\titlespacing{\paragraph}{0em}{0em}{0.5em}
\titlespacing{\subparagraph}{0em}{0em}{0.5em}
\theoremstyle{plain}
\newtheorem{theorem}{Theorem}[section]
\newtheorem{proposition}[theorem]{Proposition}
\newtheorem{lemma}[theorem]{Lemma}
\newtheorem{corollary}[theorem]{Corollary}
\newtheorem*{theorem*}{Theorem}
\newtheorem*{main}{Main Theorem}
\theoremstyle{definition}
\newtheorem{definition}[theorem]{Definition}
\theoremstyle{remark}
\newtheorem*{remark}{Remark}
\renewcommand{\Gamma}{\varGamma}
\renewcommand{\epsilon}{\varepsilon}
\renewcommand{\bar}{\overline}
\renewcommand{\hat}{\widehat}
\renewcommand{\leq}{\leqslant}
\renewcommand{\geq}{\geqslant}
\newcommand{\normaleq}{\trianglelefteq}
\newcommand{\fs}{\mathcal{F}}
\newcommand{\N}{\mathbb{N}}
\newcommand{\SL}{\mathrm{SL}} 
\newcommand{\GF}{\mathrm{GF}} 
\newcommand{\syl}{\mathrm{Syl}}
\newcommand{\GL}{\mathrm{GL}}
\newcommand{\Sp}{\mathrm{Sp}}
\newcommand{\PSL}{\mathrm{PSL}}
\newcommand{\PSU}{\mathrm{PSU}}
\newcommand{\Sz}{\mathrm{Sz}}
\newcommand{\Ree}{\mathrm{Ree}}
\newcommand{\Sym}{\mathrm{Sym}}
\newcommand{\Aut}{\mathrm{Aut}}
\newcommand{\Out}{\mathrm{Out}}
\newcommand{\Inn}{\mathrm{Inn}}
\newcommand{\Mor}{\mathrm{Mor}}
\newcommand{\Hom}{\mathrm{Hom}}
\newcommand{\Iso}{\mathrm{Iso}}
\newcommand{\Inj}{\mathrm{Inj}}
\newcommand{\Ob}{\mathrm{Ob}}
\def \wt {\widetilde}
\begin{document}

\title{Fusion Systems on a Sylow $\lowercase{p}$-subgroup of $\mathrm{G}_2(\lowercase{p^n})$ or $\PSU_4(\lowercase{p^n})$}
\author{Martin van Beek }
\thanks{This work formed part of the author's PhD thesis at the University of Birmingham under the supervision of Prof. Chris Parker. The author gratefully acknowledges the financial support received from the EPSRC during this period.}

\begin{abstract}
For any prime $p$ and $S$ a $p$-group isomorphic to a Sylow $p$-subgroup of $\mathrm{G}_2(p^n)$ or $\PSU_4(p^n)$ with $n\in\N$, we determine all saturated fusion systems supported on $S$ up to isomorphism.
\end{abstract}

\maketitle

\section{Introduction}

The purpose of this paper is determine, up to isomorphism, all saturated fusion systems supported on a Sylow $p$-subgroup of $\PSU_4(p^n)$ or $\mathrm{G}_2(p^n)$ for any prime $p$. This work forms part of a program to classify all saturated fusion systems supported on Sylow $p$-subgroups of rank $2$ groups of Lie type, complementing the results in \cite{Clelland} and \cite{Sp4}. Moreover, we generalize results already obtained in \cite{parkersem}, \cite{Baccanelli} and \cite{Raul} where only the case $n=1$ is considered.  Furthermore, we remove some of the other restrictions in those works, where only fusion systems $\fs$ satisfying $O_p(\fs)=\{1\}$ are considered, at little cost to the exposition. The work here draws heavily from results and ideas within those papers and most of the `interesting' examples we uncover occur in this `small' setting.

Additionally, with a small amount of extra effort, for $S$ a Sylow $p$-subgroup of $\PSU_4(p^n)$ or $\mathrm{G}_2(p^n)$, we are able to give a good description of all possible radical, centric subgroups of a fusion system (or group) containing $S$ as a Sylow $p$-subgroup. This has implications outwith the rest of the results in paper. For example, several results concerning weight conjectures for groups and fusion systems rely on detailed information of the radical, centric subgroups of a Sylow $p$-subgroup, see for instance \cite{WeightFusion} and \cite{WeightFusionl}.

Recall that a $\mathcal{K}$-group is a finite group in which all simple sections are known finite simple groups. Although some of the results we apply in this work rely on a $\mathcal{K}$-group hypothesis, within this restricted setting we are almost always able circumvent the need for such a strong assumptions. Where appropriate, we describe the required modifications to make these results independent of a $\mathcal{K}$-group hypothesis. In this way, we are able to almost completely rid ourselves of any reliance on the classification of finite simple groups, and only make use of it to prove the exoticity of some fusion systems supported on a Sylow $7$-subgroup of $\mathrm{G}_2(7)$, a check already completed in \cite{parkersem}, and to recognize $\PSL_2(p^{2n})$ acting on a natural $\Omega_4^-(p^n)$-module to classify fusion system on supported on a Sylow $p$-subgroup of $\PSU_4(p^n)$, where $p$ is odd. We do, however, make use of some of the results listed in \cite{GLS3} concerning known facts about known finite simple groups.

The main theorem is as follows:
\begin{main}\hypertarget{MainTheorem}{}
Suppose that $\fs$ is a saturated fusion system on a $p$-group $S$, where $S$ is isomorphic to a Sylow $p$-subgroup of $\mathrm{G}_2(p^n)$ or $\PSU_4(p^n)$. Then $\fs$ is known. Moreover, if $O_p(\fs)=\{1\}$ then $\fs$ is isomorphic to the $p$-fusion category of an almost simple group; or $p=7$ and $\fs$ is an exotic fusion system on a Sylow $7$-subgroup of $\mathrm{G}_2(7)$. 
\end{main}
In the above classification, more details are given for each prime where they arise in the proofs.

We now describe the strategy to prove the main result of this paper. In Section 2, we set up the requisite group and module theoretic results needed to examine the local actions within a fusion system supported on a $p$-group $S$, where $S$ is isomorphic to a Sylow $p$-subgroup of $\PSU_4(p^n)$ or $\mathrm{G}_2(p^n)$. In Section 3, we provide constructions of Sylow $p$-subgroups of $\mathrm{G}_2(p^n)$ and $\PSU_4(p^n)$ and lay out some important properties of these groups to be utilized in later sections. In Section 4, relevant terminology and results concerning fusion systems is provided, most of which are readily available from standard references. Most importantly, here we describe the necessary tools to describe a complete set of essential subgroups for a saturated fusion system $\fs$ and determine their automizers. The remaining sections deal with the case $\mathrm{G}_2(2^n)$, $\mathrm{G}_2(3^n)$, $\mathrm{G}_2(p^n)$ for $p\geq 5$, and $\PSU_4(p^n)$. For $\mathrm{G}_2(p^n)$, the separation in cases is brought about due to some degeneracies in the Chevalley commutator formulas when $p=2$ or $3$, resulting in some exceptional structural properties. While there are differences when $p=2$ and $p$ is odd for $\PSU_4(p^n)$, the differences are not so drastic to affect our methodology.

In each of the cases, it transpires that, barring some small exceptions, there are only two potential essential subgroups of $\fs$ which coincide with the unipotent radicals of parabolic subgroups in $\mathrm{G}_2(p^n)$ and $\PSU_4(p^n)$. Upon deducing the potential automizers of these subgroups, we then distinguish between the case where there is at most one essential subgroup (where necessarily $O_p(\fs)\ne \{1\}$), and where both subgroups are essential. In this latter case, we apply the main result of \cite{MainThm} which completely determines the fusion system. Indeed, this work as a whole may be viewed as an application of a deeper result recognizing the utility of the amalgam method in fusion systems. The key point is that the uniqueness of the amalgams we extract from the fusion system hypothesis in the main result of \cite{MainThm} implies the uniqueness of the fusion systems. Uniqueness arguments for the local actions in a fusion system are generally the most troublesome checks when classifying certain classes of fusion systems and often the complexity of the arguments tend to scale with the size of the $p$-group which is acted upon.  For the fusion systems classified in this paper, all but a small number arise as the fusion system counterparts to weak BN-pairs of rank $2$, a collection of amalgams whose uniqueness was already verified (at least for the cases relevant to this work) in \cite{Greenbook}. As a result of this observation, we now have a more systematic methodology of treating fusion systems associated to rank $2$ simple groups of Lie type instead of the ad hoc methods used previously.

Importantly within this work, since the only exotic fusion systems we engage with are determined in \cite{parkersem}, we do not need to concern ourselves with checks on saturation and exoticity as in other works. As mentioned throughout, there is some exceptional behaviour for small values of $p$ and $n$ where the fusion systems of some other finite simple groups appear. In these instances, we generally appeal to a package in MAGMA \cite{Comp1} to determine a list of radical, centric subgroups and a list of saturated fusion systems.

Something interesting to note in the above theorem is the small number of exotic fusion systems unearthed. The only exotic fusion systems that arise in the above classification were already identified in \cite{parkersem} and are related to the Monster sporadic simple group. This gives credence to \cite[Conjecture 2]{Comp1} that, aside from a few exceptions in small rank and small prime cases, the structure of a Sylow $p$-subgroup of a group of Lie type in characteristic $p$ is too rigid to support any exotic fusion systems. This is in complete contrast to the cas where the fusion system is supported on a Sylow $p$-subgroup of a group of Lie type in characteristic coprime to $p$, where exotic fusion systems are ubiquitous (see \cite{OliverExotic}).

In terms of progressing towards the goal of determining all fusion systems on Sylow $p$-subgroups of rank $2$ groups of Lie type, this still leaves $\PSU_5(p^n)$, ${}^3\mathrm{D}_4(p^n)$ and ${}^2\mathrm{F}_4(2^n)$, where necessarily $p=2$ in the last case. As in this work, a suitable methodology for classifying fusion systems over the Sylow $p$-subgroups of these groups boils down to determining a complete set of essential subgroups and, after treating small values of $n$ and $p$ separately, applying the main theorem of \cite{MainThm}. 

Our notation and terminology for groups is reasonably standard and generally follows that used in \cite{asch2}, \cite{gor}, \cite{kurz}, \cite{Huppert} and in the ATLAS \cite{atlas}. For fusion systems, we follow the notation used in \cite{ako}. Some clarification is probably needed on the notation we use for group extensions. We use $A:B$ to denote the semidirect product of $A$ and $B$, where $A$ is normalized by $B$. We use the notation $A.B$ to denote an arbitrary extension of $B$ by $A$. That is, $A$ is a normal subgroup of $A.B$ such that the quotient of $A.B$ by $A$ is isomorphic to $B$. We use the notation $A\cdot B$ to denote a central extension of $B$ by $A$ and the notation $A*B$ to denote a central product of $A$ and $B$, where the intersection of $A$ and $B$ will be clear whenever this arises.

\section{Preliminaries: Group and Module Theory}\label{GrpSec}

We recall some useful facts concerning groups and modules which we employ later in the paper.

\begin{definition}
A finite group $G$ is a \emph{$\mathcal{K}$-group} if every simple section of $G$ is a known finite simple group.
\end{definition}

\begin{definition}
Let $G$ be a finite group and $p$ a prime dividing $|G|$. Then $G$ is of \emph{characteristic $p$} if $C_G(O_p(G))\le O_p(G)$. Equivalently, $F^*(G)=O_p(G)$.
\end{definition}

We will often use the results described below without explicit reference, and where we do reference, we will refer to the totality of the techniques as ``coprime action.''

\begin{lemma}[Coprime Action]
Suppose that $G$ acts on $A$ coprimely. That is, $|G|$ is coprime to $|A|$. Let $B$ be a $G$-invariant normal subgroup of $A$. Then the following holds:
\begin{enumerate}
\item $C_{A/B}(G)=C_A(G)B/B$;
\item if $G$ acts trivially on $A/B$ and $B$, then $G$ acts trivially on $A$;
\item $[A, G]=[A,G,G]$;
\item $A=[A,G]C_A(G)$ and if $A$ is abelian $A=[A,G]\times C_A(G)$;
\item if $G$ acts trivially on $A/\Phi(A)$, then $G$ acts trivially on $A$;
\item if $A$ is a $p$-group and $G$ acts trivially on $\Omega(A)$, then $G$ acts trivially on $A$; and
\item for $S\in\syl_p(G)$, if $m_p(S)\geq 2$ then $A=\langle C_A(s) \mid s\in S\setminus\{1\}\rangle$.
\end{enumerate}
\end{lemma}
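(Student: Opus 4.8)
The plan is to treat (1) as the engine that drives nearly everything else, and to derive it from the Schur--Zassenhaus theorem on conjugacy of complements. Form the semidirect product $\Gamma = A \rtimes G$, so that $A \normaleq \Gamma$, $A \cap G = \{1\}$, and $B \normaleq \Gamma$ since $B$ is $G$-invariant and normal in $A$. Given $a \in A$ with $aB \in C_{A/B}(G)$, the condition $a^{-1}a^{g} \in B$ for all $g \in G$ translates, after a short commutator computation in $\Gamma$, into $g^{a} = (a^{-1}a^{g})g \in BG$, so that both $G$ and $G^{a}$ are complements to the normal subgroup $B$ in $X = BG$. Since $|B|$ divides $|A|$ and is therefore coprime to $|G| = |X/B|$, Schur--Zassenhaus provides $b \in B$ with $G^{a} = G^{b}$; then $ab^{-1}$ normalizes $G$, and because an element of $A$ normalizing $G$ must centralize it (the $A$-part of $x^{-1}gx$ must vanish as $A\cap G=\{1\}$), the element $c = ab^{-1} \in C_A(G)$ satisfies $cB = aB$. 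This gives $C_{A/B}(G) \le C_A(G)B/B$, and the reverse inclusion is immediate.

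Several parts then follow formally. For (4), apply (1) with $B = [A,G]$: trivial action on $A/[A,G]$ yields $A = [A,G]C_A(G)$; the direct decomposition when $A$ is abelian I would obtain from the averaging idempotent $e = |G|^{-1}\sum_{g\in G} g$, which is well defined since $|G|$ is invertible on the $p'$-group $A$ and projects $A$ onto $C_A(G)$ with kernel $[A,G]$. For (2), trivial action on $A/B$ gives $A = C_A(G)B$ by (1), and trivial action on $B$ gives $B \le C_A(G)$, whence $A = C_A(G)$. For (5), $G$ trivial on $A/\Phi(A)$ forces $[A,G]\le \Phi(A)$, so $A = [A,G]C_A(G) \le \Phi(A)C_A(G)$, and the non-generator property of $\Phi(A)$ yields $A = C_A(G)$. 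For (3), I would pass to $\bar A = A/[A,G,G]$, note $[[\bar A, G], G] = \{1\}$, and expand $[\bar A, G] = [[\bar A,G]\,C_{\bar A}(G), G]$ using (4) and the commutator identity $[XY,G] \le [X,G]^{Y}[Y,G]$, both factors of which are trivial in $\bar A$; hence $[\bar A, G] = \{1\}$, i.e. $[A,G] = [A,G,G]$.

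For (6), I would first reduce to abelian $A$; then on the nontrivial $G$-invariant abelian $p$-group $[A,G]$ parts (3) and (4) give $C_{[A,G]}(G) = \{1\}$, whereas $\Omega([A,G]) \le \Omega(A)$ is centralized by $G$, forcing $\Omega([A,G]) = \{1\}$, which is impossible for a nontrivial $p$-group. Finally, for (7) I would reduce to an elementary abelian subgroup $E \le S$ of rank $2$ and, after descending through a $G$-invariant chief series using (1) to reduce to abelian $A$, decompose $A$ into its $E$-isotypic components: every nontrivial component is centralized by one of the $p+1$ subgroups of $E$ of order $p$, so $A = \langle C_A(s) \mid s \in E^{\#}\rangle \le \langle C_A(s) \mid s\in S\setminus\{1\}\rangle$.

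The foundational obstacle is a clean proof of (1); once the complement-conjugacy argument is set up correctly inside $\Gamma$, parts (2)--(5) are essentially bookkeeping. The genuinely separate technical inputs are the abelian reductions needed for (6) and (7), where coprimeness is used to guarantee the complete reducibility underpinning the isotypic (eigenspace) decomposition; in (6) the reduction to the abelian case is where the force of the statement lies, so I would either restrict to the abelian situation in which it is applied or quote the standard reference in \cite{kurz}.
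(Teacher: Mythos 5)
The paper gives no proof of this lemma at all: its ``proof'' is the single citation \cite[Chapter 8]{kurz}. Your reconstruction is, in essence, the development carried out in that reference, so you have not so much diverged from the paper as supplied the argument it delegates: (1) via conjugacy of complements to $B$ in $BG$ inside $\Gamma=A\rtimes G$, with $N_A(G)=C_A(G)$ read off from uniqueness of the $A$--$G$ decomposition; (2), (4) and (5) as formal consequences; (3) by passing to $A/[A,G,G]$ and expanding $[\,\bar{A},G]=[[\bar{A},G]C_{\bar{A}}(G),G]$; (6) from $A=[A,G]\times C_A(G)$ together with $C_{[A,G]}(G)=\{1\}$ in the abelian case; and (7) via a rank-two elementary abelian subgroup, an invariant series lifted through (1), and the isotypic decomposition with nontrivial kernels (a faithful irreducible representation of an abelian group forces a cyclic image, so each component is killed by one of the $p+1$ subgroups of order $p$). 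All of this is correct in outline, and it is the standard route.

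Three points deserve explicit attention. First, the conjugacy part of Schur--Zassenhaus requires $B$ or $BG/B\cong G$ to be solvable; coprimality means one of the two has odd order, so this is secured by the odd order theorem and should be said, since it is the one genuinely nontrivial input to (1). Second, in (3) the quotient $A/[A,G,G]$ only makes sense once $[A,G,G]\normaleq A$ is known; a priori $[A,G,G]$ is normal merely in $[A,G]G$, but (4) repairs this, since $C_A(G)$ centralizes $G$ and normalizes $[A,G]\normaleq\Gamma$, whence $A=[A,G]C_A(G)$ normalizes $[A,G,G]$. Third, and most substantively, your ``reduction to abelian $A$'' in (6) is not merely difficult but impossible in general: $Q_8$ admits a coprime automorphism of order $3$ acting trivially on $\Omega(Q_8)=Z(Q_8)$, so for $p=2$ the statement needs $A$ abelian (or $\Omega_2$ in place of $\Omega$), and for odd $p$ the nonabelian case is a genuine theorem (critical-subgroup arguments), not bookkeeping. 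Your stated fallback of restricting to the abelian case or quoting \cite{kurz} is therefore forced rather than optional --- note that the paper's own statement of (6) silently carries the same caveat. Similarly, your chief-series reduction in (7) yields abelian factors only for solvable $A$; the nonsolvable case needs a separate standard argument, though this is harmless here since every application in the paper takes $A$ to be a $p$-group or a section of one.
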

\begin{proof}
See \cite[Chapter 8]{kurz}.
\end{proof}

We present two further lemmas which loosely fall under the umbrella of coprime action.

\begin{lemma}\label{burnside}
Let $S$ be a finite $p$-group. Then $C_{\Aut(S)}(S/\Phi(S))$ is a normal $p$-subgroup of $\Aut(S)$.
\end{lemma}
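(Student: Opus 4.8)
The plan is to split the statement into two independent claims: first that $C_{\Aut(S)}(S/\Phi(S))$ is normal in $\Aut(S)$, and second that it is a $p$-group. Throughout, write $K = C_{\Aut(S)}(S/\Phi(S))$, so that $K$ consists precisely of those $\alpha \in \Aut(S)$ with $\alpha(s)s^{-1} \in \Phi(S)$ for all $s \in S$; equivalently, $[S, \langle\alpha\rangle] \leq \Phi(S)$.

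For normality, I would observe that $\Phi(S)$ is characteristic in $S$, so every $\alpha \in \Aut(S)$ descends to an automorphism $\bar\alpha$ of the quotient $S/\Phi(S)$. This yields a group homomorphism $\rho \colon \Aut(S) \to \Aut(S/\Phi(S))$ sending $\alpha$ to $\bar\alpha$, and $K$ is exactly its kernel. Since kernels of homomorphisms are normal, $K \normaleq \Aut(S)$. This part is routine and presents no real difficulty.

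The substantive claim is that $K$ is a $p$-group, and this is where I expect the only genuine work to lie. Since $K \leq \Aut(S)$ is finite, by Cauchy's theorem it suffices to show $K$ contains no nontrivial element of order coprime to $p$. Suppose for contradiction that some prime $q \neq p$ divides $|K|$, and pick $\alpha \in K$ of order $q$. Then $G = \langle\alpha\rangle$ is a $q$-group acting on the $p$-group $S$, and this action is coprime. By construction $\alpha$ acts trivially on $S/\Phi(S)$, so $G$ acts trivially on $S/\Phi(S)$; the coprime action lemma, part (v), then forces $G$ to act trivially on all of $S$, that is, $\alpha = \mathrm{id}_S$. This contradicts $|\alpha| = q > 1$, so no such $q$ exists and $|K|$ is a power of $p$.

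The only place any care is needed is in invoking coprime action, part (v): it requires $|G|$ coprime to $|S|$, which holds since $q \neq p$ and $S$ is a $p$-group, and it requires $S$ to be a $p$-group so that $\Phi(S)$ controls $S$ under coprime action, which is exactly our setting. With that lemma available, the main obstacle---bounding the order of $K$---dissolves into a single application of coprime action together with Cauchy's theorem.
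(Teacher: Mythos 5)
Your proof is correct, but it is worth noting that the paper does not actually prove this lemma at all: it simply cites Burnside via \cite[Theorem 5.1.4]{gor}. So your argument is a genuinely self-contained alternative. Your normality step (realizing $C_{\Aut(S)}(S/\Phi(S))$ as the kernel of the natural map $\Aut(S)\to\Aut(S/\Phi(S))$, which exists because $\Phi(S)$ is characteristic) is the standard observation, and your $p$-group step correctly combines Cauchy's theorem with part (v) of the paper's Coprime Action lemma: an element $\alpha$ of prime order $q\ne p$ acts coprimely on $S$ and trivially on $S/\Phi(S)$, hence trivially on $S$, a contradiction. This is a clean reduction, and it is not circular within the paper's framework, since the coprime-action statement (v) is quoted from \cite[Chapter 8]{kurz}, where it is proved directly and not deduced from Burnside's theorem. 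For comparison, the classical proof behind the citation (Gorenstein's Theorem 5.1.4) proceeds differently: one fixes a minimal generating set of $S$, observes that the kernel $K$ acts semiregularly on the set of ordered tuples in $S$ lifting a fixed basis of $S/\Phi(S)$, and counts that this set has $p$-power size, so $|K|$ divides a power of $p$. That counting argument is more elementary in that it avoids coprime-action machinery; your route is shorter given that the paper has already placed the Coprime Action lemma on the table, and it makes transparent exactly which hypothesis ($S$ a $p$-group, so that the action of a $q$-element is coprime) is doing the work.
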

\begin{proof}
This is due to Burnside, see \cite[Theorem 5.1.4]{gor}.
\end{proof}

\begin{lemma}\label{GrpChain}
Let $E$ be a finite $p$-group and $Q\le A$ where $A\le \Aut(E)$ and $Q$ is a $p$-group. Suppose there exists a normal chain $\{1\} =E_0 \normaleq E_1  \normaleq E_2 \normaleq \dots \normaleq E_m = E$ of subgroups such that for each $\alpha \in A$, $E_i\alpha = E_i$ for all $0 \le i \le m$. If for all $1\le i\le m$, $Q$ centralizes $E_i/E_{i-1}$, then $Q\le O_p(A)$.
\end{lemma}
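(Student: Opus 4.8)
The plan is to identify a normal $p$-subgroup of $A$ that contains $Q$, namely the full stabiliser in $A$ of the chain. Set
\[
K = \bigcap_{i=1}^m C_A(E_i/E_{i-1}),
\]
the set of $\alpha \in A$ acting trivially on every quotient $E_i/E_{i-1}$. Since every $E_i$ is $A$-invariant by hypothesis, the action of $A$ on each $E_i/E_{i-1}$ is well defined and assembles into a homomorphism $A \to \prod_{i=1}^m \Aut(E_i/E_{i-1})$; the subgroup $K$ is precisely its kernel, so $K \normaleq A$. By the hypothesis that $Q$ centralises each $E_i/E_{i-1}$ we have $Q \le K$. Thus it suffices to prove that $K$ is a $p$-group, for then $K \le O_p(A)$ and hence $Q \le O_p(A)$.

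The crux is therefore to show $K$ is a $p$-group, which I would do by verifying that $K$ contains no nontrivial element of order coprime to $p$. Let $\alpha \in K$ be such an element, so that $\langle \alpha \rangle$ acts coprimely on the $p$-group $E$ and fixes the chain $\{1\} = E_0 \normaleq \dots \normaleq E_m = E$ with trivial action on every factor. I claim $\alpha$ acts trivially on $E$, proceeding by induction on the length $m$. When $m = 1$ this is immediate since $E = E_1/E_0$. For the inductive step, note that $E_{m-1}$ is $\alpha$-invariant, so $\alpha$ restricts to a coprime automorphism of $E_{m-1}$ stabilising the shorter normal chain $E_0 \normaleq \dots \normaleq E_{m-1}$ with trivial factors; by induction $\alpha$ acts trivially on $E_{m-1}$. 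Since $E_{m-1} \normaleq E$ and $\alpha$ also acts trivially on $E/E_{m-1} = E_m/E_{m-1}$, coprime action (specifically, the statement that an element acting trivially on both $B$ and $A/B$ acts trivially on $A$) forces $\alpha$ to act trivially on $E$. Hence $\alpha = 1$ in $\Aut(E)$, proving $K$ has trivial Sylow $q$-subgroups for every prime $q \ne p$ and is therefore a $p$-group.

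The two genuinely structural inputs are that the $A$-invariance of the $E_i$ realises $K$ as the kernel of a homomorphism (hence as a normal subgroup), and the coprime-action cancellation used in the inductive step; everything else is bookkeeping. I expect the main obstacle to be a matter of care rather than difficulty: one must keep track of which subgroups are normal in $E$ as opposed to merely $\alpha$-invariant, and confirm that the quotients appearing in the induction are genuine $\langle\alpha\rangle$-modules, but no idea beyond coprime action is required. This is essentially the Kaloujnine--Hall stability theorem specialised to a $p$-group, where stabilising a chain with trivial factors forces membership in a $p$-group.
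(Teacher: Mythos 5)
Your proof is correct and matches the paper, whose entire proof of this lemma is the citation \cite[{(I.5.3.2)}]{gor} (the stability-group theorem for $p$-groups): your argument --- realising the chain stabiliser $K$ as the kernel of the induced homomorphism $A\to\prod_{i=1}^m\Aut(E_i/E_{i-1})$, so that $K\normaleq A$ and $Q\le K$, then using coprime action inductively to show every $p'$-element of $K$ acts trivially on $E$, forcing $K$ to be a $p$-group inside $O_p(A)$ --- is precisely the standard proof of that cited result. The bookkeeping you flag is handled correctly: the $A$-invariance of each $E_i$ makes the quotient actions well defined, and $E_{m-1}\normaleq E_m=E$ from the normal chain is exactly what legitimises the coprime-action step.
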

\begin{proof}
See \cite[{(I.5.3.2)}]{gor}.
\end{proof}

Pivotal to the analysis of local actions within a fusion system is recognizing $\SL_2(p^n)$ acting on its modules in characteristic $p$. In studying Sylow $p$-subgroups of $\mathrm{G}_2(p^n)$ and $\PSU_4(p^n)$, the most important characterization will be $\SL_2(p^n)$ acting on its \emph{natural module}.

\begin{definition}
A \emph{natural $\SL_2(p^n)$-module} is any irreducible $2$-dimensional \linebreak $\GF(p^n)\SL_2(p^n)$-module regarded as a $2n$-dimension module for $\GF(p)\SL_2(p^n)$ by restriction.
\end{definition}

\begin{lemma}\label{sl2p-mod}
Suppose $G\cong\SL_2(p^n)$, $S\in\syl_p(G)$ and $V$ is natural $\SL_2(p^n)$-module. Then the following holds:
\begin{enumerate}
\item $[V, S,S]=\{1\}$;
\item $|V|=p^{2n}$ and $|C_V(S)|=p^n$;
\item $C_V(s)=C_V(S)=[V,S]=[V,s]=[v, S]$ for all $v\in V\setminus C_V(S)$ and $1\ne s\in S$;
\item $V=C_V(S)\times C_V(S^g)$ for $g\in G\setminus N_G(S)$;
\item every $p'$-element of $G$ acts fixed point freely on $V$; and
\item $V/C_V(S)$ and $C_V(S)$ are irreducible $\mathrm{GF}(p)N_G(S)$-modules upon restriction.
\end{enumerate}
\end{lemma}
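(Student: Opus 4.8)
The plan is to fix an explicit model and let one computation do most of the work. Identify $G$ with $\SL_2(q)$ where $q=p^n$, realise $V=\GF(q)^2$ as the standard natural module, and take $S\in\syl_p(G)$ to be the group of upper unitriangular matrices $s_t=\left(\begin{smallmatrix}1&t\\0&1\end{smallmatrix}\right)$ with $t\in\GF(q)$, so that $|S|=p^n$. At the outset I would record that this is no loss of generality: any natural module differs from the standard one only by a Frobenius twist of the action, and since the Frobenius map is a bijection of $\GF(q)$ it merely reparametrises the $S$-action (and the torus action) and therefore leaves every one of (i)--(vi) unchanged. Writing a typical vector as $v=(a,b)$, the single identity $(s_t-1)v=(tb,0)$ shows at once that $[V,S]=C_V(S)=\langle e_1\rangle_{\GF(q)}$, a $\GF(p)$-subspace of order $p^n$; combined with $\dim_{\GF(p)}V=2n$ this gives (ii).

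For (i) and (iii) I would read everything off this same formula. Since $[V,S]=\langle e_1\rangle$ is centralised by $S$, we get $[V,S,S]=[C_V(S),S]=\{1\}$, which is (i). For (iii), fix $1\ne s=s_t$ (so $t\ne 0$) and $v=(a,b)\notin C_V(S)$ (so $b\ne 0$): then $C_V(s)=\ker(s_t-1)=\langle e_1\rangle$, $[V,s]=\mathrm{im}(s_t-1)=\langle e_1\rangle$, and $[v,S]=\{(tb,0):t\in\GF(q)\}=\langle e_1\rangle$ because $b\ne 0$; all four coincide with $C_V(S)=[V,S]$. For (iv), the key observation is that the stabiliser in $G$ of the line $\langle e_1\rangle$ is exactly the Borel subgroup $N_G(S)$. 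Hence for $g\notin N_G(S)$ the lines $C_V(S)=\langle e_1\rangle$ and $C_V(S^g)=g^{-1}\langle e_1\rangle$ are distinct, and two distinct $\GF(q)$-lines in the $2$-dimensional space $V$ are complementary, so $V=C_V(S)\oplus C_V(S^g)$ as $\GF(p)$-spaces.

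Part (v) is the one place where I would argue conceptually rather than with matrices. A nontrivial $p'$-element $h$ is semisimple; if it fixed some $0\ne v\in V$, then $1$ would be an eigenvalue of $h$ on $V\otimes\bar{\GF(p)}$, and since $\det h=1$ the second eigenvalue would also be $1$, forcing the semisimple element $h$ to be the identity, a contradiction. For (vi), note that $S$ centralises both $C_V(S)$ and $V/C_V(S)$ (again because $[V,S]=C_V(S)$), so as $\GF(p)N_G(S)$-modules these are genuinely modules for the torus $T=N_G(S)/S\cong\GF(q)^\times$, acting on $C_V(S)\cong\GF(q)$ by $a\mapsto a$ and on $V/C_V(S)\cong\GF(q)$ by $a\mapsto a^{-1}$, i.e.\ by multiplication. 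Irreducibility then reduces to the elementary fact that $\GF(q)^\times$ acting on $\GF(q)$ by multiplication admits no proper nonzero $\GF(p)$-subspace, since the $\GF(p)$-span of a single nonzero vector under this action is all of $\GF(q)$.

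The only genuine subtlety, and the step I expect to be the main obstacle, is the $\GF(p)$-irreducibility in (vi): over $\GF(q)$ the two lines are irreducible for the trivial reason of being one-dimensional, but the assertion is about the prime field $\GF(p)$, where they have dimension $n$. One must therefore exploit the multiplicative action of $\GF(q)^\times$ rather than appeal to dimension, and it is precisely here that the Frobenius-twist remark made at the start is needed to justify the explicit torus action. Everything else is routine linear algebra with the matrices $s_t$ and $\mathrm{diag}(a,a^{-1})$.
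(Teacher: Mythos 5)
Your proof is correct. Note that the paper does not actually prove this lemma: it simply cites \cite[Lemma 4.6]{parkerBN}, so your self-contained matrix computation is a genuinely different (and more elementary) route than the paper's appeal to the literature. The computation $(s_t-1)v=(tb,0)$ does indeed deliver (i)--(iii) at once; the identification of the line stabiliser of $\langle e_1\rangle$ with the Borel subgroup $N_G(S)$ gives (iv); the eigenvalue argument for (v) is sound (semisimplicity of a $p'$-element plus $\det h=1$ forces both eigenvalues to be $1$, and faithfulness is automatic in your concrete model $G\le \GL(V)$); and your treatment of (vi) correctly isolates the real content, namely that a $\GF(p)$-subspace of $\GF(q)$ invariant under multiplication by all of $\GF(q)^\times$ is a $\GF(q)$-subspace, hence trivial, with the reduction to the torus justified by $S$ acting trivially on both $C_V(S)$ and $V/C_V(S)$. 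The one assertion-level step is your opening claim that every irreducible $2$-dimensional $\GF(q)\SL_2(q)$-module is a Frobenius twist of the standard module: this is standard (Steinberg's tensor product theorem gives it over $\overline{\GF(p)}$, and absolute irreducibility over $\GF(q)$ follows since a non-absolutely-irreducible $2$-dimensional module would be $1$-dimensional over $\GF(q^2)$, i.e.\ given by a homomorphism $\SL_2(q)\to\GF(q^2)^\times$, which is trivial even for $q\in\{2,3\}$ because the abelianisations $C_2$ resp.\ $C_3$ admit no nontrivial maps to the relevant cyclic groups), but since your whole reduction to the standard model rests on it, a one-line justification along these lines would make the argument airtight. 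It is also worth observing, as you implicitly do, that the twist is the standard representation precomposed with a field automorphism of $G$ preserving $S$ and $N_G(S)$, which is exactly why all six statements transfer.
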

\begin{proof}
See \cite[Lemma 4.6]{parkerBN}
\end{proof}

More generally, a natural $\SL_2(p^n)$-module provides the minimal example of a non-trivial failure to factorize module. 

\begin{definition}
Let $G$ be a finite group, $V$ a $\GF(p)G$-module and $A\le G$. If 
\begin{enumerate}
\item $A/C_A(V)$ is an elementary abelian $p$-group;
\item $[V,A]\ne\{1\}$; and 
\item $|V/C_V(A)|\leq |A/C_A(V)|$
\end{enumerate}
then $V$ is a \emph{failure to factorize module} (abbrev. FF-module) for $G$ and $A$ is an \emph{offender} on $V$. 
\end{definition}

FF-modules are named due to how they arise as counterexamples to \emph{Thompson factorization} (see \cite[{{32.11}}]{asch2}), which aims to factorize a group into two $p$-local subgroups. One of these $p$-local subgroups is the normalizer of the Thompson subgroup of a fixed Sylow $p$-subgroup. Independent of FF-modules, the Thompson subgroup is incredibly useful in studying the structure of a $p$-group and will play an important role in the analysis of subgroups of Sylow $p$-subgroups of $\mathrm{G}_2(p^n)$ and $\PSU_4(p^n)$ later.

\begin{definition}
Let $S$ be a finite $p$-group. Set $\mathcal{A}(S)$ to be the set of all elementary abelian subgroup of $S$ of maximal rank. Then the \emph{Thompson subgroup} of $S$ is defined as $J(S):=\langle A \mid A\in\mathcal{A}(S)\rangle$.
\end{definition}

\begin{proposition}
Let $S$ be a finite $p$-group. Then the following holds:
\begin{enumerate}
\item $J(S)$ is a characteristic subgroup of $S$;
\item $C_S(J(S))\le J(S)$; and
\item if $J(S)\le T\le S$, then $J(S)=J(T)$.
\end{enumerate}
\end{proposition}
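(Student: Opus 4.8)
These three statements are standard structural facts about the Thompson subgroup, and I would dispatch \textup{(i)} and \textup{(iii)} quickly before concentrating on \textup{(ii)}, which carries the real content. For \textup{(i)}, the plan is to observe that any $\alpha\in\Aut(S)$ is an isomorphism, so it carries an elementary abelian subgroup of rank $r$ to an elementary abelian subgroup of rank $r$; hence $\alpha$ permutes the set $\mathcal{A}(S)$ of elementary abelian subgroups of maximal rank. Since $J(S)=\langle A\mid A\in\mathcal{A}(S)\rangle$ is generated by a set that $\alpha$ stabilises, $J(S)\alpha=J(S)$, so $J(S)$ is characteristic in $S$.

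For \textup{(iii)}, suppose $J(S)\le T\le S$. I would first note that the maximal rank of an elementary abelian subgroup is the same whether computed in $T$ or in $S$: any $A\in\mathcal{A}(S)$ satisfies $A\le J(S)\le T$, so $T$ contains an elementary abelian subgroup of the maximal $S$-rank, while no subgroup of $T\le S$ can exceed it. Consequently $\mathcal{A}(S)=\mathcal{A}(T)$, since the members of $\mathcal{A}(S)$ lie in $T$ and retain maximal rank there, while conversely any maximal-rank elementary abelian subgroup of $T$ is, being contained in $S$ and of the common maximal rank, a member of $\mathcal{A}(S)$. Passing to the subgroups they generate gives $J(S)=\langle\mathcal{A}(S)\rangle=\langle\mathcal{A}(T)\rangle=J(T)$.

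Part \textup{(ii)} is the crux, and the only place I expect genuine difficulty. Write $C=C_S(J(S))$; since $C$ centralises $J(S)$ we already have $Z(J(S))\le C$, and the assertion $C\le J(S)$ is equivalent to $C=Z(J(S))$. By \textup{(i)}, $C$ is characteristic, hence normal, in $S$. I would argue by contradiction: assume $C\not\le J(S)$ and fix some $A\in\mathcal{A}(S)$. Because $A\le J(S)$, every element of $C$ centralises $A$, so for any $x\in C$ the subgroup $A\langle x\rangle$ is abelian. The strategy is a maximality/replacement argument: choose $x\in C\setminus J(S)$ so that $A\langle x\rangle$ is an admissible subgroup strictly containing $A$, whose order therefore exceeds $\lvert A\rvert$, contradicting the maximality built into the definition of $\mathcal{A}(S)$ and forcing $C\le J(S)$. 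The delicate point, and the main obstacle, is precisely this replacement step: one must select $x$ (controlling both its order and the intersection $\langle x\rangle\cap A$) so that the enlarged subgroup genuinely lies in the relevant class while having strictly larger order than $A$. Once this is arranged the contradiction is immediate, and combining $C\le J(S)$ with the trivial inclusion $Z(J(S))\le C$ yields $C_S(J(S))=Z(J(S))\le J(S)$.
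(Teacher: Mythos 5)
Parts (i) and (iii) of your proposal are correct and routine; note that the paper offers no argument of its own for this proposition, simply citing \cite[9.2.8]{kurz}. The genuine problem is (ii), where your text stops short of a proof: you set up the replacement argument and then expressly leave open ``the delicate point'' of choosing $x$. That point is not a technicality to be arranged later --- it is exactly where the argument breaks. Write $C=C_S(J(S))$ and fix $A\in\mathcal{A}(S)$. If $x\in C$ has order $p$, then $A\langle x\rangle$ is elementary abelian (abelian, generated by elements of order $p$), so maximality of the rank of $A$ forces $x\in A$. Hence every element of order $p$ in $C$ already lies in $A$, and the only candidates in $C\setminus J(S)$ have order at least $p^2$. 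For such an $x$, the subgroup $A\langle x\rangle$ is abelian but never elementary abelian: since $A$ has exponent $p$ and centralizes $x$, we get $(ax)^p=x^p\ne 1$ for every $a\in A$, so $\Omega(A\langle x\rangle)=A$ and no elementary abelian subgroup of rank exceeding that of $A$ is produced. No choice of $x$ ``controlling its order and $\langle x\rangle\cap A$'' can rescue this.

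Indeed, with the definition of $\mathcal{A}(S)$ as printed in the paper (elementary abelian subgroups of maximal rank), statement (ii) is simply false: for $S=C_{p^2}\times C_p$ one has $J(S)=\Omega(S)$ of order $p^2$, while $C_S(J(S))=S\not\le J(S)$, so your strategy was doomed from the start for this version of $J(S)$. The cited source \cite{kurz} defines $\mathcal{A}(S)$ as the set of \emph{abelian subgroups of maximal order}, and for that Thompson subgroup your replacement step closes instantly and with no case analysis: for any $x\in C$, the subgroup $A\langle x\rangle$ is abelian, so $|A\langle x\rangle|\le|A|$ by maximality of $|A|$, forcing $x\in A$; hence $C\le A\le J(S)$, and combining with $Z(J(S))\le C$ gives $C=Z(J(S))$ exactly as you intended. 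So the repair is not a cleverer selection of $x$ but a change of definition: (ii) should be proved (in three lines, as above) for the maximal-order Thompson subgroup of the cited reference, while for the elementary abelian version only the weaker statement $\Omega(C_S(J(S)))\le\bigcap_{A\in\mathcal{A}(S)}A$ --- which your own argument establishes --- is available.
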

\begin{proof}
See \cite[{{9.2.8}}]{kurz}.
\end{proof}

The following proposition describes a fairly natural situation in which one can identify an FF-module from a group failing to satisfy Thompson factorization. This result is well known and the proof is standard (see \cite[{{9.2}}]{kurz}).

\begin{proposition}\label{BasicFF}
Let $G=O^{p'}(G)$ be a finite group with $S\in\syl_p(G)$ and $F^*(G)=O_p(G)$. Set $V:=\langle \Omega(Z(S))^G\rangle$. If $\Omega(Z(S))<V$ then $O_p(G)=O_p(C_G(V))$ and $O_p(G/C_G(V))=\{1\}$. Furthermore, if $J(S)\not\le C_S(V)$ then $V$ is an FF-module for $G/C_G(V)$.
\end{proposition}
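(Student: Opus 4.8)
The plan is to first show that $V$ is an elementary abelian normal $p$-subgroup of $G$ sitting inside $Z(O_p(G))$, and then to read off the two $O_p$-statements and, finally, to exhibit an explicit failure-to-factorize offender inside a maximal elementary abelian subgroup of $S$.

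First I would prove $V\le Z(O_p(G))$. Since $O_p(G)$ is a normal $p$-subgroup it lies in $S$, so $Z(S)$ centralizes $O_p(G)$; as $F^*(G)=O_p(G)$ forces $C_G(O_p(G))=Z(O_p(G))$, this gives $Z(S)\le Z(O_p(G))$ and in particular $\Omega(Z(S))\le Z(O_p(G))$. Because $Z(O_p(G))\normaleq G$, every conjugate $\Omega(Z(S))^g$ lies in it, so $V=\langle\Omega(Z(S))^G\rangle\le Z(O_p(G))$; being abelian and generated by elements of order $p$, $V$ is elementary abelian, hence a genuine $\GF(p)G$-module, and $V\normaleq G$ so $C_G(V)\normaleq G$. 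One checks that the hypothesis $\Omega(Z(S))<V$ is equivalent to $C_G(V)\ne G$ (using $G=O^{p'}(G)$ for one implication), which is what makes the statement non-vacuous.

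For $O_p(G)=O_p(C_G(V))$: the subgroup $O_p(C_G(V))$ is characteristic in $C_G(V)\normaleq G$, hence a normal $p$-subgroup of $G$, so $O_p(C_G(V))\le O_p(G)$; conversely $O_p(G)$ centralizes $V$ (as $V\le Z(O_p(G))$) and is normal in $C_G(V)$, giving $O_p(G)\le O_p(C_G(V))$. For $O_p(G/C_G(V))=\{1\}$, write $\bar G=G/C_G(V)$, which acts faithfully on $V$. Let $P/C_G(V)=O_p(\bar G)$; since $O_p(\bar G)\le\bar S$ I may take $T:=P\cap S\le S$ with $P=TC_G(V)$, so that $\bar T=\bar P\normaleq\bar G$. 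As $T\le S$ centralizes $\Omega(Z(S))$, conjugation shows $T^g$ centralizes $\Omega(Z(S))^g$; but normality of $\bar P$ gives $\bar{T^g}=\bar T$, and the action factors through $\bar G$, so $\bar T$ centralizes every conjugate $\Omega(Z(S))^g$ and hence all of $V$. Faithfulness then yields $\bar T=\{1\}$, i.e.\ $O_p(\bar G)=\{1\}$.

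For the FF-module claim, $J(S)\not\le C_S(V)$ supplies some $A\in\mathcal{A}(S)$ with $A\not\le C_G(V)$, so its image $\bar A$ is a nontrivial elementary abelian subgroup with $[V,\bar A]\ne\{1\}$, securing conditions (i) and (ii). The crux — the step I expect to be the main obstacle — is the offender inequality $|V/C_V(A)|\le|A/C_A(V)|$, which I would obtain by the Thompson maximality (replacement) trick: $C_A(V)V$ is an elementary abelian subgroup of $S$, so maximality of $|A|$ gives $|C_A(V)V|=|C_A(V)||V|/|A\cap V|\le|A|$, whence $|V|/|A\cap V|\le|A/C_A(V)|$; since $A\cap V\le C_V(A)$, this yields $|V/C_V(A)|\le|A/C_A(V)|$. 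Thus $\bar A$ is an offender and $V$ is an FF-module for $G/C_G(V)$.
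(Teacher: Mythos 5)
Your proof is correct in every step (the identification $C_A(V)\cap V=A\cap V$, the Dedekind argument giving $P=(P\cap S)C_G(V)$, and the replacement inequality all check out), and it is essentially the argument the paper intends: the paper gives no proof of \cref{BasicFF}, deferring to the standard treatment in \cite[9.2]{kurz}, and your two ingredients --- generating $V$ by conjugates of $\Omega(Z(S))$ to kill $O_p(G/C_G(V))$, and Thompson maximality of $A\in\mathcal{A}(S)$ applied to $C_A(V)V$ to produce the offender --- are exactly that standard proof.
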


We will also be interested in the $\Omega_4^-(p^n)$-module associated to $\SL_2(p^{2n})$. This arises from the isomorphism $\PSL_2(p^{2n})\cong\Omega_4^-(p^n)$ and will feature predominantly in the analysis of Sylow $p$-subgroups of $\PSU_4(p^n)$. When $p=2$, since $\SL_2(p^n)$ now has more than one ``natural module'', we will need to explicitly distinguish between these two modules.

\begin{definition}
Let $V$ be a natural $\SL_2(p^{2n})$-module for $G\cong \SL_2(p^{2n})$. A \emph{natural $\Omega_4^-(p^n)$-module} for $G$ is any non-trivial irreducible submodule of $(V\otimes_k V^\tau)_{\GF(p^n)G}$ regarded as a $\GF(p)G$-module by restriction, where $\tau$ is an involutary automorphism of $\GF(p^{2n})$.
\end{definition}

\begin{lemma}\label{Omega4}
Let $G\cong \mathrm{(P)SL}_2(p^n)$, $S\in\syl_p(G)$ and $S\in\syl_p(G)$ and $V$ a natural $\Omega_4^-(p^n)$-module for $G$. Then the following holds:
\begin{enumerate}
\item $C_G(V)=Z(G)$;
\item $[V, S,S, S]=\{1\}$;
\item $|V|=p^{4n}$ and $|V/[V,S]|=|C_V(S)|=p^n$;
\item $|C_V(s)|=|[V,s]|=p^{2n}$ and $[V,S]=C_V(s)\times [V,s]$ for all $1\ne s\in S$; and
\item $V/[V,S]$ and $C_V(S)$ are irreducible $\mathrm{GF}(p)N_G(S)$-modules upon restriction.
\end{enumerate}
Moreover, for $\{1\}\ne F\le S$, one of the following occurs:
\begin{enumerate}[label=(\alph*)]
\item $[V, F]=[V, S]$ and $C_{V}(F)=C_{V}(S)$;
\item $p=2$, $[V, F]=C_{V}(F)$ has order $p^{2n}$, $F$ is quadratic on $V$ and $|F|\leq p^n$; or
\item $p$ is odd, $|[V, F]|=|C_{V}(F)|=p^{2n}$, $[V, S]=[V, F]C_{V}(F)$, $C_V(S)=C_{[V, F]}(F)$ and $|F|\leq p^n$.
\end{enumerate}
\end{lemma}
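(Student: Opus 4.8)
The plan is to realise $V$ concretely via Galois descent from a twisted tensor product and to read off every assertion from the structure of the natural $\SL_2(p^{2n})$-module, for which \cref{sl2p-mod} (applied with $p^{2n}$ in place of $p^n$) supplies all we need. Write $W$ for the natural $\SL_2(p^{2n})$-module, let $\tau$ be the involution of $\GF(p^{2n})$ with fixed field $\GF(p^n)$, and set $M:=W\otimes_{\GF(p^{2n})}W^\tau$. There is a $G$-equivariant, $\tau$-semilinear involution $\sigma$ on $M$, combining the interchange of the two tensor factors with the Galois twist, and $V$ is its fixed-point space, so that $V\otimes_{\GF(p^n)}\GF(p^{2n})\cong M$ as $\GF(p^{2n})G$-modules. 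The point of this identification is that for any $\sigma$-stable $\GF(p^{2n})$-subspace $U\le M$ one has $\dim_{\GF(p^n)}(U\cap V)=\dim_{\GF(p^{2n})}U$; since $[M,X]$ and $C_M(X)$ are $\sigma$-stable for every $X\le G$, all the orders in (i)--(v) may be computed as $\GF(p^{2n})$-dimensions inside $M$, which is where the arithmetic is cleanest.

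Fix a generator $s$ of a root group, so that $S\cong(\GF(p^{2n}),+)$ has order $p^{2n}$, and write $A:=s-1$ on $W$ and $B:=s-1$ on $W^\tau$; by \cref{sl2p-mod} both square to zero with $1$-dimensional image $C_W(S)$, resp. $C_{W^\tau}(S)$. On $M$ the element $s$ acts as $s\otimes s$, and $N:=s\otimes s-1=A\otimes 1+1\otimes B+A\otimes B$ satisfies $N^2=2\,A\otimes B$ and $N^3=0$. This single identity already yields (ii), as $[M,S,S,S]=0$, and simultaneously explains the dichotomy between the last two cases: for $p$ odd $N^2\ne 0$, so $s$ is genuinely cubic, whereas for $p=2$ we get $N^2=0$, i.e. every nontrivial element of $S$ is quadratic. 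A short rank computation with $N$ (or the explicit $4\times 4$ matrix of $s$ in the basis $e_i\otimes f_j$) then gives $\dim[M,s]=\dim C_M(s)=2$, $\dim[M,S]=3$ and $\dim C_M(S)=1$, which is (iii) together with the order claims in (iv); the product relation $[V,S]=C_V(s)[V,s]$, with $C_V(s)\cap[V,s]=C_V(S)$, follows by intersecting these explicit subspaces. For (i), $-1\in Z(\SL_2(p^{2n}))$ acts as $(-1)\otimes(-1)=1$, so $\PSL_2(p^{2n})$ acts and, being simple, acts faithfully, whence $C_G(V)=Z(G)$. For (v) a diagonal torus element scales both $C_V(S)$ and $V/[V,S]$ by the norm $\lambda\mapsto\lambda^{1+p^n}$, which is surjective onto $\GF(p^n)^\times$, so $N_G(S)$ acts transitively on the nonzero vectors of each of these $1$-dimensional $\GF(p^n)$-spaces, giving irreducibility over $\GF(p)$.

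For the ``Moreover'' part the key simplification is that $G$ acts as a group of isometries of the orthogonal space $V$, so that $C_V(F)=[V,F]^\perp$ for every $F\le S$; it therefore suffices to determine $[V,F]$, and $C_V(F)$ is then forced by taking perpendiculars. Identifying $F$ with a $\GF(p)$-subspace $T$ of $\GF(p^{2n})$, the computation of $[M,s]$ gives $[M,F]=\langle C_W(S)\otimes C_{W^\tau}(S)\rangle+\sum_{t\in T}\langle\,t\,(e_1\otimes f_2)+t^{p^n}(e_2\otimes f_1)\,\rangle$, so that $\dim_{\GF(p^{2n})}[M,F]\in\{2,3\}$ for $F\ne\{1\}$, the value being $3$ unless the vectors $(t,t^{p^n})$, $t\in T$, are all proportional. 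That proportionality holds exactly when $t_1/t_2\in\GF(p^n)^\times$ for all nonzero $t_i\in T$, i.e. when $T$ lies in a single $\GF(p^n)$-line; this is the precise boundary between case (a) and cases (b)/(c), and it immediately forces $|F|=|T|\le p^n$ in the latter. It remains only to separate (b) from (c) in the $2$-dimensional regime, where $[V,F]$ and its perpendicular $C_V(F)$ are both $2$-dimensional and coincidence is governed by the char-$2$ phenomenon isolated above: for $p$ odd the symmetric form is nondegenerate, so these are distinct $2$-spaces meeting in $C_V(S)=C_{[V,F]}(F)$ with $[V,S]=[V,F]C_V(F)$, which is (c); for $p=2$ the associated bilinear form is symplectic and $[V,F]$ is a Lagrangian (equivalently $F$ is quadratic, as $N^2=0$), so $[V,F]=[V,F]^\perp=C_V(F)$, which is (b).

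I expect the main obstacle to be the bookkeeping in the first paragraph: keeping the passage between the $\GF(p^{2n})$-model $M$, where the $\SL_2$-computations are transparent, and the $\GF(p^n)$-rational form $V$, where the orders actually live, completely honest, and in particular verifying that the relevant subspaces are $\sigma$-stable so that dimensions transfer. The only genuinely delicate conceptual point beyond this is the char-$2$ case of the ``Moreover'' part, where one must work with the symplectic rather than the quadratic form to see that a $2$-dimensional $[V,F]$ can be self-perpendicular; everything else is linear algebra driven by the single relation $N^2=2\,A\otimes B$.
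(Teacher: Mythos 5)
The paper offers no argument of its own for \cref{Omega4}; it simply cites the two external references (Lemma 4.8 of the first and Lemma 3.15 of the second), so your Galois-descent computation is necessarily a different route, and in substance it is a sound, self-contained reconstruction of what those references supply. The identification $V\otimes_{\GF(p^n)}\GF(p^{2n})\cong M=W\otimes W^\tau$, with $\dim_{\GF(p^n)}(U\cap V)=\dim_{\GF(p^{2n})}U$ for $\sigma$-stable subspaces $U$, is legitimate, and since $\sigma$ is $G$-equivariant the subspaces $[M,X]$ and $C_M(X)$ are indeed $\sigma$-stable, so all order claims transfer. The relation $N^2=2\,A\otimes B$, $N^3=0$ does govern everything, and your treatment of the ``Moreover'' trichotomy --- $\dim_{\GF(p^{2n})}[M,F]=3$ unless the points $(t:t^{p^n})$, $t\in T$, coincide, i.e.\ unless $T$ lies in a single $\GF(p^n)$-line, which forces $|F|\leq p^n$ --- is exactly right, as is the duality $C_V(F)=[V,F]^\perp$ once the invariant form is in hand (it can be built directly as the product of the symplectic forms on $W$ and $W^\tau$, so nothing beyond your model is needed). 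One small gap in (ii): $N^3=0$ for a single $s$ does not by itself give $[V,S,S,S]=\{1\}$, which involves mixed triples $[V,s_1,s_2,s_3]$; you need $N_vN_uN_t=0$ for independent parameters, but this follows from the same identities $A_uA_t=0=B_uB_t$, which give $N_uN_t=A_u\otimes B_t+A_t\otimes B_u$ with image in $C_M(S)$, killed by any further $N_v$.

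The one substantive issue is (iv). As stated in the lemma, $[V,S]=C_V(s)\times[V,s]$ cannot hold for any $p$: by (iii) the left-hand side has order $p^{3n}$, while a direct product of two groups of order $p^{2n}$ has order $p^{4n}$; your own computation exhibits $e_1\otimes f_1$ spanning $C_M(s)\cap[M,s]=C_M(S)$, so for $p$ odd the correct (and surely intended) statement is $[V,S]=C_V(s)[V,s]$ with $C_V(s)\cap[V,s]=C_V(S)$, which is what you prove --- your argument thus silently corrects a typo in the statement, and that is fine. However, your unqualified assertion of this product formula is itself false at $p=2$: there your identity $N^2=0$ gives $\ker N_t=\mathrm{im}\,N_t$, so $C_V(s)=[V,s]$ has order $p^{2n}<p^{3n}=|[V,S]|$ and the product relation fails. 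So (iv) needs exactly the parity split you correctly make later in cases (b) and (c): the (corrected) decomposition of $[V,S]$ for $p$ odd, and $C_V(s)=[V,s]$ for $p=2$, consistent with case (b). With that case distinction inserted, the proof is complete, and it has the virtue over the paper's citation of making the internal inconsistency of statement (iv) visible.
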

\begin{proof}
See \cite[Lemma 4.8]{parkerBN} and \cite[Lemma 3.15]{parkerSymp}.
\end{proof}

In addition to the structural information regarding natural $\SL_2(p^n)$-modules and natural $\Omega_4^-(p^n)$-modules, we also require methods to recognize these actions in local settings.

\begin{lemma}\label{DirectSum}
Let $G\cong\SL_2(p^n)$ and $S\in\syl_p(G)$. Suppose that $V$ is a module for $G$ over $\GF(p)$ such that $[V,S,S]=\{1\}$ and $[V, O^p(G)]\ne\{1\}$. Then $[V/C_V(O^p(G)), O^p(G)]$ is a direct sum of natural modules for $G$.
\end{lemma}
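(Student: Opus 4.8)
The plan is to work with $H := O^p(G)$, the quotient $\overline V := V/C_V(H)$, and the section $W := [\overline V, H]$; since every subquotient of $V$ inherits the quadratic action $[\,\cdot\,,S,S]=\{1\}$, all three of these are quadratic $S$-modules. For $p^n>3$ we have $H=G=\langle S, S^g\rangle$ for $g\in G\setminus N_G(S)$, while the degenerate cases $G\cong\SL_2(2)$ and $G\cong\SL_2(3)$ are small enough to treat by hand. I would prove the lemma in two stages: first that every composition factor of $W$ is a natural module, and then that $W$ is in fact a semisimple module, so that it decomposes as a direct sum of such.

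For the first stage I would show that a nontrivial irreducible $\GF(p)G$-module $U$ on which $S$ acts quadratically is a natural module. Extending scalars to an algebraic closure $k$ of $\GF(p)$, Steinberg's tensor product theorem writes each absolutely irreducible constituent of $U\otimes k$ as a tensor product $\bigotimes_i L(a_i)^{[i]}$ of Frobenius twists of the irreducibles $L(a)$, $0\le a\le p-1$, for $\SL_2$. A single unipotent element acts on $L(a)$ as one Jordan block of size $a+1$, so quadraticity forces every $a_i\le 1$; and if two twists $L(1)^{[i]}$, $L(1)^{[j]}$ with $i\ne j$ occurred in the same tensor product, then for $s,t\in S$ the cross term in $(s-1)(t-1)$ survives---even when $p=2$, where it equals a nonzero multiple of $E\otimes E$ for suitable $s\ne t$---contradicting $[U,S,S]=\{1\}$. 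Hence each constituent is a single twist $L(1)^{[i]}$, and the Galois orbit of such a twist is precisely the absolutely irreducible content of the natural $\GF(p)$-module; thus $U$ is natural. Applying this to the composition factors of $W$ (a trivial factor cannot occur in the head of $[\overline V,H]$, since any map to a trivial module kills commutators, and, once semisimplicity is known, then nowhere) shows every composition factor of $W$ is natural.

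The crux is the second stage: upgrading ``natural composition factors'' to ``direct sum of natural modules'', that is, showing $W$ has no non-split self-extensions. Here I would exploit the split torus $T\le N_G(S)$, which has order $p^n-1$ and so acts coprimely, to grade $\overline V\otimes\GF(p^n)$ into $T$-weight spaces. The root group $S$ shifts weights by the root (raising by $2$ in the usual normalisation) and the opposite root group $S^g$ shifts by its negative, and the quadratic hypothesis---now available for both $S$ and $S^g$---forces every weight string on which $H=\langle S,S^g\rangle$ acts non-trivially to have length exactly two, namely $\{-1,+1\}$. Each such string spans a natural submodule, the trivial weight-$0$ contributions are absorbed into $C_V(H)$, and the $T$-grading together with the quadratic rigidity of the raising and lowering maps provides $T$-invariant complements, yielding the direct sum decomposition. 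The main obstacle is precisely this complete reducibility step: it is effectively the vanishing of $\mathrm{Ext}^1$ among natural (and trivial) modules in the quadratic category, which can fail cohomologically in general but is rescued by the global hypothesis $[V,S,S]=\{1\}$ through the interplay of the coprime $T$-action and the $S$--$S^g$ symmetry; a handful of small $(p,n)$ where ordinary cohomology contributes would be checked separately.
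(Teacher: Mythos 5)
Your first stage is essentially correct: Steinberg's tensor product theorem plus the single-Jordan-block action of a unipotent element on $L(a)$ does force every absolutely irreducible quadratic constituent to be a single twist $L(1)^{[i]}$; your cross-term computation correctly eliminates $L(1)^{[i]}\otimes L(1)^{[j]}$ (including the $p=2$ case, where two distinct elements of $S$ are available precisely because $q>2$ whenever two distinct twists exist); and Galois descent then identifies the irreducible quadratic $\GF(p)G$-modules as natural. For comparison, note that the paper proves nothing here: it quotes the lemma from \cite[Lemma 2.2]{ChermakQuad}, so you are attempting a self-contained proof of the cited result, and stage one is the standard identification.

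The gap is exactly where you locate it, in stage two, and the sketch does not close it. Two concrete problems. First, the weight formalism is not as you describe: $T$ has order $q-1$, so weights live in $\Z/(q-1)\Z$ rather than $\Z$, and $x(t)-1$ is not a single raising operator. What quadraticity actually gives is that $\rho(t):=x(t)-1$ is \emph{additive} in $t$ (from $\rho(t)\rho(u)=0$ one gets $\rho(t+u)=\rho(t)+\rho(u)$), so over $\GF(q)$ it decomposes as $\rho(t)=\sum_{i<n}t^{p^i}\rho_i$ into $n$ Frobenius components, with $\rho_i$ shifting $T$-weight by $2p^i$ and $\rho_i\rho_j=0$ for all $i,j$. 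Consequently your picture of strings $\{-1,+1\}$ is literally false for $n>1$: a natural module has weight pairs $\{\pm p^i\}$, several distinct raising maps act simultaneously, and weights can collide modulo $q-1$ (for example $p^0\equiv-p^1 \bmod 3$ when $q=4$), so the "length exactly two" claim must be formulated and proved per component, and the cyclic-grading degeneracies are a structural feature of the argument, not a finite list of small $(p,n)$ to be checked separately. Second, and more seriously, nothing in the sketch converts $T$-invariant complements into $G$-invariant ones, which is the entire content of the semisimplicity step. The dangerous configurations, such as a quadratic uniserial module with layers natural/trivial/natural, have $C_V(O^p(G))=\{1\}$ yet nonzero weight-$0$ space, so your step "the trivial weight-$0$ contributions are absorbed into $C_V(H)$" assumes exactly what must be shown; and the vanishing of the relevant extensions is asserted via "quadratic rigidity" rather than derived from the relations $\rho_i\rho_j=0$, their analogues for the opposite root group, and the torus/Weyl-element relations tying the two together. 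Until these two points are carried out, you have correctly identified the composition factors and written a plausible plan, but not proved the direct-sum conclusion, which is the heart of the lemma.
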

\begin{proof}
See \cite[Lemma 2.2]{ChermakQuad}.
\end{proof}

\begin{lemma}\label{SL2ModRecog}
Let $G$ be a $p'$-central extension of $\PSL_2(p^n)$, $S\in\syl_p(G)$ and $V$ a faithful irreducible $\GF(p)$-module. If $|V|=p^{2n}$ then either
\begin{enumerate}
\item $V$ is a natural $\SL_2(p^n)$-module for $G\cong \SL_2(p^n)$; or
\item $V$ is a natural $\Omega_4^-(p^{n/2})$-module, $n$ is even, $S$ does not act quadratically on $V$ and $Z(G)$ acts trivially on $V$.
\end{enumerate}
\end{lemma}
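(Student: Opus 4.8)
The plan is to reduce everything to the representation theory of $\SL_2(p^n)$ in defining characteristic. First I would dispose of the central extension. Since $Z(G)$ is a central $p'$-group and $V$ is irreducible, $Z(G)$ acts as scalars on each absolutely irreducible constituent of $V\otimes_{\GF(p)}\overline{\GF(p)}$, and all Frobenius twists of a given scalar character share the same kernel (raising to a $p$-power is injective on $p'$-roots of unity); hence faithfulness of $V$ forces $Z(G)$ to embed into $\overline{\GF(p)}^\times$. As the $p'$-part of the Schur multiplier of $\PSL_2(p^n)$ is cyclic of order $\gcd(2,p^n-1)$, the group $G$ is a quotient of $\SL_2(p^n)$ by a central $p'$-subgroup, so I may inflate and regard $V$ as an irreducible $\GF(p)\SL_2(p^n)$-module with $\dim_{\GF(p)}V=2n$, recording the scalar by which $Z(\SL_2(p^n))=\langle -1\rangle$ acts.

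Next I would classify such modules. Extending scalars, $V\otimes_{\GF(p)}\overline{\GF(p)}\cong\bigoplus_{j=0}^{s-1}W^{\sigma^j}$ for an absolutely irreducible $W$ and $\sigma$ the Frobenius $x\mapsto x^p$, where $s$ is the degree of the field of definition of $W$ and $\dim_{\GF(p)}V=s\cdot\dim_{\overline{\GF(p)}}W$. By Steinberg's tensor product theorem $W\cong\bigotimes_{j=0}^{n-1}L(a_j)^{(p^j)}$ with $0\le a_j\le p-1$, where $L(r)$ is the $(r+1)$-dimensional irreducible; thus $\dim W=\prod_j(a_j+1)$, and $s$ equals the period of the sequence $(a_0,\dots,a_{n-1})$ under the cyclic shift induced by $\sigma$. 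Writing $n=st$ and letting $P=\prod(a_j+1)$ over one period block, the constraint $\dim_{\GF(p)}V=2n$ becomes $s\cdot P^{t}=2st$, i.e. $P^{t}=2t$. Since $W$ is nontrivial, $P\ge 2$, whence $2^{t}\le P^{t}=2t$ forces $t\le 2$, and the only solutions are $(P,t)=(2,1)$ and $(P,t)=(2,2)$.

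Finally I would identify the two outcomes. In the case $(P,t)=(2,1)$ the period is $n$ and $W\cong L(1)^{(p^{i})}$ for a single $i$, so $V$ is a natural $\SL_2(p^n)$-module in the sense of \cref{sl2p-mod}; here $-1$ acts as $(-1)^{1}=-1$, so $Z(G)$ is faithful and $G\cong\SL_2(p^n)$, giving conclusion (i). In the case $(P,t)=(2,2)$ the period is $n/2$, so $n$ is even and $W\cong L(1)^{(p^{i})}\otimes L(1)^{(p^{i+n/2})}$; since the Frobenius twist by $p^{n/2}$ is exactly the twist by the involution $\tau$ of $\GF(p^n)$, this $W$ is $V_0\otimes V_0^{\tau}$ for the natural module $V_0$, so $V$ is by definition a natural $\Omega_4^-(p^{n/2})$-module. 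Here $-1$ acts as $(-1)^{2}=+1$, so $Z(G)$ acts trivially on $V$, and a direct computation on $V_0\otimes V_0^{\tau}$ (or \cref{Omega4}) shows $[V,S,S]\ne\{1\}$, so $S$ is not quadratic; this is conclusion (ii).

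The hard part will be the descent bookkeeping in the middle paragraph — correctly pinning the field-of-definition degree $s$ to the cyclic period of $(a_j)$ and thereby reducing the dimension constraint to the clean equation $P^{t}=2t$ — together with matching the non-quadratic constituent to the specific $V\otimes_k V^\tau$ model in the definition of a natural $\Omega_4^-$-module. As a cross-check for the quadratic branch, one may instead observe that if $[V,S,S]=\{1\}$ then, since $G=O^p(G)$ and $V$ is faithful and irreducible, \cref{DirectSum} forces $V$ to be a direct sum of natural $\SL_2(p^n)$-modules, and the count $\dim_{\GF(p)}V=2n$ leaves exactly one summand; this recovers conclusion (i) without invoking the full classification of irreducibles.
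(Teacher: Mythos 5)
Your proof is necessarily a different route from the paper's, because the paper supplies no argument at all for \cref{SL2ModRecog}: its proof is the bare citation to Chermak. Your Steinberg-plus-Galois-descent argument makes the lemma self-contained, and its technical heart checks out. Over a finite field the Schur index is trivial, so $V\otimes_{\GF(p)}\overline{\GF(p)}$ really is the multiplicity-free sum of the $s$ Frobenius twists of an absolutely irreducible $W$, with $\GF(p^s)$ the field of definition and $s$ the exact period of the Steinberg digit string; the dimension count reduces correctly to $P^t=2t$, whose only solutions are $(P,t)=(2,1)$ and $(2,2)$. In the first case $W$ is a Frobenius twist of $L(1)$, and the twist is harmless because the paper's definition of a natural $\SL_2(p^n)$-module admits \emph{any} irreducible $2$-dimensional $\GF(p^n)$-module. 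In the second, restriction of scalars depends only on the Galois orbit of $W$, which is the orbit of $L(1)\otimes L(1)^{(p^{n/2})}$, i.e.\ of $V_0\otimes V_0^\tau$, so $V$ is indeed the natural $\Omega_4^-(p^{n/2})$-module; non-quadraticity follows as you say from \cref{Omega4}, since there $|[V,S]|=p^{3n/2}>p^{n/2}=|C_V(S)|$, whence $[V,S]\not\le C_V(S)$. Your closing cross-check via \cref{DirectSum} is also valid, and is in the spirit of how the paper pairs this lemma with quadratic action in the $\PSU_4$ analysis.

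The one step that does not hold as written is the reduction in your first paragraph. Cyclicity of $Z(G)$ --- which you correctly extract from faithfulness --- together with the Schur multiplier of $\PSL_2(p^n)$ does not yet make $G$ a quotient of $\SL_2(p^n)$: nothing so far forces $G$ to be perfect. Concretely, take $p^n=5$ and $G=Z(\GL_2(5))\,\SL_2(5)\le\GL_2(5)$, the central product of $\SL_2(5)$ with a cyclic group of order $4$; this $G$ is a $5'$-central extension of $\PSL_2(5)$ with cyclic centre, and $V=\GF(5)^2$ (scalars acting as scalars) is a faithful irreducible module of order $p^{2n}$, yet $|G|=240$ and $G$ is not a quotient of $\SL_2(5)$ --- indeed conclusion (i) itself fails for this $G$. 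So the flaw is as much a looseness in the lemma's phrasing as a defect of your proof: the statement is only true under the standard reading that the central extension is perfect, equivalently (for $p^n\geq 4$, where $G/G'$ is a $p'$-group) that $G=O^{p'}(G)$, a condition satisfied in every application in the paper, where the lemma is invoked for $O^{p'}(\Out_{\fs}(Q_2))$. With perfectness added your sentence becomes correct --- $G$ is then a quotient of the universal $p'$-covering group $\SL_2(p^n)$, the exceptional multiplier parts at $p^n\in\{4,9\}$ being $p$-groups --- and the remainder of your argument goes through unchanged; the degenerate values $p^n\in\{2,3\}$, where $\PSL_2(p^n)$ is not perfect, are settled directly by the embedding $G\hookrightarrow\GL_2(p)$ that a faithful irreducible module of order $p^2$ provides. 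State the perfectness hypothesis explicitly and the proof is complete.
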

\begin{proof}
See \cite[Lemma 2.6]{ChermakJ}.
\end{proof}
 
\section[Sylow \texorpdfstring{$p$}{p}-subgroups of \texorpdfstring{$\mathrm{G}_2(p^n)$}{G2(pn)} and \texorpdfstring{$\PSU_4(p^n)$}{PSU4(pn)}]{Sylow $p$-subgroups of $\mathrm{G}_2(p^n)$ and $\PSU_4(p^n)$}\label{G2Sylow}

In this section, we construct Sylow $p$-subgroups of $\mathrm{G}_2(p^n)$ and $\PSU_4(p^n)$ and describe some of their basic properties. We refer to \cite{Carter} for constructions and properties of $\mathrm{G}_2(p^n)$ and $\PSU_4(p^n)$, as well as generic properties and terminology regarding the simple groups of Lie type. Throughout, we set $q=p^n$.

We present the root system of type $\mathrm{G}_2$ below. We follow the choices of roots as in \cite[443]{ree} and depict a slightly altered root system than what is given in that paper \cite[Figure 1]{ree}.

\begin{center}
\begin{tikzpicture}
    \foreach\ang in {60,120,...,360}{
     \draw[->,thick] (0,0) -- (\ang:2cm);
    }
    \foreach\ang in {30,90,...,330}{
     \draw[->,thick] (0,0) -- (\ang:3cm);
    }
    
    \node[anchor = center,scale=0.8] at (0,3.3) {$\beta$};
    \node[anchor = center,scale=0.8] at (1.2,2.2) {$\alpha+\beta$};
    \node[anchor = center,scale=0.8] at (3.3,1.5) {$3\alpha+2\beta$};
    \node[anchor = center,scale=0.8] at (2.8,0) {$2\alpha+\beta$};
    \node[anchor = center,scale=0.8] at (3.3,-1.5) {$3\alpha+\beta$};
    \node[anchor = center,scale=0.8] at (1.2,-2.2) {$\alpha$};

    \node[anchor = center,scale=0.8] at (0,-3.3) {-$\beta$};
    \node[anchor = center,scale=0.8] at (-1.2,-2.2) {$-(\alpha+\beta)$};
    \node[anchor = center,scale=0.8] at (-3.5,-1.5) {$-(3\alpha+2\beta)$};
    \node[anchor = center,scale=0.8] at (-3.0,0) {$-(2\alpha+\beta)$};
    \node[anchor = center,scale=0.8] at (-3.5,1.5) {$-(3\alpha+\beta)$};
    \node[anchor = center,scale=0.8] at (-1.2, 2.2) {$-\alpha$};
\end{tikzpicture}
\end{center}

In this way, we can arrange that our six positive roots are \[\Phi^+=\{\alpha, \beta, \alpha+\beta, 2\alpha+\beta, 3\alpha+\beta, 3\alpha+2\beta\}.\] For $\epsilon\in\Phi^+$ we set $X_\epsilon:=\langle x_\epsilon(t) \mid t\in \mathbb{K}\rangle$, where $\mathbb{K}$ is a field of order $q$. Thus, we have that \[S=\langle X_{\alpha},  X_\beta,  X_{3\alpha+\beta}, X_{\alpha+\beta}, X_{2\alpha+\beta}, X_{3\alpha+2\beta}\rangle\in\syl_p(\mathrm{G}_2(q))\] is of order $q^6$.

Using results from \cite[(3.10)]{ree}, we have the following Chevalley commutator formulas for the root subgroups:
\begin{align*}
[x_\alpha(t), x_\beta(u)]&=x_{\alpha+\beta}(-tu)x_{2\alpha+\beta}(-t^2u)x_{3\alpha+\beta}(t^3u)x_{3\alpha+2\beta}(-2t^3u^2)\\
[x_\alpha(t), x_{\alpha+\beta}(u)]&=x_{2\alpha+\beta}(-2tu)x_{3\alpha+\beta}(3t^2u)x_{3\alpha+2\beta}(3tu^2)\\
[x_\alpha(t), x_{2\alpha+\beta}(u)]&=x_{3\alpha+\beta}(3tu)\\
[x_\beta(t), x_{3\alpha+\beta}(u)]&=x_{3\alpha+2\beta}(tu)\\
[x_{\alpha+\beta}(t), x_{2\alpha+\beta}(u)]&=x_{3\alpha+2\beta}(3tu).
\end{align*}

We remark that the coefficients in the commutator formulas showcase obvious degeneracies when $p\in\{2,3\}$. This is one of the reasons we treat these cases separately.

\begin{lemma}\label{G2Exponent}
Suppose that $S$ is isomorphic to a Sylow $p$-subgroup of $\mathrm{G}_2(q)$. Then the following holds:
\begin{enumerate}
\item if $p=2$, then $S$ has exponent $8$;
\item if $p\in\{3,5\}$, then $S$ has exponent $p^2$; and
\item if $p\geq 7$, then $S$ has exponent $p$. 
\end{enumerate}
\end{lemma}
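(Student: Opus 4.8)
The goal is to determine the exponent of $S\in\syl_p(\mathrm{G}_2(q))$, where $S=\langle X_\alpha, X_\beta, X_{3\alpha+\beta}, X_{\alpha+\beta}, X_{2\alpha+\beta}, X_{3\alpha+2\beta}\rangle$ is generated by root subgroups, each $X_\epsilon\cong(\mathbb{K},+)$ of exponent $p$. The strategy is to compute the order of a product $x_{\epsilon_1}(t_1)\cdots x_{\epsilon_6}(t_6)$ raised to the $p$th power using the commutator formulas, and thereby bound the exponent from above; matching lower bounds then come from exhibiting a single element of the stated order. The degeneracies in the Chevalley commutator formulas for $p\in\{2,3\}$ are precisely what force the three separate cases.

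**Main computation.**
First I would record the lower central structure: from the commutator relations, $Z_1:=X_{3\alpha+2\beta}$ is central, and setting $Z_2:=X_{2\alpha+\beta}X_{3\alpha+\beta}X_{3\alpha+2\beta}$ one checks $[S,S]\le Z_2$, with the chain refining to a normal series whose factors are the $X_\epsilon$ (in height order). Since each root subgroup has exponent $p$, an element lying in a high enough term of the lower central series automatically has order $p$. The heart of the matter is the general power formula: for $g=\prod x_{\epsilon_i}(t_i)$, expanding $g^p$ by repeatedly collecting commutators (a Hall--Petrescu style expansion) produces correction terms in the deeper root subgroups whose coefficients are the numerical constants $-1,-2,3,3,3,1,\dots$ appearing above, multiplied by binomial-type factors $\binom{p}{2},\binom{p}{3}$. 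For $p\ge 7$ all these constants are units in $\mathbb{K}$ and the binomial corrections vanish modulo $p$, so $g^p=1$ identically and $S$ has exponent $p$; the element $x_\alpha(1)$ shows the exponent is exactly $p$. For $p=5$ and $p=3$ the cubic correction term (coefficient involving $t^3u$ and $\binom{p}{3}$, or the factor $3$) survives and pushes some $g^p$ into a nontrivial root subgroup, giving an element of order $p^2$; one then checks $g^{p^2}=1$ and exhibits an explicit element of order $p^2$. For $p=2$ the coefficients $-2,3t^2u,\dots$ degenerate further and the $2$-power map chains through three successive root levels, yielding exponent $8$, again confirmed by a single element of order $8$.

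**Where the difficulty lies.**
The genuine obstacle is bookkeeping the power map $g\mapsto g^p$ across the non-abelian structure: one must track how the commutator corrections accumulate as the $p$-fold product is collected, and verify that for each prime the coefficients $\binom{p}{2},\binom{p}{3}$ and the structure constants $2,3$ conspire to either vanish or land one level deeper in the descending series. A clean way to organize this is to filter $S$ by root height and argue level by level: show that $g^p$ always lies in the next filtration term, and that $p$th powers of elements in that term vanish. I expect the $p=2$ case to be the most delicate, since the maximal degeneration of the structure constants means the power map interacts with \emph{all} the surviving quadratic terms simultaneously; the odd cases are comparatively transparent once the cubic term's coefficient $\binom{p}{3}$ is seen to be the sole obstruction to exponent $p$.
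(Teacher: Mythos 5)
Your route is genuinely different from the paper's, and its skeleton is viable. The paper obtains the upper bounds with no commutator collection at all: $\mathrm{G}_2(q)$ has a faithful $7$-dimensional module over $\GF(q)$ for $p$ odd ($6$-dimensional for $p=2$), so $S$ embeds in the unipotent upper triangular subgroup of $\GL_r(q)$, where $u=1+N$ with $N^r=0$ forces $u^{p^a}=1$ for the least $a$ with $p^a>r-1$; this gives at once exponent dividing $p$, $9$, $25$, $8$ in the four cases, and the matching lower bounds are settled by a direct computation of the exponent of a Sylow $p$-subgroup of $\mathrm{G}_2(p)$ for $p\in\{2,3,5\}$. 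Your internal approach buys finer information (for instance that $g^p\in S'$ for every $g\in S$, and where in the root filtration $p$-th powers land), and organized via regularity it does yield the upper bounds cleanly: $S$ has nilpotency class $5$, so for $p\ge 7$ it is regular and, being generated by root elements of order $p$, has exponent $p$; for $p=5$ one combines $g^p\in S'$ with the fact that $S'$ has class $2<5$, hence is regular of exponent $5$; for $p=3$, $S'$ is abelian of exponent $3$; and for $p=2$, $h^2\in (S')'\le X_{3\alpha+2\beta}$ for $h\in S'$, whence $g^8=1$.

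Two points, however, need repair before your sketch is a proof. First, for $p\in\{2,3,5\}$ you never exhibit the promised elements of order $8$, $9$, $25$; this is the nontrivial half of the lemma, amounting in your framework to verifying that a specific accumulated coefficient is nonzero (e.g.\ that $(x_\alpha(1)x_\beta(1))^p\neq 1$), a genuine computation which the paper discharges by direct calculation in $\mathrm{G}_2(p)$ and which transfers to all $q=p^n$ because the root elements over the prime field generate a copy of a Sylow $p$-subgroup of $\mathrm{G}_2(p)$ inside $S$. Second, your diagnosis of which terms survive is off. In the Hall--Petrescu expansion $(xy)^p=x^py^pc_2^{\binom{p}{2}}\cdots c_{p-1}^{\binom{p}{p-1}}c_p$ with $c_i\in\gamma_i(\langle x,y\rangle)$, every term $c_i^{\binom{p}{i}}$ with $2\le i\le p-1$ vanishes, since $p$ divides $\binom{p}{i}$ and $\gamma_i(S)\le S'$ has exponent $p$; in particular the $\binom{p}{3}$ cubic term you blame for $p=5$ dies. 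What survives is the unweighted top term $c_p\in\gamma_p(S)$, which can be nontrivial precisely because the class of $S$ is at least $p$ when $p\le 5$ (class $5$, $3$, $4$ for $p=5,3,2$ respectively). So the case division is governed by comparing the nilpotency class with $p$, not by the individual structure constants $2$ and $3$, which enter only the lower-bound computation; and with the filtration set up this way the $p=2$ upper bound is the easiest case, not the most delicate. Relatedly, your opening bookkeeping already slips: $[S,S]$ is not contained in $X_{2\alpha+\beta}X_{3\alpha+\beta}X_{3\alpha+2\beta}$, since $[x_\alpha(t),x_\beta(u)]$ has the nontrivial component $x_{\alpha+\beta}(-tu)$; the derived subgroup is the full subgroup $X_{\alpha+\beta}X_{2\alpha+\beta}X_{3\alpha+\beta}X_{3\alpha+2\beta}$ spanned by roots of height at least two.
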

\begin{proof}
Since $\mathrm{G}_2(q)$ has a $7$ dimensional representation over $\GF(q)$ when $p$ is odd, and $\mathrm{G}_2(q)$ has a $6$ dimensional representation over $\GF(q)$ when $p=2$, we can find an upper bound for the exponent of $S$ by calculating the exponent of a Sylow $p$-subgroup of $\GL_r(q)$, where $r=7$ when $p$ is odd and $r=6$ if $p=2$. But a Sylow $p$-subgroup of $\GL_r(q)$ has exponent $p^a$ with $a$ minimal such that $p^a>r-1$. Thus, $S$ has exponent $p$ when $p\geq 7$ and the exponent of $S$ is bounded above by $p^2$ or $8$ when $p\in\{3,5\}$ or $p=2$ respectively. One can compute directly that a Sylow $p$-subgroup of $\mathrm{G}_2(p)$ has exponent $8$, $9$ or $25$ when $p=2$,$3$ or $5$ respectively, and so the result follows.
\end{proof}

We now proceed with the construction of a Sylow $p$-subgroup $S$ of $\PSU_4(q)\cong {}^2\mathrm{A}_3(q^2)$. Let $\Phi^+=\{a, b, c, a+b, a+c, b+c, a+b+c\}$ be a choice of positive roots for the root system $\mathrm{A}_3$. In particular, under the symmetry of $\mathrm{A}_3$, we may partition the positive roots into equivalence classes $\{a, c\}$, $\{b\}$, $\{a+b, b+c\}$ and $\{a+b+c\}$. Following \cite[Theorem 2.4.1]{GLS3} and setting $\hat{\mathbb{K}}$ to be a finite field of order $q^2$, and $\mathbb{K}$ the subfield of order $q$, we may choose a set of fundamental roots $\{\alpha, \beta\}$ for ${}^2\mathrm{A}_3(q^2)$ as
\begin{align*}
x_{\alpha}(t)&=x_a(t)x_c(t^q), \\
x_{\beta}(u)&=x_b(u),
\end{align*}
where $t,u\in\hat{\mathbb{K}}$ and $u=u^q\in\mathbb{K}$. We then retrieve a full set of positive roots and root subgroups for ${}^2\mathrm{A}_3(q^2)$:
\begingroup
\allowdisplaybreaks
\begin{align*}
x_{\alpha}(t)&=x_a(t)x_c(t^q), \\
x_{\beta}(u)&=x_b(u),\\
x_{\alpha+\beta}(t)&=x_{a+b}(t)x_{b+c}(t^q), \\
x_{2\alpha+\beta}(u)&=x_{a+b+c}(u)
\end{align*}
\endgroup
where $t,u\in\hat{\mathbb{K}}$ and $u=u^q\in\mathbb{K}$. Hence, we infer that 
\[|X_{\alpha}|=q^2,\,\,
|X_{\beta}|=q,\,\,
|X_{\alpha+\beta}|=q^2,\,\,
|X_{2\alpha+\beta}|=q\] and $S=\langle X_\alpha, X_\beta, X_{\alpha+\beta}, X_{2\alpha+\beta}\rangle$ is of order $q^6$. 

We reproduce the Chevalley commutator formulas for ${}^2\mathrm{A}_3(q^2)$ and as, before, set $\mathbb{K}$ to be a field of order $q$. For more details, see \cite[Theorem 2.4.5]{GLS3}.
\begin{align*}
[x_\alpha(t), x_\beta(u)]&=x_{\alpha+\beta}(\epsilon tu)x_{2\alpha+\beta}(\epsilon' N(t)u)\\
[x_\alpha(t), x_{\alpha+\beta}(u)]&=x_{2\alpha+\beta}(\epsilon'' Tr(tu))
\end{align*}
where $t,u\in\hat{\mathbb{K}}$ and $u=u^q$, and $Tr$ and $N$ denote the field trace and norm from $\hat{\mathbb{K}}$ down to $\mathbb{K}$. Moreover, $\epsilon, \epsilon', \epsilon''\in\{1,-1\}$ depend only on the roots in the commutators they are involved in. It then follows that
\begin{align*}
J(S)&=X_{\beta}X_{\alpha+\beta}X_{2\alpha+\beta},\\
S'&=X_{\alpha+\beta}X_{2\alpha+\beta},\\
Z(S)&=X_{2\alpha+\beta}.
\end{align*}
For the purposes here, the exact values of $\epsilon, \epsilon'$ and $\epsilon''$ are not important and all we require is that commutators with single elements generate entire $\mathrm{GF}(q)$ spaces of root subgroups e.g. $[x_{\alpha}(t), S']=Z(S)$ and $|[x_{\alpha}(t), J(S)]|=q^2$ for all $t\ne 0$.

In the analysis of $S\in\syl_p(\PSU_4(q))$, it will often be more useful to work with local subgroups of $\PSU_4(q)$, recognizing the internal modules within these local subgroups and obtaining information about $S$ from its embedding in these groups. In this way, we work with the elements as matrices explicitly, recognizing the isomorphism ${}^2\mathrm{A}_3(q^2)\cong\PSU_4(q)\le \PSL_4(q^2)$. However, for some arguments, we still reference the commutator formulas.

\begin{lemma}\label{PSUExponent}
Suppose that $S$ is isomorphic to a Sylow $p$-subgroup of $\PSU_4(q)$. Then the following holds:
\begin{enumerate}
\item if $p=2$, then $S$ has exponent $4$;
\item if $p=3$, then $S$ has exponent $9$; and
\item if $p\geq 5$, then $S$ has exponent $p$. 
\end{enumerate}
\end{lemma}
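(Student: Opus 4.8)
The plan is to mirror the strategy used in \cref{G2Exponent}, exploiting a faithful matrix representation to bound the exponent from above and then pinning down the exact value by a direct computation in the minimal case $n=1$. Since $\PSU_4(q)\cong {}^2\mathrm{A}_3(q^2)\le\PSL_4(q^2)$, the group $S$ embeds (up to the central quotient) in a Sylow $p$-subgroup of $\GL_4(q^2)$, which consists of unipotent upper-triangular matrices. A unipotent element $x$ of $\GL_r(\cdot)$ over a field of characteristic $p$ satisfies $(x-1)^r=0$, so $x^{p^a}=1$ whenever $p^a\geq r$; here $r=4$. Thus every element of the relevant Sylow subgroup of $\GL_4(q^2)$ has order dividing $p^a$ with $a$ minimal subject to $p^a\geq 4$, giving exponent $p$ when $p\geq 5$ and exponent dividing $4$ (for $p=2$) or $9$ (for $p=3$).

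This already yields the full statement in case (iii): for $p\geq 5$ we have $p^1\geq 5>4$, so $S$ has exponent exactly $p$ (it cannot be $1$ since $S$ is a nontrivial $p$-group). For the two small primes the matrix bound only gives exponent dividing $4$ and $9$ respectively, so I would next argue the lower bound, i.e.\ that $S$ genuinely contains an element of order $4$ (resp.\ $9$). The cleanest route is to observe that the exponent of $S$ does not depend on $n$: a Sylow $p$-subgroup of $\PSU_4(q)$ for $q=p^n$ contains, as a subgroup, a copy of a Sylow $p$-subgroup of $\PSU_4(p)$ (realized over the prime subfield $\GF(p)\le\mathbb{K}$ via the root-subgroup parametrization above, restricting the field parameters $t,u$ to the prime field). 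Hence it suffices to exhibit the required element order once, in the small group $\PSU_4(p)$.

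To produce that witness I would compute directly with the root subgroups and the Chevalley commutator formulas recorded above. The commutator $[x_\alpha(t),x_\beta(u)]=x_{\alpha+\beta}(\epsilon tu)x_{2\alpha+\beta}(\epsilon'N(t)u)$ shows that $S$ is non-abelian of class at least $2$, and the presence of the norm map $N(t)=t^{1+q}$ means that for $p\in\{2,3\}$ the product of two root elements need not have order $p$: a short computation of a power of an element $x_\alpha(t)x_\beta(u)$ (or of a single element of the long root subgroup combined with an element outside $S'$) exhibits a nontrivial element of order $p^2$. Concretely, one checks in $\PSU_4(p)$ for $p=2$ and $p=3$ that a suitable element has order $4$ and $9$ respectively; this is a finite, explicit calculation (readily done by hand from the commutator relations or verified by machine) and matches the pattern of \cref{G2Exponent}.

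The main obstacle is the lower-bound direction for $p\in\{2,3\}$: the matrix argument only caps the exponent, and one must confirm that the cap is attained rather than that $S$ happens to have smaller exponent. The reduction to $n=1$ via the prime-subfield subgroup makes this a bounded computation, but care is needed because $S$ lives in the twisted group ${}^2\mathrm{A}_3$, so the field elements $t$ range over $\hat{\mathbb{K}}$ of order $q^2$ subject to the twisting conditions (e.g.\ $u=u^q$); one must ensure the chosen witness respects these constraints and that passing to the central quotient $\PSU_4$ from $\SU_4$ does not collapse the relevant element order. Verifying that the element of order $p^2$ survives in $S\cong$ Sylow of $\PSU_4(p)$ (and is not identified with a scalar) is the one place where the twisted structure genuinely enters.
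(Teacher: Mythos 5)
Your skeleton is the same as the paper's: an upper bound on the exponent from unipotent matrices in $\GL_4(q^2)$ (legitimate, since the centre of $\SU_4(q)$ has order $\gcd(4,q+1)$, a $p'$-number, so the central quotient does not disturb Sylow $p$-subgroups), followed by exhibiting the extremal element order for $p\in\{2,3\}$ by computation in the smallest case. The genuine gap is in the device you use to transport the lower bound from $n=1$ to all $n$. First, restricting \emph{both} parameters $t,u$ to the prime field $\GF(p)$ generates a subgroup of order at most $p^4$, whereas a Sylow $p$-subgroup of $\PSU_4(p)$ has order $p^6$; for a candidate copy you must take $t\in\GF(p^2)$ and $u\in\GF(p)$. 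Second, and more seriously, even this corrected restriction yields a Sylow $p$-subgroup of $\PSU_4(p)$ only when $n$ is odd: the twisting involution on $\hat{\mathbb{K}}=\GF(q^2)$ is $x\mapsto x^{p^n}$, which restricts to $x\mapsto x^p$ on $\GF(p^2)$ exactly when $n$ is odd and is the \emph{identity} when $n$ is even, in which case $N(t)=t^{1+q}$ degenerates to $t^2$ and the constraint $u=u^q$ changes meaning, so the restricted group does not carry the unitary commutator structure. (This reflects the standard fact that $\SU_4(p_0)$ occurs as a subfield subgroup of $\SU_4(q)$ only for field extensions of odd degree.) So your claim that ``the exponent of $S$ does not depend on $n$,'' as you justify it, fails for even $n$ --- precisely the twisted-structure subtlety you flagged in your final paragraph but did not resolve.

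The gap closes cleanly inside your own matrix framework, and in a way that removes the reduction to $n=1$ altogether: for any $t,u\ne0$ the element $x=x_\alpha(t)x_\beta(u)=x_a(t)x_c(t^q)x_b(u)$, viewed in $\SL_4(q^2)$, is unitriangular with nonzero coefficients on all three simple roots $a,b,c$, hence is regular unipotent, so $(x-1)^3\ne0$. The same binomial identity that gave your upper bound now gives the lower bound uniformly in $n$: writing $N=x-1$, in characteristic $2$ one has $x^2=1+N^2\ne1$ and $x^4=1+N^4=1$, so $x$ has order $4$; in characteristic $3$ one has $x^3=1+N^3\ne1$ and $x^9=1$, so $x$ has order $9$; and since the centre is a $p'$-group, these orders persist in $\PSU_4(q)$. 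For comparison, the paper's own proof (modelled on \cref{G2Exponent}) likewise combines the $\GL_4(q^2)$ bound with a direct computation that a Sylow $p$-subgroup of $\PSU_4(p)$ has exponent $p^2$ for $p\in\{2,3\}$, leaving the passage to general $n$ implicit; the regular unipotent witness above is the cheapest way to make that passage airtight for all $n$ at once.
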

\begin{proof}
This proof is much the same as \cref{G2Exponent}. Since $\PSU_4(q)$ is a subgroup of $\PSL_4(q^2)$, we can find an upper bound for the exponent of $S$ by calculating the exponent of a Sylow $p$-subgroup of $\GL_4(q^2)$, which is  $p^a$ with $a$ minimal such that $p^a>3$. Thus, $S$ has exponent $p$ when $p\geq 5$ and the exponent of $S$ is bounded above by $4$ or $9$ when $p=2$ or $p=3$ respectively. One can compute directly that a Sylow $p$-subgroup of $\PSU_4(p)$ has exponent $p^2$ when $p\in\{2,3\}$, and so the result holds.
\end{proof}

\section{Fusion Systems}

In the section, we set various notations and provide several results concerning fusion systems. As remarked earlier, most of what is written is here is fairly standard and may be extracted from \cite{craven} and \cite{ako}. 

\begin{definition}
Let $G$ be a finite group with $S\in\syl_p(G)$. The \emph{fusion category} of $G$ over $S$, written $\fs_S(G)$, is the category with object set $\Ob(\fs_S(G)):= \{Q: Q\le S\}$ and for $P,Q\le S$, $\Mor_{\fs_S(G)}(P,Q):=\Hom_G(P,Q)$, where $\Hom_G(P,Q)$ denotes maps induced by conjugation by elements of $G$. That is, all morphisms in the category are induced by conjugation by elements of $G$.
\end{definition}

\begin{definition}
Let $S$ be a $p$-group. A fusion system $\fs$ over $S$ is a category with object set $\Ob(\fs):=\{Q: Q\le S\}$ and whose morphism set satisfies the following properties for $P, Q\le S$:
\begin{itemize}
\item $\Hom_S(P, Q)\subseteq \Mor_{\fs}(P,Q)\subseteq \Inj(P,Q)$; and
\item each $\phi\in\Mor_{\fs}(P,Q)$ is the composite of an $\fs$-isomorphism followed by an inclusion,
\end{itemize}
where $\Inj(P,Q)$ denotes injective homomorphisms between $P$ and $Q$. To motivate the group analogy, we write $\Hom_{\fs}(P,Q):=\Mor_{\fs}(P,Q)$ and $\Aut_{\fs}(P):=\Hom_{\fs}(P,P)$.

Two subgroups of $S$ are said to be \emph{$\fs$-conjugate} if they are isomorphic as objects in $\fs$. We write $Q^{\fs}$ for the set of all $\fs$-conjugates of $Q$. We say a fusion system is \emph{realizable} if there exists a finite group $G$ with $S\in\syl_p(G)$ and $\fs=\fs_S(G)$. Otherwise, the fusion system is said to be \emph{exotic}.
\end{definition}

\begin{definition}
Let $\fs$ be a fusion system on a $p$-group $S$. Then $\mathcal{H}$ is a \emph{subsystem} of $\fs$, written $\mathcal{H}\le \fs$, on a $p$-group $T$ if $T\le S$, $\mathcal{H}\subseteq \fs$ as sets and $\mathcal{H}$ is itself a fusion system. Then, for $\fs_1, \fs_2$ subsystems of $\fs$, write $\langle \fs_1, \fs_2\rangle$ for the smallest subsystem of $\fs$ containing $\fs_1$ and $\fs_2$.
\end{definition}

We now give a brief overview of some key concept and definitions within the study of fusion systems. 

\begin{definition}
Let $\fs$ be a fusion system over a $p$-group $S$ and let $Q\le S$. Say that $Q$ is
\begin{itemize}
\item \emph{fully $\fs$-normalized} if $|N_S(Q)|\ge |N_S(P)|$ for all $P\in Q^{\fs}$;
\item \emph{fully $\fs$-centralized} if $|C_S(Q)|\ge |C_S(P)|$ for all $P\in Q^{\fs}$;
\item \emph{fully $\fs$-automized} if $\Aut_S(Q)\in\syl_p(\Aut_{\fs}(Q))$;
\item \emph{receptive} in $\fs$ if for each $P\le S$ and each $\phi\in\Iso_{\fs}(P,Q)$, setting \[N_{\phi}=\{g\in N_S(P) : {}^{\phi}c_g\in\Aut_S(Q)\},\] there is $\bar{\phi}\in\Hom_{\fs}(N_{\phi}, S)$ such that $\bar{\phi}|_P = \phi$;
\item \emph{$S$-centric} if $C_S(Q)=Z(Q)$ and \emph{$\fs$-centric} if $P$ is $S$-centric for all $P\in Q^{\fs}$;
\item \emph{$S$-radical} if $O_p(\Out(Q))\cap \Out_S(Q)=\{1\}$;
\item \emph{$\fs$-radical} if $O_p(\Out_{\fs}(Q))=\{1\}$; or
\item \emph{$\fs$-essential} if $Q$ is $\fs$-centric, fully $\fs$-normalized and $\Out_{\fs}(Q)$ contains a strongly $p$-embedded subgroup.
\end{itemize}
\end{definition}

If it is clear which fusion system we are working in, we will refer to subgroups as being fully normalized (centralized, centric etc.) without the $\fs$ prefix.

For a fusion system $\fs$, we set $\mathcal{E}(\fs)$ to be the set of essential subgroups of $\fs$ and note that essential subgroups of $S$ are fully $\fs$-normalized, $\fs$-centric, $\fs$-radical subgroups by definition. We also remark that any fully $\fs$-normalized, $\fs$-radical subgroup is also $S$-radical.

We mostly care about \emph{saturated} fusion systems as they most closely parallel groups and have the most interesting applications.

\begin{definition}
Let $\fs$ be a fusion system over a $p$-group $S$. Then $\fs$ is \emph{saturated} if the following conditions hold:
\begin{enumerate}
\item Every fully $\fs$-normalized subgroup is also fully $\fs$-centralized and fully $\fs$-automized.
\item Every fully $\fs$-centralized subgroup is receptive in $\fs$.
\end{enumerate}
By a theorem of Puig \cite{Puig1}, the fusion category of a finite group $\fs_S(G)$ is a saturated fusion system.
\end{definition}

From this point on, we implicitly assume that the fusion systems we study are \emph{saturated}, although some of the results we describe apply in wider contexts and can even be used to determine whether or not a fusion system is saturated.

\begin{definition}
A \emph{local $\mathcal{CK}$-system} is a saturated fusion system $\fs$ on a $p$-group $S$ such that $\Aut_{\fs}(P)$ is a $\mathcal{K}$-group for all $P\le S$.
\end{definition}

Local $\mathcal{CK}$-systems provide a means to apply group theoretic results which rely on a $\mathcal{K}$-group hypothesis. This allows for minimal counterexample arguments in fusion systems and provides a link between fusion systems and the classification of finite simple groups. That is, if $G$ is a finite group which is a counterexample to the classification with $|G|$ minimal subject to these constraints, then $\fs_S(G)$ is a local $\mathcal{CK}$-system for $S\in\syl_p(G)$.

We now present arguably the most important tool in classifying saturated fusion systems. Because of this, we need only investigate the local action on a relatively small number of $p$-subgroups to obtain a global characterization of a saturated fusion system.

\begin{theorem}[Alperin -- Goldschmidt Fusion Theorem]
Let $\fs$ be a saturated fusion system over a $p$-group $S$. Then \[\fs=\langle \Aut_{\fs}(Q) \mid Q\,\, \text{is essential or}\,\, Q=S \rangle.\]
\end{theorem}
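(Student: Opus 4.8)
The plan is to show that the subsystem $\fs_0:=\langle\Aut_{\fs}(Q)\mid Q \text{ essential or } Q=S\rangle$ exhausts all of $\fs$. Being a fusion system on $S$, $\fs_0$ automatically contains $\Hom_S(P,Q)$ for every pair $P,Q\le S$, and hence all inclusions; since every morphism of $\fs$ factors as an $\fs$-isomorphism followed by an inclusion, it suffices to prove that every $\fs$-isomorphism lies in $\fs_0$. I would prove this by downward induction on $|P|$, the inductive statement being: every $\phi\in\Iso_{\fs}(P,P')$ belongs to $\fs_0$. The base case $P=S$ is immediate, as then $\phi\in\Aut_{\fs}(S)\subseteq\fs_0$.

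For the inductive step, first reduce to the case where the target is fully normalized: given $\phi\colon P\to P'$, pick $R\in P^{\fs}$ fully normalized and write $\phi$ as a composite of two $\fs$-isomorphisms with common target $R$, so it is enough to treat isomorphisms onto $R$. By saturation $R$ is fully automized and receptive, so $\Aut_S(R)\in\syl_p(\Aut_{\fs}(R))$. Given $\phi\colon P\to R$, the image $\phi\Aut_S(P)\phi^{-1}$ is a $p$-subgroup of $\Aut_{\fs}(R)$, so after composing $\phi$ with a suitable $\psi\in\Aut_{\fs}(R)$ I may assume $N_\phi=N_S(P)$; as $P<S$ we have $P<N_S(P)$, and receptivity extends $\phi$ to an $\fs$-isomorphism defined on $N_S(P)$. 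This extension is an isomorphism between subgroups of strictly larger order, hence lies in $\fs_0$ by induction, and therefore so does $\phi$ up to the factor $\psi$. In this way the entire theorem is reduced to the single assertion: $\Aut_{\fs}(R)\subseteq\fs_0$ for every fully normalized $R$ with $|R|<|S|$.

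The heart of the argument treats this assertion. Write $G=\Aut_{\fs}(R)$, $\bar G=\Out_{\fs}(R)=G/\Inn(R)$ and $\bar P_0=\Out_S(R)\in\syl_p(\bar G)$. For each nontrivial $\bar Q\le\bar P_0$, its preimage $Q$ in $\Aut_S(R)$ satisfies $\Inn(R)<Q=\Aut_U(R)$ for a uniquely determined $U$ with $R<U\le N_S(R)$. I would verify that every $\alpha\in N_G(Q)$ has $U\le N_\alpha$, so that receptivity extends $\alpha$ over $U$; since $|U|>|R|$, this extension, and hence $\alpha$, lies in $\fs_0$ by induction. Consequently the preimage of $\langle N_{\bar G}(\bar Q)\mid 1\ne\bar Q\le\bar P_0\rangle$ sits inside $\fs_0$. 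Now I invoke the group-theoretic dichotomy that a finite group is generated by the normalizers of the nontrivial subgroups of a fixed Sylow $p$-subgroup exactly when it has no strongly $p$-embedded subgroup. If $\bar G$ has no strongly $p$-embedded subgroup, then $G=\langle\Inn(R),\,N_G(Q)\rangle\subseteq\fs_0$ and we are finished. Otherwise $\Out_{\fs}(R)$ has a strongly $p$-embedded subgroup, $R$ is (necessarily) $\fs$-centric and fully normalized, so $R$ is essential and $\Aut_{\fs}(R)$ is by construction one of the generators of $\fs_0$.

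I expect the genuine obstacles to be two. The first is the group-theoretic input itself, namely the characterization of possessing a strongly $p$-embedded subgroup in terms of the failure of normalizer-generation, which is precisely what forces the notion of essential subgroup into the statement. The second is the careful bookkeeping around centricity: one must confirm that the induction can only stall at $\fs$-centric subgroups, i.e.\@ that a fully normalized but non-$\fs$-centric $R$ never has $\Out_{\fs}(R)$ with a strongly $p$-embedded subgroup, so that the stalling case never lands on a non-essential subgroup. The extension steps, by contrast, are routine applications of receptivity, and the Sylow reduction in the second paragraph is standard; the delicate work lies in matching the internal subgroup structure of $\Aut_{\fs}(R)$ with extendability of automorphisms to overgroups of $R$ in $S$.
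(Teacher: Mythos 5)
Your outline reproduces, essentially verbatim, the standard argument behind \cite[Theorem I.3.5]{ako} (the paper itself offers no proof, only this citation): the downward induction on $|P|$, the Sylow--receptivity reduction to the assertion $\Aut_{\fs}(R)\subseteq\fs_0$ for fully normalized $R<S$, and the normalizer-generation dichotomy for groups with a strongly $p$-embedded subgroup are all correct as far as they go. But there is a genuine gap exactly at the point you flagged, and the patch you propose is not available: it is \emph{false} that a fully normalized, non-$\fs$-centric $R$ can never have $\Out_{\fs}(R)$ with a strongly $p$-embedded subgroup. For example, if $R_0$ is an essential subgroup of $\fs_{S_0}(G_0)$ and one passes to $G=G_0\times C_p$, $S=S_0\times C_p$, $R=R_0\times 1$, then $R$ is fully normalized and $C_S(R)=Z(R_0)\times C_p\not\le R$, so $R$ is not $\fs$-centric, yet $\Out_{\fs}(R)\cong\Out_{\fs_{S_0}(G_0)}(R_0)$ still has a strongly $p$-embedded subgroup. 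At such an $R$ your induction stalls: $R$ is not essential, so $\Aut_{\fs}(R)$ is not among the generators of $\fs_0$, and your normalizer argument cannot recover all of $\Out_{\fs}(R)$. A second, degenerate defect is that the dichotomy you invoke presupposes $p$ divides $|\Out_{\fs}(R)|$: for non-centric $R$ one can have $\Out_S(R)=1$ while $\Out_{\fs}(R)$ is a nontrivial $p'$-group (take $p=2$, $G=\Alt(4)\times C_2$ and $R=O_2(\Alt(4))\times 1$, where $\Out_{\fs}(R)\cong C_3$), and then the generation statement is vacuously false although no strongly $p$-embedded subgroup exists.

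The repair is the elementary step the standard proof inserts \emph{before} the dichotomy. For every $\alpha\in\Aut_{\fs}(R)$ and every $g\in C_S(R)$ one has $c_g|_R=\mathrm{id}_R$, hence ${}^{\alpha}c_g=\mathrm{id}_R\in\Aut_S(R)$; thus $RC_S(R)\le N_\alpha$ always. Since $R$ is fully normalized it is fully centralized and receptive, so $\alpha$ extends to $RC_S(R)$. If $R$ is not $\fs$-centric, then $C_S(R)\not\le R$ (were $C_S(R)=Z(R)$, then for any $P\in R^{\fs}$ one would get $|C_S(P)|\leq|C_S(R)|=|Z(R)|=|Z(P)|\leq |C_S(P)|$, forcing $C_S(P)=Z(P)$ and $R$ centric after all), so $|RC_S(R)|>|R|$ and your induction puts the extension, hence $\alpha$, in $\fs_0$. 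With non-centric subgroups disposed of this way, the dichotomy need only be applied to centric fully normalized $R<S$, where $\Out_S(R)=N_S(R)/R\ne 1$ guarantees $p$ divides $|\Out_{\fs}(R)|$, and the stalling case is then, by definition, an essential subgroup. Everything else in your proposal is sound.
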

\begin{proof}
See \cite[Theorem I.3.5]{ako}.
\end{proof}

Along these lines, another important notion is for a $p$-subgroup to be \emph{normal} in a saturated fusion system.

\begin{definition}
Let $\fs$ be a fusion systems over a $p$-group $S$ and $Q\le S$. Say that $Q$ is \emph{normal} in $\fs$ if $Q\normaleq S$ and for all $P,R\le S$ and $\phi\in\Hom_{\fs}(P,R)$, $\phi$ extends to a morphism $\bar{\phi}\in\Hom_{\fs}(PQ,RQ)$ such that $\bar{\phi}(Q)=Q$.

It may be checked that the product of normal subgroups is itself normal. Thus, we may talk about the largest normal subgroup of $\fs$ which we denote $O_p(\fs)$ (and occasionally refer to as the $p$-core of $\fs$).  Further, it follows immediately from the saturation axioms that any subgroup normal in $S$ is fully normalized and fully centralized.
\end{definition}

\begin{definition}
Let $\fs$ be a fusion system over a $p$-group $S$ and let $Q$ be a subgroup. The \emph{normalizer fusion subsystem} of $Q$, denoted $N_{\fs}(Q)$, is the largest subsystem of $\fs$, supported over $N_S(Q)$, in which $Q$ is normal. 
\end{definition}

It is clear from the definition that if $\fs$ is the fusion category of a group $G$ i.e. $\fs=\fs_S(G)$, then $N_{\fs}(Q)=\fs_{N_S(Q)}(N_G(Q))$. The following result is originally attributed to Puig \cite{Puig2}.

\begin{theorem}
Let $\fs$ be a saturated fusion system over a $p$-group $S$. If $Q\le S$ is fully $\fs$-normalized then $N_{\fs}(Q)$ is saturated.
\end{theorem}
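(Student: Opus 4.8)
The plan is to verify the two saturation axioms of the definition directly for the subsystem $\mathcal{N} := N_{\fs}(Q)$ on the $p$-group $T := N_S(Q)$, using as input the saturation of the ambient system $\fs$ together with the hypothesis that $Q$ is fully $\fs$-normalized. The first observation is that, since $\fs$ is saturated and $Q$ is fully $\fs$-normalized, $Q$ is also fully $\fs$-centralized and fully $\fs$-automized; in particular $Q$ is receptive in $\fs$ and $\Aut_S(Q)\in\syl_p(\Aut_{\fs}(Q))$. These two facts are the engine of the argument: receptivity of $Q$ lets us build extensions of morphisms, and the Sylow statement controls the relevant $p$-local orders. By construction $Q\normaleq\mathcal{N}$, and for $P,R\le T$ the set $\Hom_{\mathcal{N}}(P,R)$ consists exactly of those $\phi\in\Hom_{\fs}(P,R)$ admitting an extension $\bar{\phi}\in\Hom_{\fs}(PQ,RQ)$ with $\bar{\phi}(Q)=Q$.

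The key reduction I would carry out is to pass from a subgroup $P\le T$ to the subgroup $PQ$, and to translate the $\mathcal{N}$-local data of $P$ into $\fs$-local data of $PQ$. Concretely, $\Aut_{\mathcal{N}}(P)$ is identified with the group of automorphisms of $PQ$ in $\fs$ that stabilise both $P$ and $Q$ setwise (restricted back to $P$), while $\Aut_T(P)$ and $C_T(P)$ are matched against the corresponding subgroups of $N_S(PQ)$ and $C_S(PQ)$ that fix $Q$. The central structural lemma to establish is: if $P$ is fully $\mathcal{N}$-normalized then $PQ$ is fully $\fs$-normalized (and similarly for the centralized notion). Granting this, saturation of $\fs$ supplies $\Aut_S(PQ)\in\syl_p(\Aut_{\fs}(PQ))$ and receptivity of $PQ$, and a Sylow-intersection argument inside $\Aut_{\fs}(PQ)$ then yields $\Aut_T(P)\in\syl_p(\Aut_{\mathcal{N}}(P))$ together with the fully-centralized conclusion, establishing the first saturation axiom for $\mathcal{N}$.

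For the second axiom I would take $P\le T$ fully $\mathcal{N}$-centralized and an $\mathcal{N}$-isomorphism $\phi\colon R\to P$, extend it to $\bar{\phi}\in\Hom_{\fs}(RQ,PQ)$ with $\bar{\phi}(Q)=Q$ by the definition of $\mathcal{N}$, and then invoke receptivity of $PQ$ in $\fs$ to extend $\bar{\phi}$ over the $\fs$-normalizer $N_{\bar{\phi}}$; restricting this extension back down to $T$ and to the relevant $\mathcal{N}$-normalizer $N_{\phi}$ produces the required extension of $\phi$ inside $\mathcal{N}$. The main obstacle, and the point requiring the most care, is precisely the bookkeeping of normalizers: one must check that the extensions manufactured by receptivity in $\fs$ genuinely normalize $Q$ and take values in $T=N_S(Q)$, and that the group $N_{\phi}$ computed in $\mathcal{N}$ is compatible with $N_{\bar{\phi}}$ computed in $\fs$. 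Controlling where these extended maps send $Q$ is exactly where the hypothesis that $Q$ is fully $\fs$-normalized (equivalently, fully centralized and automized) becomes indispensable, and the cleanest route is to organise the whole argument around the $\Aut_{\fs}(Q)$-normalizer formalism, so that preservation of $Q$ is built into every extension from the outset.
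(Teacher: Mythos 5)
There is a genuine gap, and it sits exactly at your ``central structural lemma.'' The implication ``$P$ fully $\mathcal{N}$-normalized $\Rightarrow$ $PQ$ fully $\fs$-normalized'' is false in general. Concretely: take $G=\Sym(6)$, $p=2$, $S=\langle(1\,2)\rangle\times D$ where $D=\langle(3\,4\,5\,6),(3\,5)\rangle\in\syl_2(\Sym(\{3,4,5,6\}))$, and $Q=\langle(1\,2)\rangle\le Z(S)$. Then $Q$ is fully $\fs$-normalized in $\fs=\fs_S(G)$ (here $|N_S(Q)|=16$ is the full $2$-part of $|N_G(Q)|=48$), and $\mathcal{N}=N_{\fs}(Q)=\fs_S(N_G(Q))$. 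Put $P=\langle(1\,2)(3\,5)\rangle$. Its $\mathcal{N}$-class inside $S$ is $\{\langle(1\,2)(3\,5)\rangle,\langle(1\,2)(4\,6)\rangle\}$, and both members have normalizer $\langle(1\,2),(3\,5),(4\,6)\rangle$ of order $8$, so $P$ is fully $\mathcal{N}$-normalized. However $PQ=\langle(1\,2),(3\,5)\rangle$ is $\fs$-conjugate, via $c_g$ with $g=(1\,3\,4)(2\,5\,6)$, to $V=\langle(3\,5),(4\,6)\rangle\normaleq S$, and $|N_S(PQ)|=8<16=|N_S(V)|$, so $PQ$ is \emph{not} fully $\fs$-normalized. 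The conjugation achieving this does not carry $Q$ to $Q$ --- which is precisely the information your lemma discards, since being fully $\mathcal{N}$-normalized constrains only those $\fs$-maps that normalize $Q$, while being fully $\fs$-normalized quantifies over the whole (larger) $\fs$-class of $PQ$.

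Note that the paper itself offers no argument: it cites Puig's theorem as \cite[Theorem I.5.5]{ako}, and the proof there is organised to avoid exactly the trap above. The correct invariant is not $|N_S(PQ)|$ but the $K$-normalizer $|N_S^K(PQ)|$ for $K$ the group of automorphisms of $PQ$ preserving $Q$ (and $P$), together with the corresponding notion of being \emph{fully $K$-normalized}; the Sylow statement $\Aut_S^K(PQ)\in\syl_p(\Aut_{\fs}^K(PQ))$ and the extension statement in which a morphism may first be corrected by some $\chi\in\Aut_{\fs}^K(PQ)$ are the content of \cite[Lemma I.5.2]{ako}. Both of your remaining steps secretly depend on this machinery: the Sylow-intersection argument for the first saturation axiom needs the Sylow statement for $\Aut_{\fs}^K(PQ)$ rather than for $\Aut_{\fs}(PQ)$, and in the second axiom the inclusion $N_{\phi}\le N_{\bar{\phi}}$ generally fails until $\bar{\phi}$ has been replaced by $\chi\circ\bar{\phi}$ for a suitable $\chi$; in my example above, $N_S^K$ is constant (of order $8$) across the twisted class even though $N_S$ is not, illustrating why only the $K$-version can work. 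Your closing sentence gestures at the right formalism, but as written the plan rests on a false lemma, and repairing it by installing the fully-$K$-normalized machinery essentially reproduces the proof in \cite{ako} rather than giving an alternative to it.
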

\begin{proof}
See \cite[Theorem I.5.5]{ako}.
\end{proof}

\begin{definition}
Let $\fs$ be a fusion systems over a $p$-group $S$ and $P\le Q\le S$. Say that $P$ is \emph{$\fs$-characteristic} in $Q$ if $\Aut_{\fs}(Q)\le N_{\Aut(Q)}(P)$.
\end{definition}

Plainly, if $Q\normaleq\fs$ and $P$ is $\fs$-characteristic in $Q$, then $P\normaleq\fs$.

We now present a link between normal subgroups of a saturated fusion system $\fs$ and its essential subgroups.

\begin{proposition}\label{normalinF}
Let $\fs$ be a saturated fusion system over a $p$-group $S$. Then $Q$ is normal in $\fs$ if and only if $Q$ is contained in each essential subgroup, $Q$ is $\Aut_{\fs}(E)$-invariant for any essential subgroup $E$ of $\fs$ and $Q$ is $\Aut_{\fs}(S)$-invariant.
\end{proposition}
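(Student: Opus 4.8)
The statement to prove is \cref{normalinF}: for a saturated fusion system $\fs$ over $S$, the subgroup $Q\le S$ is normal in $\fs$ if and only if $Q$ is contained in every essential subgroup, is $\Aut_{\fs}(E)$-invariant for every essential $E$, and is $\Aut_{\fs}(S)$-invariant. My plan is to prove the two implications separately, with the Alperin--Goldschmidt Fusion Theorem as the engine for the harder reverse direction.

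The forward direction should be the easy half. If $Q\normaleq\fs$, then by the definition of normality every morphism $\phi\in\Hom_{\fs}(P,R)$ extends to a morphism on $PQ$ that stabilizes $Q$. Applying this with $E$ an essential subgroup and $\phi\in\Aut_{\fs}(E)$ (taking $P=R=E$), the extension lives on $EQ$; but $Q\normaleq S$ together with $Q\le E$ needs to be established first, so I would first argue $Q\le E$ for every essential $E$. This containment follows because $Q\normaleq S$ is invariant under $\Aut_{\fs}(E)$ and one can use the standard fact (or a short argument via \cref{GrpChain}-style reasoning on the action of $\Out_{\fs}(E)$) that a normal subgroup of $S$ which is stabilized by a group with a strongly $p$-embedded quotient, yet not contained in $E$, would contradict $E$ being $\fs$-centric and $\fs$-radical. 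Once $Q\le E$ is in hand, $\Aut_{\fs}(E)$-invariance and $\Aut_{\fs}(S)$-invariance are immediate from the extension property in the definition of $O_p(\fs)$.

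For the reverse direction I would assume the three stated conditions and verify the defining extension property of normality. The key reduction is the Alperin--Goldschmidt theorem: since $\fs=\langle \Aut_{\fs}(Q')\mid Q' \text{ essential or } Q'=S\rangle$, every morphism in $\fs$ is a composite of restrictions of automorphisms of essential subgroups and of $S$. Therefore it suffices to check that, for each generating automorphism $\psi\in\Aut_{\fs}(E)$ (with $E$ essential) or $\psi\in\Aut_{\fs}(S)$, there is an extension stabilizing $Q$. For $\psi\in\Aut_{\fs}(S)$ this is exactly the hypothesis that $Q$ is $\Aut_{\fs}(S)$-invariant. For $\psi\in\Aut_{\fs}(E)$, the hypotheses $Q\le E$ and $Q\psi=Q$ mean $\psi$ itself already restricts appropriately, and I would extend it to $EQ=E$ trivially. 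The bookkeeping step is to show that stability under the generators propagates to stability under arbitrary composites and that the resulting extensions are compatible on overlaps, so that one genuinely obtains the extension $\bar\phi\in\Hom_{\fs}(PQ,RQ)$ with $\bar\phi(Q)=Q$ required by the definition of normality.

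The main obstacle I anticipate is the reverse direction's passage from \emph{generator-wise} invariance to the \emph{global} extension property demanded by the definition of $O_p(\fs)$: Alperin--Goldschmidt lets me factor morphisms, but I must ensure each factor's domain can be enlarged to contain $Q$ and that these enlargements splice together into a single well-defined morphism on $PQ$. Concretely, when a morphism $\phi\colon P\to R$ is written as a composite $\phi=\phi_k\cdots\phi_1$ with each $\phi_i$ a restriction of some $\Aut_{\fs}(E_i)$ or $\Aut_{\fs}(S)$, I would enlarge each intermediate domain to include $Q$ (using $Q\le E_i$ and $Q$-invariance of each $\psi_i$) and check inductively that the enlarged composite restricts back to $\phi$ on $P$ while fixing $Q$ setwise. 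Making this gluing rigorous — rather than the per-generator check, which is routine given the hypotheses — is where the real content lies, and I would lean on the receptivity and saturation axioms to guarantee the extensions exist and agree.
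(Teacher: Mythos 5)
The paper offers no in-house proof here --- it simply cites \cite[Proposition I.4.5]{ako} --- so I am comparing your proposal against that standard argument, whose outline your two-implication plan follows. Your reverse direction is sound, and in fact easier than you anticipate: no receptivity, no saturation axiom beyond Alperin--Goldschmidt itself, and no gluing is required. Writing $\phi\in\Hom_{\fs}(P,R)$ as a composite of restrictions of automorphisms $\psi_i\in\Aut_{\fs}(E_i)$ with $E_i$ essential or $E_i=S$, each intermediate subgroup satisfies $P_iQ\le E_i$ (since $P_i\le E_i$ and $Q\le E_i$ by hypothesis), and $\psi_i(P_iQ)=P_{i+1}Q$ because $\psi_i(Q)=Q$; one simply restricts each $\psi_i$ to $P_iQ$ and composes. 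Fusion systems are closed under restriction and composition, so the composite is a morphism $PQ\to RQ$ in $\fs$ extending $\phi$ and sending $Q$ to $Q$. There are no overlaps to reconcile and nothing to splice, so the ``main obstacle'' you flag is illusory.

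The genuine gap is in the forward direction, at the step $Q\le E$. As written your sketch cannot be executed: $\Aut_{\fs}(E)$ acts on $E$, not on $S$, so the phrase ``$Q\normaleq S$ is invariant under $\Aut_{\fs}(E)$'' has no content before $Q\le E$ is known, and \cref{GrpChain} is the wrong tool --- there is no $\Aut_{\fs}(E)$-invariant chain of subgroups of $E$ centralized by a relevant $p$-group in sight. What the step actually requires is the extension clause in the definition of normality, not mere invariance. If $Q\not\le E$, then since $Q\normaleq S$ we have $E<EQ$, and because normalizers grow in $p$-groups one finds $N_Q(E)\not\le E$. Each $\alpha\in\Aut_{\fs}(E)$ extends to $\bar{\alpha}\in\Hom_{\fs}(EQ,EQ)$ with $\bar{\alpha}(Q)=Q$, and $\bar{\alpha}(E)=E$ since $\bar{\alpha}|_E=\alpha$; hence $\bar{\alpha}(N_Q(E))=N_Q(E)$, and the identity $\alpha c_x\alpha^{-1}=c_{\bar{\alpha}(x)}$ shows that $\Aut_{EN_Q(E)}(E)$ is a normal $p$-subgroup of $\Aut_{\fs}(E)$. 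Because $E$ is $\fs$-centric, $C_S(E)\le E$, so $c_x\notin\Inn(E)$ for $x\in N_Q(E)\setminus E$, and the image of this subgroup in $\Out_{\fs}(E)$ is a nontrivial normal $p$-subgroup --- contradicting the fact that $E$ is $\fs$-radical, which (as the paper remarks) is forced by $\Out_{\fs}(E)$ having a strongly $p$-embedded subgroup. Once $Q\le E$ is established, $EQ=E$ and the $\Aut_{\fs}(E)$- and $\Aut_{\fs}(S)$-invariance of $Q$ are indeed immediate from the extension property, as you say.
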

\begin{proof}
See \cite[Proposition I.4.5]{ako}.
\end{proof}

In order to investigate the local actions in a saturated fusions, and in particular in its normalizer subsystems, it will often be convenient to work in a purely group theoretic context. The \emph{model theorem} guarantees that we may do this for a certain class of $p$-subgroups of a saturated fusion system $\fs$.

\begin{theorem}[Model Theorem]\label{model}
Let $\fs$ be a saturated fusion system over a $p$-group $S$. Fix $Q\le S$ which is $\fs$-centric and normal in $\fs$. Then the following holds:
\begin{enumerate} 
\item There are models for $\fs$.
\item If $G_1$ and $G_2$ are two models for $\fs$, then there is an isomorphism $\phi: G_1\to G_2$ such that $\phi|_S = \mathrm{Id}_S$.
\item For any finite group $G$ containing $S$ as a Sylow $p$-subgroup such that $Q\le G$, $C_G(Q) \le Q$ and $Aut_G(Q) = Aut_{\fs}(Q)$, there is $\beta\in\Aut(S)$ such that $\beta|_Q = \mathrm{Id}_Q$ and $\fs_S(G) = {}^{\beta}\fs$. Thus, there is a model for $\fs$ which is isomorphic to $G$.
\end{enumerate}
\end{theorem}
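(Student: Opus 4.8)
Recall that a \emph{model} for $\fs$ is a finite group $G$ with $S\in\syl_p(G)$, $F^*(G)=O_p(G)$, and $\fs_S(G)=\fs$; the plan is to prove the three parts in turn, with the existence statement (i) carrying essentially all of the work. Write $\Gamma:=\Out_\fs(Q)=\Aut_\fs(Q)/\Inn(Q)$. Since $Q$ is normal in $\fs$ we have $N_S(Q)=S$, and since $Q$ is $\fs$-centric we have $C_S(Q)=Z(Q)$; thus $\Aut_S(Q)=S/Z(Q)$ and, as $Q\trianglelefteq S$ is fully normalized hence fully automized by saturation, $\Out_S(Q)=S/Q$ is a Sylow $p$-subgroup of $\Gamma$. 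The inclusion $\Inn(Q)\trianglelefteq\Aut_\fs(Q)$ determines an injective outer action $\Gamma\hookrightarrow\Out(Q)$, and for (i) I would realize it by an extension $1\to Q\to G\to\Gamma\to 1$. By the Eilenberg--MacLane theory of non-abelian extensions such a $G$ exists precisely when an obstruction class $\omega\in H^3(\Gamma;Z(Q))$ vanishes, where $\Gamma$ acts on $Z(Q)$ through $\Gamma\to\Out(Q)\to\Aut(Z(Q))$. Proving $\omega=0$ is the step I expect to be the main obstacle. The key observation is that $Z(Q)$ is a $p$-group, so $H^3(\Gamma;Z(Q))$ is a $p$-group and restriction to the Sylow subgroup $\Out_S(Q)=S/Q$ is injective; hence it suffices to see that $\omega$ restricts to $0$ there. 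By naturality of the obstruction, the restricted class is exactly the obstruction to extending $Q$ by $S/Q$ compatibly with the conjugation action, and that extension is realized by $S$ itself via $1\to Q\to S\to S/Q\to 1$. Therefore the restriction of $\omega$ vanishes, whence $\omega=0$ and $G$ exists.

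It then remains to check that this $G$ is a model realizing $\fs$. As the outer action is injective, any $g\in C_G(Q)$ maps to the trivial element of $\Gamma$, so $C_G(Q)\le Q$, i.e.\ $C_G(Q)=Z(Q)$; since $Z(Q)\le Q\le O_p(G)$ this gives $C_G(O_p(G))\le C_G(Q)\le O_p(G)$, so $F^*(G)=O_p(G)$. An order count (the preimage of $\Out_S(Q)$ is a $p$-subgroup of order $|Q|\cdot|S/Q|=|S|$) shows $S\in\syl_p(G)$, and by construction $\Aut_G(Q)=N_G(Q)/C_G(Q)=\Aut_\fs(Q)$. Finally $\fs_S(G)=\fs$: because $Q$ is normal and $\fs$-centric, every $\fs$-morphism extends to one preserving $Q$ and, $Q$ being centric, is determined by its restriction to $Q$, so the morphisms of $\fs$ are precisely the restrictions of $\Aut_\fs(Q)$; the identical description holds for $\fs_S(G)$ since $Q\trianglelefteq G$ is self-centralizing, and the two automizers coincide.

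For (ii), given two models $G_1,G_2$, each has $Q$ normal and self-centralizing, so $G_i/Z(Q)\cong\Aut_{G_i}(Q)=\Aut_\fs(Q)$ and $G_i$ is a central extension of $\Aut_\fs(Q)$ by $Z(Q)$ restricting over $\Inn(Q)$ to the fixed central extension $Q$. Two such extensions differ by a class in $H^2(\Gamma;Z(Q))$ (via inflation--restriction for $\Inn(Q)\trianglelefteq\Aut_\fs(Q)$) which restricts to $0$ over $\Out_S(Q)=S/Q$, since both $G_i$ contain $S$ as the preimage of $\Out_S(Q)$; injectivity of restriction to the Sylow subgroup (again $Z(Q)$ is a $p$-group) forces this class to be $0$. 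The resulting equivalence of extensions yields an isomorphism $G_1\to G_2$ fixing $Q$, which after composing with a suitable inner automorphism restricts to $\mathrm{Id}_S$.

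For (iii), since $\Aut_S(Q)\le\Aut_G(Q)=\Aut_\fs(Q)$ the group $S$ normalizes $Q$, so $S\in\syl_p(M)$ for $M:=N_G(Q)$. Now $M$ meets every hypothesis of a model: $Q\trianglelefteq M$, $C_M(Q)=C_G(Q)\le Q$, $\Aut_M(Q)=\Aut_\fs(Q)$, and $F^*(M)=O_p(M)$ exactly as in (i). By the extension-classification used in (ii), $\fs_S(M)$ is the unique constrained system over $S$ in which $Q$ is normal and centric with automizer $\Aut_\fs(Q)$ once $Q$ is fixed; comparing with $\fs$ shows the two differ only in how $\Aut_\fs(Q)$ is glued to $S$, a discrepancy realized by some $\beta\in\Aut(S)$ with $\beta|_Q=\mathrm{Id}_Q$, giving $\fs_S(M)={}^\beta\fs$. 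It remains to see $\fs_S(G)=\fs_S(M)$, i.e.\ that $N_G(Q)$ controls $G$-fusion; equivalently that $Q\trianglelefteq\fs_S(G)$, which one checks from the structure of $\fs_S(G)$ (for instance via \cref{normalinF}) using $C_G(Q)\le Q$. Granting this, $\fs_S(G)=N_{\fs_S(G)}(Q)=\fs_{S}(N_G(Q))=\fs_S(M)={}^\beta\fs$, as required.
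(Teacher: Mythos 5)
The paper offers no proof of \cref{model} beyond the citation \cite[Theorem I.4.9]{ako}, and your outline reproduces the skeleton of the argument given there (originally due to Broto, Castellana, Grodal, Levi and Oliver): realize $G$ as an extension of $\Gamma=\Out_{\fs}(Q)$ by $Q$, kill the Eilenberg--MacLane obstruction in $H^3(\Gamma;Z(Q))$ using transfer-injectivity of restriction to the Sylow subgroup $\Out_S(Q)\cong S/Q$ together with the fact that $S$ itself realizes the restricted extension problem, and control uniqueness through $H^2(\Gamma;Z(Q))$. Up to that point your part (i) is sound. The first genuine gap comes immediately after: your ``order count'' produces a Sylow $p$-subgroup of $G$, namely the preimage $\hat{S}$ of $\Out_S(Q)$, of order $|S|$, but nothing you say makes $\hat{S}$ isomorphic to $S$, let alone identified with $S$ compatibly with the given data. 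Extensions of $S/Q$ by $Q$ with the fixed outer action form a torsor over $H^2(S/Q;Z(Q))$, which need not vanish; and since the restriction map $H^2(\Gamma;Z(Q))\to H^2(S/Q;Z(Q))$ is injective but in general \emph{not} surjective, you cannot simply re-choose the extension class of $G$ to correct the discrepancy. This is precisely where the actual proof must invoke saturation: one shows the class of $1\to Q\to S\to S/Q\to 1$ is \emph{stable} with respect to $\fs$-fusion and then applies the Cartan--Eilenberg stable elements theorem to lift it to $H^2(\Gamma;Z(Q))$. It is a warning sign that your argument never uses the saturation axioms at all, whereas the theorem fails without them.

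The second gap is the claim, used to conclude $\fs_S(G)=\fs$ in (i) and again at the end of (ii) and (iii), that a morphism between overgroups of $Q$ is ``determined by its restriction to $Q$''. It is not: if $Q\le P$ and $\phi,\psi\in\Hom_{\fs}(P,S)$ agree on $Q$, then for $x\in P$ one only gets $\psi(x)^{-1}\phi(x)\in C_S(Q)=Z(Q)$, so the two maps differ by a $1$-cocycle $P/Q\to Z(Q)$. This $H^1(S/Q;Z(Q))$-sized ambiguity is exactly why part (iii) concludes only $\fs_S(G)={}^{\beta}\fs$ for some $\beta\in\Aut(S)$ trivial on $Q$, rather than $\fs_S(G)=\fs$, and it cannot be removed by composing with inner automorphisms (those account only for coboundaries); eliminating it on $S$ once more requires a stability/saturation argument. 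Your verification of $\fs_S(G)=\fs$ in (i) and the final adjustment ``composing with a suitable inner automorphism restricts to $\mathrm{Id}_S$'' in (ii) therefore do not go through as written. A smaller point: in (iii) the hypothesis should read $Q\normaleq G$ (the statement's ``$Q\le G$'' is a transcription slip from \cite{ako}); with $Q$ normal in $G$ the fact that $Q$ is normal in $\fs_S(G)$ is immediate, whereas for a merely contained $Q$ your proposed check via \cref{normalinF} has no way to get started, since you have no hold on the essential subgroups of $\fs_S(G)$.
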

\begin{proof}
See \cite[Theorem I.4.9]{ako}.
\end{proof}

Fusion systems satisfying the hypothesis of the above theorem are referred to as \emph{constrained} fusion systems. It is clear that if $E$ is an essential subgroup of $\fs$,$E$ is a centric normal subgroup of $N_{\fs}(E)$, $N_{\fs}(E)$ is constrained and there is a model $G$ for $N_{\fs}(E)$ with $O_p(G)=E$.

With the aim of applying the Alperin--Goldschmidt fusion theorem, we present the following lemmas which provide the main tools for determining whether a $p$-group is an essential subgroup of saturated fusion system $\fs$.

\begin{lemma}\label{Chain}
Let $E$ be a finite $p$-group and $Q\le A$ where $A\le \Aut(E)$ and $Q$ is a $p$-group. Suppose there exists a normal chain $\{1\} =E_0 \normaleq E_1  \normaleq E_2 \normaleq \dots \normaleq E_m = E$ of subgroups such that for each $\alpha \in A$, $E_i\alpha = E_i$ for all $0 \le i \le m$. If for all $1\le i\le m$, $Q$ centralizes $E_i/E_{i-1}$, then $Q \le O_p(A)$. In particular, if $E$ is essential and $A=\Aut_{\fs}(E)$, then $Q\le\Inn(E)$.
\end{lemma}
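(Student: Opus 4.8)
The plan is to read off the first assertion directly from \cref{GrpChain}, and then to combine it with the radical property that essentiality confers on $\Aut_\fs(E)$.

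First I would note that the hypotheses on $Q$, $A$ and the chain $\{1\}=E_0\normaleq E_1\normaleq\dots\normaleq E_m=E$ are \emph{exactly} those of \cref{GrpChain}: a $p$-subgroup $Q\le A\le\Aut(E)$ together with an $A$-invariant normal series each of whose factors is centralized by $Q$. Applying that lemma verbatim gives $Q\le O_p(A)$ with no additional argument, so the genuinely new content of the statement is the ``in particular'' clause.

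For the refinement, I would take $E$ essential and $A=\Aut_\fs(E)$. The point of essentiality is that $E$ is then $\fs$-radical, i.e. $O_p(\Out_\fs(E))=\{1\}$; this is the manifestation of $\Out_\fs(E)$ containing a strongly $p$-embedded subgroup, such a subgroup forcing the $p$-core of the ambient group to vanish (a nontrivial normal $p$-subgroup would lie in a Sylow $p$-subgroup inside the strongly $p$-embedded subgroup $M$, hence, being normal, in $M\cap M^h$ for every $h$, contradicting $p\nmid|M\cap M^h|$ for $h\notin M$). Now $\Inn(E)\cong E/Z(E)$ is a normal $p$-subgroup of $\Aut_\fs(E)$, so under the canonical projection $\pi\colon\Aut_\fs(E)\twoheadrightarrow\Out_\fs(E)$ the image $\pi(O_p(A))$ is a normal $p$-subgroup of $\Out_\fs(E)$, hence trivial. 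Therefore $O_p(A)\le\ker\pi=\Inn(E)$, and combining this with the first part yields $Q\le O_p(A)\le\Inn(E)$, as required.

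I expect the only non-formal ingredient to be the vanishing of the $p$-core of a group with a strongly $p$-embedded subgroup, which is precisely where essentiality is used; once that is in hand everything else is bookkeeping with the inner-automorphism projection. I would also remark in passing that, since $\Inn(E)\le O_p(A)$ always holds, the argument in fact delivers the equality $O_p(\Aut_\fs(E))=\Inn(E)$ for essential $E$, which is a convenient form for later applications of the Alperin--Goldschmidt fusion theorem.
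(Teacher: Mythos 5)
Your proposal is correct and follows the same route as the paper, whose entire proof is the observation that this is the fusion-theoretic version of \cref{GrpChain}; your first paragraph is exactly that citation, and your treatment of the ``in particular'' clause (essential $\Rightarrow$ $\fs$-radical since a strongly $p$-embedded subgroup forces $O_p(\Out_{\fs}(E))=\{1\}$, then push $O_p(A)$ through the projection to $\Out_{\fs}(E)$ to get $O_p(A)\le\Inn(E)$) is precisely the routine argument the paper leaves implicit. Your closing remark that in fact $O_p(\Aut_{\fs}(E))=\Inn(E)$ for essential $E$ is also correct, since $\Inn(E)$ is always a normal $p$-subgroup of $\Aut_{\fs}(E)$.
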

\begin{proof}
This is the fusion theoretic version of \cref{GrpChain}.
\end{proof}

\begin{lemma}\label{E-Bound}
Suppose that $\fs$ is a saturated fusion system and $E$ is an essential subgroup. Assume that $\Aut_{\fs}(E)$ is a $\mathcal{K}$-group. Then $|E/\Phi(E)|\geq |\Out_S(E)|^2$.
\end{lemma}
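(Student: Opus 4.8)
The plan is to show that $\Out_{\fs}(E)$ acts faithfully on the $\GF(p)$-vector space $V:=E/\Phi(E)$, and then to bound the order of a Sylow $p$-subgroup of $\Out_{\fs}(E)$ below in terms of $\dim_{\GF(p)}V$ using that $E$ essential forces $\Out_{\fs}(E)$ to contain a strongly $p$-embedded subgroup. First I would establish faithfulness. Writing $G:=\Aut_{\fs}(E)$ and noting $[E,E]\le\Phi(E)$, the inner automorphisms $\Inn(E)$ centralize $V$, so $\Inn(E)\le C:=C_G(V)$. By \cref{burnside}, $C_{\Aut(E)}(E/\Phi(E))$ is a normal $p$-subgroup of $\Aut(E)$, hence $C=G\cap C_{\Aut(E)}(E/\Phi(E))$ is a normal $p$-subgroup of $G$, and so $C/\Inn(E)$ is a normal $p$-subgroup of $\Out_{\fs}(E)=G/\Inn(E)$. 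Since $E$ is essential it is $\fs$-radical, i.e. $O_p(\Out_{\fs}(E))=\{1\}$, whence $C=\Inn(E)$ and $\Out_{\fs}(E)$ acts faithfully on $V$. Moreover $E$ essential is fully $\fs$-normalized, hence fully $\fs$-automized, so $\bar S:=\Out_S(E)\in\syl_p(\Out_{\fs}(E))$ (and, as $E$ is $\fs$-centric, $|\bar S|=|N_S(E)/E|$). It thus remains to prove that any faithful $\GF(p)\Out_{\fs}(E)$-module satisfies $|V|\ge|\bar S|^2$.

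Next I would invoke essentiality in the form that $\bar G:=\Out_{\fs}(E)$ contains a strongly $p$-embedded subgroup. Replacing $\bar G$ by $O^{p'}(\bar G)$ — which still acts faithfully on $V$, still has $\bar S$ as a Sylow $p$-subgroup, and still has a strongly $p$-embedded subgroup (intersect the given one with $O^{p'}(\bar G)$) — I may assume $\bar G=O^{p'}(\bar G)$; in particular $O_p(\bar G)=\{1\}$, since any normal $p$-subgroup lies in $H\cap H^g$ for some conjugate $H^g\ne H$ of a strongly $p$-embedded subgroup $H$. Using the hypothesis that $\Aut_{\fs}(E)$ is a $\mathcal K$-group, I would then apply the classification of $\mathcal K$-groups possessing a strongly $p$-embedded subgroup (see \cite{GLS3}): either $\bar G$ has $p$-rank one, or $\bar G$ has a single component $K$ with $\bar S\in\syl_p(K)$ and $K/Z(K)$ a known simple group from an explicit list, the generic members being the rank one groups of Lie type $\PSL_2(p^m)$, $\PSU_3(p^m)$, $\Sz(2^m)$ and $\Ree(3^m)$ in their defining characteristic.

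Finally I would verify $|V|\ge|\bar S|^2$ in each case by restricting $V$ to $K$ — controlling the $p'$-obstruction $O_{p'}(\bar G)$ via coprime action to extract a genuinely faithful section for $K/Z(K)$ — and then invoking the known minimal dimension of a faithful $\GF(p)$-representation. For the generic Lie type cases the minimal faithful module is the natural module, whose $\GF(p)$-dimension is exactly $2\log_p|\bar S|$: for $\PSL_2(p^m)$, $\PSU_3(p^m)$, $\Sz(2^m)$ and $\Ree(3^m)$ these dimensions are $2m$, $6m$, $4m$ and $7m$ while $|\bar S|=p^m$, $p^{3m}$, $2^{2m}$ and $3^{3m}$ respectively, so in every case $|V|\ge p^{2\log_p|\bar S|}=|\bar S|^2$. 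Equality holds precisely for the natural $\SL_2(p^m)$-, $\SU_3(p^m)$- and $\Sz(2^m)$-modules, which is exactly why the exponent $2$ is sharp.

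The main obstacle is this final case analysis rather than the reduction. One must treat by hand the finitely many sporadic and alternating exceptions in the list, and, more delicately, the $p$-rank one configurations: there a cyclic or generalized quaternion Sylow $p$-subgroup can occur, and one must check that either such a group cannot support a strongly $p$-embedded subgroup with $O_{p'}=\{1\}$, or that the presence of a nontrivial $O_{p'}(\bar G)$ (as in a Frobenius configuration $C_\ell\rtimes\bar S$) forces the faithful module to be so large that $|V|\ge|\bar S|^2$ holds with room to spare. Assembling these checks uniformly, together with the coprime-action bookkeeping needed to pass from a faithful $\bar G$-module to a faithful module for the relevant simple section, is the technical heart of the argument.
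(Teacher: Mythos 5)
Your proposal is correct and is essentially the same argument as the one the paper relies on: the paper gives no internal proof but simply cites \cite[Lemma 4.5]{Comp1}, and that proof proceeds exactly along your lines --- faithfulness of $\Out_{\fs}(E)$ on $E/\Phi(E)$ via Burnside's lemma together with $\fs$-radicality, $\Out_S(E)\in\syl_p(\Out_{\fs}(E))$ from full automization, the $\mathcal{K}$-group classification of groups with a strongly $p$-embedded subgroup, and lower bounds on dimensions of faithful $\GF(p)$-modules (indeed, the paper's later remarks confirm that the $\mathcal{K}$-group hypothesis enters only through the list of such groups and their Sylow $p$-subgroups). Two peripheral caveats: your blanket claim that $\bar{S}\in\syl_p(K)$ for the unique component $K$ fails for list members such as $\Sz(32){:}5$ (where the module is cross-characteristic and the bound holds with enormous room), and for $p=2$ with $\bar{S}$ cyclic of order $4$ the naive Jordan-block bound only yields $\dim V\geq 3 < 4$, so the Frobenius-type analysis of $O_{2'}(\bar{G})$ that you sketch in your final paragraph is genuinely indispensable there rather than a mere formality.
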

\begin{proof}
This is \cite[{Lemma 4.5}]{Comp1}.
\end{proof}

Now that we have a way to determine whether a subgroup is essential, in order to make use of the Alperin--Goldschmidt fusion theorem, we must also determine the induced automorphism group by $\fs$. The first result along these lines determines the potential automizer $\Aut_{\fs}(E)$ of an essential subgroup $E$ whenever some non-central chief factor of $E$ is an FF-module. It is important to note that this theorem does not rely on a $\mathcal{K}$-group hypothesis.

\begin{theorem}\label{SEFF}
Suppose that $E$ is an essential subgroup of a saturated fusion system $\fs$ over a $p$-group $S$, and assume that there is an $\Aut_{\fs}(E)$-invariant subgroup $V\le \Omega(Z(E))$ such that $V$ is an FF-module for $G:=\Out_{\fs}(E)$. Then, writing $L:=O^{p'}(G)$, we have that $L/C_L(V)\cong \SL_2(p^n)$, $C_L(V)$ is a $p'$-group and $V/C_V(O^p(L))$ is a natural $\SL_2(p^n)$-module.
\end{theorem}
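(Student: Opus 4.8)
The plan is to study the action of $G:=\Out_{\fs}(E)$ on $V$ by combining two facts that are free from the hypotheses. Since $E$ is essential, $G$ contains a strongly $p$-embedded subgroup $H$, and consequently $O_p(G)=\{1\}$ (a normal $p$-subgroup would lie in every Sylow $p$-subgroup, hence in $H\cap H^g$ for suitable $g$, contradicting strong $p$-embedding). Since $E$ is fully $\fs$-normalized it is fully $\fs$-automized, so $S_0:=\Out_S(E)$ is a Sylow $p$-subgroup of $G$; we may arrange $N_G(S_0)\le H$. Because $V$ is an FF-module there is an offender $A\le G$ with $A/C_A(V)$ elementary abelian, $[V,A]\ne\{1\}$ and $|V/C_V(A)|\le|A/C_A(V)|$. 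The goal is to pin down $L:=O^{p'}(G)$ and its action on $V$; note first that $O_p(L)\le O_p(G)=\{1\}$ since $L\trianglelefteq G$, so $L$ has no nontrivial normal $p$-subgroup.

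First I would extract the rank-one geometry forced by $H$. The defining property of a strongly $p$-embedded subgroup makes distinct Sylow $p$-subgroups of $G$ meet trivially and confines all $p$-local structure to the single conjugacy pattern governed by $H$; in particular the offenders and their conjugates that generate the $p$-part of $L$ can be taken to be just $A$ and one opposite conjugate $A^g$ with $g\notin H$. This exhibits $L$ (modulo its $p'$-part) as a group generated by two "opposite" offenders, precisely the configuration treated by the $\mathcal{K}$-free recognition theory for groups acting quadratically on FF-modules. The role of strong $p$-embedding here is to replace the usual $\mathcal{K}$-group input: it supplies the rigidity that the classification of FF-modules would otherwise require.

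With this in place, the decisive step is the identification $L/C_L(V)\cong\SL_2(p^n)$ via the rank-one quadratic recognition, which also yields $C_L(V)=O_{p'}(L)$, a $p'$-group (here $O_p(L)=\{1\}$ is exactly what rules out a residual $p$-part in $C_L(V)$). Once $\SL_2(p^n)$ is recognized, its Sylow $p$-subgroup acts quadratically on $V$, so $[V,T,T]=\{1\}$ for $T\in\syl_p(L)$ and $[V,O^p(L)]\ne\{1\}$; \cref{DirectSum} then shows that $[V/C_V(O^p(L)),O^p(L)]$ is a direct sum of natural $\SL_2(p^n)$-modules. Finally the FF-inequality collapses this sum to a single summand: for a sum of $k$ natural modules a Sylow offender would satisfy $|V/C_V(A)|=q^k$ while $|A/C_A(V)|\le q$, so $|V/C_V(A)|\le|A/C_A(V)|$ forces $k=1$. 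Hence $V/C_V(O^p(L))$ is a natural $\SL_2(p^n)$-module.

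The main obstacle is the $\mathcal{K}$-free recognition of $\SL_2(p^n)$ itself, together with the establishment of quadratic action. The general structure theory of FF-modules ordinarily invokes the classification of finite simple groups, and the whole point of the hypothesis is that the strongly $p$-embedded subgroup lets one avoid this; making the collapse rigorous — reducing to a quadratic pair of opposite offenders, excluding wreath-product and diagonal configurations, and applying a rank-one identification theorem for $\SL_2(p^n)$ without a $\mathcal{K}$-group assumption — is where essentially all the work lies, whereas the passage to the natural-module statement via \cref{DirectSum} and the FF-bound is routine.
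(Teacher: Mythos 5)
Your proposal cannot be compared against an internal argument, because the paper does not give one: its entire proof of \cref{SEFF} is the citation \cite[Theorem 1.1]{henkesl2}, and the paper repeatedly leans on the fact that Henke's theorem is proved without a $\mathcal{K}$-group hypothesis. So the real test is whether your sketch could substitute for that citation, and it cannot. The decisive step --- the identification $L/C_L(V)\cong\SL_2(p^n)$ together with $C_L(V)=O_{p'}(L)$ --- is exactly the content of the theorem being proved, and in your write-up it is discharged to an unnamed ``rank-one quadratic recognition'' / ``$\mathcal{K}$-free recognition theory for groups acting quadratically on FF-modules''; you yourself concede that ``essentially all the work lies'' there. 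That is a restatement of the problem, not a proof: no concrete recognition theorem is invoked, and no argument is given that the configuration you describe satisfies the hypotheses of any such theorem.

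Beyond the deferred core, two specific steps in the reduction are wrong or missing. First, strong $p$-embedding does \emph{not} force distinct Sylow $p$-subgroups of $G$ to meet trivially. The correct statement is that any Sylow $p$-subgroup meeting $H$ in a nontrivial $p$-subgroup is contained in $H$ (normalizers of nontrivial $p$-subgroups of $H$ lie in $H$, so one climbs up a normalizer chain); hence Sylow subgroups in \emph{distinct conjugates} of $H$ are disjoint, but Sylow subgroups inside $H$ itself may well intersect --- e.g.\ $\Alt(2p)$ for $p\geq 5$ has a strongly $p$-embedded subgroup while its Sylow $p$-subgroups, elementary abelian of order $p^2$, share subgroups of order $p$. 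Consequently your reduction of the $p$-part of $L$ to a single pair of ``opposite'' offenders $A$, $A^g$ is unjustified; the rigidity you attribute to $H$ does not exclude wreath-type or diagonal configurations by itself, which is precisely what the hard part of \cite{henkesl2} must do. Second, offenders need not act quadratically: to even reach a quadratic configuration one must replace $A$ by a quadratic best offender via the Timmesfeld replacement theorem, a step absent from your outline; and even once $L/C_L(V)\cong\SL_2(p^n)$ is known, the hypothesis $[V,T,T]=\{1\}$ needed to apply \cref{DirectSum} must be verified for $V$ itself --- it does not follow formally from the isomorphism type of $L/C_L(V)$. Your preliminary observations ($O_p(G)=\{1\}$, $\Out_S(E)\in\syl_p(G)$) and the closing count collapsing a sum of $k$ natural modules to $k=1$ via the FF-inequality are correct and match the standard endgame, but they sit on top of an unproven core, so the proposal has a genuine gap.
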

\begin{proof}
This is \cite[Theorem 1.1]{henkesl2}.
\end{proof}

In this work, we encounter potential essentials subgroups none of whose non-central chief factors are FF-modules for its outer automorphism group. In this case, we still need to determine the automizers and must use other techniques. To do this, we appeal to a proposition proved in \cite{MainThm}. There, this proposition was proved under a $\mathcal{K}$-group hypothesis. However, when we have need of this proposition later in the paper, we will almost always have a workaround which allows us to apply this proposition without the need for this extra hypothesis. To ease exposition, we present the proposition as it is stated in \cite{MainThm} and describe how we avoid the need for a $\mathcal{K}$-group hypothesis in each application as it appears.

\begin{definition}
Suppose that $\fs$ is a saturated fusion system on a $p$-group $S$. Then $E\le S$ is \emph{maximally essential} in $\fs$ if $E$ is essential and, if $F\le S$ is essential in $\fs$ and $E\le F$, then $E=F$.
\end{definition}

\begin{proposition}\label{MaxEssen}
Suppose that $\fs$ is a saturated fusion system on a $p$-group $S$ and $E\le S$ is maximally essential. If $m_p(\Out_S(E))\geq 2$ then $O^{p'}(\Out_{\fs}(E))$ is a $p'$-central extension of one of the following groups:
\begin{enumerate}
\item $\PSL_2(p^a)$ or $\PSU_3(p^b)$ for $p$ arbitrary, $a\geq 2$ and $p^b>2$;
\item $\Sz(2^{2a+1})$ for $p=2$ and $a\geq 1$;
\item $\PSL_2(8)\cong\Ree(3)'$, $\mathrm{Ree}(3^{2a+1}), \PSL_3(4)$ or $\mathrm{M}_{11}$ for $p=3$ and $a\geq 0$;
\item $\Sz(32)$, $\Sz(32):5, {}^2\mathrm{F}_4(2)'$ or  $\mathrm{McL}$ for $p=5$; or
\item $\mathrm{J}_4$ for $p=11$.
\end{enumerate}
\end{proposition}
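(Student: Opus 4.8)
The plan is to recast the statement as a question about the finite group $G := \Out_{\fs}(E)$ and then appeal to the classification of $\mathcal{K}$-groups possessing a strongly $p$-embedded subgroup. The first step is to extract from the hypotheses the three facts we need about $G$. Since $E$ is $\fs$-essential, $\Out_{\fs}(E)$ contains a strongly $p$-embedded subgroup by definition. Since every essential subgroup is $\fs$-radical, $O_p(G) = O_p(\Out_{\fs}(E)) = \{1\}$. And since $E$ is fully $\fs$-normalized in a saturated system it is fully $\fs$-automized, so $\Out_S(E) \in \syl_p(G)$; thus the hypothesis $m_p(\Out_S(E)) \geq 2$ becomes $m_p(G) \geq 2$. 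Finally, as we are working inside a local $\mathcal{CK}$-system, $\Out_{\fs}(E)$ is a $\mathcal{K}$-group, so the classification is available to us.

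With these facts in hand, I would apply the classification of finite $\mathcal{K}$-groups with a strongly $p$-embedded subgroup and $p$-rank at least $2$. Writing $L := O^{p'}(G)$, this yields that $L$, modulo a central $p'$-subgroup, is isomorphic to one of the groups listed; these are simple apart from the almost simple group $\Sz(32){:}5$, which appears for $p = 5$ precisely because its outer field automorphism of order $5$ is what lifts the $p$-rank to $2$. The hypothesis $m_p(G) \geq 2$ does real work here: it excludes the $p$-solvable configurations (which, for a group with a strongly $p$-embedded subgroup, always have cyclic or quaternion Sylow $p$-subgroups and hence $p$-rank $1$) and the rank-one cases such as $\PSL_2(p)$, thereby forcing a component and narrowing the possibilities to the stated families. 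This classification is the deepest input to the argument and is exactly where the classification of the finite simple groups becomes unavoidable.

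It then remains to pin down the extension itself: the classification delivers $L/O_{p'}(L)$ up to isomorphism, and one must verify that $O_{p'}(L) \le Z(L)$, so that $L$ is genuinely a central extension by a $p'$-group of one of the listed groups. Here I would exploit $O_p(G) = \{1\}$ together with coprime action, analysing the action of a Sylow $p$-subgroup of $L$ on $O_{p'}(L)$ to force the core to be central. The maximality of $E$ is what keeps us in this clean situation: were $E$ properly contained in another essential subgroup $F$, the restriction of $\Aut_{\fs}(F)$ to $E$ could introduce a second component of order divisible by $p$ or split $L$ as a product, and maximality of $E$ rules this out, leaving a single such component. I expect the principal obstacle to be the classification of strongly $p$-embedded subgroups itself --- both in invoking it in the $\mathcal{K}$-group setting and in the bookkeeping required to separate the simple from the almost simple cases (in particular to recognise $\Sz(32){:}5$) and to confirm that every surviving candidate really does have $p$-rank at least $2$.
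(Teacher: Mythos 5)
Your opening reduction is correct and is exactly the right first move: $G:=\Out_{\fs}(E)$ has a strongly $p$-embedded subgroup since $E$ is essential, $O_p(G)=\{1\}$ since essential subgroups are $\fs$-radical, $\Out_S(E)\in\syl_p(G)$ since a fully normalized subgroup in a saturated system is fully automized, and the hypothesis becomes $m_p(G)\geq 2$; you are also right that a $\mathcal{K}$-group assumption must be supplied (the restatement here suppresses it, but the surrounding text concedes the result is proved in \cite{MainThm} under exactly that hypothesis, and indeed the paper's entire ``proof'' is the citation \cite[Proposition 3.2.6]{MainThm}). The genuine gap comes at the decisive step. The classification of $\mathcal{K}$-groups with a strongly $p$-embedded subgroup and $p$-rank at least $2$ (as in \cite{GLS3}) returns a strictly longer list than the proposition's conclusion: besides the groups listed it contains at least $\Alt(2p)$ for $p\geq 5$ and $\mathrm{Fi}_{22}$ for $p=5$. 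Your proposal contains no mechanism for removing these surplus candidates, so as written it establishes only a weaker statement with a longer list.

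Relatedly, you have misassigned the role of the maximal essentiality hypothesis. It is not needed to prevent ``a second component'': for a group with a strongly $p$-embedded subgroup, trivial $p$-core and $m_p\geq 2$, the classification already delivers $O^{p'}(G/O_{p'}(G))$ essentially simple, so uniqueness of the component is free. What maximality actually buys --- and this is visible in every application of the proposition in this paper --- is a lifting mechanism: a $p'$-element of $N_{\Aut_{\fs}(E)}(\Aut_S(E))$ extends by receptivity to a morphism on $N_S(E)$, and because no essential subgroup lies strictly above $E$, the Alperin--Goldschmidt theorem pushes it to an element of $\Aut_{\fs}(S)$; the resulting constraints on $N_{\Out_{\fs}(E)}(\Out_S(E))$ are what eliminate candidates such as $\Alt(2p)$ and $\mathrm{Fi}_{22}$, whose Sylow normalizer structure is incompatible with such lifts. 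Your treatment of the $p'$-central extension claim is also too thin: $O_p(G)=\{1\}$ concerns the $p$-core, and coprime action alone does not centralize the $p'$-core (compare $\Alt(4)$ at $p=3$, which has a strongly $3$-embedded subgroup, trivial $3$-core, and a noncentral $3'$-core); one genuinely needs $m_p\geq 2$ together with strong embedding, via coprime generation $O_{p'}(L)=\langle C_{O_{p'}(L)}(t)\mid 1\ne t\in T\rangle$ with each $C_L(t)$ inside the strongly embedded subgroup, and then a further argument. To close the gap you would need to reproduce the elimination arguments of \cite[Proposition 3.2.6]{MainThm}, or do as the paper does and simply cite it.
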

\begin{proof}
See \cite[Proposition 3.2.6]{MainThm}.
\end{proof}

Now equipped with a method to ascertain a complete set of essentials of $\fs$ along with their automizers, we can apply the Alperin--Goldschmidt fusion theorem. We make use of the main result in \cite{MainThm} wherein fusion systems in which the Alperin-Goldschmidt theorem provides a rank $2$ amalgam are classified.

\begin{theorem}\label{MainThm}
Let $\fs$ be a saturated fusion system with two $\Aut_{\fs}(S)$-invariant maximally essential subgroups $E_1, E_2\normaleq S$ such that $\Aut_{\fs}(E_i)$ is a $\mathcal{K}$-group for $i\in\{1,2\}$. Suppose that $O_p(\fs)=\{1\}$ and $\fs=\langle N_{\fs}(E_1), N_{\fs}(E_2) \rangle$. Then $\fs$ is known explicitly.
\end{theorem}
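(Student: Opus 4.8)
The plan is to realise the hypotheses as a rank $2$ amalgam of finite groups and then invoke the classification of weak BN-pairs, so that uniqueness of the amalgam forces uniqueness of $\fs$. First I would pass from fusion to groups via the Model Theorem (\cref{model}). Since each $E_i$ is essential it is $\fs$-centric and normal in the constrained system $N_{\fs}(E_i)$, so there is a model $L_i$ with $O_p(L_i)=E_i$, $S\in\syl_p(L_i)$ and $\fs_S(L_i)=N_{\fs}(E_i)$. As $L_i$ has characteristic $p$ one checks $C_{L_i}(S)=Z(S)$, whence $\Aut_{L_i}(S)=\Aut_{N_{\fs}(E_i)}(S)$. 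Here the hypothesis that $E_1,E_2$ are $\Aut_{\fs}(S)$-invariant is decisive: every element of $\Aut_{\fs}(S)$ preserves $E_i$ and therefore lies in $\Aut_{N_{\fs}(E_i)}(S)$, so in fact $\Aut_{L_1}(S)=\Aut_{\fs}(S)=\Aut_{L_2}(S)$. Consequently $N_{L_1}(S)$ and $N_{L_2}(S)$ are both models for $N_{\fs}(S)$ and may be identified, by an isomorphism fixing $S$ pointwise, with a single group $B$. This produces a well-defined amalgam $\mathcal{A}\colon L_1\hookleftarrow B\hookrightarrow L_2$ with $B=N_{L_i}(S)$ and $O_p(B)=S$.

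Next I would verify that $\mathcal{A}$ is a weak BN-pair of rank $2$. The characteristic $p$ property of $L_i$ gives $C_{L_i}(E_i)\le E_i$, so $L_i/E_i\cong\Out_{\fs}(E_i)$, which contains a strongly $p$-embedded subgroup by essentiality. Invoking \cref{MaxEssen} (available since $E_i$ is maximally essential), $O^{p'}(\Out_{\fs}(E_i))$ is a $p'$-central extension of a rank $1$ group of Lie type in characteristic $p$ outside a short exceptional list, so $B/E_i$ is the Borel subgroup of a split BN-pair of rank $1$ in $L_i/E_i$. The faithfulness axiom is precisely a reformulation of $O_p(\fs)=\{1\}$: any $1\ne R\le S$ normal in both $L_1$ and $L_2$ lies in $E_1\cap E_2$ and is invariant under $\Aut_{\fs}(E_1)$, $\Aut_{\fs}(E_2)$ and $\Aut_{\fs}(S)$; since $\fs=\langle N_{\fs}(E_1),N_{\fs}(E_2)\rangle$ this forces $R\normaleq\fs$, contradicting $O_p(\fs)=\{1\}$.

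With $\mathcal{A}$ established as a weak BN-pair of rank $2$, I would apply the classification of such amalgams in \cite{Greenbook}, which shows that $\mathcal{A}$ is isomorphic, via a map restricting to the identity on $S$, to one of an explicit finite list of amalgams: those arising from the two minimal parabolic subgroups of a rank $2$ group of Lie type in characteristic $p$, together with a handful of sporadic amalgams. The exceptional quotients thrown up by \cref{MaxEssen} (such as $\mathrm{M}_{11}$, $\PSL_3(4)$, $\mathrm{McL}$ and $\mathrm{J}_4$) must be dealt with by hand at this stage, either ruling them out as members of a weak BN-pair or identifying the resulting amalgam explicitly; the same is needed for the small $p$-rank $1$ configurations that fall outside the scope of \cref{MaxEssen}.

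Finally, the generation hypothesis $\fs=\langle N_{\fs}(E_1),N_{\fs}(E_2)\rangle=\langle\fs_S(L_1),\fs_S(L_2)\rangle$ lets me transport uniqueness of the amalgam to uniqueness of the fusion system: an isomorphism of amalgams fixing $S$ induces an isomorphism of the generated fusion systems, so $\fs$ is isomorphic to the $p$-fusion system of the corresponding known group (or to one of the finitely many explicitly described exceptional systems), and hence is known explicitly. I expect the main obstacle to be two-fold. First, one must control the exceptional rank $1$ quotients so that the weak BN-pair hypotheses of \cite{Greenbook} genuinely apply to every surviving configuration. Second, one must make rigorous the final implication — that uniqueness of $\mathcal{A}$ forces uniqueness of $\fs$ — by checking that the entire fusion system, and not merely its two local pieces $N_{\fs}(E_i)$, is recovered functorially from the amalgam data.
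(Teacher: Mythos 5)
Your proposal cannot be checked against an internal argument because this paper does not prove \cref{MainThm} at all: the theorem is imported wholesale from \cite{MainThm}, and what you have written is precisely the strategy of that cited work as the paper itself describes it in the introduction and in the discussion preceding the $p=2$ and $p\geq 5$ classifications --- models for the constrained systems $N_{\fs}(E_i)$ via \cref{model}, identification of the two copies of a model for $N_{\fs}(S)$ (your construction of $B$, using $\Aut_{\fs}(S)$-invariance of the $E_i$ and the uniqueness clause of the Model Theorem, is sound) to form a rank $2$ amalgam, faithfulness from $O_p(\fs)=\{1\}$ via \cref{normalinF}, uniqueness of weak BN-pairs from \cite{Greenbook}, and transport of amalgam uniqueness back through the generation hypothesis $\fs=\langle N_{\fs}(E_1), N_{\fs}(E_2)\rangle$. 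One substantive caution, which you partly anticipate: \cref{MaxEssen} carries the hypothesis $m_p(\Out_S(E_i))\geq 2$, which the hypotheses of \cref{MainThm} do not grant, and even when it applies, the exceptional automizers in its conclusion (e.g. $\mathrm{M}_{11}$, $\PSL_3(4)$, ${}^2\mathrm{F}_4(2)'$, $\mathrm{McL}$, $\mathrm{J}_4$) yield amalgams that are \emph{not} weak BN-pairs, so \cite{Greenbook} is simply unavailable there. In \cite{MainThm} these configurations, together with the cyclic $\Out_S(E_i)$ case, are not a residual hand-check but are handled by a direct amalgam-method analysis (FF-module recognition, pushing-up arguments), and this constitutes the bulk of that proof; your sketch is therefore the correct architecture, with its two self-identified obstacles marking exactly where the real work lies.
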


Rather than provide a full list of outcomes in the above theorem, we instead only list the relevant fusion systems in the following two corollaries.

\begin{corollary}\label{G2Cor}
Suppose the hypothesis of \cref{MainThm} and assume that $S$ is isomorphic to a Sylow $p$-subgroup of $\mathrm{G}_2(p^n)$ for some $n\in\N$. Then either
\begin{enumerate}
\item $\fs=\fs_S(G)$, where $F^*(G)=O^{p'}(G)\cong\mathrm{G}_2(p^n)$;
\item $p=2$ and $\fs=\fs_S(G)$ where $G\cong\mathrm{M}_{12}$ or $\mathrm{G}_2(3)$; or
\item $p=7$, $\fs$ is a uniquely determined simple fusion system on a Sylow $7$-subgroup of $\mathrm{G}_2(7)$ and, assuming the classification of finite simple groups, $\fs$ is exotic.
\end{enumerate}
\end{corollary}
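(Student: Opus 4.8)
The plan is to obtain this as a specialization of \cref{MainThm}. That theorem already furnishes a complete and explicit list of the saturated fusion systems satisfying the stated hypotheses, each arising as the fusion system attached to a weak BN-pair of rank $2$ (together with finitely many sporadic degeneracies) and each coming equipped with an explicitly determined underlying $p$-group. Consequently, the only task is to intersect that list with the extra condition that $S$ be isomorphic to a Sylow $p$-subgroup of $\mathrm{G}_2(p^n)$. So the first step is simply to invoke \cref{MainThm} and then filter its conclusions by the isomorphism type of $S$.

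The substance is the matching step: decide which entries of the \cref{MainThm} list are supported on a $p$-group isomorphic to a Sylow $p$-subgroup of $\mathrm{G}_2(q)$. Here I would use the structural data assembled in \cref{G2Sylow} as a fingerprint for $S$, namely that $|S|=q^6$ together with the descriptions of $Z(S)$, $\Phi(S)$ and $S'$ read off from the Chevalley commutator relations, and the resulting $\mathrm{G}_2$-shape of the amalgam formed by the two unipotent radicals. Since each outcome of \cref{MainThm} carries an explicitly determined Sylow $p$-subgroup, comparing this fingerprint against each entry in turn discards all but the amalgams of $\mathrm{G}_2$-type over $\GF(q)$.

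For a generic field $q=p^n$ this leaves exactly the amalgam realizing $\mathrm{G}_2(q)$, yielding conclusion (i): $\fs=\fs_S(G)$ with $F^*(G)=O^{p'}(G)\cong\mathrm{G}_2(p^n)$, where $G$ may incorporate the diagonal, field and graph automorphisms that stabilize $S$ but leave $F^*(G)$ unchanged. The remaining two conclusions are the small-field coincidences already present in the \cref{MainThm} list. When $p=2$ and $n=1$, the Sylow $2$-subgroup of $\mathrm{G}_2(2)$ is isomorphic to that of $\mathrm{M}_{12}$ and of $\mathrm{G}_2(3)$ (all of order $2^6$), so these two realizable systems appear as (ii); and when $p=7$, $n=1$, the list contains the simple exotic system on a Sylow $7$-subgroup of $\mathrm{G}_2(7)$ identified in \cite{parkersem}, whose exoticity is verified there modulo the classification of finite simple groups, giving (iii).

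The main obstacle is the completeness of this bookkeeping: one must be certain that the invariants of $S$ recorded in \cref{G2Sylow}---its order $q^6$ together with $Z(S)$, $\Phi(S)$ and $S'$---are restrictive enough to eliminate every non-$\mathrm{G}_2$ amalgam in the (long) list of \cref{MainThm}, and that the degeneracies at $p\in\{2,7\}$ produce precisely the systems stated and no others. This is purely a matter of comparing the explicit $p$-group data attached to each outcome against this fingerprint; no new fusion-theoretic input beyond \cref{MainThm} is required.
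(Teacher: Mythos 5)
Your proposal is correct and coincides with the paper's own treatment: the paper offers no separate argument for \cref{G2Cor}, stating explicitly that the two corollaries merely record the entries from the full outcome list of \cref{MainThm} relevant to the given Sylow structure, which is exactly the invoke-and-filter procedure you describe, including the small-field coincidences at $p=2$ (where the Sylow $2$-subgroups of $\mathrm{M}_{12}$ and $\mathrm{G}_2(3)$, both of order $2^6$, agree with that of $\mathrm{G}_2(2)$) and the exotic system at $p^n=7$ from \cite{parkersem}. The only blemish is cosmetic: $\mathrm{G}_2(p^n)$ admits no diagonal automorphisms (and graph automorphisms only when $p=3$), so in outcome (i) the extensions of $F^*(G)$ are by $p'$-subgroups of the group generated by field (and, for $p=3$, graph) automorphisms, not the full list you mention.
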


\begin{corollary}\label{PSUCor}
Suppose the hypothesis of \cref{MainThm} and assume that $S$ is isomorphic to a Sylow $p$-subgroup of $\PSU_4(p^n)$ for some $n\in\N$.  Then $\fs=\fs_S(G)$, where $F^*(G)=O^{p'}(G)\cong\mathrm{PSU}_4(p^n)$; or $p^n=3$ and $G\cong\PSU_6(2), \PSU_6(2).2$, $\mathrm{McL}$, $\Aut(\mathrm{McL})$ or $\mathrm{Co}_2$.
\end{corollary}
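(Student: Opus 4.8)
The plan is to derive this corollary directly from \cref{MainThm} and then carry out a recognition step. Under the stated hypotheses the hypotheses of \cref{MainThm} hold verbatim, so that theorem yields that $\fs$ lies in the explicit list of saturated fusion systems classified there. The only remaining task is therefore to isolate, from that list, precisely those fusion systems whose underlying $p$-group is isomorphic to $S$, a Sylow $p$-subgroup of $\PSU_4(p^n)$: every such $\fs$ must then appear in the statement, while conversely each group named in the statement is checked to realize such an $\fs$ (the generic answer being phrased as $F^*(G)=O^{p'}(G)\cong\PSU_4(p^n)$ so as to absorb the various almost simple extensions inducing the same fusion).

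First I would pin down the generic member. Using the construction in \cref{G2Sylow}, the two maximally essential subgroups $E_1,E_2\normaleq S$ coincide with the unipotent radicals of the two maximal parabolics of $\PSU_4(p^n)$, and I would identify their automizers via \cref{MaxEssen}, which constrains $O^{p'}(\Out_{\fs}(E_i))$ to a $p'$-central extension of a short list of groups. Here \cref{SL2ModRecog} together with \cref{Omega4} is decisive: for $\PSU_4(p^n)$ exactly one essential carries a natural $\Omega_4^-(p^n)$-module (reflecting $\PSL_2(p^{2n})\cong\Omega_4^-(p^n)$) while the other carries a natural $\SL_2$-module. I would then exclude the other infinite families appearing in the \cref{MainThm} list by comparing $p$-group invariants against those recorded for $S$ in \cref{G2Sylow}: the order $q^6$, the orders $|Z(S)|=q$, $|S'|=q^3$ and $|J(S)|=q^4$, the exponent computed in \cref{PSUExponent}, and above all this asymmetric module pattern together with the commutator data $|[x_\alpha(t),J(S)]|=q^2$. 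This combination is what separates the $\PSU_4(p^n)$ amalgam from the $\mathrm{G}_2$, $\Sp_4$ and $\PSL$ amalgams occurring in the same output.

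Finally I would treat the exceptional coincidences in small cases. Besides the generic Lie-type amalgams, the list of \cref{MainThm} contains finitely many sporadic and cross-characteristic examples, and I expect all of those sharing the Sylow $p$-subgroup of $\PSU_4(p^n)$ to occur only when $p^n=3$, where the Sylow $3$-subgroup of $\PSU_4(3)$ is simultaneously a Sylow $3$-subgroup of $\PSU_6(2)$, $\mathrm{McL}$ and $\mathrm{Co}_2$ (and of the extensions $\PSU_6(2).2$ and $\Aut(\mathrm{McL})$), giving exactly the list in the statement. The main obstacle is precisely this matching against the full and lengthy output of \cref{MainThm}: verifying that no other infinite family and no other sporadic amalgam has a Sylow $p$-subgroup isomorphic to $S$, and confirming the $p^n=3$ coincidences together with the associated automizers. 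For the latter I would rely on the known Sylow structure of these groups as recorded in \cite{GLS3} and \cite{atlas}, or on a direct computation in \textsc{Magma} as in the other small-case analyses of this paper.
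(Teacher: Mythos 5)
Your proposal matches the paper's treatment: \cref{PSUCor} is obtained exactly as you describe, by applying \cref{MainThm} and extracting from its explicit output list those completions whose Sylow $p$-subgroup is isomorphic to that of $\PSU_4(p^n)$ --- the paper offers no further argument, saying only that the two corollaries record ``the relevant fusion systems'' from that list. Your supplementary checks (the $\Omega_4^-$/$\SL_2$ module pattern on the two unipotent radicals, the invariants of $S$, and the $p^n=3$ coincidence of the Sylow $3$-subgroup of $\PSU_4(3)$ with those of $\PSU_6(2)$, $\mathrm{McL}$ and $\mathrm{Co}_2$) are correct and simply make explicit the inspection the paper leaves implicit.
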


\section[Fusion Systems on a Sylow \texorpdfstring{$2$}{2}-subgroup of \texorpdfstring{$\mathrm{G}_2(2^n)$}{G2(2n)}]{Fusion Systems on a Sylow $2$-subgroup of $\mathrm{G}_2(2^n)$}

In this section, we let $q=2^n$ for $n\in\N$, $\mathbb{K}=\GF(q)$ and $S$ be isomorphic to a Sylow $2$-subgroup of $\mathrm{G}_2(q)$. Assume throughout that $\fs$ is a saturated fusion system on $S$.

We deal with the $q=2$ case separately in order to streamline some of the arguments later in this section. Fortunately, since $|S|=2^6$ is small, we can directly determine the list of $S$-centric, $S$-radical subgroups and their automizers. We employ MAGMA to do this, although remark that lemmas and propositions in the remainder of this section all apply when $q=2$ and their proofs could adapted with minor alternations.

\begin{proposition}
Let $S$ be isomorphic to a Sylow $2$-subgroup of $\mathrm{G}_2(2)$. The $S$-centric, $S$-radical subgroups of $S$ are $S, C_S(Z_3(S)/Z(S)), C_S(Z_2(S))$ and the maximal elementary abelian subgroups of $S$ of order $2^3$.
\end{proposition}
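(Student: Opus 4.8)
The plan is to work entirely inside the explicit model $S\in\syl_2(\mathrm{G}_2(2))$ constructed in \cref{G2Sylow} from the six root subgroups $X_\alpha, X_\beta, X_{\alpha+\beta}, X_{2\alpha+\beta}, X_{3\alpha+\beta}, X_{3\alpha+2\beta}$, each of order $2$, and to read everything off from the Chevalley commutator formulas after specializing $t,u\in\{0,1\}$ (so that $t^2=t^3=t$ and the coefficients divisible by $2$ vanish). First I would pin down the characteristic structure of $S$. The highest root gives $Z(S)=X_{3\alpha+2\beta}$ of order $2$, and ascending through the root heights produces the upper central series, in particular $Z_2(S)=X_{3\alpha+\beta}X_{3\alpha+2\beta}$ of order $4$ and $Z_3(S)=X_{2\alpha+\beta}X_{3\alpha+\beta}X_{3\alpha+2\beta}$ of order $8$. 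Imposing the centralizing conditions $[s,X_{3\alpha+\beta}]=1$ and $[s,X_{2\alpha+\beta}]\le Z(S)$ directly from the relations then identifies $C_S(Z_2(S))=\langle X_\alpha, X_{\alpha+\beta}, X_{2\alpha+\beta}, X_{3\alpha+\beta}, X_{3\alpha+2\beta}\rangle$ (omitting $X_\beta$) and $C_S(Z_3(S)/Z(S))=\langle X_\beta, X_{\alpha+\beta}, X_{2\alpha+\beta}, X_{3\alpha+\beta}, X_{3\alpha+2\beta}\rangle$ (omitting $X_\alpha$), the two index-$2$ subgroups which are exactly the unipotent radicals of the two maximal parabolics of $\mathrm{G}_2(2)$. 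Alongside these I would record the maximal elementary abelian subgroups of order $2^3$, noting that $Z_3(S)$ and $\langle X_{\alpha+\beta}, X_{3\alpha+\beta}, X_{3\alpha+2\beta}\rangle$ are among them.

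Next I would check that the four listed (families of) subgroups are $S$-centric. This is a finite centralizer computation in each case. For $Q:=C_S(Z_2(S))$ it is a one-liner: since the only root subgroup failing to commute with $X_{3\alpha+\beta}$ is $X_\beta$, one has $C_S(X_{3\alpha+\beta})=Q$, whence $C_S(Q)\le C_S(X_{3\alpha+\beta})=Q$; the subgroup $C_S(Z_3(S)/Z(S))$ is handled by intersecting the centralizers of its generators, which collapse to $Z(S)$. For a maximal elementary abelian $A$ of order $2^3$ one verifies $C_S(A)=A$ directly. Radicality then splits by type. For $S$ it is vacuous, as $\Out_S(S)=\{1\}$. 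For a maximal elementary abelian $A$ of order $2^3$ it is automatic: since $A$ is abelian, $\Out(A)=\Aut(A)=\GL_3(2)$, which is simple, so $O_2(\Out(A))=\{1\}$. For each index-$2$ subgroup $Q$ I would use the realization inside $\mathrm{G}_2(2)$: here $N_S(Q)=S$, so $\Out_S(Q)=S/Q$ has order $2$, while $N_{\mathrm{G}_2(2)}(Q)/Q\cong\SL_2(2)\cong\Sym(3)$ realizes a copy of $\Sym(3)\le\Out(Q)$ in which $\Out_S(Q)$ is a Sylow $2$-subgroup. If $\Out_S(Q)\le O_2(\Out(Q))$, then $\Out_S(Q)\le O_2(\Out(Q))\cap\Sym(3)$, a normal $2$-subgroup of $\Sym(3)$, hence trivial --- a contradiction. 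Thus $\Out_S(Q)\cap O_2(\Out(Q))=\{1\}$ and $Q$ is $S$-radical.

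The genuine difficulty is exhaustiveness: showing that no other subgroup is simultaneously $S$-centric and $S$-radical. Here I would first cut the search down using the two defining properties. Every $S$-centric subgroup contains $Z(S)$ and is self-centralizing, which already discards every subgroup that is centralized by a root subgroup lying outside it. For the radical condition I would invoke \cref{Chain} (the automorphism-theoretic form of \cref{GrpChain}): a subgroup $Q$ admitting an $\Aut(Q)$-invariant normal chain all of whose factors are centralized by $\Out_S(Q)=N_S(Q)/Q$ cannot be $S$-radical. Since for any proper centric $Q$ the quotient $N_S(Q)/Q$ is a nontrivial $2$-group, this is a stringent requirement, and I would run through the surviving self-centralizing candidates order-by-order (orders $8$, $16$, $32$), using the commutator relations to exhibit such a $\Out_S(Q)$-centralized invariant central chain in every case apart from the four families above.

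This last order-by-order elimination is precisely the step I expect to be the main obstacle, and it is also the one most efficiently discharged by the \textsc{Magma} computation the author invokes: because $|S|=2^6$, a complete enumeration of subgroups together with their centralizers and outer automorphism groups is entirely feasible, whereas a purely hand-written verification must laboriously certify that each of the remaining self-centralizing subgroups fails the radical test. The structural reductions of the previous paragraphs are what explain, conceptually, why only $S$, the two unipotent radicals $C_S(Z_2(S))$ and $C_S(Z_3(S)/Z(S))$, and the maximal elementary abelian subgroups of order $2^3$ can survive both conditions.
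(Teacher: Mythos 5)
Your overall architecture matches the paper's, whose ``proof'' of this proposition is simply a machine enumeration: since $|S|=2^6$, the author computes the $S$-centric, $S$-radical subgroups and their automizers directly in \textsc{Magma}, and you likewise concede the decisive exhaustiveness step to exactly that computation (or to an unexecuted order-by-order sweep). Your by-hand verifications of the positive direction are largely sound and are a genuine addition: the observation that an elementary abelian $A$ of order $2^3$ is automatically $S$-radical because $\Out(A)=\Aut(A)=\GL_3(2)$ has trivial $O_2$, and the trick of realizing $\Sym(3)\cong N_{\mathrm{G}_2(2)}(Q)/Q$ inside $\Out(Q)$ and intersecting with the normal subgroup $O_2(\Out(Q))$, are both correct.

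There is, however, a concrete error in your structural platform, and it is precisely the $q=2$ degeneracy that the paper sidesteps by computing: the upper central series of $S$ does \emph{not} track root heights when $q=2$. Set $w:=x_{\alpha+\beta}(1)x_{2\alpha+\beta}(1)$. The $x_{3\alpha+\beta}$-coordinates $t^2u$ and $tv$ coming from $[x_\alpha(t),x_{\alpha+\beta}(u)]$ and $[x_\alpha(t),x_{2\alpha+\beta}(v)]$ cancel over $\GF(2)$ (where $t^2=t$), so $[x_\alpha(t),w]=x_{3\alpha+2\beta}(t)\in Z(S)$, and $w$ commutes with every other root subgroup modulo $Z(S)$. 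Hence $w\in Z_2(S)$, so $Z_2(S)=\langle w, x_{3\alpha+\beta}(1)\rangle\cong C_4\times C_2$ has order $8$ (note $w^2=x_{3\alpha+2\beta}(1)$), every commutator of generators lies in $Z_2(S)$, whence $Z_3(S)=S$ and $S$ has class $3$ --- not the picture you assert. Taken literally, $C_S(Z_2(S))$ then has order $16$ (it omits $X_\alpha$, since $[x_\alpha(1),w]\ne 1$) and $C_S(Z_3(S)/Z(S))=Z_2(S)$ is not even self-centralizing, so your identification of these centralizers with the two order-$2^5$ unipotent radicals fails; it is valid only if one reads $Z_2(S)$ and $Z_3(S)$ in the statement as the root-subgroup carryovers $X_{3\alpha+\beta}X_{3\alpha+2\beta}$ and $X_{\alpha+\beta}X_{2\alpha+\beta}X_{3\alpha+\beta}X_{3\alpha+2\beta}$ from the $q>2$ analysis, which the automizers in \cref{q=2Class} confirm is the intended meaning. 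Separately, your claim that $Z_3(S)=X_{2\alpha+\beta}X_{3\alpha+\beta}X_{3\alpha+2\beta}$ has order $8$ is wrong under any reading: the carryover group has order $q^4=16$, contains $X_{\alpha+\beta}$, and is nonabelian (as $[x_{\alpha+\beta}(1),x_{2\alpha+\beta}(1)]=x_{3\alpha+2\beta}(1)$), so it is in particular not among the maximal elementary abelian subgroups; you have conflated it with $W=X_{2\alpha+\beta}X_{3\alpha+\beta}X_{3\alpha+2\beta}$. None of this is fatal to the strategy --- the two index-$2$ subgroups you ultimately test are the right ones, and your centricity and radicality checks for them go through once the setup is corrected --- but as written the foundation would not survive scrutiny, and since the exhaustiveness sweep is never executed, your proof, like the paper's, ultimately \emph{is} the \textsc{Magma} computation.
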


\begin{proposition}\label{q=2Class}
Let $\fs$ be a saturated fusion system over a Sylow $2$-subgroup of $\mathrm{G}_2(2)$. Set $Q_1:=C_S(Z_3(S)/Z(S))$ and $Q_2=C_S(Z_2(S))$. Then one of the following holds:
\begin{enumerate}
\item $\fs=\fs_S(S)$;
\item $\fs=\fs_S(Q_1: \Out_{\fs}(Q_1))$ where $\Out_{\fs}(Q_1)$ is isomorphic to a subgroup of $(3\times 3):2$;
\item $\fs=\fs_S(Q_2: \Out_{\fs}(Q_2))$ where $\Out_{\fs}(Q_2)\cong \Sym(3)$;
\item $\fs=\fs_S(M)$ where $M\cong 2^3.\PSL_3(2)$;
\item $\fs=\fs_S(G)$ where $G\cong \mathrm{G}_2(2)$;
\item $\fs=\fs_S(G)$ where $G\cong \mathrm{G}_2(3)$; or
\item $\fs=\fs_S(G)$ where $G\cong\mathrm{M}_{12}$.
\end{enumerate}
\end{proposition}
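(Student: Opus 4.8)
The plan is to classify all saturated fusion systems $\fs$ on a fixed Sylow $2$-subgroup $S$ of $\mathrm{G}_2(2)$, which has order $2^6$, by exploiting the Alperin--Goldschmidt fusion theorem together with the explicit list of $S$-centric, $S$-radical subgroups furnished by the preceding proposition. Since $\fs = \langle \Aut_{\fs}(Q) \mid Q \text{ essential or } Q = S\rangle$ and every essential subgroup is fully normalized, $\fs$-centric and $S$-radical, the only candidates for essential subgroups are $Q_1 = C_S(Z_3(S)/Z(S))$, $Q_2 = C_S(Z_2(S))$, and the maximal elementary abelian subgroups of order $2^3$. Thus the first step is to pin down, for each candidate $E$, the possible automizers $\Out_{\fs}(E)$: these must contain a strongly $2$-embedded subgroup (so $O^{2'}(\Out_{\fs}(E))$ has a Sylow $2$-subgroup that is cyclic or quaternion, forcing $\Out_{\fs}(E)$ to involve $\SL_2(2^a) \cong \Sym(3)$-type sections or similar), and must normalize the relevant characteristic structure. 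Here I would use \cref{SEFF} where an FF-module is present and \cref{E-Bound} to bound $|\Out_S(E)|$, computing the chief factors of each candidate directly.

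First I would handle the constrained case where at most one of the candidate subgroups is actually essential, or none is. If no subgroup is essential then $\fs = \fs_S(S)$, giving outcome (i). If exactly one essential subgroup occurs, then $O_2(\fs) \ne \{1\}$ by \cref{normalinF}, the system is constrained, and by the \hyperref[model]{Model Theorem} $\fs = \fs_S(Q_i : \Out_{\fs}(Q_i))$ for the relevant $i$; determining the allowable $\Out_{\fs}(Q_i)$ (a subgroup of $(3\times 3):2$ for $Q_1$, isomorphic to $\Sym(3)$ for $Q_2$) via the automizer analysis yields outcomes (ii) and (iii). The case where a maximal elementary abelian $2^3$ is the unique essential, with automizer $\PSL_3(2)$ acting on the natural module, produces the model $M \cong 2^3.\PSL_3(2)$ of outcome (iv); this is the parabolic-type configuration and I would recognize the action using \cref{sl2p-mod}-style module identification extended to $\PSL_3(2)$.

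The substantive case is when two maximally essential subgroups occur, both $\Aut_{\fs}(S)$-invariant and normal in $S$, with $O_2(\fs) = \{1\}$. Here I would verify the hypotheses of \cref{MainThm} and invoke \cref{G2Cor} directly: the conclusion is that $\fs$ is the fusion system of a group $G$ with $F^*(G) \cong \mathrm{G}_2(2)$, or $G \cong \mathrm{M}_{12}$ or $\mathrm{G}_2(3)$ (the $p=2$ exceptions listed there), giving outcomes (v), (vi) and (vii). The $\mathcal{K}$-group hypothesis required by \cref{MainThm} is automatic here since all relevant automizers are small groups whose composition factors are manifestly known. The main obstacle I anticipate is the bookkeeping of which pairs of candidate essential subgroups can simultaneously carry admissible automizers consistent with a saturated system on this specific $S$ of order $2^6$; rather than grind through this by hand, I would lean on the MAGMA computation already invoked for the list of $S$-centric, $S$-radical subgroups to confirm that the enumerated configurations are exhaustive and that no further exotic possibilities survive, thereby matching the computational output to the seven cases in the statement.
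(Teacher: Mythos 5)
Your skeleton contains a genuine error exactly at outcome (iv), and it propagates into your main case division. You derive $M\cong 2^3.\PSL_3(2)$ from the configuration ``a maximal elementary abelian $2^3$ is the unique essential, with automizer $\PSL_3(2)$.'' No elementary abelian subgroup $E$ of order $2^3$ can be essential in any saturated $\fs$ on this $S$: since $E$ is $S$-centric, $\Out_S(E)=N_S(E)/E$ has order $2^3$ (for the two subgroups normal in $S$) or $2^2$ (for the others, as in the $q=2$ analogue of \cref{thomas}), so \cref{E-Bound} fails, $|E/\Phi(E)|=2^3<|\Out_S(E)|^2$. Equivalently: $\Out_{\fs}(E)\le \GL_3(2)$ would need a strongly $2$-embedded subgroup while containing a $2$-group of order at least $4$, but by Bender's theorem such a group has cyclic or quaternion Sylow $2$-subgroups or is a Bender group, and no subgroup of $\GL_3(2)$ (whose Sylow $2$-subgroups are dihedral of order $8$) qualifies --- in particular $\Out_{\fs}(E)=\PSL_3(2)$ is outright impossible, as $\PSL_3(2)\cong\PSL_2(7)$ has no strongly $2$-embedded subgroup. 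This is precisely the elimination the paper performs for $q>2$ in the theorem following this proposition, and it survives unchanged at $q=2$.

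Where case (iv) actually lives is in a configuration your trichotomy omits: \emph{both} $Q_1$ and $Q_2$ essential but $O_2(\fs)\ne\{1\}$. You pass from ``two maximally essential subgroups'' directly to ``$O_2(\fs)=\{1\}$'' and invoke \cref{MainThm}/\cref{G2Cor}; but $O_p(\fs)=\{1\}$ is a \emph{hypothesis} of \cref{MainThm}, not a consequence of having two essentials, so you must first analyze the $O_2(\fs)\ne\{1\}$ branch. Running the paper's $q>2$ argument at $q=2$: one shows $Q:=O_2(\fs)$ is elementary abelian of order $2^3$, self-centralizing, so \cref{model} yields a model $G$ with $F^*(G)=Q$, the quotient $G/Q$ has a weak BN-pair with $Q$ a natural $\SL_3(2)$-module, and $S$ does not split over $Q$ (a split extension would make $S$ Sylow in $\PSL_4(2)$, of $2$-rank $4$); by Bell's tables the nonsplit extension exists exactly when $q=2$, giving $M\cong 2^3\cdot\PSL_3(2)$ (the maximal subgroup of $\mathrm{G}_2(3)$ mentioned in the paper's remark, whose essentials are $Q_1$ and $Q_2$, not the $2^3$). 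For comparison: the paper offers no hand proof at all here --- the proposition is established by the MAGMA computation of centric--radical subgroups, automizers and saturated systems --- so your concluding fallback to MAGMA would in practice deliver the correct list; but as a mathematical argument the manual portion of your proposal both asserts an impossible automizer and mis-sorts case (iv), so it does not stand as written.
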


\begin{remark}
In case (iv) of the above theorem, one can take $M$ to be a maximal subgroup of $\mathrm{G}_2(3)$.
\end{remark}

We continue the analysis when $p=2$ and suppose throughout the remainder of this section that $q>2$. We may reduce the commutator formulas from \cref{G2Sylow} to the following:

\begin{align*}
[x_\alpha(t), x_\beta(u)]&=x_{\alpha+\beta}(tu)x_{2\alpha+\beta}(t^2u)x_{3\alpha+\beta}(t^3u)\\
[x_\alpha(t), x_{\alpha+\beta}(u)]&=x_{3\alpha+\beta}(t^2u)x_{3\alpha+2\beta}(tu^2)\\
[x_\alpha(t), x_{2\alpha+\beta}(u)]&=x_{3\alpha+\beta}(tu)\\
[x_\beta(t), x_{3\alpha+\beta}(u)]&=x_{3\alpha+2\beta}(tu)\\
[x_{\alpha+\beta}(t), x_{2\alpha+\beta}(u)]&=x_{3\alpha+2\beta}(tu).
\end{align*}

It follows that 
\[Z_3(S)=\langle X_{\alpha+\beta}, X_{2\alpha+\beta}, X_{3\alpha+\beta}, X_{3\alpha+2\beta}\rangle\] \[Z_2(S)=\langle X_{3\alpha+\beta}, X_{3\alpha+2\beta}\rangle\]\[Z(S)=\langle X_{3\alpha+2\beta}\rangle\] are characteristic subgroups of $S$ of orders $q^4$, $q^2$ and $q$ respectively.

We define \[Q_1:=C_S(Z_3(S)/Z_1(S))=\langle X_\beta, X_{\alpha+\beta}, X_{2\alpha+\beta}, X_{3\alpha+\beta}, X_{3\alpha+2\beta}\rangle\]
\[Q_2:=C_S(Z_2(S))=\langle X_\alpha, X_{\alpha+\beta}, X_{2\alpha+\beta}, X_{3\alpha+\beta}, X_{3\alpha+2\beta}\rangle\]
both of order $q^5$ and characteristic in $S$. Moreover, we can identify $Q_1$ and $Q_2$ with unipotent radicals of two maximal parabolic subgroups in $\mathrm{G}_2(q)$. Therefore, $\Phi(Q_1)=Z(Q_1)=Z(S)$ and $\Phi(Q_2)=Z_2(S)=Z(Q_2)$. 

The following lemma gives detailed information on involutions in $S$, their normalizers and the maximal elementary abelian subgroups of $S$.

\begin{lemma}\label{thomas}
Every involution in $S$ is conjugate in $S$ to one of the following: $x_\alpha(t_1)$,  $x_\beta(t_2)x_{2\alpha+\beta}(t_2')$,  $x_{2\alpha+\beta}(t_3)$,  $x_{\alpha+\beta}(t_4)$,  $x_{3\alpha+\beta}(t_5)$ or  $x_{3\alpha+2\beta}(t_6)$, for $t_i\in\mathbb{K}^\times$ and $t_2'\in\mathbb{K}$. Moreover, each has centralizer of order $q^3$, $q^4$, $q^4$, $q^4$, $q^5$ or $q^6$ respectively. As a consequence, every maximal elementary abelian subgroup is conjugate in $S$ to one of \[T:=X_\alpha X_{3\alpha+\beta}X_{3\alpha+2\beta},\] \[U:=X_\beta X_{2\alpha+\beta} X_{3\alpha+2\beta},\] \[V:=X_\beta X_{\alpha+\beta} X_{3\alpha+2\beta},\] \[W:=X_{2\alpha+\beta} X_{3\alpha+\beta}X_{3\alpha+2\beta}, or\] \[X:=X_{\alpha+\beta} X_{3\alpha+\beta}X_{3\alpha+2\beta}.\] All are of order $q^3$ and have normalizers in $S$ equal to $Q_2$, $Q_1$, $Q_1$, $S$ and $S$ respectively.
\end{lemma}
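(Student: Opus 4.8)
The plan is to work entirely inside $S$ using the reduced commutator formulas, computing squares and commutators of general elements to (i) list the involution classes, (ii) pin down their centralizer orders, and (iii) deduce the maximal elementary abelian subgroups and their $S$-normalizers. The central computational engine is the square map: for a general element $g = x_\alpha(a)x_\beta(b)x_{\alpha+\beta}(c)x_{2\alpha+\beta}(d)x_{3\alpha+\beta}(e)x_{3\alpha+2\beta}(f)$, I would expand $g^2$ using the collection process dictated by the five commutator relations, reading off which coefficients must vanish for $g$ to be an involution (recall $p=2$, so squares are determined by the commutator of a standard-form element with itself together with the square of each root element, and each $X_\epsilon$ is elementary abelian of exponent $2$). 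This determines the involution variety inside $S$.

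\textbf{Finding the conjugacy classes.}
Having identified the elements of order dividing $2$, I would next reduce them under $S$-conjugacy. The key tool is that conjugation by a root element $x_\epsilon(s)$ shifts the coefficients of an involution by commutator terms, moving mass ``up'' the root order $\alpha < \beta < \alpha+\beta < 2\alpha+\beta < 3\alpha+\beta < 3\alpha+2\beta$ toward $Z(S)=X_{3\alpha+2\beta}$. By using these shifts to clear lower-height coordinates, I expect every involution to be conjugate into one of the six stated normal forms $x_\alpha(t_1)$, $x_\beta(t_2)x_{2\alpha+\beta}(t_2')$, $x_{2\alpha+\beta}(t_3)$, $x_{\alpha+\beta}(t_4)$, $x_{3\alpha+\beta}(t_5)$, $x_{3\alpha+2\beta}(t_6)$. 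The slightly exceptional form $x_\beta(t_2)x_{2\alpha+\beta}(t_2')$ is expected precisely because the relation $[x_\alpha(t),x_\beta(u)]$ produces an $x_{2\alpha+\beta}(t^2u)$ term, so $x_\beta$ and $x_{2\alpha+\beta}$ components cannot always be separated.

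\textbf{Centralizer orders and elementary abelian subgroups.}
For each representative I would compute $C_S(\cdot)$ directly: conjugate the representative by a general $g$ as above, equate to itself, and count the dimension of the resulting linear (over $\GF(q)$) system of conditions on $(a,\dots,f)$; this yields the orders $q^3, q^4, q^4, q^4, q^5, q^6$. For the maximal elementary abelian subgroups, I would take each listed candidate $T,U,V,W,X$, verify it is elementary abelian (all generators commute by the formulas and square to $1$) of order $q^3$, and then argue maximality: any elementary abelian subgroup lies in the involution variety, so using the class data an elementary abelian subgroup of order $>q^3$ would force an involution with centralizer too small to contain it, a contradiction with the centralizer orders just computed. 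Finally $N_S(\cdot)$ is obtained by recording which of $Q_1, Q_2, S$ normalizes each of $T,U,V,W,X$, read off from the defining root subgroups (e.g.\ $U, V \le Q_1$ with $N_S(U)=N_S(V)=Q_1$, and $W, X$ central enough that $N_S = S$).

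\textbf{Main obstacle.}
The hardest part will be the bookkeeping in the conjugacy reduction step: showing that the six normal forms are \emph{exhaustive} and \emph{irredundant} requires carefully tracking how each commutator relation redistributes coefficients under conjugation, and in particular justifying why the pair $\{x_\beta, x_{2\alpha+\beta}\}$ must be bundled into a single class while the other height-one-and-two roots split off cleanly. A clean way to organize this is to stratify involutions by the highest root subgroup in which they have nontrivial projection and induct downward, using at each stage that the coprime-free but characteristic filtration $Z(S) \le Z_2(S) \le Z_3(S) \le S$ is preserved; the degenerate coefficient structure at $p=2$ is what makes the case split manageable rather than the main difficulty.
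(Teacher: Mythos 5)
The paper itself does not prove this lemma; it simply cites \cite[(3.6)--(3.10)]{thomas}, so a self-contained computation of the kind you propose is a legitimate substitute and is essentially the method of the cited source. Your square-map computation, the upward coefficient-clearing under conjugation (including the correct explanation, via the $x_{2\alpha+\beta}(t^2u)$ term in $[x_\alpha(t),x_\beta(u)]$, of why the class $x_\beta(t_2)x_{2\alpha+\beta}(t_2')$ cannot be split into pure root elements), and the direct centralizer computations are all viable. One small inaccuracy: the fixed-point conditions on a general $g$ do not form a linear system over $\GF(q)$ --- the formulas carry Frobenius twists $t^2u$, $t^3u$, $tu^2$ as well as cross terms in the coordinates of $g$ --- so you are counting solutions of a (semilinear/quadratic) system rather than computing the kernel of a $\GF(q)$-linear map; this complicates the bookkeeping but does not break the approach.

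The genuine gap is your maximality argument for elementary abelian subgroups. You claim an elementary abelian $A$ with $|A|>q^3$ ``would force an involution with centralizer too small to contain it.'' This does not follow: only the $x_\alpha$-class has centralizer of order $q^3$; the other five classes have centralizers of order $q^4$, $q^4$, $q^4$, $q^5$, $q^6$, so a candidate $A$ of order $q^4$ consisting entirely of involutions from the larger classes is not excluded by order counting. (Note that $Z_3(S)=X_{\alpha+\beta}X_{2\alpha+\beta}X_{3\alpha+\beta}X_{3\alpha+2\beta}$ has order $q^4$ and consists of such elements; it fails to be abelian only because $[x_{\alpha+\beta}(t),x_{2\alpha+\beta}(u)]=x_{3\alpha+2\beta}(tu)\ne 1$, a fact invisible to any count of centralizer orders.) Moreover, the lemma asserts that every maximal-by-inclusion elementary abelian subgroup is conjugate to one of $T,U,V,W,X$, and a bound on orders alone cannot deliver that. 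The repair is to use the structure, not just the order, of each centralizer: given $A$ maximal elementary abelian, pick $a\in A$ in the least central class occurring in $A$, conjugate $a$ to its normal form so that $A\le C_S(a)$, and classify the elementary abelian subgroups of each of the six centralizers. For instance $C_S(x_{2\alpha+\beta}(t))=WU$ has order $q^4$, and for $g=x_\beta(b)x_{2\alpha+\beta}(d)x_{3\alpha+\beta}(e)x_{3\alpha+2\beta}(f)\in WU$ one computes $g^2=x_{3\alpha+2\beta}(be)$, so the involutions of $WU$ are exactly the elements of $U\cup W$, whence any elementary abelian subgroup of $WU$ lies in $U$ or in $W$; the same computation in $C_S(x_{\alpha+\beta}(t))=XV$ yields $X\cup V$. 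With this replacement --- which is exactly how the paper deploys the conjugacy and centralizer data in its later propositions on $S$-radical subgroups --- your outline goes through, and the normalizer verifications at the end are fine as stated.
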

\begin{proof}
See \cite[(3.6)-(3.10)]{thomas}.
\end{proof}

Throughout this section, we retain the notation from the lemma and remark that $WX=Z_3(S)$, $T\le Q_2$, $T\not\le Q_1$, $U, V\le Q_1$ and $U,V\not\le Q_2$.

We can now begin to determine to the possible essential subgroups of $\fs$. The primary technique used is \cref{Chain} which, more generally, aids in proving that a candidate subgroup $E$ is not an $\fs$-radical subgroup of $S$. Moreover, if we can prove that a chain of characteristic subgroups of $E$ is centralized by some $p$-group not contained in $E$, then $E$ will be not be $S$-radical. For large parts of this section, we can operate in this more general setting, assuming only that $E$ is $S$-centric and $S$-radical.

\begin{proposition}\label{2Qi}
Let $E$ be an $S$-centric and $S$-radical subgroup of $S$ and suppose $Z_3(S)\le E$. Then $E\in\{Q_1, Q_2, S\}$.
\end{proposition}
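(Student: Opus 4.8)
The plan is to verify, via \cref{Chain}, that any $E$ with $Z_3(S)\le E$ and $E\notin\{Q_1,Q_2,S\}$ fails to be $S$-radical; the $S$-centric hypothesis will turn out to be automatic and play no role. First I would record the reduction forced by the containment. From the reduced commutator formulas one has $S'=\langle X_{\alpha+\beta},X_{2\alpha+\beta},X_{3\alpha+\beta},X_{3\alpha+2\beta}\rangle=Z_3(S)$, and as $S/Z_3(S)$ has exponent $2$ also $\Phi(S)=Z_3(S)$. Hence every such $E$ is normal in $S$ and is determined by the subgroup $\bar E:=E/Z_3(S)$ of the elementary abelian quotient $S/Z_3(S)$ of order $q^2$; in particular $|E|\in\{q^4,q^5,q^6\}$, with $\bar Q_1$ and $\bar Q_2$ the images of $X_\beta$ and $X_\alpha$ respectively. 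Since $C_S(E)\le C_S(Z_3(S))=Z_2(S)\le Z_3(S)\le E$, we get $C_S(E)=Z(E)$, so $E$ is $S$-centric for free and the entire burden lies on radicality.

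I would then isolate the following uniform device. Let $1=Z_0(E)\le\dots\le Z_c(E)=E$ be the upper central series of $E$, a characteristic chain, and set $R:=\{s\in S:[s,Z_i(E)]\le Z_{i-1}(E)\text{ for all }i\}$, the stabiliser in $S$ of this chain acting trivially on each factor. As the $Z_i(E)$ are characteristic in $E\normaleq S$, $R$ is a subgroup of $S$, and $[E,Z_i(E)]\le Z_{i-1}(E)$ gives $E\le R$. Applying \cref{Chain} with $A=\Aut(E)$ and $Q=\Aut_R(E)$ yields $\Aut_R(E)\le O_p(\Aut(E))$. Because $C_S(E)\le E\le R$, we have $\Inn(E)=E/C_S(E)\le R/C_S(E)=\Aut_R(E)\le\Aut_S(E)$, so whenever $R\supsetneq E$ the image of $\Aut_R(E)$ is a nontrivial subgroup of $O_p(\Out(E))\cap\Out_S(E)$, and $E$ is not $S$-radical. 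It therefore remains to show that $R=E$ forces $E\in\{Q_1,Q_2,S\}$.

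Next I would treat the clean cases, the governing computation being the class-two identity
\[[x_\alpha(t)x_\beta(u),x_\alpha(t')x_\beta(u')]\equiv x_{\alpha+\beta}(tu'+t'u)\,x_{2\alpha+\beta}(t^2u'+t'^2u)\pmod{Z_2(S)},\]
from which $E'\le Z_2(S)$ holds exactly when both expressions vanish on $\bar E$; for $q>2$ this happens if and only if $\bar E\in\{\bar Q_1,\bar Q_2\}$, vanishing of the first expression forcing $\bar E$ to be a single $\GF(q)$-line and the second then eliminating the diagonal lines. For $E=S$ there is nothing to prove. For $E=Z_3(S)$ the upper central series is $1\le Z_2(S)\le Z_3(S)$, and since $[S,Z_3(S)]\le Z_2(S)$ one gets $R=C_S(Z_2(S))=Q_2\supsetneq Z_3(S)$. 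For $Q_1$ and $Q_2$ themselves a direct check gives $R=E$ (equivalently, neither $Z_2(S)$ nor $Z_3(S)$ is characteristic in $Q_i$), so these genuinely survive; whereas if $\bar E$ is a diagonal $\GF(q)$-line then the upper central series is precisely $1\le Z(S)\le Z_2(S)\le Z_3(S)\le E$, each factor of which is centralised by all of $S$ via $[S,Z_i(S)]\le Z_{i-1}(S)$ and $[S,E]\le S'=Z_3(S)$, whence $R=S\supsetneq E$ and $E$ is excluded.

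The hard part will be the remaining family: order-$q^5$ subgroups whose image $\bar E$ is \emph{not} a $\GF(q)$-line. Such $\bar E$ occur as soon as $q\ge 4$, being honest $\GF(2)$-subspaces rather than scalar lines, and for them neither $Z_2(S)$ nor $Z_3(S)$ is characteristic in $E$: the discrepancy between the two components $tu'+t'u$ and $t^2u'+t'^2u$ above forces the intrinsic upper central series of $E$ to develop ``diagonal'' terms (for instance a $Z_2(E)$ strictly between $Z_2(S)$ and $Z_3(S)$, cut out by the radical of these forms on $\bar E$). The crux is to prove that the stabiliser $R$ of this series is nonetheless strictly larger than $E$; I expect to obtain this by a direct coordinate computation showing that the Frobenius-twisted congruences defining $R$ constrain the action in only one root direction at a time and so cannot cut $S$ down all the way to $E$, leaving a direction transverse to $\bar E$ unconstrained and hence $\bar R\supsetneq\bar E$. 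Carrying out this bookkeeping uniformly over all non-line subspaces $\bar E$ and all $q>2$ is the principal technical obstacle, and is exactly where the field structure and the characteristic-two degeneracies of the Chevalley relations enter.
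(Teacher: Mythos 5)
Your chain-stabilizer device is sound in principle (and your observation that centricity comes for free, via $C_S(Z_3(S))=Z_2(S)\le E$, is correct), but the proposal has a genuine gap in two respects. First, the taxonomy of subgroups to be excluded is wrong: $S/Z_3(S)$ is elementary abelian of order $q^2$ over $\GF(2)$, so $\bar E$ ranges over \emph{all} $\GF(2)$-subspaces and $|E|=2^kq^4$ for any $0\le k\le 2n$; both the claim $|E|\in\{q^4,q^5,q^6\}$ and the asserted equivalence ``$E'\le Z_2(S)$ if and only if $\bar E\in\{\bar Q_1,\bar Q_2\}$'' fail. For instance any proper $\GF(2)$-subspace of $\bar Q_1$, or an order-$2$ subgroup $\bar E$ on a diagonal line, makes both of your bilinear expressions vanish identically while $E\notin\{Q_1,Q_2,S\}$, and none of these subgroups is treated by your clean cases or by your final paragraph (which only contemplates non-line subspaces of order $q$). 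Second, and decisively, the case you yourself label ``the principal technical obstacle'' is exactly where the content of the proposition lies, and it is left as an expectation about an unexecuted coordinate computation; as submitted this is a plan, not a proof. (A small factual slip along the way: $Z_2(S)=Z(Q_2)=\Phi(Q_2)$ certainly \emph{is} characteristic in $Q_2$; what protects $Q_2$ is that $Q_2=C_S(Z_2(S))$, so no element outside $Q_2$ centralizes the bottom factor of any chain beginning $\{1\}\normaleq Z_2(S)$.)

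The missing idea --- and the paper's actual route --- is the Thompson subgroup, which makes all of your hard cases evaporate uniformly. Since $Z_3(S)\le E$ and every member of $\mathcal{A}(S)$ has order $q^3$ by \cref{thomas}, we get $\mathcal{A}(E)\subseteq\mathcal{A}(S)$; and since $TZ_3(S)=Q_2$ while $UZ_3(S)=VZ_3(S)=Q_1$, any $S$-conjugate of $T$, $U$ or $V$ lying in $E$ would force $Q_2\le E$ or $Q_1\le E$. Hence when $Q_1,Q_2\not\le E$ one has $\mathcal{A}(E)=\{W,X\}$ and $J(E)=WX=Z_3(S)$ is characteristic in $E$, whence so is $Z_2(S)=Z(Z_3(S))$ --- with no case division on $\bar E$ whatsoever (when $Q_i<E$ one argues instead that $W,X$ are the unique \emph{normal} members of $\mathcal{A}(E)$, with the same conclusion). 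Then $Q_2$ centralizes the characteristic chain $\{1\}\normaleq Z_2(S)\normaleq Z_3(S)\normaleq E$, using only $[S,Z_3(S)]\le Z_2(S)$ and $[S,E]\le S'=Z_3(S)$, so \cref{Chain} together with $S$-radicality forces $Q_2\le E$; and if $Q_2<E$ then $Z(E)=Z(S)$ and $Q_1$ centralizes $\{1\}\normaleq Z(S)\normaleq Z_2(S)\normaleq Z_3(S)\normaleq E$, giving $E=S$. In particular, insisting on the upper central series of $E$ --- whose terms, as you note, depend delicately on the $\GF(2)$-structure of $\bar E$ --- is a self-inflicted difficulty: \cref{Chain} accepts any characteristic chain, and $J(E)$ supplies a uniform one.
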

\begin{proof}
Since $Z_3(S)\le E$, $W,X\le E$ and so $\mathcal{A}(E)\subseteq \mathcal{A}(S)$. Suppose first that $Q_i<E$ for some $i\in\{1,2\}$. Then, $W,X$ are the unique normal elementary abelian subgroups of maximal rank in $E$ and so $Z_3(S)=WX$ is characteristic in $E$. Hence, $Z_2(S)=Z(Z_3(S))$ is also a characteristic subgroup. If $Q_1\not\le E$ and $Q_2\not\le E$, then $\mathcal{A}(E)=\{W,X\}$, $J(E)=Z_3(S)$ and again, $Z_3(S)$ and $Z_2(S)$ are characteristic subgroups of $E$. Thus, we have shown in either case that $Z_2(S)$ and $Z_3(S)$ are characteristic subgroups of $E$.

Now, if $Q_2\not\le E$, $Q_2$ centralizes the chain $\{1\}\normaleq Z_2(S)\normaleq Z_3(S)\normaleq E$ and $E$ is not $S$-radical by \cref{Chain}, a contradiction. So we may as well assume that $Q_2< E$. But then, it follows from the commutator formulas that $Z(E)=Z(S)$. Hence, $Q_1$ centralizes the chain $\{1\}\normaleq Z(S)\normaleq Z_2(S)\normaleq Z_3(S)\normaleq E$, and since $E$ is $S$-radical, we conclude that $E=S$, as required.
\end{proof}

\begin{lemma}\label{Center}
Let $E$ be an $S$-centric, $S$-radical subgroup of $S$ and suppose that $Z_3(S)\not\le E$. Then $Z(S)<Z(E)$ and if $Z(S)<Z(E)\cap Z_2(S)$, then $Z_2(S)<Z(E)$ and $E<Q_2$. In particular $Z(E)\not\le Z_2(S)$. 
\end{lemma}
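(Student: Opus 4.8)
I need to show that when $E$ is $S$-centric, $S$-radical, and $Z_3(S) \not\le E$, three things hold: $Z(S) < Z(E)$; if moreover $Z(S) < Z(E) \cap Z_2(S)$ then $Z_2(S) < Z(E)$ and $E < Q_2$; and consequently $Z(E) \not\le Z_2(S)$. The plan is to exploit \cref{Chain} repeatedly: if some nontrivial $p$-subgroup outside $E$ centralizes a characteristic chain of $E$, then $E$ fails to be $S$-radical, a contradiction. So the strategy is to assume $Z(E)$ is "too small" and manufacture such an external centralizing subgroup using the explicit commutator structure of $S$.

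\textbf{First claim: $Z(S) < Z(E)$.} Since $E$ is $S$-centric, $Z(S) \le Z(E)$ always (indeed $Z(S) = C_S(S) \le C_S(E) = Z(E)$ because $E$ centric forces $C_S(E) = Z(E)$, and $Z(S)$ centralizes everything). Now I would argue for strict containment. The key structural input is the lower/upper central series: $Z(S) = X_{3\alpha+2\beta}$, $Z_2(S) = X_{3\alpha+\beta}X_{3\alpha+2\beta}$, and $Z_3(S) = X_{\alpha+\beta}X_{2\alpha+\beta}X_{3\alpha+\beta}X_{3\alpha+2\beta}$. Consider the chain $\{1\} \normaleq Z(S) \normaleq E$. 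If $Z(E) = Z(S)$, then $E$ centralizes $Z(S)$ (trivially) and $E$ acts on $E/Z(S)$; I want to find an external $p$-element centralizing the relevant factors. More directly, I would look at $N_S(E)$: since $E < S$ (as $Z_3(S) \not\le E$ forces $E \ne S$), we have $E < N_S(E)$, so pick $g \in N_S(E) \setminus E$. Then $[E, g] \le E$, and the goal is to show $g$ centralizes a suitable characteristic chain. The cleanest route: if $Z(E) = Z(S)$ has order $q$, then since $E$ is centric and $N_S(E)/E$ acts on $Z(E) = Z(S)$ trivially (as $Z(S)$ is central in $S$), one examines whether $N_S(E)$ centralizes $Z_2(S) \cap E$ and builds the chain $\{1\} \normaleq Z(S) \normaleq \dots$; I expect to derive that some $g \in N_S(E) \setminus E$ centralizes $E/Z(S)$ or an appropriate refinement, contradicting $S$-radicality.

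\textbf{Second claim and the main obstacle.} Assuming $Z(S) < Z(E) \cap Z_2(S)$: since $Z_2(S)$ has order $q^2$ with $Z(S)$ of index $q$, and $Z_2(S)$ is (essentially) a natural module factor, the condition $Z(S) < Z(E) \cap Z_2(S)$ combined with $N_S(E)$-invariance should force $Z_2(S) \le Z(E)$ via an irreducibility or coprime-type argument on $Z_2(S)/Z(S)$ — but here all groups are $p$-groups, so I cannot invoke coprime action directly; instead I must use that $Z(E)$ is $N_S(E)$-invariant and that $Z_2(S)/Z(S)$ has no proper nontrivial $S$-invariant subgroup between the relevant bounds, giving $Z_2(S) \le Z(E)$, hence $Z_2(S) < Z(E)$ (strict, since $Z(E) \cap Z_2(S) \ge Z_2(S)$ and $Z(E)$ must be larger by the first claim's method applied again). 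Then to get $E < Q_2$: recall $Q_2 = C_S(Z_2(S))$, so $Z_2(S) \le Z(E)$ means $E$ centralizes $Z_2(S)$, i.e. $E \le C_S(Z_2(S)) = Q_2$; strictness $E < Q_2$ follows since $E = Q_2$ would give $Z_3(S) \le Q_2$ contradicting... — here I must check $Z_3(S) \le Q_2$ directly from the commutators, and indeed $Z_3(S) \le Q_2$, so $E = Q_2$ would not immediately contradict $Z_3(S) \not\le E$; instead $E = Q_2$ is excluded because $Z(Q_2) = Z_2(S)$ while we would need $Z_3(S) \not\le E = Q_2$ to hold, and since $Z_3(S) \le Q_2$ this is the contradiction, forcing $E < Q_2$. \textbf{The main obstacle} will be this second claim: pinning down that $Z(S) < Z(E) \cap Z_2(S)$ propagates to all of $Z_2(S) \le Z(E)$ without coprime action, which requires carefully using the $S$-action on the order-$q^2$ group $Z_2(S)$ and the fact that $E \normaleq N_S(E)$ forces $Z(E)$ to absorb the whole $S$-chief factor $Z_2(S)/Z(S)$.

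\textbf{Third claim.} Finally, $Z(E) \not\le Z_2(S)$: suppose for contradiction $Z(E) \le Z_2(S)$. By the first claim $Z(S) < Z(E)$, so $Z(S) < Z(E) \le Z_2(S)$, which triggers the second claim and yields $Z_2(S) < Z(E)$ — but that contradicts $Z(E) \le Z_2(S)$. Hence $Z(E) \not\le Z_2(S)$, completing the proof. This third part is purely formal once the first two are established, so the real work is concentrated in the first two claims, both of which I expect to run through \cref{Chain} by locating an external $p$-element that centralizes an appropriate characteristic chain whenever $Z(E)$ is smaller than claimed.
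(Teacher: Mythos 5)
Your overall plan (force a contradiction with $S$-radicality via \cref{Chain} by exhibiting an external subgroup centralizing a characteristic chain) is the right framework, and your third claim is handled exactly as in the paper: it is the purely formal consequence of the first two. But both substantive claims have genuine gaps. For the first claim, you never actually produce the centralizing subgroup: ``I expect to derive that some $g\in N_S(E)\setminus E$ centralizes $E/Z(S)$'' is a placeholder where the real work lives. The paper's argument is a concrete three-step chain: if $Z(E)=Z(S)$, then $Z_2(S)$ centralizes $\{1\}\normaleq Z(E)\normaleq E$ (because $[E,Z_2(S)]\le [S,Z_2(S)]=Z(S)$), so radicality forces $Z_2(S)\le E$, hence $Z_2(S)\le \Omega(Z_2(E))$; next, if $\Omega(Z_2(E))\le Q_1$ then whichever of the normal elementary abelian subgroups $W,X\le Z_3(S)$ lies outside $E$ centralizes $\{1\}\normaleq Z(E)\normaleq \Omega(Z_2(E))\normaleq E$, a contradiction; so $Z_2(E)$ contains an involution of type $x_\alpha(t)$ by \cref{thomas}, and since $[E,e]\le Z(E)=Z(S)$ the commutator formulas force $E\le Q_2$, whence $Z_2(S)=Z(Q_2)\le Z(E)=Z(S)$, the final contradiction. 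None of this is recoverable from your sketch, and in particular your suggestion to work with $N_S(E)$-action on $Z(E)$ does not lead anywhere, since the external subgroups needed are the specific normal subgroups $Z_2(S)$, $W$, $X$ of $S$, not arbitrary elements of $N_S(E)$.

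For the second claim, the mechanism you propose is actually false, not merely incomplete: you want $Z(S)<Z(E)\cap Z_2(S)$ to propagate to $Z_2(S)\le Z(E)$ via ``irreducibility'' of $Z_2(S)/Z(S)$ under the $S$- or $N_S(E)$-action, but $[S,Z_2(S)]\le Z(S)$ by definition of the upper central series, so $S$ acts \emph{trivially} on $Z_2(S)/Z(S)$ and every intermediate subgroup is $S$-invariant; there is no invariance constraint to exploit, which is presumably why you correctly flagged this as the main obstacle but could not close it. The paper closes it in the opposite logical order using centralizers rather than module structure: for $e\in (Z(E)\cap Z_2(S))\setminus Z(S)$, \cref{thomas} gives $C_S(e)=Q_2$ (such $e$ is of type $x_{3\alpha+\beta}$ with centralizer of order $q^5$), so $E\le C_S(e)=Q_2$ comes \emph{first}; then $S$-centricity of $E$ inside $Q_2$, with $Z(Q_2)=Z_2(S)$, yields $Z_2(S)\le C_S(E)=Z(E)$; finally $Z(E)=Z_2(S)$ is excluded because then $Q_2$ centralizes $\{1\}\normaleq Z(E)\normaleq E$ (as $[E,Q_2]\le \Phi(Q_2)=Z_2(S)$), forcing $Q_2\le E$ and hence $Z_3(S)\le E$ by radicality, a contradiction. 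Your deduction $E<Q_2$ from $Z_2(S)\le Z(E)$, including the observation that $E=Q_2$ is killed by $Z_3(S)\le Q_2$, is correct, but it sits downstream of the step you cannot supply.
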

\begin{proof}
Suppose first that $Z(S)=Z(E)$. Since $WX=Z_3(S)\not\le E$, there exists $Y\in\{W,X\}$ with $Y\not\le E$. Notice that $Z_2(S)$ centralizes the chain $\{1\}\normaleq Z(E)\normaleq E$ so that, as $E$ is $S$-radical, $Z_2(S)\le E$ and $Z_2(S)\le Z_2(E)$. Suppose that $\Omega(Z_2(E))\le Q_1$. Then, as $Y\normaleq S$, $Y$ centralizes the chain $\{1\}\normaleq Z(E)\normaleq \Omega(Z_2(E))\normaleq E$, a contradiction since $Y\not\le E$. Therefore, by \cref{thomas}, there exists an involution $e\in Z_2(E)$ which is conjugate in $S$ to $x_\alpha(t)$, for some $t\in\mathbb{K}^\times$. Since $[E, e]\le Z(E)=Z(S)$ it follows from the commutator formulas that elements of $E$ are conjugate to elements of $Q_2$, and since $Q_2\normaleq S$ we deduce that $E\le Q_2$. But then $Z(S)<Z_2(S)\le Z(E)$, a contradiction. Hence, $Z(S)<Z(E)$.

Suppose now that $Z(S)<Z(E)\cap Z_2(S)$ and let $e\in (Z(E)\cap Z_2(S))\setminus Z(S)$. Then $C_S(e)=Q_2$ by \cref{thomas} and $E\le C_S(e)=Q_2$. Because $E$ is $S$-centric, $Z_2(S)\le E$ from which it follows that $Z_2(S)\le Z(E)$. Assume that $Z(E)=Z_2(S)$. Then, $Q_2$ centralizes the chain $\{1\}\normaleq Z(E)\normaleq E$ and since $E$ is $S$-radical, $Q_2\le E$. But then $Z_3(S)\le E$, a contradiction. Hence, if $Z(S)<Z(E)\cap Z_2(S)$ we deduce that $Z(E)>Z_2(S)$ and $E<Q_2$.
\end {proof}

\begin{proposition}
Let $E$ be an  $S$-centric, $S$-radical subgroup of $S$ and suppose that $Z_3(S)\not\le E$. Then $E$ is maximal elementary abelian, so is conjugate in $S$ to $W,X,T,U$ or $V$.
\end{proposition}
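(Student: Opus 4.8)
The target is to show that $E$ is a \emph{maximal} elementary abelian subgroup of $S$; for then, since all such subgroups are listed in \cref{thomas} and none of $T,U,V,W,X$ contains $Z_3(S)$, we obtain exactly the stated conclusion. The crucial reduction is the following: once $E$ is known to be elementary abelian, its $S$-centrality forces $E=Z(E)=C_S(E)$, so $E$ is a self-centralising elementary abelian subgroup, hence lies in $\mathcal{A}(S)$, and \cref{thomas} then finishes. Thus the entire content of the proposition is to prove that $E$ is elementary abelian.

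First I would locate a central involution lying off $Z_2(S)$. By \cref{Center} we have $Z(E)\not\le Z_2(S)$, and since $Z_2(S)=\langle X_{3\alpha+\beta},X_{3\alpha+2\beta}\rangle$ is elementary abelian, any $g\in Z(E)\setminus Z_2(S)$ satisfies $g^2\in Z_2(S)$ and so has order at most $4$. I claim $\Omega(Z(E))\not\le Z_2(S)$; the only way this could fail is if every element of $Z(E)\setminus Z_2(S)$ had order exactly $4$, a configuration that one excludes as in the proof of \cref{Center}, by exhibiting a characteristic chain of $E$ centralised by a suitable $p$-element outside $E$ and invoking \cref{Chain} against the $S$-radicality of $E$. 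Granting the claim, fix an involution $z\in Z(E)\setminus Z_2(S)$. By \cref{thomas}, $z$ is $S$-conjugate to one of $x_\alpha(t)$, $x_{2\alpha+\beta}(t)$, $x_{\alpha+\beta}(t)$ or $x_\beta(t)x_{2\alpha+\beta}(t')$, these being precisely the involution types outside $Z_2(S)$; replacing $E$ by an $S$-conjugate we may take $z$ to be the chosen representative, and since $z\in Z(E)$ we have $E\le C_S(z)$.

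The easy case is $z=x_\alpha(t)$: the commutator formulas give $C_S(z)=T=X_\alpha X_{3\alpha+\beta}X_{3\alpha+2\beta}$, which is elementary abelian of order $q^3$. As $E\le T$ with $T$ abelian, $T\le C_S(E)=Z(E)\le E$, so $E=T$. In the remaining cases $C_S(z)$ has order $q^4$ and derived subgroup $Z(S)$. For $z\in\{x_{\alpha+\beta}(t),\,x_{2\alpha+\beta}(t)\}$ it decomposes as $C_S(z)=H\times Z$, where $H$ is a special group of order $q^{1+2}$ and exponent $4$ and $Z\cong(\mathbb{K},+)$ is central of order $q$; e.g.\ for $z=x_{\alpha+\beta}(t)$ one computes $C_S(z)=X_\beta X_{\alpha+\beta}X_{3\alpha+\beta}X_{3\alpha+2\beta}$ with $H=X_\beta X_{3\alpha+\beta}X_{3\alpha+2\beta}$ and $Z=X_{\alpha+\beta}$, the case $z=x_\beta(t)x_{2\alpha+\beta}(t')$ being analogous. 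Writing $C:=C_S(z)$, the $S$-centrality of $E$ forces $Z(C)\le Z(E)$; as $Z\le Z(C)\le E$ we get $E=(E\cap H)\times Z$ with $C_H(E\cap H)=Z(E\cap H)$, so $E\cap H$ is either a maximal abelian subgroup of $H$ or all of $H$.

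It remains to rule out the non-elementary-abelian possibilities, which I expect to be the main obstacle. If $E\cap H=H$ then $E=C$ is nonabelian, while if $E\cap H$ is a \emph{diagonal} maximal abelian subgroup of $H$ then $E$ is abelian of exponent $4$; both must be excluded. Here I would contradict $S$-radicality directly via \cref{Chain}: the root element $X_{2\alpha+\beta}$ (for $z=x_{\alpha+\beta}(t)$; the element $X_{\alpha+\beta}$ when $z=x_{2\alpha+\beta}(t)$, and an analogous root element in the last case) lies outside $C$ yet satisfies $[X_{2\alpha+\beta},C]\le X_{3\alpha+2\beta}=Z(S)$, so it normalises $E$, acts non-trivially on it, and centralises the characteristic chain $\{1\}\le Z(S)\le\Omega(Z(E))\le E$; by \cref{Chain} its image lies in $O_p(\Out(E))\cap\Out_S(E)\neq\{1\}$, so $E$ is not $S$-radical, a contradiction. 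Hence $E\cap H$ is one of the two elementary abelian maximal abelian subgroups of $H$, $E$ is elementary abelian, and by the reduction of the first paragraph $E$ is a maximal elementary abelian subgroup, hence $S$-conjugate to one of $T,U,V,W,X$.
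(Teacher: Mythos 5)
Your opening reduction is correct and clean: an elementary abelian $S$-centric subgroup satisfies $E=C_S(E)$, hence is maximal elementary abelian and \cref{thomas} finishes. But the proof has two genuine gaps. First, the claim $\Omega(Z(E))\not\le Z_2(S)$, which you dismiss in one sentence as ``excluded as in the proof of \cref{Center}'', is in fact the first half of the paper's proof and is \emph{not} a replay of \cref{Center}. The paper must rule out two configurations: $Z_2(S)<Z(E)$ with $E<Q_2$ (killed quickly, since then $\Omega(Z(E))=Z_2(S)$ and $Y\in\{W,X\}\setminus\{$subgroups of $E\}$ centralizes $\{1\}\normaleq \Omega(Z(E))\normaleq E$), and the hard case $\Omega(Z(E))=Z(S)$ with $E\not\le Q_2$, which requires showing $Z_2(S)\le E$, then $Z(E)\le Z_3(S)$, then that every involution of $E$ lies in $Q_1$ (via the $x_\alpha$-type centralizer analysis), so that $\Omega(E)\le E\cap Q_1$ and $Z_3(S)$ centralizes the characteristic chain $\{1\}\normaleq \Omega(Z(E))\normaleq \Omega(E)\normaleq E$. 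None of this is present in your sketch, and no concrete chain is exhibited; this is a missing argument, not a routine citation.

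Second, your final exclusion step is invalid as written. To contradict $S$-radicality via \cref{Chain} you need the image of the outer element to land in $O_p(\Out(E))\cap\Out_S(E)$, which requires every term of the chain to be invariant under \emph{all} of $\Aut(E)$; the middle term $Z(S)$ of your chain $\{1\}\le Z(S)\le\Omega(Z(E))\le E$ is not known to be characteristic in $E$. (It is repairable in your two named cases: if $E=C_S(z)$ then $E'=Z(S)$, and if $E$ is abelian of exponent $4$ then $\mho(E)=Z(S)$ since squaring is surjective on $\mathbb{K}$ --- but you must say so.) Worse, your dichotomy ``$E\cap H$ self-centralizing in $H$ implies maximal abelian or all of $H$'' is false for $n\geq 2$: subgroups of $H$ need not be $\mathbb{K}$-subspaces, and the preimage $K$ of any $\GF(2)$-hyperplane $\bar{K}$ of $H/Z(H)$ is a proper nonabelian self-centralizing subgroup (for $v\neq 0$ the kernel of the $\mathbb{K}$-valued commutator form $\beta(v,\cdot)$ has dimension $n<2n-1$, so $\bar{K}^\perp=0$ and $C_H(K)=Z(H)\le K$), yielding $S$-centric candidates $E=K\times Z$ that your case analysis never touches. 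The paper sidesteps this classification entirely: if $Z(C_S(e))=Z(E)$ then $C_S(e)$ centralizes the characteristic chain $\{1\}\normaleq Z(E)\normaleq E$, forcing $E=C_S(e)$, which the chain $\{1\}\normaleq E'\normaleq E$ (centralized by $X$, resp.\ $Q_1$) eliminates; otherwise $Z(C_S(e))<\Omega(Z(E))$ supplies a second central involution of a different type, making $C_S(\Omega(Z(E)))$ elementary abelian of order $q^3$, and centricity forces $E$ to equal it. You should adopt that bifurcation (or explicitly dispose of the hyperplane-type subgroups) before the elementary abelian conclusion can be drawn.
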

\begin{proof}
By \cref{Center}, we may assume that $Z(E)\not\le Z_2(S)$. Suppose first that $\Omega(Z(E))\le Z_2(S)$. By \cref{Center}, either $\Omega(Z(E))=Z(S)$; or that $Z_2(S)<Z(E)$ and $E<Q_2$. Suppose the latter and, since $Z_3(S)\not\le E$, choose $Y\in\{W,X\}$ with $Y\not\le E$. Since $\Omega(Z(E))\le Z_2(S)<Z(E)$, $E$ is centric and $Z_2(S)$ has exponent $2$, we have that $\Omega(Z(E))=Z_2(S)$ and $Y$ centralizes the chain, $\{1\}\normaleq \Omega(Z(E))\normaleq E$, a contradiction since $E$ is $S$-radical and $Y\not\le E$. Hence, we assume that $\Omega(Z(E))=Z(S)=Z(E)\cap Z_2(S)$ and $E\not\le Q_2$.

Since $Z_2(S)$ centralizes the chain $\{1\}\normaleq \Omega(Z(E))\normaleq E$, $Z_2(S)\le E$ and $Z(E)\le Q_2$. Furthermore, $[Z_3(S), E]\le Z_2(S)\le E$ and so $Z_3(S)\le N_S(E)\le N_S(Z(E))$. In particular, $[Z_3(S), Z(E)]\le Z(E)\cap [Z_3(S), Q_2]=Z(E)\cap Z_2(S)=\Omega(Z(E))=Z(S)$ and so $Z(E)\le C_S(Z_3(S)/Z(S))=Q_1$. Therefore, $Z(E)\le Z_3(S)$. Let $e\in E$ be an involution and suppose that $e\not\le Q_1$. Then, by \cref{thomas}, $e$ is conjugate in $S$ to $x_{\alpha}(t)$ for some $t\in\mathbb{K}^\times$. Thus, $Z(E)\le C_S(e)\le T^s$ for some $s\in S$ and since $Z(E)\le Z_3(S)\normaleq S$, it follows that $Z(E)\le X_{3\alpha+\beta}X_{3\alpha+2\beta}=Z_2(S)$. But then $Z(E)$ has exponent $2$ and $Z(E)=\Omega(Z(E))=Z(S)$, a contradiction. Therefore, $\Omega(E)\le E\cap Q_1$. In particular, $Z_2(S)\le \Omega(E)$ so that $[E, Z_3(S)]\le \Omega(E)$ and $Z_3(S)$ centralizes the chain $\{1\}\normaleq \Omega(Z(E))\normaleq \Omega(E)\normaleq E$, a contradiction since $E$ is $S$-radical and $Z_3(S)\not\le E$. Therefore, $\Omega(Z(E))\not\le Z_2(S)$.

Hence, there exists an involution $e\in Z(E)\setminus Z_2(S)$ such that $e$ is conjugate in $S$ to $x_\alpha(t_1)$,  $x_\beta(t_2)x_{2\alpha+\beta}(t_2')$,  $x_{2\alpha+\beta}(t_3)$ or $x_{\alpha+\beta}(t_4)$ for $t_i\in\mathbb{K}^\times$ and $t_2'\in\mathbb{K}$ by \cref{thomas}. Suppose first that $e$ is conjugate to $x_{\alpha}(t)$, some $t\in\mathbb{K}^\times$. Then $E\le C_S(e)=T^s$ for some $s\in S$ and since $E$ is $S$-centric, $E=T^s$. 

Suppose now that $e$ is conjugate to $x_{2\alpha+\beta}(t)$, $t\in\mathbb{K}^\times$. Then $E\le C_S(e)=WU^s\le Q_1$ for some $s\in S$ and $Z(C_S(e))=(U\cap W)^s\le Z(E)$. If $Z(C_S(e))=Z(E)$, then $C_S(e)$ centralizes the series $\{1\}\normaleq Z(E)\normaleq E$ and $E=C_S(e)$. But now, $X$ centralizes the series $\{1\}\normaleq E'\normaleq E$ and since $E$ is $S$-radical and $X\not\le E$, we have a contradiction. Thus, $Z(C_S(e))<\Omega(Z(E))$ and $C_S(\Omega(Z(E)))$ is an elementary abelian subgroup of order $q^3$. Since $E$ is $S$-centric, it follows that $|E|=q^3$ and $E=W$ or $U^s$ for some $s\in S$, as required. If $e$ is conjugate to $x_{\alpha+\beta}(t)$, we obtain $E\le C_S(e)=XV^s$ for some $s\in S$ by \cref{thomas}. Arguing as before, we conclude that $E$ is conjugate to either $V$ or $X$ in $S$.

Finally, we suppose that $e$ is conjugate in $S$ to some $x_{\beta}(t)x_{2\alpha+\beta}(t')$, for $t\in\mathbb{K}^\times$ and $t'\in\mathbb{K}$. Then, using the commutator formulas, one can calculate that $|C_S(e)|=q^4$, $E\le C_S(e)\le Q_1$ and $Z(S)X_{\beta}^s=Z(C_S(e))\le \Omega(Z(E))$ for some $s\in S$. If $\Omega(Z(E))=Z(C_S(e))$ then $C_S(e)$ centralizes the series $\{1\}\normaleq Z(E) \normaleq E$ and since $E$ is $S$-radical, $E=C_S(e)$. But then, $E'=Z(S)$ and $Q_1$ centralizes the series $\{1\}\normaleq E'\normaleq E$, a contradiction as $E$ is $S$-radical and $Q_1\not\le E$. Hence, $Z(C_S(e))<\Omega(Z(E))$, $|\Omega(Z(E))|>q^2$ and as $\Omega(Z(E))Z_3(S)\le Q_1$, there is some $\wt e\in (\Omega(Z(E))\cap Z_3(S))\setminus Z(S)$. Indeed, $\wt e$ is not contained in $Z_2(S)$, for otherwise $E\le Q_1\cap Q_2=Z_3(S)$. Therefore, $\wt e$ is conjugate in $S$ to some $x_{2\alpha+\beta}(t)$ or $x_{\alpha+\beta}(t)$ and by the above, $E$ is elementary abelian. Moreover, since there is $e\in E$ conjugate to some $x_{\beta}(t)x_{2\alpha+\beta}(t')$, we have that $E$ is conjugate to $U$ or $V$.
\end{proof}

We have shown that the $S$-centric, $S$-radical subgroups of $S$ are $S$, $Q_1$, $Q_2$ and the maximal elementary abelian subgroups of $S$. At this point, we restrict our attention to a saturated fusion system $\fs$ on $S$ and its essential subgroups. We make use of \cref{E-Bound}, and as stated, this appears to rely on a $\mathcal{K}$-group hypothesis on $\Aut_{\fs}(E)$, where $E$ is a candidate essential subgroup. Following the proof in \cite[{Proposition 4.8}]{Comp1}, the $\mathcal{K}$-group condition is only used to provide a list of candidates for groups with a strongly $2$-embedded subgroup along with their Sylow $2$-subgroups. Fortunately, when $p=2$ a result of Bender \cite{Bender} classifies all such groups and so, we can determine the essential subgroups of $\fs$ without the need to employ a $\mathcal{K}$-group hypothesis.

In addition, the proof of \cref{MaxEssen} relies on a $\mathcal{K}$-group hypothesis for the same reason as \cref{E-Bound} and so when $p=2$, utilizing Bender's result with the acknowledgment that $q>2$, $O^{2'}(\Out_{\fs}(E))$ is isomorphic to a central extension of a rank $1$ group of Lie type in characteristic $2$, independent of any $\mathcal{K}$-group hypothesis on $\Aut_{\fs}(E)$. A final consideration is that we intend to use \cref{G2Cor} which relies on the \cref{MainThm} which again uses a $\mathcal{K}$-group hypothesis. Following the proof of that theorem, the determination of $\fs$ from a rank $2$ amalgam relies only on the work in \cite{Greenbook} which is, again, independent of any $\mathcal{K}$-group hypothesis. Hence, when $p=2$, we can apply all the necessary results to determine $\fs$ without the need to enforce a $\mathcal{K}$-group hypothesis on $\Aut_{\fs}(E)$.

\begin{theorem}
Let $\fs$ be a saturated fusion system over a Sylow $2$-subgroup of $\mathrm{G}_2(2^n)$ for $n>1$. Then one of the following holds:
\begin{enumerate}
\item $\fs=\fs_S(S: \Out_{\fs}(S))$;
\item $\fs=\fs_S(Q_i: \Out_{\fs}(Q_i))$ where $O^{2'}(\Out_{\fs}(Q_i))\cong \SL_2(2^n)$; or 
\item $\fs=\fs_S(G)$, where $F^*(G)=O^{2'}(G)\cong\mathrm{G}_2(2^n)$.
\end{enumerate}
\end{theorem}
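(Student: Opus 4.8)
The plan is to run the Alperin--Goldschmidt fusion theorem, so that $\fs$ is determined by its essential subgroups and the automizer $\Aut_{\fs}(S)$. By the preceding propositions every $\fs$-essential subgroup is $S$-centric and $S$-radical, hence lies in $\{S,Q_1,Q_2\}\cup\{T,U,V,W,X\}$; as essential subgroups are proper, $S$ is excluded. First I would discard the maximal elementary abelian subgroups. Each such $E$ is $\fs$-centric with $\Phi(E)=\{1\}$, so $|E/\Phi(E)|=q^3$, while \cref{thomas} gives $|N_S(E)|\in\{q^5,q^6\}$ and hence $|\Out_S(E)|=|N_S(E)/E|\geq q^2$ for a fully normalized representative. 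Since $q>2$, Bender's classification of groups with a strongly $2$-embedded subgroup lets us apply \cref{E-Bound} without a $\mathcal{K}$-group hypothesis, and $q^3<q^4\leq|\Out_S(E)|^2$ rules out every such $E$. Thus the essential subgroups of $\fs$ lie among $Q_1,Q_2$, which are characteristic in $S$ and non-isomorphic (their centres have orders $q$ and $q^2$).

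Next I would determine $\Out_{\fs}(Q_i)$ whenever $Q_i$ is essential. No essential subgroup properly contains $Q_i$ (the only other candidate is $Q_j$, and $Q_1\cap Q_2=Z_3(S)\lneq Q_i$), so each $Q_i$ is maximally essential; moreover $\Out_S(Q_i)\cong S/Q_i$ is elementary abelian of order $q=2^n$ since $Z_3(S)=S'\leq Q_i$, giving $m_2(\Out_S(Q_i))=n\geq 2$. By \cref{MaxEssen} together with Bender's theorem (again valid without a $\mathcal{K}$-group hypothesis as $q>2$), $O^{2'}(\Out_{\fs}(Q_i))$ is a central extension of $\SL_2(2^a)$, $\PSU_3(2^b)$ or $\Sz(2^{2a+1})$. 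Its Sylow $2$-subgroup is $\Out_S(Q_i)$, which is abelian; as the Suzuki and unitary groups have non-abelian Sylow $2$-subgroups, only $\SL_2(2^a)$ survives, and matching $2$-parts forces $a=n$. The $2'$-part of the Schur multiplier of $\SL_2(2^n)$ being trivial, $O^{2'}(\Out_{\fs}(Q_i))\cong\SL_2(2^n)$; for $Q_2$ this also follows from \cref{SEFF}, since $Z_2(S)=\Omega(Z(Q_2))$ is an FF-module on which $\SL_2(q)$ acts naturally. In particular $\Aut_{\fs}(Q_i)$ is a $\mathcal{K}$-group.

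I would then split into cases by the number of essential classes. If $\fs$ has no essential subgroups, \cref{normalinF} forces $O_2(\fs)=S$, so $\fs$ is constrained and the Model Theorem \cref{model} gives outcome (i). If exactly one $Q_i$ is essential, then by \cref{normalinF} the $2$-core both contains $Q_i$ (as $Q_i\normaleq\fs$) and lies inside the unique essential $Q_i$, so $O_2(\fs)=Q_i$; again $\fs$ is constrained and \cref{model} yields $\fs=\fs_S(Q_i:\Out_{\fs}(Q_i))$ with $O^{2'}(\Out_{\fs}(Q_i))\cong\SL_2(2^n)$, which is outcome (ii).

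The substantive case, and the main obstacle, is when both $Q_1$ and $Q_2$ are essential: here I aim to apply \cref{G2Cor}, for which every hypothesis of \cref{MainThm} but one is already in hand ($Q_i$ are $\Aut_{\fs}(S)$-invariant and normal in $S$ as they are characteristic, maximally essential, with $\Aut_{\fs}(Q_i)$ a $\mathcal{K}$-group; and $\fs=\langle N_{\fs}(Q_1),N_{\fs}(Q_2)\rangle$ follows from Alperin--Goldschmidt together with $\Aut_{\fs}(S)\leq N_{\fs}(Q_i)$ since $Q_i\chr S$). The remaining point is $O_2(\fs)=\{1\}$. By \cref{normalinF}, $R:=O_2(\fs)\leq Q_1\cap Q_2=Z_3(S)$ and is invariant under both $L_i:=O^{2'}(\Out_{\fs}(Q_i))\cong\SL_2(q)$. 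If $R\neq\{1\}$ then $R\cap Z(S)\neq\{1\}$; since $Z_2(S)=Z(Q_2)$ is a natural, hence $\GF(2)$-irreducible, $L_2$-module containing $Z(S)$, the $L_2$-invariant subgroup $R\cap Z_2(S)\supseteq R\cap Z(S)$ must equal $Z_2(S)$, so $Z_2(S)\leq R$. A direct calculation with the commutator formulas then identifies $Q_1/Z(Q_1)$ (of order $q^4$) with the irreducible $4$-dimensional module $V(3)=\mathrm{Sym}^3$ for $L_1\cong\SL_2(q)$ — the root action of $\Out_S(Q_1)$ is a single length-$4$ Jordan block, so the module is non-quadratic and, in characteristic $2$ with $n\geq 2$, irreducible. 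Since $Z(Q_1)=Z(S)<Z_2(S)\leq R\leq Z_3(S)<Q_1$, the image $R/Z(Q_1)$ would be a nonzero proper $L_1$-submodule, contradicting irreducibility. Hence $O_2(\fs)=\{1\}$, and \cref{G2Cor} applies; as $p=2$ excludes the exotic $\mathrm{G}_2(7)$ system, and the remaining sporadic possibilities $\mathrm{M}_{12}$ and $\mathrm{G}_2(3)$ have $|S|=2^6$, which is excluded by $n>1$, we are left with outcome (iii).
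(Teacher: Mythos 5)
Your reduction to $\mathcal{E}(\fs)\subseteq\{Q_1,Q_2\}$ via \cref{E-Bound}, your identification $O^{2'}(\Out_{\fs}(Q_i))\cong\SL_2(2^n)$ via \cref{MaxEssen} and Bender, and your handling of the cases with at most one essential subgroup all match the paper's proof (your Schur-multiplier/abelian-Sylow elaboration of the \cref{MaxEssen} step is a correct fleshing-out of what the paper leaves implicit). The gap is in the final case, where you claim $O_2(\fs)=\{1\}$ outright: the assertion that ``a direct calculation with the commutator formulas identifies $Q_1/Z(Q_1)$ with the irreducible module $V(3)$ for $L_1$'' is a non sequitur. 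The commutator formulas determine only the action of $\Out_S(Q_1)$, a Sylow $2$-subgroup of $L_1$; in a saturated fusion system nothing pins down how the rest of the abstract $L_1\cong\SL_2(q)$ acts on $Q_1/Z(S)$, and \cref{MaxEssen} gives only the isomorphism type of $L_1$, not the module. Indeed the $T$-data alone cannot force irreducibility: taking $U=W/Z(S)$ (or $X/Z(S)$), the restricted flag $0<U<Q_1/Z(S)$ looks, under $\Out_S(Q_1)$, exactly like an extension of one natural-module pattern by another, so a reducible $\GF(2)L_1$-structure is fully consistent with everything you know at that point. Ruling out the configuration $Z_2(S)<O_2(\fs)<Z_3(S)$ is precisely the hard content of the theorem, and it is not an $\SL_2$-local module computation.

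The paper's own proof shows what is actually needed: it first pins $Q:=O_2(\fs)$ down to $W$ or $X$ (via $\Phi(Q)=\{1\}$, the chain argument excluding $Q=Z_2(S)$, and $C_S(Q)\normaleq\fs$ forcing $Q=C_S(Q)$), then invokes the Model Theorem (\cref{model}) to get a group $G$ with $F^*(G)=Q$, recognizes a weak BN-pair of rank $2$ in $G/Q$ in the sense of \cite{Greenbook}, applies \cite[Theorem A]{ChermakJ} to see $Q$ is a natural $\SL_3(q)$-module for $O^{2'}(G/Q)\cong\SL_3(q)$, excludes the split extension by the $2$-rank comparison with \cite[Theorem 3.3.3]{GLS3} and \cref{thomas}, and only then derives the contradiction $q=2$ from Bell's cohomology table \cite[Table I]{bell}. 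That the contradiction ultimately rests on a cohomological fact about $\SL_3(q)$ on its natural module — and that for $q=2$ the configuration genuinely exists, as the fusion system of $2^3\cdot\PSL_3(2)$ in the paper's \cref{q=2Class} — is strong evidence that your one-line irreducibility claim cannot be had cheaply: any correct argument must do real work at exactly the point where you appeal to ``irreducibility of $\mathrm{Sym}^3$ for $n\geq 2$.'' As written, your proof of the substantive case is therefore incomplete, even though the statement you are aiming at ($O_2(\fs)=\{1\}$ when both $Q_i$ are essential and $n>1$) is what the paper's longer argument ultimately establishes.
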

\begin{proof}
Let $E\in \mathcal{E}(\fs)$ and suppose that $E$ is elementary abelian. Then, in all cases, we deduce that $q^3=|E|<q^4\leq |\Out_S(E)|^2$, a contradiction by \cref{E-Bound}. Therefore, $\mathcal{E}(\fs)\subseteq \{Q_1, Q_2\}$. If neither $Q_1$ nor $Q_2$ are essential then outcome (i) holds, and if $\mathcal{E}(\fs)=\{Q_i\}$ for some $i\in\{1,2\}$ then since $Q_i$ is $\Aut_{\fs}(S)$-invariant, outcome (ii) holds by \cref{MaxEssen}. Thus, $\mathcal{E}(\fs)=\{Q_1, Q_2\}$. Since $Q_i$ is $\Aut_{\fs}(S)$-invariant for $i\in\{1,2\}$, if $O_2(\fs)=\{1\}$ we apply \cref{G2Cor} and the result follows.

Suppose that $Q:=O_2(\fs)\ne\{1\}$. By \cref{normalinF}, $Q\le Q_1\cap Q_2=Z_3(S)$ and so, $\Phi(Q)\le Z(S)$. Now, $Z_2(S)$ is normalized by $\Aut_{\fs}(Q_2)$ and $\Out_S(Q_2)$ centralizes $Z(S)$ which has index $q$ in $Z_2(S)$, which is itself of order $q^2$. Moreover, since $S$ does not centralize $Z_2(S)$, $\Out_S(Q_2)$ acts non-trivially on $Z_2(S)$ and, by \cref{SEFF}, $Z_2(S)$ is an FF-module for $O^{2'}(\Out_{\fs}(Q_2))\cong\SL_2(q)$ and $Z_2(S)$ is irreducible. Since $\Phi(Q)\le Z(S)< Z_2(S)$, we conclude that $\Phi(Q)=\{1\}$, $Q$ is elementary abelian and $Z_2(S)\le Q$. 

If $Q=Z_2(S)$, then $Z_2(S)$ is $\Aut_{\fs}(Q_1)$-invariant and so is $Z_3(S)=C_{Q_1}(Z_2(S))$. But then $S$ centralizes the chain $\{1\}\normaleq Z(S)\normaleq Z_2(S)\normaleq Z_3(S)\normaleq Q_1$, a contradiction since $Q_1$ is $\fs$-radical. Hence, $Z_2(S)<Q<Z_3(S)$ and there is an involution $x\in Q$ which is conjugate in $S$ to $x_{2\alpha+\beta}(t)$ or $x_{\alpha+\beta}(t)$ for some $t\in\mathbb{K}^\times$. But then $C_S(Q)\le Q_1\cap Q_2$ and so $C_S(Q)$ is $\Aut_{\fs}(Q_i)$-invariant for $i\in\{1,2\}$. It follows from \cref{normalinF} that $C_S(Q)\normaleq \fs$ so that $Q=C_S(Q)$ is self-centralizing in $S$, $Q\in\{W,X\}$ and $\fs$ is satisfies the hypothesis of \cref{model}.

By \cref{model}, there is a finite group $G$ such that $F^*(G)=Q$ and $\fs=\fs_S(G)$. Moreover, $O^{2'}(\Out_G(Q_i))\cong \SL_2(q)$ and $\Out_{G}(Q_i)$ acts faithfully on $Q_i/Q$ for $i\in\{1,2\}$. Set $\bar{G}:=G/Q$ and notice that $\bar{Q_1}$ and $\bar{Q_2}$ are self-centralizing in $\bar{G}$. Moreover, $\bar{G}=\langle N_{\bar{G}}(\bar{Q_1}), N_{\bar{G}}(\bar{Q_2})\rangle$, and $\bar{Q_i}$ is $\Aut_{\bar{G}}(\bar{S})$-invariant for $i\in\{1,2\}$. It follows that $\bar{G}$ has a weak BN-pair of rank $2$ in the sense of \cite{Greenbook}. Moreover, since $Q_2$ centralizes $Z_2(S)$ which has index $q$ in $Q$ and $Q_2/Q$ is elementary abelian of order $q^2$, we infer that $Q$ is an FF-module for $\bar{G}$. Then, comparing with the completions in \cite{Greenbook} and applying \cite[Theorem A]{ChermakJ}, $Q$ is a ``natural module'' for $O^{2'}(\bar{G})\cong\SL_3(q)$. Notice that if $S$ splits over $Q$, then $S$ is isomorphic to a Sylow $2$-subgroup of $\PSL_4(q)$. Then by \cite[Theorem 3.3.3]{GLS3}, the $2$-rank of $S$ is $4n$, a contradiction to \cref{thomas}. Therefore, $S$ is non-split and it follows from \cite[Table I]{bell} that $q=2$, a contradiction to the original hypothesis.
\end{proof}

Combined with the classification provided in \cref{q=2Class}, this completely determines all saturated fusion systems on a Sylow $2$-subgroup of $\mathrm{G}_2(2^n)$ for any $n\in\N$.

\section[Fusion Systems on a Sylow \texorpdfstring{$3$}{3}-subgroup of \texorpdfstring{$\mathrm{G}_2(3^n)$}{G2(3n)}]{Fusion Systems on a Sylow $3$-subgroup of $\mathrm{G}_2(3^n)$}

Throughout this section, we suppose that $p=3$, $q=3^n$ for $n\in\N$, $\mathbb{K}$ is a finite field of order $q$ and $S$ is isomorphic to a Sylow $3$-subgroup of $\mathrm{G}_2(q)$. We may reduce the commutator formulas from \cref{G2Sylow} to the following:

\begin{align*}
[x_\alpha(t), x_\beta(u)]&=x_{\alpha+\beta}(-tu)x_{2\alpha+\beta}(-t^2u)x_{3\alpha+\beta}(t^3u)x_{3\alpha+2\beta}(t^3u^2)\\
[x_\alpha(t), x_{\alpha+\beta}(u)]&=x_{2\alpha+\beta}(tu)\\
[x_\beta(t), x_{3\alpha+\beta}(u)]&=x_{3\alpha+2\beta}(tu).
\end{align*}

Additionally, we remark that $Z(S)=\langle X_{2\alpha+\beta}, X_{3\alpha+2\beta}\rangle$ is a characteristic subgroup of $S$ of order $q^2$.

We let \[Q_1=\langle X_\beta,  X_{3\alpha+\beta}, X_{\alpha+\beta}, X_{2\alpha+\beta}, X_{3\alpha+2\beta}\rangle\] \[Q_2=\langle X_\alpha, X_{\alpha+\beta}, X_{3\alpha+\beta}, X_{3\alpha+2\beta}, X_{2\alpha+\beta}\rangle\] and by removing one root subgroup at a time from $Q_i$, starting from the left, we get a chain of subgroups $Q_1\cap Q_2\to Z(Q_i)\to Z(S) \to\Phi(Q_i)\to \{1\}$ e.g. \[Z(Q_1)=\langle X_{\alpha+\beta}, X_{2\alpha+\beta}, X_{3\alpha+2\beta}\rangle.\]

Before determining the essential subgroups of a saturated fusion system $\fs$ on $S$, we state and prove some important properties of $S, Q_1$ and $Q_2$ which may be of interest in their own right.

\begin{lemma}\label{SL3Sub}
The subgroup $X:=\langle X_{\beta}, X_{3\alpha+\beta}, X_{3\alpha+2\beta}\rangle\le Q_1$ is of shape $q^{1+2}$ and is isomorphic to a Sylow $3$-subgroup of $\SL_3(q)$.
\end{lemma}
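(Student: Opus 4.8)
The plan is to read off the internal commutator structure of $X=\langle X_\beta, X_{3\alpha+\beta}, X_{3\alpha+2\beta}\rangle$ directly from the reduced Chevalley formulas for $p=3$ and then match it against the unitriangular Sylow subgroup of $\SL_3(q)$. First, $X\le Q_1$ is immediate, since the three generating root subgroups of $X$ all occur among the generators of $Q_1$. The only non-trivial commutator among the generators is $[x_\beta(t),x_{3\alpha+\beta}(u)]=x_{3\alpha+2\beta}(tu)$ from the third reduced formula; the remaining two commutators vanish because $\beta+(3\alpha+2\beta)=3\alpha+3\beta$ and $(3\alpha+\beta)+(3\alpha+2\beta)=6\alpha+3\beta$ are not roots of $\mathrm{G}_2$ (equivalently, $3\alpha+2\beta$ is the highest root), so $X_{3\alpha+2\beta}$ commutes with both $X_\beta$ and $X_{3\alpha+\beta}$.

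From these relations I would extract the group structure. Since $X_{3\alpha+2\beta}$ commutes with both generators it is central, and it coincides with $X'$: the derived group is generated by the basic commutators of the generators, and $\{x_{3\alpha+2\beta}(tu)\mid t,u\in\mathbb{K}\}$ already exhausts $X_{3\alpha+2\beta}$ (take $u=1$). Conversely an element $x_\beta(t)x_{3\alpha+\beta}(u)x_{3\alpha+2\beta}(v)$ centralises $X_\beta$ and $X_{3\alpha+\beta}$ only when $t=u=0$, so $Z(X)=X'=X_{3\alpha+2\beta}$. Each root subgroup is isomorphic to $(\mathbb{K},+)$ and hence has order $q$; the relations give every element a unique normal form $x_\beta(t)x_{3\alpha+\beta}(u)x_{3\alpha+2\beta}(v)$, whence $|X|=q^3$ and $X/X'$ is elementary abelian of order $q^2$. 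Thus $\Phi(X)=X'=Z(X)$ has order $q$ and $X$ is special of shape $q^{1+2}$. Finally, since the commutator $[x_\beta(t),x_{3\alpha+\beta}(u)]$ is central of order dividing $3$ while each root subgroup is elementary abelian (as $3=0$ in $\mathbb{K}$), a direct expansion of $\big(x_\beta(t)x_{3\alpha+\beta}(u)x_{3\alpha+2\beta}(v)\big)^3$ collapses to the identity, so $X$ has exponent $3$.

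To finish I would produce the explicit isomorphism with $U\in\syl_3(\SL_3(q))$, the group of upper unitriangular matrices. Its root subgroups $U_{12},U_{23},U_{13}$ satisfy exactly the Heisenberg relations $[u_{12}(t),u_{23}(u)]=u_{13}(tu)$ with the other two commutators trivial, each of order $q$, and $|U|=q^3$. Mapping the normal form $x_\beta(t)x_{3\alpha+\beta}(u)x_{3\alpha+2\beta}(v)\mapsto u_{12}(t)u_{23}(u)u_{13}(v)$ respects additivity within each root subgroup and the single commutator relation, so it is a homomorphism; it is visibly bijective on normal forms, hence an isomorphism $X\cong U$.

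The genuinely load-bearing points, rather than the coefficient bookkeeping (which the reduced formulas already handle), are the two root-nonexistence checks that force $X_{3\alpha+2\beta}$ to be central and the normal-form count giving $|X|=q^3$; once these are in place, recognising $X$ as the Heisenberg Sylow subgroup of $\SL_3(q)$ is a matter of matching identical defining relations. The main obstacle I anticipate is verifying that the induced commutator pairing $X/Z(X)\times X/Z(X)\to Z(X)$ really is the full $\mathbb{K}$-bilinear map $(t,u)\mapsto tu$ with no degeneracy, since this is what pins down the correct $\SL_3(q)$-Sylow among groups of shape $q^{1+2}$; this is exactly the content of $[x_\beta(t),x_{3\alpha+\beta}(u)]=x_{3\alpha+2\beta}(tu)$ being non-trivial whenever $t,u\neq 0$.
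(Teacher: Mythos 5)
Your proposal is correct and matches the paper's proof in essence: the paper likewise writes every element of $X$ in the normal form $x_{3\alpha+\beta}(t_1)x_\beta(t_2)x_{3\alpha+2\beta}(t_3)$ and exhibits an explicit injective homomorphism onto the (lower) unitriangular Sylow $3$-subgroup of $\SL_3(q)$, using exactly the relation $[x_\beta(t),x_{3\alpha+\beta}(u)]=x_{3\alpha+2\beta}(tu)$ and the centrality of the highest root subgroup. Your additional observations (centre, shape $q^{1+2}$, exponent $3$) are correct but not needed beyond what the matrix identification already gives.
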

\begin{proof}
Since the groups $X_\beta$ and $X_{3\alpha+\beta}$ commute modulo $X_{3\alpha+2\beta}$, it follows that every element may be written as $x_{3\alpha+\beta}(t_1)x_\beta(t_2)x_{3\alpha+2\beta}(t_3)$ for $t_1,t_2, t_3\in\mathbb{K}$. Then, using the commutator formulas, we calculate that the map $\theta:X\to\SL_3(q)$ such that 
\[
(x_{3\alpha+\beta}(t_1)x_\beta(t_2)x_{3\alpha+2\beta}(t_3))\theta=
\begin{pmatrix} 
1 & 0 & 0\\ 
t_1 & 1 & 0 \\ 
t_3 & t_2 &1
\end{pmatrix}
\]
is an injective homomorphism, from which it follows that $X$ is isomorphic to a Sylow $3$-subgroup of $\SL_3(q)$.
\end{proof}

\begin{remark}
By symmetry, the subgroup $\langle X_\alpha, X_{\alpha+\beta}, X_{2\alpha+\beta}\rangle\le Q_2$ is also isomorphic to a Sylow $3$-subgroup of $\SL_3(q)$.
\end{remark}

As $Q_1=Z(Q_1)X$, we observe that $Q_1$ and $Q_2$ are isomorphic groups of shape $q^2\times q^{1+2}$. We may identify $Q_1, Q_2$ with the radical subgroups of maximal parabolic subgroups of $\mathrm{G}_2(q)$ of shape $(q^2\times q^{1+2}): \GL_2(q)$.

\begin{lemma}\label{SL3Quo}
Let $i\in\{1,2\}$. Then $S/Z(Q_i)$ is isomorphic to a Sylow $3$-subgroup of $\SL_3(q)$.
\end{lemma}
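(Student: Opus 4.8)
The plan is to proceed exactly as in \cref{SL3Sub}: exhibit an explicit injective homomorphism from $S/Z(Q_i)$ onto the group of unitriangular $3\times 3$ matrices over $\mathbb{K}$, which is a Sylow $3$-subgroup of $\SL_3(q)$. I treat $i=1$ in detail; $i=2$ is analogous. From the chain obtained by stripping root groups off $Q_1$ from the left we have $Z(Q_1)=\langle X_{\alpha+\beta}, X_{2\alpha+\beta}, X_{3\alpha+2\beta}\rangle$, characteristic in $S$ of order $q^3$, so $|S/Z(Q_1)|=q^3$. Since every element of $S$ has a unique expression as a product of root elements in a fixed order, choosing the order so that the three generators of $Z(Q_1)$ come last, a transversal is given by the elements $x_\alpha(t_1)x_\beta(t_2)x_{3\alpha+\beta}(t_3)$ with $t_i\in\mathbb{K}$; in particular $\bar X_\alpha$, $\bar X_\beta$ and $\bar X_{3\alpha+\beta}$ each inject into $S/Z(Q_1)$ and together generate it.

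Next I would read off the commutator structure modulo $Z(Q_1)$ from the reduced Chevalley formulas. Because $X_{\alpha+\beta}, X_{2\alpha+\beta}, X_{3\alpha+2\beta}\le Z(Q_1)$, reducing $[x_\alpha(t), x_\beta(u)]$ leaves only $[\bar x_\alpha(t), \bar x_\beta(u)]=\bar x_{3\alpha+\beta}(t^3u)$. Moreover $X_\alpha$ and $X_{3\alpha+\beta}$ commute (there is no commutator relation between them), while $[x_\beta(t), x_{3\alpha+\beta}(u)]=x_{3\alpha+2\beta}(tu)\in Z(Q_1)$; hence $\bar X_{3\alpha+\beta}$ is centralized by both $\bar X_\alpha$ and $\bar X_\beta$ and is therefore central in $S/Z(Q_1)$. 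As each root group is abelian, this presents $S/Z(Q_1)$ as a special group of order $q^3$; and since $t^3u$ ranges over all of $\mathbb{K}$, the commutator pairing is nondegenerate, so the centre and the derived subgroup both equal $\bar X_{3\alpha+\beta}$, of order $q$.

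The key point is the cube appearing in $[\bar x_\alpha(t), \bar x_\beta(u)]=\bar x_{3\alpha+\beta}(t^3u)$. Since $q=3^n$, the map $t\mapsto t^3$ is the Frobenius automorphism of $\mathbb{K}$, hence an additive bijection; this is what lets the twist be absorbed. Concretely, I would define $\theta\colon S/Z(Q_1)\to\SL_3(q)$ on the normal form above by
\[
x_\alpha(t_1)x_\beta(t_2)x_{3\alpha+\beta}(t_3)\,Z(Q_1)\ \longmapsto\ \begin{pmatrix}1&0&0\\ t_1^3&1&0\\ t_3&t_2&1\end{pmatrix},
\]
the exact signs being immaterial. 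Using additivity of Frobenius together with the single relation $[\bar x_\alpha(t), \bar x_\beta(u)]=\bar x_{3\alpha+\beta}(t^3u)$, one checks that $\theta$ respects multiplication; it is visibly injective with image the full lower-unitriangular subgroup, exactly as in \cref{SL3Sub}, so $S/Z(Q_1)$ is a Sylow $3$-subgroup of $\SL_3(q)$.

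For $i=2$ the same method applies: here $Z(Q_2)=\langle X_{3\alpha+\beta}, X_{3\alpha+2\beta}, X_{2\alpha+\beta}\rangle$, and reduction now gives $[\bar x_\alpha(t), \bar x_\beta(u)]=\bar x_{\alpha+\beta}(-tu)$ with $\bar X_{\alpha+\beta}$ central and, this time, no Frobenius twist, so the analogous matrix map (or the symmetry recorded after \cref{SL3Sub}) yields the conclusion. The main obstacle is thus conceptual rather than computational: one must recognise that the $t\mapsto t^3$ twist in the $i=1$ case is harmless precisely because $q$ is a power of $3$, so that the quotient is the genuine Heisenberg group over $\mathbb{K}$ rather than a twisted variant, and one must verify nondegeneracy of the commutator pairing to pin the centre down to order exactly $q$.
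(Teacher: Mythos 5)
Your proposal is correct and is essentially the paper's own proof: the paper likewise writes elements of $S/Z(Q_i)$ in a normal form on the three surviving root groups and maps them to lower-unitriangular matrices, placing the cube on the $x_\alpha$-parameter for $i=1$ (its entry $t_2^3$) exactly as you do, with the Frobenius $t\mapsto t^3$ making the map additive in characteristic $3$. Your additional observations (centrality of $\bar X_{3\alpha+\beta}$, nondegeneracy of the commutator pairing, absorbing signs) are harmless elaborations of the same computation.
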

\begin{proof}
Since $X_\alpha Z(Q_1)$ and $X_\beta Z(Q_1)$ commute modulo $X_{3\alpha+\beta}Z(Q_1)/Z(Q_1)$ we may write any element of $S/Z(Q_1)$ as $x_{\beta}(t_1)x_{\alpha}(t_2)x_{3\alpha+\beta}(t_3)Z(Q_1)$ for $t_1, t_2, t_3\in\mathbb{K}$. Then the map $\theta_1:S/Z(Q_1)\to\SL_3(q)$ such that 
\[
(x_{\beta}(t_1)x_\alpha(t_2)x_{3\alpha+\beta}(t_3)Z(Q_1))\theta=
\begin{pmatrix} 
1 & 0 & 0\\ 
t_1 & 1 & 0 \\ 
t_3 & t_2^3 &1
\end{pmatrix}
\]
is an injective homomorphism, from which it follows that $S/Z(Q_1)$ is isomorphic to a Sylow $3$-subgroup of $\SL_3(q)$.

In a similar manner, since $X_\alpha Z(Q_2)/Z(Q_2)$ and $X_\beta Z(Q_2)/Z(Q_2)$ commute modulo $X_{\alpha+\beta}Z(Q_2)/Z(Q_2)$ we may write elements of $S/Z(Q_2)$ as $x_{\alpha}(t_1)x_{\beta}(t_2)x_{\alpha+\beta}(t_3)Z(Q_2)$ for $t_1, t_2, t_3\in\mathbb{K}$. Then the map $\theta_2:S/Z(Q_2)\to\SL_3(q)$ such that 
\[
(x_{\alpha}(t_1)x_\beta(t_2)x_{\alpha+\beta}(t_3)Z(Q_2))\theta_2=
\begin{pmatrix} 
1 & 0 & 0\\ 
t_1 & 1 & 0 \\ 
t_3 & t_2 &1
\end{pmatrix}
\]
is an injective homomorphism, from which it follows that $S/Z(Q_2)$ is isomorphic to a Sylow $3$-subgroup of $\SL_3(q)$.
\end{proof}

We summarize some further structural results concerning $S, Q_1$ and $Q_2$. Some are easily calculated using the commutator formulas, while others are lifted from \cite[Definition 2.1]{parkerBN} and \cite[Lemma 6.5]{parkerBN}.

\begin{lemma}\label{pStructure}
For $i\in\{1,2\}$, we have the following:
\begin{enumerate}
\item $Q_1\cap Q_2= Z(Q_1)Z(Q_2)\in\mathcal{A}(S)$ has order $q^4$;
\item $S$ has nilpotency class $3$;
\item $C_S(Z(Q_i))=Q_i$, $|Z(Q_i)|=q^3$, $Z(Q_1)\cap Z(Q_2)=\Phi(Q_1)\times \Phi(Q_2)=Z(S)$ is of order $q^2$ and $\Phi(Q_i)$ is of order $q$;
\item $[Q_i, Z(Q_{3-i})]=\Phi(Q_i)$;
\item for $x\in S\setminus Q_i$ we have that $[x, Q_i]Z(Q_i)=Q_1\cap Q_2$ and $[x, Z(Q_i)]\Phi(Q_i)=Z(S)$;
\item $Q_i$ is of exponent $3$, $S$ is of exponent $9$, $\Omega(S)=S$ and $\mho(S)=Z(S)$;
\item  if $z\in S$ is of order $3$ then $z\in Q_1\cup Q_2$; and
\item if $x\in Q_1\setminus Q_2$ and $y\in Q_2\setminus Q_1$ then $[y,x,x]\ne 1 \ne [x,y,y]$.
\end{enumerate}
\end{lemma}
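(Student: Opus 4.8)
The whole lemma flows from the five reduced commutator relations together with \cref{SL3Sub,SL3Quo,G2Exponent}, and I would organise it around the lower central series. Since $X_{2\alpha+\beta}$ and $X_{3\alpha+2\beta}$ commute with every root subgroup, they lie in $Z(S)$, so $Z(S)=X_{2\alpha+\beta}X_{3\alpha+2\beta}$. Because $S/Q_i$ is abelian for each $i$ we have $[S,S]\le Q_1\cap Q_2$, and a one-line commutator count gives $[Q_1\cap Q_2,S]=Z(S)$; hence $\gamma_3(S)\le Z(S)$ and $\gamma_4(S)\le[Z(S),S]=\{1\}$, while $[x_\beta(1),x_\alpha(1),x_\alpha(1)]=x_{2\alpha+\beta}(\pm1)\ne1$ shows the class is exactly $3$. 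This proves (ii) and records the fact I use repeatedly: \emph{every weight-$3$ commutator is central}. The same relations show the four pairwise-commuting root subgroups generating $Q_1\cap Q_2$ are independent and of exponent $3$, so $Q_1\cap Q_2$ is elementary abelian of order $q^4$ with $Z(Q_1)Z(Q_2)=Q_1\cap Q_2$; this is the bulk of (i).

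For (iii)--(v) I would read everything off the ``peel one root subgroup at a time'' chain set up before the lemma. Thus $Z(Q_1)=\langle X_{\alpha+\beta},X_{2\alpha+\beta},X_{3\alpha+2\beta}\rangle$ and $Z(Q_2)=\langle X_{3\alpha+\beta},X_{2\alpha+\beta},X_{3\alpha+2\beta}\rangle$ are elementary abelian of order $q^3$ with intersection $Z(S)$, and $\Phi(Q_1)=X_{3\alpha+2\beta}$, $\Phi(Q_2)=X_{2\alpha+\beta}$ (the penultimate terms of the chains, equal to $[Q_i,Q_i]$ once $Q_i$ has exponent $3$) each have order $q$ with $\Phi(Q_1)\times\Phi(Q_2)=Z(S)$. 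The identity $C_S(Z(Q_1))=Q_1$ holds because centralising $Z(Q_1)$ amounts to centralising $X_{\alpha+\beta}$ (its other generators being central in $S$), and the unique root subgroup not centralising $X_{\alpha+\beta}$ is $X_\alpha$; symmetrically for $Q_2$. Part (iv) is the single commutator $[Q_1,Z(Q_2)]=[X_\beta,X_{3\alpha+\beta}]=X_{3\alpha+2\beta}=\Phi(Q_1)$, and for (v), writing $x=x_\alpha(t_0)w$ with $t_0\ne0$ and $w\in Q_1$, reduction of $[x_\alpha(t_0),x_\beta(u)]$ modulo $Z(Q_1)$ leaves $x_{3\alpha+\beta}(t_0^3u)$ (sweeping $X_{3\alpha+\beta}$ as $u$ varies), so $[x,Q_1]Z(Q_1)=Q_1\cap Q_2$, while $[x,Z(Q_1)]=X_{2\alpha+\beta}$ gives $[x,Z(Q_1)]\Phi(Q_1)=Z(S)$.

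The exponent claims (vi) come from the two $\SL_3$-lemmas: $Q_i=Z(Q_i)X$ with $Z(Q_i)$ central of exponent $3$ and $X$ a Sylow $3$-subgroup of $\SL_3(q)$ (\cref{SL3Sub}) of exponent $3$, so $Q_i$ has exponent $3$; $S$ has exponent $9$ by \cref{G2Exponent}; and $\Omega(S)=S$ because $S=Q_1Q_2$ is generated by the exponent-$3$ subgroups $Q_1,Q_2$. For $\mho(S)=Z(S)$ and for (vii) I would use one computation. Since $S/Z(Q_i)$ is a Sylow $3$-subgroup of $\SL_3(q)$ (\cref{SL3Quo}), hence of exponent $3$, every cube lies in $Z(Q_1)\cap Z(Q_2)=Z(S)$, so $\mho(S)\le Z(S)$. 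Conversely the class-$3$ collection formula, which in our setting (where $\gamma_3(S)=Z(S)$ has exponent $3$ and $[x_\beta(u),x_\alpha(t)]^3=1$) reads $(x_\alpha(t)x_\beta(u))^3=[x_\beta(u),x_\alpha(t),x_\alpha(t)]\,[x_\beta(u),x_\alpha(t),x_\beta(u)]^2$, evaluates to
\[(x_\alpha(t)x_\beta(u))^3=x_{2\alpha+\beta}(-t^2u)\,x_{3\alpha+2\beta}(-t^3u^2),\]
which is nontrivial whenever $t,u\ne0$. Any $g\notin Q_1\cup Q_2$ equals $x_\alpha(t)x_\beta(u)w$ with $t,u\ne0$ and $w\in Q_1\cap Q_2$; as $w$ commutes with this product modulo the central $Z(S)$, a class-$2$ collection gives $g^3=(x_\alpha(t)x_\beta(u))^3\ne1$, proving (vii). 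Letting $t,u$ vary and using that the squares in $\mathbb K$ additively generate $\mathbb K$ shows these cubes fill out $Z(S)$, so $\mho(S)=Z(S)$.

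Part (viii) reduces to the root-element case by the same class-$3$ mechanism: for $x\in Q_1\setminus Q_2$ and $y\in Q_2\setminus Q_1$ I write $x=x_\beta(r)x'$, $y=x_\alpha(s)y'$ with $r,s\ne0$ and $x',y'\in Q_1\cap Q_2\supseteq[S,S]$; since $[[S,S],[S,S]]\le\gamma_4(S)=\{1\}$ and weight-$3$ commutators are central, every correction involving $x'$ or $y'$ vanishes and $[y,x,x]=x_{3\alpha+2\beta}(\pm s^3r^2)\ne1$, $[x,y,y]=x_{2\alpha+\beta}(\pm s^2r)\ne1$. The main obstacle is the cube computation behind (vi) and (vii): one must get the class-$3$ power identity right and then verify that the cubes genuinely fill the rank-$2$ group $Z(S)$ rather than a single line --- this is where the additive span of squares enters and where the case $q=3$ is tightest. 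A secondary point is the assertion $Q_1\cap Q_2\in\mathcal A(S)$ in (i): to see it has maximal rank I would invoke (vii) to force any elementary abelian subgroup into $Q_1$ or $Q_2$ (a subgroup contained in $Q_1\cup Q_2$ lies in one of them) and then bound the elementary abelian rank inside $Q_i\cong q^2\times q^{1+2}$, the $q^{1+2}$ factor being of exponent $3$ with elementary abelian subgroups of rank at most $2n$.
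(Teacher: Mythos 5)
Your proof is correct, and the key computations all check out: collecting $(x_\alpha(t)x_\beta(u))^3$ directly from the reduced relations does give $x_{2\alpha+\beta}(-t^2u)\,x_{3\alpha+2\beta}(-t^3u^2)$, and the triple commutators in (viii) evaluate to $x_{3\alpha+2\beta}(-s^3r^2)$ and $x_{2\alpha+\beta}(-s^2r)$, nontrivial for $r,s\ne 0$. The comparison with the paper is quickly made: the paper gives no proof at all, stating only that some parts are ``easily calculated using the commutator formulas'' while others are lifted from \cite[Definition 2.1, Lemma 6.5]{parkerBN}. Your argument therefore differs simply by being self-contained, and it buys two things: independence from the external reference, and an efficient treatment of the genuinely delicate parts (vi)--(vii), where your use of \cref{SL3Quo} to get $\mho(S)\le Z(Q_1)\cap Z(Q_2)=Z(S)$ avoids computing cubes of general elements, the class-$2$ collection $g^3=(x_\alpha(t)x_\beta(u))^3$ for $g=x_\alpha(t)x_\beta(u)w$, $w\in Q_1\cap Q_2$, disposes of (vii) in one line (valid because $[w,v]\in Z(S)$ has order dividing $3$ and $w^3=1$), and the polarization identity $(k_1+k_2)^2-k_1^2-k_2^2=2k_1k_2$ does indeed show the cubes generate all of $Z(S)$, including in the tight case $q=3$. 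Your ordering is also sound: (vii) is proved from the cube formula alone, so invoking it for the $\mathcal{A}(S)$ claim in (i) is not circular, and the rank bound $4n$ inside $Q_i\cong q^2\times q^{1+2}$ matches \cref{QiCent}. Two small wording points, neither a gap: in (viii), the fact you cite ($[[S,S],[S,S]]\le\gamma_4(S)=\{1\}$) is not quite what you use, since $x',y'$ lie in $Q_1\cap Q_2$, which may properly contain $[S,S]$; what actually kills the corrections is that $Q_1\cap Q_2$ is abelian together with $[S,Q_1\cap Q_2]\le Z(S)$, both of which you had already established. And in (v) you should record the immediate containment $[x,Q_1]\le Q_1\cap Q_2$ before the sweeping of $x_{3\alpha+\beta}(t_0^3u)$ (using that $t\mapsto t^3$ is bijective on $\mathbb{K}$) upgrades it to equality.
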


\begin{lemma}\label{exp3} 
Suppose $R\le S$ has exponent $3$.  Then $R\le Q_1$ or $R\le Q_2$.
\end{lemma}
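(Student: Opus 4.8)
The plan is to reduce immediately to the observation that every nonidentity element of $R$ lies in $Q_1 \cup Q_2$, and then to invoke the elementary fact that a group cannot be written as the union of two proper subgroups. First I would note that since $R$ has exponent $3$, every element of $R \setminus \{1\}$ has order exactly $3$; by \cref{pStructure}(vii), each such element lies in $Q_1 \cup Q_2$. Since $1$ lies in both $Q_1$ and $Q_2$, this yields the containment $R \subseteq Q_1 \cup Q_2$, and hence $R = (R \cap Q_1) \cup (R \cap Q_2)$, where $R \cap Q_1$ and $R \cap Q_2$ are genuine subgroups of $R$.

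To finish, I would show that $R$ equals one of these two subgroups. Suppose for contradiction that $R \not\le Q_1$ and $R \not\le Q_2$. Then I can choose $x \in R \setminus Q_1$ and $y \in R \setminus Q_2$; by the containment above, $x \in Q_2 \setminus Q_1$ and $y \in Q_1 \setminus Q_2$. Consider the product $xy \in R$. Since $x \notin Q_1$ while $y \in Q_1$, the element $xy$ cannot lie in $Q_1$ (otherwise $x = (xy)y^{-1} \in Q_1$); symmetrically $xy \notin Q_2$. Thus $xy \notin Q_1 \cup Q_2$, yet $xy \in R$ has order dividing $3$. If $xy \ne 1$ this contradicts \cref{pStructure}(vii), and if $xy = 1$ then $y = x^{-1} \in Q_2$, contradicting the choice of $y$. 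Either way we reach a contradiction, so $R \le Q_1$ or $R \le Q_2$, as required.

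The argument is essentially immediate once \cref{pStructure}(vii) is available, so there is no genuine obstacle here; the only point requiring care is to record that exponent $3$ forces every nontrivial element to have order exactly $3$, which is precisely what licenses the appeal to part (vii). I would also remark that part (viii) of \cref{pStructure}, though morally the structural reason that elements of $Q_1 \setminus Q_2$ and $Q_2 \setminus Q_1$ cannot coexist inside an exponent-$3$ subgroup, is not actually needed for this short proof, the union-of-subgroups observation doing all the work.
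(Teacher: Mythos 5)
Your proof is correct and follows essentially the same route as the paper: both reduce to the containment $R\subseteq Q_1\cup Q_2$ via \cref{pStructure}(vii) and then derive a contradiction by showing a product of suitably chosen elements escapes $Q_1\cup Q_2$. Your explicit handling of the case $xy=1$ is a minor (harmless) extra care the paper's version absorbs implicitly.
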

\begin{proof}
As $R$ has exponent $3$, $R \subset Q_1 \cup Q_2$ by \cref{pStructure} (vii). If $R \not \le Q_1$ and $R \not \le Q_2$,  then there exists $r \in R \setminus Q_1$ and $s \in R \setminus Q_2$.  But then $rs \not \in Q_1 \cup Q_2$, which is impossible.
\end{proof}

\begin{lemma}\label{swapping}
Let $S$ be isomorphic to a Sylow $3$-subgroup of $\mathrm{G}_2(q)$ where $q=3^n$. Then $Q_1\cap Q_2$ is characteristic in $S$, $N_{\Aut(S)}(Q_1)=N_{\Aut(S)}(Q_2)$ has index $2$ in $\Aut(S)$ and for $\alpha\in \Aut(S)$ with non-trivial image in $\Aut(S)/N_{\Aut(S)}(Q_i)$, $Q_i\alpha=Q_{3-i}$ for $i\in\{1,2\}$.
\end{lemma}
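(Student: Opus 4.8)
The plan is to first identify $\{Q_1,Q_2\}$ as an intrinsic (hence $\Aut(S)$-invariant) set of subgroups, read off the characteristicity of $Q_1\cap Q_2$ for free, and then exhibit a single automorphism swapping the two subgroups to pin the index down to $2$.

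First I would characterize the pair. By \cref{pStructure}(vi) each $Q_i$ has exponent $3$ and order $q^5$. Conversely, if $R\le S$ has exponent $3$ then $R\le Q_1$ or $R\le Q_2$ by \cref{exp3}; if in addition $|R|=q^5=|Q_i|$, this forces $R=Q_1$ or $R=Q_2$. Thus $\{Q_1,Q_2\}$ is \emph{exactly} the set of subgroups of $S$ of order $q^5$ and exponent $3$. Since an automorphism preserves both order and exponent, every $\alpha\in\Aut(S)$ satisfies $\{Q_1\alpha,Q_2\alpha\}=\{Q_1,Q_2\}$. In particular $(Q_1\cap Q_2)\alpha=Q_1\alpha\cap Q_2\alpha=Q_1\cap Q_2$, so $Q_1\cap Q_2$ is characteristic in $S$, giving the first assertion. (Alternatively, a direct computation with the commutator formulas identifies $Q_1\cap Q_2=\langle X_{\alpha+\beta},X_{2\alpha+\beta},X_{3\alpha+\beta},X_{3\alpha+2\beta}\rangle$ with the second centre $Z_2(S)$, which is characteristic a priori.)

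Next I would exploit the invariance to analyse the normalizers. The action on $\{Q_1,Q_2\}$ is a homomorphism $\Aut(S)\to\Sym(\{Q_1,Q_2\})\cong C_2$. Since fixing one member of a two-element set is the same as fixing the other, the stabilizers of $Q_1$ and of $Q_2$ coincide and both equal the kernel $N$; hence $N_{\Aut(S)}(Q_1)=N_{\Aut(S)}(Q_2)=:N$ is normal of index dividing $2$, and any $\alpha\notin N$ has $Q_1\alpha=Q_2$ and $Q_2\alpha=Q_1$. It thus remains only to produce one automorphism interchanging $Q_1$ and $Q_2$, which forces the index to be exactly $2$ and establishes the swapping statement.

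Finally I would exhibit such an automorphism, namely the exceptional graph automorphism available because $p=3$. Define $\rho$ on root subgroups by $x_\alpha(t)\mapsto x_\beta(t^3)$, $x_\beta(u)\mapsto x_\alpha(u)$, $x_{\alpha+\beta}(t)\mapsto x_{3\alpha+\beta}(t^3)$, $x_{3\alpha+\beta}(u)\mapsto x_{\alpha+\beta}(u)$, $x_{2\alpha+\beta}(t)\mapsto x_{3\alpha+2\beta}(t^3)$, and $x_{3\alpha+2\beta}(u)\mapsto x_{2\alpha+\beta}(u)$. This map interchanges the generating sets of $Q_1$ and $Q_2$, so $Q_1\rho=Q_2$ once $\rho$ is known to be an automorphism. \textbf{The main obstacle} is precisely verifying that $\rho$ respects the three nontrivial Chevalley commutator relations of \cref{G2Sylow}: one checks that the relations $[x_\alpha(t),x_{\alpha+\beta}(u)]=x_{2\alpha+\beta}(tu)$ and $[x_\beta(t),x_{3\alpha+\beta}(u)]=x_{3\alpha+2\beta}(tu)$ map to one another, while $[x_\alpha(t),x_\beta(u)]=x_{\alpha+\beta}(-tu)x_{2\alpha+\beta}(-t^2u)x_{3\alpha+\beta}(t^3u)x_{3\alpha+2\beta}(t^3u^2)$ maps to itself; the Frobenius twists $t\mapsto t^3$ on the short-root side are forced, and the degeneracies at $p=3$ (the vanishing of coefficients such as $3tu$) are exactly what make this bookkeeping consistent. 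The trivial commutators are preserved because the underlying permutation of positive roots carries non-summing pairs to non-summing pairs, so $\rho$ is an automorphism of $S$ with $Q_1\rho=Q_2$, completing the argument.
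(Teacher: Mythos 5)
Your proof is correct, and its skeleton matches the paper's: both use \cref{pStructure}(vi) and \cref{exp3} to identify $\{Q_1,Q_2\}$ as precisely the subgroups of $S$ of order $q^5$ and exponent $3$, deduce that $\Aut(S)$ permutes this pair (whence the characteristicity of $Q_1\cap Q_2$ and the coincidence of the two normalizers as the kernel of the map to $\Sym(\{Q_1,Q_2\})$), and then conclude index exactly $2$ from the existence of an exchanging automorphism. The one genuine difference is that the paper simply asserts ``$Q_1$ and $Q_2$ are exchanged in $\Aut(S)$'' --- implicitly invoking the exceptional graph automorphism of $\mathrm{G}_2(3^n)$ --- whereas you construct that automorphism $\rho$ explicitly on root subgroups and verify the three nontrivial $p=3$ commutator relations; this check does go through (your $\rho$ swaps the second and third relations and carries the first to its inverse, with all factors commuting since the relevant Chevalley coefficients vanish in characteristic $3$), so your write-up actually closes a step the paper leaves to the reader. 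One small inaccuracy in your final sentence: the induced permutation $\alpha\leftrightarrow\beta$, $\alpha+\beta\leftrightarrow 3\alpha+\beta$, $2\alpha+\beta\leftrightarrow 3\alpha+2\beta$ does \emph{not} carry every non-summing pair to a non-summing pair --- the pair $\{\beta,\,3\alpha+2\beta\}$ maps to $\{\alpha,\,2\alpha+\beta\}$, whose sum $3\alpha+\beta$ is a root --- but the corresponding commutator $[x_\alpha(t),x_{2\alpha+\beta}(u)]=x_{3\alpha+\beta}(3tu)$ is trivial in characteristic $3$, which is the degeneracy you correctly flag elsewhere, so the conclusion stands after replacing ``non-summing pairs to non-summing pairs'' by ``pairs with trivial commutator to pairs with trivial commutator.''
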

\begin{proof}
By \cref{exp3}, $Q_1$ and $Q_2$ are the only subgroups of $S$ of order $q^5$ and exponent $3$. Therefore $\Aut(S)$ permutes $\{Q_1,Q_2\}$. As $Q_1$ and $Q_2$ are exchanged in $\Aut(S)$, $N_{\Aut(S)}(Q_1)$ has index $2$ in $\Aut(S)$ and $N_{\Aut(S)}(Q_1)=N_{\Aut(S)}(Q_2)$. Furthermore, it follows that $Q_1\cap Q_2$ is a characteristic subgroup of $S$.
\end{proof}

\begin{proposition}\label{AutS}
Let $S$ be isomorphic to a Sylow $3$-subgroup of $\mathrm{G}_2(q)$ where $q=3^n$. Then $\Aut(S)=CH$ where $C$ is a normal $3$-subgroup and $H=N_{\Aut(\mathrm{G}_2(q))}(S)$.
\end{proposition}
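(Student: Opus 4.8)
The plan is to produce the normal $3$-subgroup as a Burnside-type kernel and then show that the image of $H$ already fills out the quotient. Concretely, set $C:=C_{\Aut(S)}(S/\Phi(S))$. By \cref{burnside} this is a normal $3$-subgroup of $\Aut(S)$, and since $S'\le\Phi(S)$ it contains $\Inn(S)$. Here I regard $H=N_{\Aut(\mathrm{G}_2(q))}(S)$ as a subgroup of $\Aut(S)$ via its conjugation action on $S$. As $C\normaleq\Aut(S)$, the set $CH$ is a subgroup, so it suffices to prove that the restriction map $H\to A:=\Aut(S)/C$ is surjective. Writing $\bar S:=S/\Phi(S)$ and using that $\Phi(S)=S'=\langle X_{\alpha+\beta},X_{2\alpha+\beta},X_{3\alpha+\beta},X_{3\alpha+2\beta}\rangle$ has order $q^4$, the quotient $\bar S$ is an $\mathbb{F}_3$-space of dimension $2n$ that splits as $\bar S=\bar X_\alpha\oplus\bar X_\beta$ with each summand of order $q$; moreover $A$ acts faithfully on $\bar S$, so $A\le\GL(\bar S)$.

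First I would locate $A$ inside $\GL(\bar S)$. By \cref{swapping}, $\Aut(S)$ permutes $\{Q_1,Q_2\}$, and since $\bar Q_1=\bar X_\beta$ and $\bar Q_2=\bar X_\alpha$ modulo $\Phi(S)$, this says exactly that $A$ permutes the pair of complementary $n$-subspaces $\{\bar X_\alpha,\bar X_\beta\}$; the stabilizer $A_0$ of both summands then has index at most $2$ in $A$. The transposition is realized inside $H$ by the exceptional graph automorphism of $\mathrm{G}_2(q)$, which exists precisely because $p=3$: it preserves the positive system while interchanging the long and short fundamental root subgroups, hence interchanges $Q_1$ and $Q_2$. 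Thus $H\not\le N_{\Aut(S)}(Q_1)$ and $H$ surjects onto $A/A_0$, reducing the problem to covering $A_0$.

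The heart of the argument is a rigidity computation from the Chevalley commutator formulas recorded at the start of this section. Given $\theta\in\Aut(S)$ stabilizing $Q_1$ and $Q_2$, let $g_\alpha,g_\beta\in\GL_n(3)$ denote its actions on $\bar X_\alpha,\bar X_\beta$ and let $h$ denote its action on an appropriate characteristic section carrying the image of $X_{\alpha+\beta}$. Applying $\theta$ to $[x_\alpha(t),x_\beta(u)]\equiv x_{\alpha+\beta}(-tu)$ and comparing the $X_{\alpha+\beta}$-component forces the relation $h(tu)=g_\alpha(t)g_\beta(u)$ for all $t,u\in\mathbb{K}$. Normalizing so that $g_\alpha(1)=g_\beta(1)=1$, one sees that $g_\alpha=g_\beta$ is a bijective ring endomorphism of $\mathbb{K}=\GF(q)$, i.e.\ an element of $\mathrm{Gal}(\mathbb{K}/\mathbb{F}_3)$; restoring the scalars gives $g_\alpha,g_\beta\in\Gamma\mathrm{L}_1(q)$ with a common Galois part. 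Since $A$ acts faithfully on $\bar S$, an element of $A_0$ is determined by the pair $(g_\alpha,g_\beta)$, so the free parameters are one Galois automorphism and two scalars in $\mathbb{K}^\times$, whence $|A_0|\le (q-1)^2 n$.

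Finally I would exhibit this full bound inside $H$. The maximal torus $T\le N_{\mathrm{G}_2(q)}(S)\le H$ acts on $\bar X_\alpha,\bar X_\beta$ through the simple-root characters $\alpha,\beta$, which are jointly surjective onto $(\mathbb{K}^\times)^2$ because the $\mathrm{G}_2$ Cartan matrix is unimodular; this supplies the full scalar part of order $(q-1)^2$. The field automorphism of order $n$, likewise lying in $H$, supplies the common Galois part. These elements of $H$ realize a subgroup of $A_0$ of order $(q-1)^2 n$, which by the bound above is all of $A_0$. Combined with the graph automorphism covering $A/A_0$, this shows $H$ surjects onto $A$, and therefore $\Aut(S)=CH$. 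I expect the main obstacle to be the rigidity step of the third paragraph: making precise that $\theta$ induces a well-defined map on a characteristic section realizing $\overline{X_{\alpha+\beta}}$, and verifying that the single leading commutator relation already forces semilinearity, so that one need not track the remaining, more degenerate relations.
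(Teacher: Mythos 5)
Your argument as written has a genuine gap at $q=3$, a case the proposition explicitly includes. Your whole frame rests on $\Phi(S)=S'=Q_1\cap Q_2$ having order $q^4$ and $S/\Phi(S)=\bar X_\alpha\oplus\bar X_\beta$ having order $q^2$, but this fails for $q=3$: modulo $Z(S)$, the $X_{\alpha+\beta}$- and $X_{3\alpha+\beta}$-components of $[x_\alpha(t),x_\beta(u)]$ are the vectors $u(-t,t^3)\in\mathbb{K}^2$, and when $q=3$ one has $t^3=t$, so these span only the diagonal line $\{(-s,s)\}$ rather than all of $\mathbb{K}^2$. Hence for $q=3$ the group $S'$ has order $q^3=27$, $\Phi(S)=S'\mho(S)=S'\ne Q_1\cap Q_2$ (using $\mho(S)=Z(S)$ from \cref{pStructure}), and $S/\Phi(S)$ is elementary abelian of rank $3$, carrying a third coordinate coming from $X_{\alpha+\beta}$. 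Your rigidity computation, the embedding of $A_0$ into pairs in $\Gamma\mathrm{L}_1(q)$ with common Galois part, and the resulting bound $|A_0|\le (q-1)^2n$ are all underived in this case; one would have to track the extra coordinate and additional relations, so $q=3$ requires a separate argument (or a fallback to the paper's method, which passes to $S/Z(Q_i)$, a Sylow $3$-subgroup of $\SL_3(q)$ by \cref{SL3Quo} \emph{for all} $q$, and quotes \cite[Proposition 5.3]{parkerBN} for $\Aut(S/Z(Q_1))=A.\Gamma\mathrm{L}_2(q)$ -- this is exactly how the paper handles $q=3$ uniformly).

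For $q>3$ your route is sound and genuinely different from the paper's: where the paper bounds $|\Aut(S)|_{3'}$ by citing the known automorphism group of $\mathrm{UT}_3(q)$, you prove semilinearity directly from one commutator relation, which makes the proof self-contained. The step you flag as the main obstacle does resolve cleanly: by \cref{pStructure}, $Q_1\cap Q_2=Z(Q_1)Z(Q_2)$ is elementary abelian with $Z(Q_1)\cap Z(Q_2)=Z(S)$, so $(Q_1\cap Q_2)/Z(S)=Z(Q_1)/Z(S)\oplus Z(Q_2)/Z(S)$ with both summands invariant under any $\theta$ stabilizing $Q_1$ and $Q_2$; take $h$ to be the action on $Z(Q_1)/Z(S)\cong X_{\alpha+\beta}$. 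Since $[Q_1\cap Q_2,S]\le \Phi(Q_1)\Phi(Q_2)=Z(S)$, the $\Phi(S)$-corrections to $\theta(x_\alpha(t))$ and $\theta(x_\beta(u))$ die modulo $Z(S)$, and projecting $[x_\alpha(t),x_\beta(u)]$ to $Z(Q_1)/Z(S)$ discards the troublesome $x_{3\alpha+\beta}(t^3u)$-term (it lies in the other summand), leaving exactly $h(tu)=g_\alpha(t)g_\beta(u)$; the standard normalization then gives $g_\alpha=a\sigma$, $g_\beta=b\sigma$ with $\sigma\in\mathrm{Gal}(\mathbb{K}/\GF(3))$, as you assert. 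Note also that for $q>3$ your kernel $C_{\Aut(S)}(S/\Phi(S))$ coincides with the paper's $C=C_{\Aut(S)}(S/Q_1\cap Q_2)$, since there $\Phi(S)=Q_1\cap Q_2$. So the proposal is correct once you patch $q=3$ separately, but as it stands that case is a real hole, not a routine omission.
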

\begin{proof}
We have that $|N_{\Aut(\mathrm{G}_2(q))}(S)|=q^6.(q-1)^2.2n$ where $q=3^n$, and so $|\Aut(S)|_{3'}\geq (q-1)^2.2n$. Note that $N_{\Aut(S)}(Q_1)=N_{\Aut(S)}(Q_2)$ normalizes $Z(Q_1)$ and $Z(Q_2)$ and so acts on both $S/Z(Q_1)$ and $S/Z(Q_2)$. Let $\alpha\in N_{\Aut(S)}(Q_1)$. If $\alpha$ acts trivially on $S/Z(Q_1)$ and $S/Z(Q_2)$, then $\alpha$ acts trivially on $S/Z(S)$ and since $Z(S)\le \Phi(S)$, $\alpha$ acts trivially on $S/\Phi(S)$.  By \cref{burnside}, all such automorphism form a normal $3$-subgroup of $\Aut(S)$. Now, every other automorphism acts non-trivially on $S/Z(Q_i)$ for some $i\in\{1,2\}$ and so embeds in $\Aut(S/Z(Q_i))$. Without loss of generality, let $i=1$. By \cref{SL3Quo}, $S/Z(Q_1)$ is isomorphic to a Sylow $3$-subgroup of $\SL_3(q)$, and by \cite[Proposition 5.3]{parkerBN}, $\Aut(S/Z(Q_1))=A.\Gamma\mathrm{L}_2(q)$ where $A$ is a normal $3$-subgroup of $\Aut(S/Z(Q_1))$ which centralizes $S/Q_1\cap Q_2$. In particular, setting $C=C_{\Aut(S)}(S/Q_1\cap Q_2)$, $C$ is a normal $3$-subgroup of $\Aut(S)$ and $\Aut(S)/C$ has an index $2$ subgroup which normalizes $Q_1$ and is isomorphic to a subgroup of $\Gamma\mathrm{L}_2(q)$. Specifically, $N_{\Aut(S/Z(Q_1))}(Q_1/Z(Q_1))=N_{\Aut(S/Z(Q_1))}(T)$ where $T\in\syl_3(\Aut(S/Z(Q_1)))$. Therefore, $|\Aut(S)|_{3'}\leq (q-1)^2.2n$ and it follows that $|\Aut(S)|_{3'}=|N_{\Aut(\mathrm{G}_2(q))}(S)|_{3'}$ and $\Aut(S)=CH$ where $C=C_{\Aut(S)}(S/Q_1\cap Q_2)$ and $H=N_{\Aut(\mathrm{G}_2(q))}(S)$.
\end{proof}

\begin{lemma}\label{QiCent}
Let $x\in Q_i\setminus Z(Q_i)$. Then $|C_{Q_i}(x)|=q^4$ and $\mathcal{A}(Q_i)=\{C_{Q_i}(x)\mid x\in Q_i\setminus Z(Q_i)\}$.
\end{lemma}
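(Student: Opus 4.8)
The plan is to reduce to the case $i=1$ using \cref{swapping}, which supplies an automorphism of $S$ interchanging $Q_1$ and $Q_2$; the argument below then transfers to $Q_2$ verbatim. First I would record the relevant structure of $Q_1$. By \cref{pStructure} the group $Q_1$ has exponent $3$, with $|Z(Q_1)|=q^3$ and $|\Phi(Q_1)|=q$; since $Q_1$ has exponent $3$ we have $\Phi(Q_1)=[Q_1,Q_1]$, and this lies in $Z(S)\le Z(Q_1)$, so $Q_1$ has nilpotency class $2$. Exponent $3$ also means a subgroup of $Q_1$ is elementary abelian precisely when it is abelian, so $\mathcal{A}(Q_1)$ consists of the abelian subgroups of $Q_1$ of largest order. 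Modulo $Z(Q_1)$ every element of $Q_1$ is uniquely of the form $x_\beta(t)x_{3\alpha+\beta}(u)Z(Q_1)$, which identifies $Q_1/Z(Q_1)$ with $\GF(q)^2$; the reduced commutator formula $[x_\beta(t),x_{3\alpha+\beta}(u)]=x_{3\alpha+2\beta}(tu)$, together with the vanishing (at $p=3$) of the other commutators among the generators of $Q_1$, shows that the commutator map induces the nondegenerate $\GF(q)$-bilinear alternating form $B((t_1,u_1),(t_2,u_2))=t_1u_2-t_2u_1$ on $Q_1/Z(Q_1)$ with values in $\Phi(Q_1)\cong\GF(q)$.

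For $x\in Q_1\setminus Z(Q_1)$ the class-$2$ condition makes $y\mapsto[x,y]$ a homomorphism $Q_1\to\Phi(Q_1)$ with kernel $C_{Q_1}(x)$ and image $[x,Q_1]$. Writing $\bar x\ne 0$ for the image of $x$ in $Q_1/Z(Q_1)$, this map is induced by the $\GF(q)$-linear functional $B(\bar x,-)$, which is nonzero and hence surjective onto $\GF(q)\cong\Phi(Q_1)$. Therefore $[x,Q_1]=\Phi(Q_1)$ has order $q$ and $|C_{Q_1}(x)|=|Q_1|/q=q^4$, which settles the first assertion.

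To identify $\mathcal{A}(Q_1)$ I would first show that each $C_{Q_1}(x)$ is abelian. Its image in $Q_1/Z(Q_1)$ is the kernel of $B(\bar x,-)$, which is a $\GF(q)$-line containing $\bar x$; since $B$ is $\GF(q)$-bilinear and alternating it vanishes on this line, so $[C_{Q_1}(x),C_{Q_1}(x)]=\{1\}$ and $C_{Q_1}(x)$ is elementary abelian of order $q^4$. Next, any abelian $A\le Q_1$ with $A\not\le Z(Q_1)$ satisfies $A\le C_{Q_1}(x)$ for any $x\in A\setminus Z(Q_1)$, whence $|A|\le q^4$, while any abelian $A\le Z(Q_1)$ has $|A|\le q^3$. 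Thus the abelian subgroups of $Q_1$ have order at most $q^4$, this bound is attained by the subgroups $C_{Q_1}(x)$, and $\mathcal{A}(Q_1)$ consists exactly of the elementary abelian subgroups of order $q^4$; in particular $C_{Q_1}(x)\in\mathcal{A}(Q_1)$ for all $x\in Q_1\setminus Z(Q_1)$. Conversely, given $A\in\mathcal{A}(Q_1)$ we have $|A|=q^4>|Z(Q_1)|$, so choosing $x\in A\setminus Z(Q_1)$ gives $A\le C_{Q_1}(x)$, and equality of orders forces $A=C_{Q_1}(x)$. This yields the claimed equality $\mathcal{A}(Q_1)=\{C_{Q_1}(x)\mid x\in Q_1\setminus Z(Q_1)\}$.

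The only genuinely delicate point is the passage to the $\GF(q)$-structure: it is crucial that the induced commutator form is $\GF(q)$-bilinear (not merely $\GF(3)$-bilinear) and nondegenerate, since this is simultaneously what forces $[x,Q_1]$ to fill out all of $\Phi(Q_1)$ and what makes each centralizer $C_{Q_1}(x)$ abelian. This in turn rests on the degeneracy of all the other Chevalley commutators inside $Q_1$ when $p=3$, so I would take care to verify that $[x_{\alpha+\beta}(t),x_{2\alpha+\beta}(u)]$ and the remaining commutators among the generators of $Q_1$ are trivial, leaving the single relation $[x_\beta(t),x_{3\alpha+\beta}(u)]=x_{3\alpha+2\beta}(tu)$ as the sole contribution to $B$.
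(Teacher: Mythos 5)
Your proof is correct, but it takes a different route from the paper's. The paper invokes \cref{SL3Sub} to decompose $Q_1=Z(Q_1)X$ with $X\cong q^{1+2}$ a Sylow $3$-subgroup of $\SL_3(q)$, then computes $C_{Q_1}(x)=Z(Q_1)C_X(x)$ with $|C_X(x)|=q^2$ by ``an easy calculation in $X$,'' and concludes by noting that the maximal elementary abelian subgroups of $X$ have order $q^2$; the whole proof is four lines. You instead work intrinsically in the quotient $Q_1/Z(Q_1)$: since $Q_1$ has exponent $3$ and class $2$ with $\Phi(Q_1)=Q_1'$ of order $q$, commutation induces the nondegenerate alternating $\GF(q)$-bilinear form $B((t_1,u_1),(t_2,u_2))=t_1u_2-t_2u_1$ on $Q_1/Z(Q_1)\cong\GF(q)^2$, and both assertions drop out of linear algebra: surjectivity of the nonzero functional $B(\bar x,-)$ gives $|C_{Q_1}(x)|=q^4$, isotropy of the kernel line gives that $C_{Q_1}(x)$ is (elementary) abelian, and the two-way comparison of orders identifies $\mathcal{A}(Q_1)$ exactly. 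What your approach buys is self-containedness and explicitness: you never need the identification of $X$ with a Sylow subgroup of $\SL_3(q)$, the ``easy calculation'' the paper leaves implicit becomes the visible statement that $B(\bar x,-)$ has a one-dimensional kernel equal to $\GF(q)\bar x$, and you supply both inclusions of the set equality for $\mathcal{A}(Q_1)$, which the paper compresses into ``the result follows.'' You are also right to flag that the $\GF(q)$-bilinearity (not merely $\GF(3)$-bilinearity) of $B$ is the crucial point, as it is what forces $[x,Q_1]=\Phi(Q_1)$ and makes each centralizer abelian; this is guaranteed by the explicit Chevalley relation $[x_\beta(t),x_{3\alpha+\beta}(u)]=x_{3\alpha+2\beta}(tu)$ together with the degeneration of the remaining commutators inside $Q_1$ at $p=3$. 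Two minor remarks: your appeal to \cref{swapping} to reduce to $i=1$ is legitimate (it supplies an automorphism of $S$ interchanging $Q_1$ and $Q_2$) and matches the paper's ``by symmetry,'' though an isomorphism $Q_1\cong Q_2$, already noted in the paper via the shape $q^2\times q^{1+2}$, would suffice since the statement is intrinsic to $Q_i$; and your observation that exponent $3$ makes abelian subgroups elementary abelian is exactly the point needed to pass between $\mathcal{A}(Q_1)$, defined via maximal rank, and abelian subgroups of maximal order.
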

\begin{proof}
By symmetry, we may as well suppose that $i=1$. Then \cref{SL3Sub} implies that $Q_1= Z(Q_1)X$. Moreover, for $x\in Q_1\setminus Z(Q_1)$, $C_{Q_1}(x)= Z(Q_1)C_X(x)$ and an easy calculation in $X$ shows that $C_X(x)$ has order $q^2$. Hence $C_{Q_1}(x)$ is elementary abelian of order $q^4$. Since the maximal elementary abelian subgroups of $X$ have order $q^2$, the result follows.
\end{proof}

We now determine the set of essential subgroups of a saturated fusion system $\fs$ on $S$ over a series of lemmas and propositions. As in the case where $p=2$, it is enough to assume that a candidate essential is $S$-radical and $S$-centric and so we perform the analysis in this more general setting.

\begin{lemma}\label{Q1/Q2}
Let $E$ be an $S$-centric, $S$-radical subgroup of $S$ and suppose that $Q_1\cap Q_2< E$. Then $Q_1\le E$, $Q_2\le E$ or $E=S$.
\end{lemma}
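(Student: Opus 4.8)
The plan is to argue by contradiction: assume $E\ne S$ and that $E$ contains neither $Q_1$ nor $Q_2$, and deduce that $E$ is not $S$-radical. I will use three facts read off from the commutator formulas and \cref{pStructure}: that $S'=Q_1\cap Q_2$ (both are generated by the non-simple root subgroups), that $[S,Q_1\cap Q_2]=Z(S)$, and that $C_S(Q_1\cap Q_2)=C_S(Z(Q_1))\cap C_S(Z(Q_2))=Q_1\cap Q_2$. The last equality gives $C_S(E)\le C_S(Q_1\cap Q_2)=Q_1\cap Q_2\le E$, so $E$ is automatically $S$-centric; moreover, since $S/S'$ is abelian and $S'\le E$, we have $E\normaleq S$ and $\Out_S(E)\cong S/E\ne\{1\}$. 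Thus it suffices to produce a characteristic chain of $E$ all of whose factors are centralized by some $x\in S\setminus E$: by \cref{Chain} (with $A=\Aut(E)$) the induced automorphism then lies in $O_p(\Aut(E))$, it is non-inner because $C_S(E)\le E$, and hence $O_p(\Out(E))\cap\Out_S(E)\ne\{1\}$, contradicting $S$-radicality.

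\textbf{Case 1: $E\le Q_i$ for some $i$.} Say $i=2$, so $Q_1\cap Q_2<E<Q_2$. A short commutator computation (using that $E$ has non-trivial image in $Q_2/(Q_1\cap Q_2)$) gives $E'=Q_2'=\Phi(Q_2)$, which lies in $Z(S)$. Consequently $[Q_2,E]\le Q_2'=E'$ and $[Q_2,E']=\{1\}$, so \emph{every} $x\in Q_2\setminus E$ centralizes the characteristic chain $\{1\}\normaleq E'\normaleq E$. This already yields the contradiction, and the case $E\le Q_1$ is symmetric.

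\textbf{Case 2: $E\not\le Q_1$ and $E\not\le Q_2$.} Here $E$ contains an element $g$ outside $Q_1\cup Q_2$ (if $a\in E\setminus Q_1$ and $b\in E\setminus Q_2$ happen to lie in $Q_2$ and $Q_1$ respectively, then $g=ab\notin Q_1\cup Q_2$). Writing $g\equiv x_\alpha(s)x_\beta(t)\pmod{S'}$ with $s,t\ne 0$, I would evaluate $[w,g]$ for $w\in Q_1\cap Q_2$ directly from the commutator formulas; this shows $C_{Q_1\cap Q_2}(g)=Z(S)$, and since $Z(S)\le Z(E)\le C_S(Q_1\cap Q_2)=Q_1\cap Q_2$ it forces $Z(E)=Z(S)$, a characteristic subgroup. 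I then claim $Q_1\cap Q_2=\Omega(E)\cap Z_2(E)$, which is characteristic. The inclusion $\supseteq$ holds because $Q_1\cap Q_2$ is elementary abelian (so lies in $\Omega(E)$) and $(Q_1\cap Q_2)/Z(S)$ is central in $S/Z(S)$ (so $Q_1\cap Q_2\le Z_2(E)$, using $Z(E)=Z(S)$). For $\subseteq$ one uses the alternating pairing induced by commutation on $\bar{E}:=E/(Q_1\cap Q_2)$, valued in $(Q_1\cap Q_2)/Z(S)$: its restriction to $(\bar{E}\cap\bar{Q_1})\oplus(\bar{E}\cap\bar{Q_2})$ is non-degenerate (this is precisely where $E\not\le Q_1,Q_2$ is consumed), so no non-trivial element of $\Omega(E)$ outside $Q_1\cap Q_2$ becomes central modulo $Z(S)$. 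Granting the claim, $S$ centralizes every factor of the characteristic chain $\{1\}\normaleq Z(S)\normaleq Q_1\cap Q_2\normaleq E$, since $[S,Z(S)]=\{1\}$, $[S,Q_1\cap Q_2]=Z(S)$ and $[S,E]\le S'=Q_1\cap Q_2$; any $x\in S\setminus E$ now gives the contradiction.

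\textbf{Main obstacle.} The genuine work is entirely in Case 2, in the two structural identifications $C_{Q_1\cap Q_2}(g)=Z(S)$ and $Q_1\cap Q_2=\Omega(E)\cap Z_2(E)$. Both reduce to bookkeeping with the $\mathrm{G}_2$ commutator formulas in characteristic $3$, but the second is the delicate one: one must verify that the commutation form on $S/(Q_1\cap Q_2)$ (governed by the two leading terms $x_{\alpha+\beta}(-st)$ and $x_{3\alpha+\beta}(s^3t)$ of $[x_\alpha(s),x_\beta(t)]$, and taking values in the two-dimensional space $(Q_1\cap Q_2)/Z(S)$) pairs the $Q_1$- and $Q_2$-directions of $\bar E$ non-trivially, so that an order-$3$ element of $E$ lying outside $Q_1\cap Q_2$ can never be central modulo $Z(S)$. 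Everything else follows the standard ``centralize a characteristic chain'' template for detecting non-radical subgroups.
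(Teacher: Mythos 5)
Your route is genuinely different from the paper's and, modulo two repairs below, it works. The paper does not split on whether $E\le Q_i$; instead it first decomposes $\Omega(E)=(Q_1\cap E)(Q_2\cap E)$ via \cref{exp3} and argues, as in \cref{AutS} and \cref{swapping}, that every automorphism of $E$ fixes or interchanges $Q_1\cap E$ and $Q_2\cap E$, so that at least one of $Q_1\cap E$, $Q_2\cap E$, $Q_1\cap Q_2$ is characteristic in $E$; the three resulting branches end in $E=S$ (via the same chain $\{1\}\normaleq Z(S)\normaleq Q_1\cap Q_2\normaleq E$ you use, with $[Q_1\cap Q_2,E]=Z(S)$ characteristic), respectively $Q_i\le E$ (via $Z(Q_1\cap E)=Z(Q_1)$ and \cref{QiCent}). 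Your Case 1 is shorter than the paper's treatment of $E<Q_i$, and your Case 2 replaces the fix-or-swap argument by the direct identifications $Z(E)=Z(S)$ and $Q_1\cap Q_2=\Omega(E)\cap Z_2(E)$; your computation $C_{Q_1\cap Q_2}(g)=Z(S)$ for $g\notin Q_1\cup Q_2$ is correct. Do note that your $\subseteq$ inclusion silently uses \cref{exp3} (order-$3$ elements lie in $Q_1\cup Q_2$, so $\overline{\Omega(E)}\subseteq(\bar E\cap\bar{Q_1})+(\bar E\cap\bar{Q_2})$ in $\bar S=S/(Q_1\cap Q_2)$); this is load-bearing, since $Z_2(E)$ alone need \emph{not} lie in $Q_1\cap Q_2$ (for $E=\langle Q_1\cap Q_2,g\rangle$ with $g\notin Q_1\cup Q_2$ one has $g\in Z_2(E)$), so intersecting with $\Omega(E)$ is exactly what saves you.

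Two of your supporting claims need correction. First, $S'=Q_1\cap Q_2$ is false when $q=3$: since $t^3=t$ in $\GF(3)$, the commutators $[x_\alpha(t),x_\beta(u)]$ sweep out only the diagonal line $\{x_{\alpha+\beta}(-s)x_{3\alpha+\beta}(s)\}$ modulo $Z(S)$, so $|S'|=q^3<q^4$ (for $q>3$ two distinct slopes $-t^2$ occur and equality does hold). This is cosmetic: everywhere you invoke it, the true inclusion $S'\le Q_1\cap Q_2$ suffices, and $g$ should be written modulo $Q_1\cap Q_2$ rather than $S'$. Second, and more substantively, the non-degeneracy claim as stated can fail: $E\not\le Q_1,Q_2$ does \emph{not} force $\bar E\cap\bar{Q_1}$ and $\bar E\cap\bar{Q_2}$ to be non-trivial ($\bar E$ may be a ``diagonal'' subgroup meeting both coordinate directions trivially), and if, say, $\bar E\cap\bar{Q_1}=\{0\}\ne\bar E\cap\bar{Q_2}$, then the restriction of the form to $(\bar E\cap\bar{Q_1})\oplus(\bar E\cap\bar{Q_2})=\bar E\cap\bar{Q_2}$ is identically zero, since the form vanishes on $\bar{Q_2}\times\bar{Q_2}$. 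What you actually need, and what is true, is that no non-zero $\bar x\in(\bar E\cap\bar{Q_1})+(\bar E\cap\bar{Q_2})$ is orthogonal to all of $\bar E$: for $\bar x=(s_0,0)$ pair against any $\bar e=(s',t')\in\bar E$ with $t'\ne 0$ (such exists as $E\not\le Q_2$), giving $(-s_0t',s_0^3t')\ne 0$; symmetrically for $(0,t_0)$ using $E\not\le Q_1$; and if both coordinates of $\bar x$ are non-zero, pair against its component $(0,t_0)\in\bar E\cap\bar{Q_1}$. In the fully degenerate configuration $\bar E\cap\bar{Q_1}=\bar E\cap\bar{Q_2}=\{0\}$ the claim is vacuous because then $\Omega(E)=Q_1\cap Q_2$ outright. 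With these repairs your argument is complete, including at $q=3$.
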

\begin{proof} 
Suppose that $E$ is an $S$-centric, $S$-radical subgroup with $Q_1\cap Q_2)<E$, $Q_1\not\le E$ and $Q_2\not\le E$. Note that $E\normaleq S$ as $S'\le Q_1\cap Q_2<E$. Since all elements of $S$ of order $3$ are contained in $Q_1\cup Q_2$ we deduce that $\Omega(E)=(Q_1\cap E)(Q_2\cap E)$. Let $\alpha\in\Aut(E)$ and notice that $\Omega(E)$ is characteristic in $E$, so is normalized by $\alpha$. Suppose also that $(Q_1\cap E)\alpha \ne (Q_1\cap E)$. We follow the same argument as \cref{AutS} to see that $(Q_1\cap E)\alpha =(Q_2\cap E)$ and $(Q_2\cap E)\alpha=(Q_1\cap E)$ so that $\alpha$ fixes $(Q_1\cap Q_2 \cap E)$. Therefore, in all cases, at least one of $(Q_1\cap E)$, $(Q_2\cap E)$ or $(Q_1\cap Q_2\cap E)=Q_1\cap Q_2$ is characteristic in $E$.

Suppose $Q_1\cap Q_2$ is characteristic in $E$. If $E< Q_i$ for some $i\in\{1,2\}$, then as $E$ is $S$-centric, $Z(Q_i)\le Z(E)$. If $Z(Q_i)=Z(E)$ then $Q_i$ centralizes the chain $\{1\}\normaleq Z(E)\normaleq E$, a contradiction since $Q_i\not\le E$ and $E$ is $S$-radical. Hence, there is $e\in Z(E)\setminus Z(Q_1)$ and since $Q_1\cap Q_2$ is a maximal elementary abelian subgroup of $S$ which centralizes $Z(E)$, by \cref{QiCent}, we conclude that $E\le C_S(Z(E))=Q_1\cap Q_2$, a contradiction. Therefore, $E\not\le Q_i$ for $i\in\{1,2\}$. We have that $[E,S]\le [S,S]=S'\le Q_1\cap Q_2$ and since $E\not\le Q_i$, we have that $[Q_1\cap Q_2, E]=[Z(Q_1), E][Z(Q_2), E]=Z(S)=[Q_1\cap Q_2, S]$. But $[Q_1\cap Q_2, E]$ is a commutator of two characteristic subgroups of $E$, so is characteristic in $E$. Thus, $S$ centralizes the characteristic chain $\{1\}\normaleq [Q_1\cap Q_2, E] \normaleq Q_1\cap Q_2 \normaleq E$, and since $E$ is $S$-radical, we conclude that $E=S$.

Suppose now that $Q_1\cap E$ is characteristic in $E$ and $Q_1\cap Q_2\le E$ is not characteristic. Then $Q_1\cap Q_2< Q_1\cap E$ and $Z(Q_1 \cap E)$ centralizes $Q_1\cap Q_2$. Since $Q_1\cap Q_2$ is maximal elementary abelian, $Z(Q_1)\le Z(Q_1\cap E)\le Q_1\cap Q_2$. If there is $x\in Z(Q_1\cap E)\setminus Z(Q_1)$ then by \cref{QiCent}, $C_{Q_1}(x)=Q_1\cap Q_2$. But then $Q_1\cap E$ obviously centralizes $x$ so that $Q_1\cap E=Q_1\cap Q_2$  is characteristic in $E$, a contradiction. Therefore, we deduce that $Z(Q_1\cap E)= Z(Q_1)$. But now $[Q_1, E]\le Q_1\cap E$, $[Q_1,Q_1\cap E]\le Q_1'\le Z(Q_1\cap E)$ and $[Q_1, Z(Q_1\cap E)]=\{1\}$ so that $Q_1$ centralizes the chain $\{1\}\normaleq Z(Q_1\cap E)\normaleq Q_1\cap E \normaleq E$ and since $E$ is $S$-radical, $Q_1=Q_1\cap E$ is a characteristic subgroup of $E$. The argument when $Q_2\cap E$ is characteristic in $E$ is similar.
\end{proof}

\begin{proposition}\label{Q_1}
Let $E$ be an $S$-centric, $S$-radical subgroup of $S$ such that $Q_1\cap Q_2<E<S$. Then $E=Q_i$.
\end{proposition}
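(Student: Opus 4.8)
The plan is to argue by contradiction, after an initial reduction via \cref{Q1/Q2}. Since $Q_1\cap Q_2<E<S$ we have in particular $E\ne S$, so \cref{Q1/Q2} forces $Q_1\le E$ or $Q_2\le E$. As $Q_1Q_2=S$ (their orders give $|Q_1Q_2|=q^6$), we cannot have both, so without loss of generality $Q_1\le E$ and $Q_2\not\le E$, and the goal becomes $E=Q_1$. Suppose instead $Q_1<E$. Writing $|E/Q_1|=3^j$ and recalling that $S/Q_1$ is elementary abelian of order $q=3^n$ (as $S'\le Q_1\cap Q_2$ and $\mho(S)=Z(S)\le Q_1$ by \cref{pStructure}), we have $0<j<n$. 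The strategy is then to produce an element $g\in N_S(E)\setminus E$ together with a chain of subgroups characteristic in $E$ whose successive quotients are all centralized by $g$; then \cref{Chain} gives $\langle c_g\rangle\le O_3(\Aut(E))$, and since $g\notin E$ while $E$ is $S$-centric, the image of $c_g$ is a nontrivial element of $O_3(\Out(E))\cap\Out_S(E)$, contradicting that $E$ is $S$-radical. Note that because $Q_1\le E$, any $g\in N_S(E)\setminus E$ (which exists as $E<S$) automatically satisfies $g\notin Q_1$.

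The crux, and the step I expect to be the main obstacle, is showing that $Q_1$ is characteristic in $E$; without this none of the natural characteristic subgroups of $Q_1$ can be fed into the chain. I would prove it by characterizing $Q_1$ intrinsically inside $E$ through \cref{exp3}: every subgroup of $S$ of exponent $3$ lies in $Q_1$ or $Q_2$, so every exponent-$3$ subgroup of $E$ lies in $Q_1$ or in $Q_2\cap E$. Both $Q_1$ and $Q_2\cap E$ have exponent $3$, and counting (using $Q_2E=S$, so that $|Q_2\cap E|=q^4\cdot 3^j$) shows $Q_2\cap E\not\le Q_1$ and, since $3^j<q$, that $Q_1\not\le Q_2\cap E$. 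Hence $Q_1$ and $Q_2\cap E$ are precisely the two maximal subgroups of $E$ of exponent $3$, and they have distinct orders $q^5$ and $q^4\cdot 3^j$. As any automorphism of $E$ permutes these two subgroups while preserving order, it must fix $Q_1$, so $Q_1$ is characteristic in $E$.

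With $Q_1$ characteristic in $E$, the chain I would use is $\{1\}\normaleq\Phi(Q_1)\normaleq Z(S)\normaleq Z(Q_1)\normaleq Q_1\cap Q_2\normaleq Q_1\normaleq E$. Each term is characteristic in $E$: $\Phi(Q_1)$ and $Z(Q_1)$ are characteristic in $Q_1$, while $Z(S)=[E,Z(Q_1)]\Phi(Q_1)$ and $Q_1\cap Q_2=[E,Q_1]Z(Q_1)$ are built from commutators of characteristic subgroups (these identities follow from \cref{pStructure}(v) upon taking commutators with an element of $(Q_2\cap E)\setminus Q_1$). It then remains to verify that $g$ centralizes every quotient: the two bottom layers lie in $Z(S)$, hence are centralized by all of $S$; the layers $Z(Q_1)/Z(S)$ and $Q_1/(Q_1\cap Q_2)$ are handled by the bounds $[g,Z(Q_1)]\le Z(S)$ and $[g,Q_1]\le Q_1\cap Q_2$ of \cref{pStructure}(v); the layer $(Q_1\cap Q_2)/Z(Q_1)$ follows since $Q_1\cap Q_2=Z(Q_1)Z(Q_2)$ is abelian by \cref{pStructure}(i) and \cref{pStructure}(v) places both $[g,Z(Q_1)]$ and $[g,Z(Q_2)]$ inside $Z(S)\le Z(Q_1)$; and $g$ centralizes $E/Q_1$ because $[g,E]\le S'\le Q_1\cap Q_2\le Q_1$. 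Applying \cref{Chain} then yields the contradiction, so $E=Q_1$; the symmetric argument beginning from $Q_2\le E$ gives $E=Q_2$, completing the proof.
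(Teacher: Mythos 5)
Your proof is correct, and its overall skeleton (reduce via \cref{Q1/Q2}, show $Q_1$ is characteristic in $E$, then exhibit a characteristic chain centralized by a $3$-element outside $E$ and invoke \cref{Chain} against $S$-radicality) matches the paper's. The details differ in two worthwhile ways. First, where the paper simply imports ``$Q_1$ is characteristic in $E$'' from the proof of \cref{Q1/Q2}, you give a self-contained derivation: by \cref{exp3} and \cref{pStructure}(vi), $Q_1$ and $Q_2\cap E$ are the only maximal exponent-$3$ subgroups of $E$, and your order count $|Q_2\cap E|=q^4\cdot 3^j<q^5=|Q_1|$ rules out the swap, so each is $\Aut(E)$-invariant --- this makes explicit exactly why the exchange case in the \cref{Q1/Q2} argument cannot occur here, and it gives you $Q_2\cap E$ characteristic for free. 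Second, the chains differ: the paper computes $Z(E)=Z(S)$ and $E'Z(Q_1)=Q_1\cap Q_2$ from the commutator formulas and lets all of $Q_2$ centralize the short chain $\{1\}\normaleq Z(E)\normaleq Q_1\cap Q_2\normaleq E$, concluding $Q_2\le E$, a contradiction; you instead refine through $\Phi(Q_1)$, $Z(S)$, $Z(Q_1)$, $Q_1\cap Q_2$, $Q_1$, using the verbal identifications $Z(S)=[E,Z(Q_1)]\Phi(Q_1)$ and $Q_1\cap Q_2=[E,Q_1]Z(Q_1)$ from \cref{pStructure}(v) to keep every term characteristic, and your bounds in fact show that \emph{all} of $S$ centralizes this chain, forcing $S\le E$. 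Your route buys independence from the computation of $Z(E)$ and from the internals of another proof, at the cost of a longer chain to verify; the paper's is shorter but leans on the cited proof and on an unstated commutator calculation. Both arguments are sound, and your closing appeal to the $Q_1$/$Q_2$ symmetry of \cref{pStructure} is legitimate.
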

\begin{proof}
By \cref{Q1/Q2}, we may assume that $Q_1\le E$ or $Q_2\le E$. Without loss of generality, suppose that $Q_1< E$ but $Q_2\not\le E$. By the proof of \cref{Q1/Q2}, $Q_1$ is characteristic in $E$. By the Dedekind modular law, $E=E\cap S=E\cap Q_1Q_2=Q_1(E\cap Q_2)$ so that there exists $x\in (E\cap Q_2) \setminus Q_1$. As a consequence, using the commutator formulas, we deduce that $E'Z(Q_1)=Q_1\cap Q_2$ is a characteristic subgroup of $E$ and $Z(E)=Z(S)$. But then $Q_2$ centralizes the chain $\{1\}\normaleq Z(E)\normaleq Q_1\cap Q_2\normaleq E$, a contradiction since $Q_2\not\le E$ and $E$ is $S$-radical. Therefore, $E=Q_1$, as required.
\end{proof}

\begin{proposition}\label{Q_12}
Let $E\le S$ be an $S$-centric, $S$-radical subgroup of $S$ such that $Q_1\cap Q_2\not\le E$. Then for some $i\in\{1,2\}$, $E\in\mathcal{A}(Q_i)$ is of order $q^4$ and $N_S(E)=Q_i$.
\end{proposition}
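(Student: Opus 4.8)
The plan is to argue in three stages: first reduce to the case $E\le Q_1$ or $E\le Q_2$; then identify $E$ as a member of $\mathcal{A}(Q_i)$; and finally compute $N_S(E)$. Throughout write $R:=Q_1\cap Q_2$ and record from \cref{pStructure} the two facts that drive everything: $R=S'$ is elementary abelian of order $q^4$ with $[R,S]=Z(S)$, and $Z(S)\le Z(Q_i)$. Since $E$ is $S$-centric, $Z(S)\le C_S(E)=Z(E)\le E$, so $Z(S)\le\Omega(Z(E))$.

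\emph{Stage 1 (reduction into a single $Q_i$).} The key point is that $R$ is ``almost central'': because $[R,E]\le[R,S]=Z(S)\le Z(E)\le E$, the subgroup $R$ normalizes $E$ and centralizes $E/Z(E)$. I would then split on whether $\Omega(Z(E))\le R$. If $\Omega(Z(E))\le R$, then $R$, being abelian, centralizes $\Omega(Z(E))$, centralizes $Z(E)/\Omega(Z(E))$ (as $[R,Z(E)]\le Z(S)\le\Omega(Z(E))$) and centralizes $E/Z(E)$; thus $R$ centralizes the characteristic chain $\{1\}\normaleq\Omega(Z(E))\normaleq Z(E)\normaleq E$, and since $R\le N_S(E)$, \cref{Chain} together with the $S$-radicality of $E$ forces $R\le E$, contrary to hypothesis. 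Hence $\Omega(Z(E))\not\le R$, so there is $z\in Z(E)$ of order $3$ with $z\notin R$. By \cref{pStructure}, every element of order $3$ lies in $Q_1\cup Q_2$, so after interchanging $Q_1$ and $Q_2$ we may take $z\in Q_1\setminus Q_2$. Working in $S/Z(Q_1)\cong\syl_3(\SL_3(q))$ (\cref{SL3Quo}), the image of $z$ has nonzero ``$X_\beta$-coordinate'' while any $s\notin Q_1$ has nonzero ``$X_\alpha$-coordinate'', and the resulting commutator in the quotient is nonzero; this yields $C_S(z)\le Q_1$, whence $E\le C_S(z)\le Q_1$.

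\emph{Stage 2 (pinning down $E$).} Assume $E\le Q_1$. Since $Z(Q_1)$ centralizes $Q_1\ge E$ we get $Z(Q_1)\le Z(E)\le E$, and $E\normaleq Q_1$ (as $E\ge\Phi(Q_1)$ with $Q_1/\Phi(Q_1)$ abelian), so $Q_1\le N_S(E)$; moreover $E<Q_1$, since $R\le Q_1$ but $R\not\le E$. If $Z(E)=Z(Q_1)$, then $[Q_1,E]\le[Q_1,Q_1]=\Phi(Q_1)\le Z(Q_1)=Z(E)$, so $Q_1$ centralizes $\{1\}\normaleq Z(E)\normaleq E$ and \cref{Chain} forces $Q_1\le E$, which is absurd; hence $Z(E)>Z(Q_1)$. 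Choosing $y\in Z(E)\setminus Z(Q_1)$, the group $F:=C_{Q_1}(y)$ lies in $\mathcal{A}(Q_1)$ and is elementary abelian of order $q^4$ by \cref{QiCent}. As $y\in Z(E)$ we have $E\le F$, and since $F$ is abelian it centralizes $E$, so $F\le C_S(E)=Z(E)\le E$; therefore $E=F\in\mathcal{A}(Q_1)$ has order $q^4$, and $E\ne R$ because $R\not\le E$ while $|R|=|E|$.

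\emph{Stage 3 (the normalizer).} It remains to show $N_S(E)\le Q_1$. Since $[Q_1,Q_1]\le\Phi(Q_1)\le Z(Q_1)$, the group $Q_1$ acts trivially on $Q_1/Z(Q_1)\cong\mathbb{K}^2$, so the $S$-action on this quotient factors through $S/Q_1$; a nontrivial coset is represented by some $x_\alpha(t)$ with $t\ne0$, and the reduced formula $[x_\alpha(t),x_\beta(u)]\equiv x_{3\alpha+\beta}(t^3u)\pmod{Z(Q_1)}$ shows it induces a nontrivial transvection on $Q_1/Z(Q_1)$ whose unique fixed line is $R/Z(Q_1)$. As $N_S(E)$ normalizes both $Q_1$ and $Z(Q_1)$, it preserves the line $E/Z(Q_1)$; since $E\ne R$ this is not the fixed line, so no $s\in S\setminus Q_1$ normalizes $E$. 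Hence $N_S(E)=Q_1$, completing the argument with $i=1$, the case $E\le Q_2$ being symmetric.

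I expect Stage 1 to be the main obstacle: the real difficulty is to prevent $E$ from straddling $Q_1$ and $Q_2$, equivalently to exclude central elements of order $9$ (which lie outside $Q_1\cup Q_2$). What makes this tractable is the relation $[R,S]=Z(S)$, which forces the normal subgroup $R=S'$ to act almost trivially on every centric $E$, so that $S$-radicality can only survive in the presence of an order-$3$ central element outside $R$; the accompanying centralizer computation in the $\SL_3(q)$-quotient is the essential local input, and Stages 2 and 3 are then comparatively routine.
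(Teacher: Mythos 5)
Your argument is correct and is in essence the paper's own proof: a \cref{Chain}-plus-radicality argument with $R=Q_1\cap Q_2$ produces an order-$3$ element of $Z(E)$ outside $R$, whose centralizer (computed in the $\SL_3(q)$-model of \cref{SL3Quo}) forces $E$ into a single $Q_i$, after which $S$-centricity and \cref{QiCent} pin down $E\in\mathcal{A}(Q_i)$ of order $q^4$ and a commutator computation gives $N_S(E)=Q_i$ --- the paper merely organizes the reduction as two separate chains (one centralized by $R$ when $Z(E)\le R$, one centralized by $Z(Q_1)$ when $\Omega(Z(E))\le Q_1$, using \cref{exp3}) and selects its central element directly outside $Q_1$ instead of first proving $Z(Q_1)<Z(E)$, while your three-term chain $\{1\}\normaleq\Omega(Z(E))\normaleq Z(E)\normaleq E$ and explicit transvection argument are cosmetic variants. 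Two small repairs: the assertion $R=S'$ is not in \cref{pStructure} and is in fact false for $q=3$ (the degeneracy $t^3=t$ leaves $S'$ of order $q^3$, covering only an anti-diagonal line of $R/Z(S)$), though this is harmless since you only use that $R$ is elementary abelian of order $q^4$ with $[R,S]=Z(S)$, which is true; and in Stage 3 you should say explicitly that $E/Z(Q_1)$ is a $\mathbb{K}$-line rather than a bare subgroup of order $q$ (a $\GF(3)$-subgroup of order $q$ \emph{can} be transvection-invariant when $n$ is even, e.g.\ of the form $U\times t^3U$ with $|U|=q^{1/2}$), which does hold here because $E=C_{Q_1}(y)=Z(Q_1)C_X(y)$ by \cref{QiCent} and $C_X(y)$ is a $\mathbb{K}$-line modulo the centre in the matrix model of \cref{SL3Sub}.
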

\begin{proof}
Suppose that $Q_1\cap Q_2\not\le E$. If $Z(E)\le Q_1\cap Q_2$, since $[E, Q_1\cap Q_2]\le [S, Q_1\cap Q_2]= Z(S)\le Z(E)$, $Q_1\cap Q_2$ centralizes the chain $\{1\}\normaleq Z(E)\normaleq E$, a contradiction since $E$ is $S$-radical. Thus, $Z(E)\not\le Q_1\cap Q_2$. Since $Q_1\cap Q_2\not \le E$, and $Q_1\cap Q_2=Z(Q_1)Z(Q_2)$, we may assume without loss of generality that $Z(Q_1)\not\le E$. If $\Omega(Z(E))\le Q_1$ then, since $[E, Z(Q_1)]\le [S, Z(Q_1)]=Z(S)\le \Omega(Z(E))$, $Z(Q_1)$ centralizes the chain $\{1\}\normaleq \Omega(Z(E)) \normaleq E$, a contradiction.

Hence, $\Omega(Z(E))\not\le Q_1$ and so, $\Omega(Z(E))\le Q_2$ by \cref{exp3}. Since $E$ centralizes $\Omega(Z(E))$, it follows from the commutator formulas that $E\le Q_2$ and since $E$ is $S$-centric, we conclude $Z(Q_2)\le \Omega(Z(E))$. Moreover, since $Z(E)\not\le Q_1\cap Q_2$, there exists $e\in Z(E) \setminus Z(Q_2)$ and therefore $E \le C_S(e)\in\mathcal{A}(Q_2)$ by \cref{SL3Sub}. Since $E$ is $S$-centric, $E=C_S(e)$ is elementary abelian of order $q^4$ and calculating using the commutator formulas, it follows that $N_S(E)=Q_2$. A similar argument when $Z(Q_2)\not\le E$ completes the proof.
\end{proof}

Having identified the $S$-centric, $S$-radical subgroups we now turn our attention to a fixed saturated fusion system $\fs$ on $S$ and its essential subgroups. In the following, to restrict the list of centric, radical subgroups, we make use of \cref{SEFF}, again stressing that this theorem does not rely on $\mathcal{K}$-group hypothesis. Moreover, we use some results in \cite{parkersem} and even though the hypothesis there includes $O_3(\fs)=\{1\}$, the results we use are independent of this. Thus, we can still operate in a completely general setting. 

\begin{lemma}\label{Q1/Q22}
Let $E$ be an essential subgroup of a saturated fusion system $\fs$ on $S$. Then $Q_1\cap Q_2\le E$.
\end{lemma}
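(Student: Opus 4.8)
The plan is to combine the classification of $S$-centric, $S$-radical subgroups obtained above with the failure-to-factorize machinery of \cref{SEFF}. Since every essential subgroup is $S$-centric and $S$-radical, \cref{Q_1} and \cref{Q_12} show that $E$ is one of $Q_1$, $Q_2$, or a member of $\mathcal{A}(Q_i)$ of order $q^4$ with $N_S(E)=Q_i$ for some $i\in\{1,2\}$. If $E\in\{Q_1,Q_2\}$ then $Q_1\cap Q_2\le E$ trivially, and the same holds when $E=Q_1\cap Q_2\in\mathcal{A}(Q_i)$; so I may assume $E\in\mathcal{A}(Q_i)$ is elementary abelian of order $q^4$ with $N_S(E)=Q_i$ and $Q_1\cap Q_2\not\le E$, and aim for a contradiction. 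In this situation $Z(Q_i)\le E$ (as $E=C_{Q_i}(x)$ for some $x$ by \cref{QiCent}), while $Z(Q_{3-i})\not\le E$, and $E\cap(Q_1\cap Q_2)=Z(Q_i)$ has order $q^3$.

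First I would exhibit $E$ as an FF-module for $\Out_{\fs}(E)$. As $E$ is self-centralizing we have $C_{Q_i}(E)=E$, so $\Out_S(E)=Q_i/E$ acts faithfully; it is elementary abelian of order $q$ by \cref{pStructure}, and a direct computation with the commutator formulas gives $C_E(Q_i)=Z(Q_i)$ and $[E,Q_i]=\Phi(Q_i)\ne\{1\}$. Hence $|E/C_E(\Out_S(E))|=q=|\Out_S(E)|$ and $\Out_S(E)$ is a (quadratic, since $Q_i$ has class $2$) offender on $E$. Applying \cref{SEFF} with $V=E$, and writing $L:=O^{3'}(\Out_{\fs}(E))$, we obtain $L/C_L(E)\cong\SL_2(3^m)$ with $C_L(E)$ a $3'$-group and $E/C_E(O^3(L))$ a natural $\SL_2(3^m)$-module. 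Since $C_L(E)$ is a $3'$-group, $\Out_S(E)$ injects into $L/C_L(E)$ onto a Sylow $3$-subgroup of $\SL_2(3^m)$; comparing orders forces $3^m=q$, i.e. $m=n$. Consequently $C_E(O^3(L))$ and $[E,O^3(L)]$ both have order $q^2$, and as $[E,O^3(L)]\cap C_E(O^3(L))=\{1\}$ we obtain the $\Aut_{\fs}(E)$-invariant decomposition $E=[E,O^3(L)]\times C_E(O^3(L))$ into a natural module and a trivial module.

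The crux is to contradict this decomposition using the embedding of $E$ in $S$. Writing $N:=[E,O^3(L)]$ and $C:=C_E(O^3(L))$, I would first record that $\Out_S(E)\le O^3(L)$ centralizes $C$, so $C\le C_E(\Out_S(E))=Z(Q_i)$, whereas $N\cap Z(Q_i)=\Phi(Q_i)$ has order $q$ and hence $N\not\le Z(Q_i)$; moreover $N$ and $C$ are the \emph{only} $\Aut_{\fs}(E)$-invariant subgroups of $E$ of order $q^2$, because $C$ is a trivial and $N$ an irreducible non-trivial $\SL_2(q)$-module. The goal is then to show that the characteristic subgroup $Z(S)$ of $S$, which satisfies $\Phi(Q_i)\le Z(S)\le Z(Q_i)$ and $|Z(S)|=q^2$, must be $\Aut_{\fs}(E)$-invariant: since $\SL_2(q)\le\Out_{\fs}(E)$ acts transitively on the $q+1$ lines of $N$, invariance of $Z(S)$ together with $\Phi(Q_i)\le Z(S)\cap N$ would force $N\le Z(S)\le Z(Q_i)$, contradicting $N\not\le Z(Q_i)$.

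The main obstacle is precisely this invariance claim: $Z(S)$ is characteristic in $S$ but is not a priori preserved by the ``new'' $\fs$-automorphisms of $E$. I would establish it fusion-theoretically, by tracking the $\fs$-conjugacy of the central root elements in $\Phi(Q_i)\le Z(S)$: a Weyl element of $\SL_2(q)\le\Aut_{\fs}(E)$ would otherwise fuse a non-trivial element of $Z(S)$ to an element of $N\setminus Z(Q_i)\subseteq Q_i\setminus Q_{3-i}$, and the control of conjugacy among such root elements provided by the relevant results of \cite{parkersem} (which, as noted above, do not require $O_3(\fs)=\{1\}$) rules this out. This last step, reconciling the $\SL_2(q)$-action on $E$ with the global $\fs$-fusion of $Z(S)$, is where the real work lies; the preceding reductions are routine given \cref{Q_1}, \cref{Q_12} and \cref{SEFF}.
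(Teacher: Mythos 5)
Your reductions are correct and in fact reproduce the opening of the paper's own argument: by \cref{Q_1} and \cref{Q_12} one may take $E\in\mathcal{A}(Q_i)$ of order $q^4$ with $N_S(E)=Q_i$ and $E\cap Q_1\cap Q_2=Z(Q_i)$, and your application of \cref{SEFF} with the decomposition $E=N\times C$ (where $N=[E,O^3(L)]$ is natural of order $q^2$ and $C=C_E(O^3(L))\le Z(Q_i)$) is sound --- the paper sets up exactly this data, writing $Z_E$ for your $C$. Your conditional endgame is also valid: the only $L$-invariant subgroups of $E$ of order $q^2$ are $N$ and $C$, and since $\{1\}\ne\Phi(Q_i)\le Z(S)\cap N$ while $N\not\le Z(Q_i)$, invariance of $Z(S)$ under $\Aut_{\fs}(E)$ would indeed be an immediate contradiction.

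However, that invariance claim is essentially the whole content of the lemma, and your proposal does not prove it --- you concede as much, and the tool you gesture at cannot supply it. The results of \cite{parkersem} are proved only for $q=p=3$, whereas this section treats all $q=3^n$ (the generalization is the point of the paper); moreover, even for $q=3$ the lemmas this paper imports from \cite{parkersem} (Lemmas 7.4 and 7.8) contain no ``control of conjugacy of root elements'' that would block a Weyl element of $L$ from fusing $z\in Z(S)^{\#}$ into $N\setminus Z(Q_i)\subseteq Q_i\setminus Q_{3-i}$. Note why this cannot be soft: $Z(S)$ being characteristic in $S$ only constrains morphisms that \emph{extend} to $S$, and essentiality forces $\Aut_{\fs}(E)$ to contain automorphisms admitting no such extension --- precisely the Weyl-type elements you must control. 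The paper closes this gap by a different mechanism: it takes $t_E\in Z(O^{3'}(\Out_{\fs}(E)))$, notes $t_E$ normalizes $\Out_S(E)$, and uses receptivity together with the Alperin--Goldschmidt theorem to lift $t_E$ to $\Aut_{\fs}(S)$ or to $\Aut_{\fs}(Q_2)$. If the lift lies in $\Aut_{\fs}(S)$, a chain of three-subgroup-lemma computations tracking the action of the lift on $\Phi(Q_2)$, $(Q_1\cap Q_2)/Z(Q_2)$, $S/Q_2$ and $Q_1/(Q_1\cap Q_2)$ produces a contradiction; otherwise $Q_2$ must be essential, \cref{SEFF} together with \cref{MaxEssen} (or \cite[Lemma 7.8]{parkersem} when $q=3$) makes $Q_2/Z(Q_2)$ a natural $\SL_2(q)$-module, and transitivity of $O^{3'}(\Out_{\fs}(Q_2))$ on the order-$q$ subgroups of $Q_2/Z(Q_2)$ conjugates $E$ to $Q_1\cap Q_2$, contradicting that $E$ is fully normalized (since $|N_S(E)|=q^5<|S|=|N_S(Q_1\cap Q_2)|$). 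If you wish to rescue your plan, this extension step for a suitable element of $N_{L}(\Out_S(E))$ via receptivity is the missing mechanism: it is what converts ``characteristic in $S$'' into invariance data usable on $E$.
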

\begin{proof}
By \cref{Q_12}, without loss of generality, we assume that $E$ is a maximal elementary abelian subgroup of $N_S(E)=Q_2$, $E\cap Q_1=Z(Q_2)$ and $E(Q_1\cap Q_2)=Q_2$. Since $Z(Q_2)$ is an index $q$ subgroup of $E$ centralized by $Q_2$, it follows by \cref{SEFF} that $O^{3'}(\Out_{\fs}(E))\cong \SL_2(q)$ and $E/ C_E(O^{3'}(\Out_{\fs}(E)))$ is a natural $\SL_2(q)$-module. Set $Z_E:=C_E(O^{3'}(\Out_{\fs}(E)))\le Z(Q_2)$ and let $t_E \in Z(O^{3'}(\Out_{\fs}(E)))$. By \cref{Q_1} and \cref{Q_12}, every essential subgroup is contained in either $Q_1$ or $Q_2$. In particular, we may as well assume that the only possible essential subgroup $E$ is strictly contained in is $Q_2$. Since $t_E$ normalizes $\Out_S(E)$, using that $E$ is receptive, and applying the Alperin--Goldschmidt theorem, we conclude that $t_E$ lifts to some automorphism of $S$ or $Q_2$, and since $Q_2=N_S(E)$, the lift of $t_E$ normalizes $Q_2$ in both cases. 

Suppose that $t_E$ lifts to some automorphism of $S$ and call this morphism $t_E^*$. Since $t_E^*$ normalizes $Q_2$, by \cref{swapping}, $t_E^*$ normalizes $Q_1$. Moreover, $t_E^*$ centralizes $Z(Q_1)/Z(S)=Z(Q_1)/(Z(Q_1)\cap E)\cong Q_2/E$. Since $t_E^*$ normalizes $\Phi(Q_2)$, either $t_E^*$ inverts $\Phi(Q_2)$ or centralizes $\Phi(Q_2)$. If $t_E^*$ centralizes $\Phi(Q_2)$, then $[Q_1\cap Q_2, Q_2, t_E^*]=\{1\}$. But $t_E^*$ centralizes $(Q_1\cap Q_2)/Z(Q_2)=(Q_1\cap Q_2)/(Q_1\cap Q_2\cap E)\cong Q_2/E$ so that $[t_E^*, Q_1\cap Q_2, Q_2]=\{1\}$. Then, the three subgroup lemma yields $[t_E^*, Q_2, Q_1\cap Q_2]=\{1\}$ so that $[t_E^*, Q_2]\le E\cap Q_1\cap Q_2=Z(Q_2)$, a contradiction since $Z_E\le Z(Q_2)$. Thus, $t_E^*$ inverts $\Phi(Q_2)$ and since $Z_E\le Q_2$ has order $q^2$, it follows that $t_E^*$ centralizes $Z(Q_2)/\Phi(Q_2)$ and $(Q_1\cap Q_2)/\Phi(Q_2)=C_{Q_2/\Phi(Q_2)}(t_E^*)$. Again, $t_E^*$ either inverts $S/Q_2$ or centralizes $S/Q_2$. Suppose the latter. Then $t_E^*Q_2$ is normalized by $S$ so that $[Q_2/\Phi(Q_2), t_E^*]$ is normalized by $S$. But $Z(S/\Phi(Q_2))\le (Q_1\cap Q_2)/\Phi(Q_2)=C_{Q_2/\Phi(Q_2)}(t_E^*)$ from which it follows that $[Q_2/\Phi(Q_2), t_E^*]=\{1\}$, a clear contradiction. Thus, $t_E^*$ inverts $S/Q_2$. Now, $[t_E^*, Q_1\cap Q_2, Q_1]=[\Phi(Q_2), Q_1]=\{1\}$ and $[Q_1, (Q_1\cap Q_2), t_E^*]=[\Phi(Q_1), t_E^*]=\{1\}$, since $\Phi(Q_1)\cap \Phi(Q_2)=\{1\}$. Therefore, by the three subgroup lemma, $[t_E^*, Q_1, Q_1\cap Q_2]=\{1\}$ and $t_E^*$ centralizes $Q_1/Q_1\cap Q_2$, a contradiction since $t_E^*$ inverts $S/Q_2\cong Q_1/(Q_1\cap Q_2)$. 

Suppose that $t_E$ does not lift to a morphism of $S$. In particular, we may assume that $Q_2$ is essential. Note that $S$ acts non-trivially on $Z(Q_2)/\Phi(Q_2)$ and centralizes $Z(S)/\Phi(Q_2)$. By \cref{SEFF}, setting $L_2:=O^{3'}(\Out_{\fs}(Q_2))$, we have that $V:=Z(Q_2)/\Phi(Q_2)$ is a natural $\SL_2(q)$-module for $L_2/C_{L_2}(V)\cong\SL_2(q)$ and $C_{L_2}(V)$ is a $3'$-group. Then, independently of a $\mathcal{K}$-group hypothesis, provided $q>3$, \cref{MaxEssen} implies that $L_2$ is a central extension of $\SL_2(q)$ by a group of $3'$-order, and so $L_2\cong\SL_2(q)$. If $q=3$, then \cite[Lemma 7.8]{parkersem} implies that $L_2\cong\SL_2(3)$ and $V$ is a natural $\SL_2(3)$-module. Since $S$ acts non-trivially and quadratically on $Q_2/Z(Q_2)$,  $Q_2/Z(Q_2)$ is also a natural $\SL_2(q)$-module for $L_2$. But then, $L_2$ is transitive on subgroups of $Q_2/Z(Q_2)$ of order $q$ and there is $\alpha\in L_2$ such that $E\alpha=Q_1\cap Q_2$, a contradiction since $E$ is fully normalized. This completes the proof.
\end{proof}

As with the case when $p=2$, we circumvent the need for a $\mathcal{K}$-group hypothesis. As in the above, we only make use of \cref{SEFF} to identify the automizer of an essential subgroup, and this is enough to show that for $E$ an essential subgroup under consideration, $O^{3'}(\Out_{\fs}(E))\cong\SL_2(3^r)$ for some $r$. Moreover, as intimated when $p=2$, under such circumstances the proof of \cref{G2Cor} boils down to recognizing a weak BN-pair of rank $2$ whose completion is completely determined using \cite{Greenbook} which does not rely on any inductive hypothesis. In our application, we identify a specified subsystem of $\fs$ within the fusion category of $\mathrm{G}_2(q)$ using this methodology, and then identify $\fs$ using the relationship between $\Aut(S)$ and $\Aut(\mathrm{G}_2(q))$ demonstrated in \cref{AutS}.

\begin{theorem}
Let $\fs$ be a saturated fusion system over a Sylow $3$-subgroup of $\mathrm{G}_2(3^n)$. Then one of the following occurs:
\begin{enumerate}
\item $\fs=\fs_S(S: \Out_{\fs}(S))$;
\item $\fs=\fs_S(Q_i: \Out_{\fs}(Q_i))$ where $O^{3'}(\Out_{\fs}(Q_i))\cong \SL_2(3^n)$; or
\item $\fs=\fs_S(G)$ where $F^*(G)=O^{3'}(G)\cong\mathrm{G}_2(3^n)$.
\end{enumerate}
\end{theorem}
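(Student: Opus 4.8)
The plan is to feed the classification of $S$-centric, $S$-radical subgroups from \cref{Q_1} and \cref{Q_12} into the Alperin--Goldschmidt machinery. Since $S$ is never essential (as $\Out_S(S)=\{1\}$) and, by \cref{Q1/Q22}, every essential subgroup contains $Q_1\cap Q_2$, \cref{Q_1} shows that the only candidate essential subgroups are $Q_1$, $Q_2$ and $Q_1\cap Q_2$. The main obstacle is to rule out $E:=Q_1\cap Q_2$: it is $S$-centric and $S$-radical (being abelian with $O_3(\Aut(E))=\{1\}$), so it is not excluded by the earlier results and must be eliminated by a genuinely fusion-theoretic argument.

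To do this I would exploit that $E$ is elementary abelian of order $q^4$ with $C_E(S)=[E,S]=Z(S)$ of order $q^2$ and $\Out_S(E)=S/E$ of order $q^2$, so that $E$ is an FF-module for $\Out_{\fs}(E)$ with offender $\Out_S(E)$. Applying \cref{SEFF} and comparing orders --- $\Out_S(E)$ is Sylow in $\Out_{\fs}(E)$ of order $q^2$ and the natural module has order $|E|=q^4$ --- forces $\Out_{\fs}(E)$ to induce $\SL_2(q^2)$ on $E$, with $E$ a natural $\SL_2(q^2)$-module and $\Out_S(E)$ a full root subgroup. The torus $T\cong C_{q^2-1}$ of this $\SL_2(q^2)$ normalizes $\Aut_S(E)$ and acts on its fixed line $C_E(\Out_S(E))=Z(S)$ as the full group of $\GF(q^2)$-scalars. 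As $E$ is essential it is fully normalized, hence receptive, so each element of $T$ extends to an automorphism of $N_S(E)=S$; thus $\Aut_{\fs}(S)\le\Aut(S)$ would contain a $3'$-element whose order is divisible by $q^2-1$. This contradicts \cref{AutS}: there $\Aut(S)=CH$ with $C$ a normal $3$-group and $|H|_{3'}=(q-1)^2\cdot 2n$, so every $3'$-element of $\Aut(S)$ has order dividing $2n(q-1)<q^2-1$. Hence $E$ is not essential and $\mathcal{E}(\fs)\subseteq\{Q_1,Q_2\}$.

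With the essentials confined to $\{Q_1,Q_2\}$, I would read off the automizers from \cref{SEFF} alone, so that no $\mathcal{K}$-group hypothesis is needed: if $Q_i$ is essential then $Z(Q_i)$ is an FF-module for $\Out_{\fs}(Q_i)$ with offender $\Out_S(Q_i)=S/Q_i$ of order $q$, and the same order count gives $O^{3'}(\Out_{\fs}(Q_i))\cong\SL_2(q)$ with $Z(Q_i)/\Phi(Q_i)$ a natural module (for $q=3$ one instead invokes \cite[Lemma 7.8]{parkersem}, exactly as in the proof of \cref{Q1/Q22}). Now split on the number of essentials. If neither $Q_1$ nor $Q_2$ is essential, then $\fs=\langle\Aut_{\fs}(S)\rangle$ and outcome (i) holds. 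If exactly one, say $Q_i$, is essential, then $\Aut_{\fs}(S)$ fixes $Q_i$, so $Q_i\normaleq\fs$ by \cref{normalinF}; hence $\fs=N_{\fs}(Q_i)$ is constrained and the model theorem \cref{model} realizes it in the shape of outcome (ii).

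Finally, suppose both $Q_1$ and $Q_2$ are essential. I would first show $O_3(\fs)=\{1\}$. By \cref{normalinF}, $R:=O_3(\fs)\le Q_1\cap Q_2$ and $R$ is invariant under $\Out_{\fs}(Q_1)$ and $\Out_{\fs}(Q_2)$. Since $Q_i/Z(Q_i)$ is an irreducible natural $\SL_2(q)$-module for $O^{3'}(\Out_{\fs}(Q_i))$ while $RZ(Q_i)/Z(Q_i)$ is a proper submodule (as $RZ(Q_i)\le Q_1\cap Q_2<Q_i$), we get $R\le Z(Q_i)$; hence $R\le Z(Q_1)\cap Z(Q_2)=Z(S)$ by \cref{pStructure}. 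As $Z(S)/\Phi(Q_i)$ is a proper submodule of the irreducible $Z(Q_i)/\Phi(Q_i)$, the same reasoning yields $R\le\Phi(Q_i)$ for $i\in\{1,2\}$, whence $R\le\Phi(Q_1)\cap\Phi(Q_2)=\{1\}$ by \cref{pStructure}. With $O_3(\fs)=\{1\}$, the subgroups $Q_1,Q_2\normaleq S$ are maximally essential with $O^{3'}(\Out_{\fs}(Q_i))\cong\SL_2(q)$ and $\fs=\langle N_{\fs}(Q_1),N_{\fs}(Q_2)\rangle$, so \cref{G2Cor} applies --- the $\mathcal{K}$-group hypothesis being circumvented, as explained before the theorem, by recognizing a weak BN-pair and appealing to \cite{Greenbook} --- giving outcome (iii), with \cref{AutS} used to pin down $\fs$ from its $O^{3'}$-part.
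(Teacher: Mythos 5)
Your skeleton matches the paper's: reduce $\mathcal{E}(\fs)$ to $\{Q_1,Q_2,Q_1\cap Q_2\}$ via \cref{Q_1} and \cref{Q1/Q22}, eliminate $Q_1\cap Q_2$, identify the automizers, and split on how many of $Q_1,Q_2$ are essential. Your elimination of $Q_1\cap Q_2$, however, is genuinely different from the paper's and is, as far as I can see, correct: the paper also produces $\SL_2(q^2)$ via \cref{SEFF} and lifts the torus to $\Aut_{\fs}(S)$, but then derives its contradiction from the fact that a subgroup of index at most $2$ of the lifted torus must normalize $\Out_{Q_1}(Q_1\cap Q_2)$ (using \cref{swapping}), which fails only when $q>3$, so the paper must quote \cite[Lemma 7.4]{parkersem} separately for $q=3$. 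Your bound on orders of $3'$-elements of $\Aut(S)$ coming from \cref{AutS} (every $3'$-element has order dividing $d(q-1)$ for some $d\mid 2n$, and $q^2-1\mid d(q-1)$ forces $q+1\mid d\le 2n$, impossible as $3^n+1>2n$) works uniformly in $q$, including $q=3$, which is a nice simplification. One small repair: the extension $\hat\lambda\in\Aut_{\fs}(S)$ of a torus generator need not be a $3'$-element; you should pass to $\hat\lambda^{3^a}$ with $3^a$ the $3$-part of its order, which is a $3'$-element still restricting to an automorphism of $E$ of order $q^2-1$ since $\gcd(3,q^2-1)=1$.

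There are two genuine gaps. First, the automizer identification for $Q_i$ when $q>3$: \cref{SEFF} applied to $V=Z(Q_i)$ gives only $L/C_L(V)\cong\SL_2(q)$ with $C_L(V)$ a $3'$-group. For the abelian subgroup $Q_1\cap Q_2$ one has $V=E$ and faithfulness of $\Aut_{\fs}(E)$ on $E$ kills $C_L(V)$, but for the nonabelian $Q_i$ nothing in \cref{SEFF} prevents a nontrivial normal $3'$-subgroup centralizing $Z(Q_i)$ while acting nontrivially on $Q_i/Z(Q_i)$, and $L=O^{3'}(L)$ does not exclude this (compare $Z(\SL_2(q))$). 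Your "same order count" only pins down the quotient $L/C_L(V)$, not $L$; the paper closes exactly this hole by invoking \cref{MaxEssen} for $q>3$ (noting there that it is $\mathcal{K}$-group-free in this setting, so avoiding it buys you nothing) together with a Schur multiplier argument, and \cite[Lemma 7.8]{parkersem} for $q=3$. Relatedly, for both outcome (ii) and the input to \cref{G2Cor} one also wants the natural module structure on $Q_i/Z(Q_i)$, which the paper extracts from quadratic action and which your proposal omits.

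Second, and more seriously, the endgame when both $Q_1$ and $Q_2$ are essential. You assert that $\fs=\langle N_{\fs}(Q_1),N_{\fs}(Q_2)\rangle$ and feed $\fs$ directly into \cref{G2Cor}, whose hypothesis (via \cref{MainThm}) requires each $Q_i$ to be $\Aut_{\fs}(S)$-invariant. By \cref{swapping}, $\Aut(S)$ contains elements interchanging $Q_1$ and $Q_2$, and such elements genuinely occur in $\Aut_{\fs}(S)$ — for instance when $\fs$ is the fusion system of $\mathrm{G}_2(3^n)$ extended by the graph automorphism, a configuration that must be captured by outcome (iii). In that situation $Q_i$ is not $\Aut_{\fs}(S)$-invariant and your generation claim is false, since the swapping automorphism lies in neither $N_{\fs}(Q_1)$ nor $N_{\fs}(Q_2)$. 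The paper's proof is structured around precisely this point: it sets $\fs_0:=\langle N_{\fs}(Q_1),N_{\fs}(Q_2)\rangle$, shows via \cite[Lemma I.7.6(b), Theorem I.7.7(c)]{ako} that $\fs_0$ is a saturated normal subsystem of index at most $2$ in $\fs$, applies \cref{G2Cor} to $\fs_0$ to identify $O^{3'}(\fs_0)=O^{3'}(\fs)$ with the fusion system of $\mathrm{G}_2(3^n)$, and only then uses \cref{AutS} to enumerate the possible $\fs$ lying above it. Your closing clause about pinning down $\fs$ from its $O^{3'}$-part gestures at this last step, but without the intermediate index-$2$ subsystem the application of \cref{G2Cor} is not justified, and your $O_3(\fs)=\{1\}$ argument (which is fine, and cleaner than the paper's) would also need to be rerun for $\fs_0$.
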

\begin{proof}
By \cref{Q_1} and \cref{Q1/Q22}, $\mathcal{E}(\fs)\subseteq \{Q_1, Q_2, Q_1\cap Q_2\}$. Suppose that $Q_1\cap Q_2$ is essential. Since $S/Q_1\cap Q_2$ is elementary abelian and of order $q^2$ and $Z(S)$ is of index $q^2$ in $Q_1\cap Q_2$ and centralized by $S$, it follows by \cref{SEFF} that $Q_1\cap Q_2$ is a natural $\SL_2(q^2)$-module for $L_{12}:=O^{3'}(\Out_{\fs}(Q_1\cap Q_2))\cong \SL_2(q^2)$. But then $|N_{L_{12}}(\Out_S(Q_1\cap Q_2))|=q^2-1$ and since $Q_1\cap Q_2$ is receptive, each morphism $\phi\in N_{L_{12}}(\Out_S(Q_1\cap Q_2))$ lifts to some morphism in $\Aut_{\fs}(S)$. Since $N_{\Aut_{\fs}(S)}(Q_1)$ has index at most $2$ in $\Aut_{\fs}(S)$, it follows that upon restriction there is a group of index at least $2$ in $N_{L_{12}}(\Out_{S}(Q_1\cap Q_2))$ normalizing $\Out_{Q_1}(Q_1\cap Q_2)$, a contradiction unless $q=3$. If $q=3$, then $Q_1\cap Q_2$ is not essential in $\fs$ by \cite[Lemma 7.4]{parkersem}. 

We have reduced to the case where the set of essentials is contained in $\{Q_1, Q_2\}$. If neither $Q_1$ nor $Q_2$ is essential then outcome (i) holds. If $Q_i$ is essential then following an argument in \cref{Q1/Q22}, we deduce that $O^{3'}(\Out_{\fs}(Q_i))\cong\SL_2(q)$ and both $Q_i/Z(Q_i)$ and $Z(Q_i)/\Phi(Q_i)$ are natural $\SL_2(q)$-modules. In particular, if only one of $Q_1, Q_2$ is essential then by \cref{swapping}, $\Aut_{\fs}(S)=N_{\Aut_{\fs}(S)}(Q_i)$ and outcome (ii) holds. 

Assume that both $Q_1$ and $Q_2$ are essential and suppose $Q:=O_3(\fs)\ne \{1\}$. By \cref{normalinF}, $Q\le Q_1\cap Q_2$. Then $Q\cap Z(S)\ne\{1\}$ and the irreducibility of $Z(Q_i)/\Phi(Q_i)$ under the action of $O^{3'}(\Out_{\fs}(Q_i))$ implies that $Z(Q_1)Z(Q_1)\le Q_1\cap Q_2\le Q\le Q_1\cap Q_2$ and $Q=Q_1\cap Q_2$. Then, the irreducibility of $O^{3'}(\Out_{\fs}(Q_i))$ on $Q_i/Z(Q_i)$ gives a contradiction. Therefore, $O_3(\fs)=\{1\}$.

Set $\fs_0=\langle N_{\fs}(Q_1), N_{\fs}(Q_2)\rangle$ so that $\Aut_{\fs_0}(S)$ has index at most $2$ in $\Aut_{\fs}(S)$. It follows by \cite[Lemma I.7.6(b)]{ako} that $\fs_0$ is a saturated subsystem of $\fs$ and so $\fs_0$ has index $2$ in $\fs$. In particular, by \cite[Theorem I.7.7(c)]{ako}, $\fs_0$ is a normal subsystem of $\fs$ and $O^{3'}(\fs)\le O^{3'}(\fs_0)$. Now, $\fs_0$ satisfies the hypothesis of \cref{G2Cor} and comparing with the list there, it follows that $O^{3'}(\fs_0)$ is isomorphic to the $3$-fusion system of $\mathrm{G}_2(3^n)$ and since $O^{3'}(\fs_0)$ is simple, we deduce that $O^{3'}(\fs_0)=O^{3'}(\fs)$. By \cref{AutS}, we have that $\Aut(S)=CH$, where $C$ is a $3$-group and $H=N_{\Aut(\mathrm{G}_2(3^n))}(S)$, and so choices of $\Aut_{\fs}(S)$ correspond exactly to $G\le \Aut(\mathrm{G}_2(q))$ such that $F^*(G)=O^{3'}(G)\cong \mathrm{G}_2(q)$, as required.
\end{proof}

\section[Fusion Systems on a Sylow \texorpdfstring{$p$}{p}-subgroup of \texorpdfstring{$\mathrm{G}_2(p^n)$}{G2(pn)} for \texorpdfstring{$p\geq 5$}{p5}]{Fusion Systems on a Sylow $p$-subgroup of $\mathrm{G}_2(p^n)$ for $p\geq 5$}

Suppose now that $p\geq 5$, $q=p^n$ and $S$ is isomorphic to a Sylow $p$-subgroup of $\mathrm{G}_2(q)$. Again, we set $\mathbb{K}$ to be a finite field of order $q$ and recall the Chevalley commutator formulas from \cref{G2Sylow}:

\begin{align*}
[x_\alpha(t), x_\beta(u)]&=x_{\alpha+\beta}(-tu)x_{2\alpha+\beta}(-t^2u)x_{3\alpha+\beta}(t^3u)x_{3\alpha+2\beta}(-2t^3u^2)\\
[x_\alpha(t), x_{\alpha+\beta}(u)]&=x_{2\alpha+\beta}(-2tu)x_{3\alpha+\beta}(3t^2u)x_{3\alpha+2\beta}(3tu^2)\\
[x_\alpha(t), x_{2\alpha+\beta}(u)]&=x_{3\alpha+\beta}(3tu)\\
[x_\beta(t), x_{3\alpha+\beta}(u)]&=x_{3\alpha+2\beta}(tu)\\
[x_{\alpha+\beta}(t), x_{2\alpha+\beta}(u)]&=x_{3\alpha+2\beta}(3tu).
\end{align*}

It then follows that
\[Z_4(S)=S'=\langle X_{\alpha+\beta}, X_{2\alpha+\beta}, X_{3\alpha+\beta}, X_{3\alpha+2\beta}\rangle,\] \[Z_3(S)=S''=\langle X_{2\alpha+\beta}, X_{3\alpha+\beta}, X_{3\alpha+2\beta}\rangle,\] \[Z_2(S)=S'''=\langle X_{3\alpha+\beta}, X_{3\alpha+2\beta}\rangle, and\]\[Z(S)=S^{''''}=S^{(2)}=\langle X_{3\alpha+2\beta}\rangle\] are characteristic subgroups of $S$ of orders $q^4$, $q^3$, $q^2$ and $q$ respectively. In particular, the lower and upper central series for $S$ coincide.

We define
\[Q_1:=C_S(Z_3(S)/Z_1(S))=\langle X_\beta, X_{\alpha+\beta}, X_{2\alpha+\beta}, X_{3\alpha+\beta}, X_{3\alpha+2\beta}\rangle\]
\[Q_2:=C_S(Z_2(S))=\langle X_\alpha, X_{\alpha+\beta}, X_{2\alpha+\beta}, X_{3\alpha+\beta}, X_{3\alpha+2\beta}\rangle\]
both of order $q^5$ and characteristic in $S$. Observe that we may identify $Q_1$ and $Q_2$ with the unipotent radical subgroups of two maximal parabolic subgroups in $\mathrm{G}_2(q)$. Additionally, $\Phi(Q_1)=Z(Q_1)=Z(S)$ and $\Phi(Q_2)=Z_3(S)$.

We first record some useful structural properties of $S$, $Q_1$ and $Q_2$. There is much more to be said here but we only present the results required to prove the \hyperlink{MainTheorem}{Main Theorem}.

\begin{lemma}\label{Q15Iden}
$Q_1$ is isomorphic to $X_1*X_2$ where $Z(S)=Z(X_1)=Z(X_2)$ and $X_i\cong T\in\syl_p(\SL_3(q))$ for $i\in\{1,2\}$.
\end{lemma}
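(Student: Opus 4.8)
The plan is to read off the two central factors directly as products of root subgroups of $Q_1$, and then to verify the central product decomposition using only the reduced Chevalley commutator formulas for $p\geq 5$.

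First I would set $X_1:=\langle X_{\alpha+\beta}, X_{2\alpha+\beta}\rangle$ and $X_2:=\langle X_\beta, X_{3\alpha+\beta}\rangle$. Scanning the commutator formulas, among the five root subgroups generating $Q_1$ the only non-trivial commutators are $[X_{\alpha+\beta}, X_{2\alpha+\beta}]=X_{3\alpha+2\beta}$ and $[X_\beta, X_{3\alpha+\beta}]=X_{3\alpha+2\beta}$; every other pair commutes because the relevant positive integer combinations of the two roots (such as $\alpha+2\beta$, $2\alpha+2\beta$, $4\alpha+2\beta$ and $5\alpha+2\beta$) are not roots of $\mathrm{G}_2$. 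Since $X_{3\alpha+2\beta}=Z(S)$ is central in $Q_1$, it follows that $X_1=X_{\alpha+\beta}X_{2\alpha+\beta}X_{3\alpha+2\beta}$ and $X_2=X_\beta X_{3\alpha+\beta}X_{3\alpha+2\beta}$ are genuine subgroups, each of order $q^3$, with $X_i'=\Phi(X_i)=Z(X_i)=X_{3\alpha+2\beta}=Z(S)$ and $X_i/Z(S)$ elementary abelian of order $q^2$. As $S$ (hence $X_i$) has exponent $p$ by \cref{G2Exponent}, each $X_i$ is the special group of shape $q^{1+2}$ and exponent $p$; writing a general element as a unitriangular matrix exactly as in \cref{SL3Sub} exhibits an isomorphism $X_i\cong T\in\syl_p(\SL_3(q))$.

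It remains to assemble the central product. The four cross commutators $[X_{\alpha+\beta},X_\beta]$, $[X_{\alpha+\beta},X_{3\alpha+\beta}]$, $[X_{2\alpha+\beta},X_\beta]$ and $[X_{2\alpha+\beta},X_{3\alpha+\beta}]$ all vanish by the same root sum argument, so $[X_1,X_2]=\{1\}$. Because distinct root subgroups meet trivially and their product in a fixed order recovers every element of $S$ uniquely, we get $X_1\cap X_2=X_{3\alpha+2\beta}=Z(S)$, while $X_1X_2$ contains all five generating root subgroups and therefore equals $Q_1$; the orders check out as $|X_1X_2|=q^3\cdot q^3/q=q^5=|Q_1|$. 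With $Z(X_1)=Z(X_2)=Z(S)$ identified, this is exactly the assertion $Q_1\cong X_1*X_2$.

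I expect no genuine obstacle here: the whole statement is a transcription of the commutator formulas once the correct pairing $\{X_{\alpha+\beta},X_{2\alpha+\beta}\}$, $\{X_\beta,X_{3\alpha+\beta}\}$ of root subgroups has been spotted. The only place meriting a line of care is confirming that each factor is the Sylow $p$-subgroup of $\SL_3(q)$ and not merely an abstract group of order $q^3$; this is secured by the exponent $p$ property from \cref{G2Exponent} together with the explicit matrix model of \cref{SL3Sub}.
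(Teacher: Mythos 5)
Your proof is correct and follows essentially the same route as the paper's: the same pairing of root subgroups ($X_\beta X_{3\alpha+\beta}X_{3\alpha+2\beta}$ and $X_{\alpha+\beta}X_{2\alpha+\beta}X_{3\alpha+2\beta}$), each identified with a Sylow $p$-subgroup of $\SL_3(q)$ via the explicit unitriangular matrix model, with the central product over $Z(S)=X_{3\alpha+2\beta}$ read off from the commutator formulas. If anything you are more explicit than the paper, which constructs the two injective homomorphisms and then simply asserts the central product decomposition, whereas you spell out $[X_1,X_2]=\{1\}$ via the root-sum argument, $X_1\cap X_2=Z(S)$ via unique expression, and the order count $|X_1X_2|=q^3\cdot q^3/q=q^5=|Q_1|$.

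One point needs repair: your appeal to \cref{G2Exponent} for ``$S$ (hence $X_i$) has exponent $p$'' is false when $p=5$, since \cref{G2Exponent}(ii) gives $S$ exponent $p^2$ for $p\in\{3,5\}$, and this section covers all $p\geq 5$. The slip is inessential to your argument: the explicit matrix homomorphism (which is exactly the paper's mechanism) already pins down the isomorphism type $X_i\cong T\in\syl_p(\SL_3(q))$ with no exponent input whatsoever. If you do want the exponent-$p$ fact for $X_i$, it is true for all $p\geq 5$, but for a different reason than you cite: each $X_i$ has nilpotency class $2<p$, hence is a regular $p$-group, and is generated by the root elements $x_\epsilon(t)$, each of order $p$, so has exponent $p$. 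As written, though, the justification via \cref{G2Exponent} should be deleted or replaced, and the proof should rest on the matrix model alone.
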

\begin{proof}
Let $X_1=X_{\beta}X_{3\alpha+\beta}X_{3\alpha+2\beta}\le Q_1$. Since the groups $X_\beta$ and $X_{3\alpha+\beta}$ commute modulo $X_{3\alpha+2\beta}$, it follows that every element of $X_1$ may be written in the form $x_{3\alpha+\beta}(t_1)x_\beta(t_2)x_{3\alpha+2\beta}(t_3)$ for $t_i\in\mathbb{K}$. Then, using the commutator formulas, we calculate that the map $\theta_1:X_1\to\SL_3(q)$ such that 
\[
(x_{3\alpha+\beta}(t_1)x_\beta(t_2)x_{3\alpha+2\beta}(t_3))\theta_1=
\begin{pmatrix} 
1 & 0 & 0\\ 
t_1 & 1 & 0 \\ 
t_3 & t_2 &1
\end{pmatrix}
\]
is an injective homomorphism, from which it follows that $X_1$ is isomorphic to a Sylow $3$-subgroup of $\SL_3(q)$. Similarly, letting $X_2=X_{2\alpha+\beta}X_{\alpha+\beta}X_{3\alpha+2\beta}\le Q_1$. Then every element of $X_2$ may be written as $x_{2\alpha+\beta}(t_1)x_{\alpha+\beta}(t_2)x_{3\alpha+2\beta}(t_3)$ for $t_i\in\mathbb{K}$. Then, using the commutator formulas, we calculate that the map $\theta_2:X_2\to\SL_3(q)$ such that 
\[
(x_{2\alpha+\beta}(t_1)x_{\alpha+\beta}(t_2)x_{3\alpha+2\beta}(t_3))\theta_2=
\begin{pmatrix} 
1 & 0 & 0\\ 
t_1 & 1 & 0 \\ 
t_3 & 3t_2 &1
\end{pmatrix}
\]
is an injective homomorphism, from which it follows that $X_2$ is isomorphic to a Sylow $3$-subgroup of $\SL_3(q)$. Thus, $Q_1$ is a central product (over $Z(S)=X_{3\alpha+2\beta}$) of two groups isomorphic to a Sylow $p$-subgroup of $\SL_3(q)$.
\end{proof}

In the literature, $Q_1$ is referred to as an \emph{ultraspecial} group. The properties of such groups are well known.  See, for example, \cite{beisiegel}.

\begin{lemma}\label{5conj}
Let $x\in Z_3(S)\setminus Z_2(S)$. Then $x$ is $S$-conjugate to $x_{2\alpha+\beta}(u)$ for some $u\in\mathbb{K}^\times$.
\end{lemma}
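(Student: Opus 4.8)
The plan is to put a general element of $Z_3(S)$ into normal coordinates and then clear its two ``lower'' coordinates by successive conjugation inside $\langle X_\alpha, X_\beta\rangle$, exploiting precisely the two relations $[x_\alpha(t), x_{2\alpha+\beta}(u)] = x_{3\alpha+\beta}(3tu)$ and $[x_\beta(t), x_{3\alpha+\beta}(u)] = x_{3\alpha+2\beta}(tu)$ from the commutator formulas recorded above. Since $Z_3(S)=\langle X_{2\alpha+\beta}, X_{3\alpha+\beta}, X_{3\alpha+2\beta}\rangle$ is abelian, I would first write $x = x_{2\alpha+\beta}(a)\,x_{3\alpha+\beta}(b)\,x_{3\alpha+2\beta}(c)$ with $a,b,c\in\mathbb{K}$, and note that the hypothesis $x\notin Z_2(S)=\langle X_{3\alpha+\beta}, X_{3\alpha+2\beta}\rangle$ forces $a\ne 0$. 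The goal is then to conjugate $x$ to $x_{2\alpha+\beta}(a)$, whence $u:=a\in\mathbb{K}^\times$.

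The key bookkeeping step is to record how the three coordinates transform under conjugation by $x_\alpha(t)$ and by $x_\beta(s)$. Because $4\alpha+\beta$ and $2\alpha+2\beta$ are not roots, $X_\alpha$ centralizes $X_{3\alpha+\beta}$ and $X_\beta$ centralizes $X_{2\alpha+\beta}$; moreover $X_{3\alpha+2\beta}=Z(S)$ is central. Working these out with the two relations above, conjugation by $x_\alpha(t)$ fixes $a$ and $c$ and sends $b\mapsto b-3ta$, while conjugation by $x_\beta(s)$ fixes $a$ and $b$ and sends $c\mapsto c-sb$. No higher commutator terms intrude, since $[x_\alpha(t), x_{3\alpha+\beta}(\cdot)]$ is trivial; in particular the coordinate $a$ is invariant under every such conjugation.

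With this dictionary the argument becomes a short three-step elimination. If $b=0$, first conjugate by some $x_\alpha(t_0)$ with $t_0\ne 0$ to arrange $b\ne 0$; this is possible because $a\ne 0$ and $3$ is invertible in $\mathbb{K}$ (here is where $p\geq 5$ enters). Next conjugate by $x_\beta(s)$ with $s=c/b$ to clear the $X_{3\alpha+2\beta}$-coordinate without disturbing $b$. Finally conjugate by $x_\alpha(t)$ with $t=b/(3a)$ to clear the $X_{3\alpha+\beta}$-coordinate; by the transformation rules this does not reintroduce a nonzero $X_{3\alpha+2\beta}$-coordinate, and it leaves $a$ untouched. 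What remains is $x_{2\alpha+\beta}(a)$ with $a\in\mathbb{K}^\times$, as required.

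The one delicate point, which I expect to be the main obstacle, is the order of the two eliminations. The coordinate $c$ can only be moved by conjugation from $X_\beta$, and that conjugation has no effect unless the $X_{3\alpha+\beta}$-coordinate is nonzero; conversely the final clearing of $b$ by $X_\alpha$ must not resurrect $c$. The transformation rules above show that clearing $c$ \emph{before} $b$ resolves both tensions simultaneously, and the hypothesis $p\geq 5$ guarantees the factor $3$ is a unit so that $b$ is genuinely adjustable. These two observations together make the elimination go through cleanly.
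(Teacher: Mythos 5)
Your proof is correct and follows essentially the same elimination as the paper's: write $x=x_{2\alpha+\beta}(a)\,x_{3\alpha+\beta}(b)\,x_{3\alpha+2\beta}(c)$ with $a\ne 0$, clear the $X_{3\alpha+2\beta}$-coordinate by conjugating with a suitable $x_\beta(s)$, then clear the $X_{3\alpha+\beta}$-coordinate with $x_\alpha(t)$, $t=b/(3a)$, using that $3$ is invertible when $p\geq 5$. The only (immaterial) divergence is the degenerate case $b=0$: the paper clears $c$ in a single step by conjugating with $x_{\alpha+\beta}(3^{-1}ca^{-1})$ via the relation $[x_{\alpha+\beta}(t), x_{2\alpha+\beta}(u)]=x_{3\alpha+2\beta}(3tu)$, whereas you first perturb $b$ to be nonzero with an $x_\alpha$-conjugation and then rerun the same two-step elimination.
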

\begin{proof}
Let $x\in Z_3(S)\setminus Z_2(S)$ so that $x=x_{2\alpha+\beta}(t_1)x_{3\alpha+\beta}(t_2)x_{3\alpha+2\beta}(t_3)$ for some $t_1,t_2,t_3\in\mathbb{K}$ with $t_1\ne 0$. Then the element $x_{\beta}(t_3t_2^{-1})x_{\alpha}(3^{-1}t_2t_1^{-1})$ conjugates $x$ to $x_{2\alpha+\beta}(t_1)$ if $t_2\ne 0$ and the element $x_{\alpha+\beta}(3^{-1}t_3 t_1^{-1})$ conjugates $x$ to $x_{2\alpha+\beta}(t_1)$ if $t_2=0$.
\end{proof}

As in the cases where $p=2$ or $3$, the main tool we use to determine whether a subgroup of $S$ is essential is \cref{Chain} and so for a large number of arguments in this section, we need only assume that any essential candidate is $S$-radical and $S$-centric.

\begin{lemma}\label{Q1Q2}
Suppose that $E$ is an $S$-centric, $S$-radical subgroup of $S$ with $Q_1\le E$ or $Q_2\le E$. Then $E\in\{Q_1, Q_2, S\}$.
\end{lemma}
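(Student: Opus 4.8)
The plan is to use \cref{Chain} to rule out every subgroup strictly between $Q_i$ and $S$, so that the radical hypothesis forces $E\in\{Q_1,Q_2,S\}$. First observe that since $S'=Q_1\cap Q_2\le Q_i\le E$, any such $E$ satisfies $S'\le E$, hence $E\normaleq S$ and $N_S(E)=S$. If $|E|\in\{q^5,q^6\}$ then $E$ equals $Q_1$ (respectively $Q_2$) or $S$ and there is nothing to prove, so the entire content is to exclude the intermediate subgroups $Q_i<E<S$, which can exist only when $n\ge 2$ since $|S/Q_i|=q$.

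I would treat the two cases $Q_1\le E$ and $Q_2\le E$ in parallel, noting that they are genuinely distinct because $Q_1\not\cong Q_2$ for $p\ge 5$ (indeed $\Phi(Q_1)=Z(S)$ while $\Phi(Q_2)=Z_3(S)$). Suppose first $Q_1<E<S$. Since $S/Q_1\cong X_\alpha$ and $E/Q_1$ is a proper subgroup, choose $t\in\mathbb{K}^\times$ with $x:=x_\alpha(t)\notin E$; as $E\normaleq S$, $x$ normalizes $E$. Using the commutator formulas together with the fact that $E$ contains $Q_1$ \emph{and} an element outside $Q_1$, I would check that $E'=S'$ and hence that $Z_3(S)=[E',E]$, $Z_2(S)=[[E',E],E]$ and $Z(S)=[[[E',E],E],E]$ are iterated terms of the lower central series of $E$; that is, the lower central series of $E$ coincides with that of $S$. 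In particular these subgroups are characteristic in $E$, giving the chain $\{1\}\le Z(S)\le Z_2(S)\le Z_3(S)\le S'\le E$. A direct reading of the commutator formulas then shows $x_\alpha(t)$ centralizes each successive quotient, since $[x_\alpha,Z_2(S)]=1$, $[x_\alpha,Z_3(S)]\le Z_2(S)$, $[x_\alpha,S']\le Z_3(S)$, and $[x_\alpha,E]\le E'=S'$. By \cref{Chain} the image of $c_x$ lies in $O_p(\Out(E))$; as $E$ is $S$-centric we have $\Out_S(E)=S/E$, so $x\notin E$ yields a nontrivial element of $O_p(\Out(E))\cap\Out_S(E)$, contradicting $S$-radicality.

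The case $Q_2<E<S$ runs identically with $x=x_\beta(t)\notin E$ chosen via $S/Q_2\cong X_\beta$. Again the lower central series of $E$ coincides with that of $S$, and the commutator formulas give $[x_\beta,S']\le Z(S)$, $[x_\beta,Z_3(S)]\le Z(S)$, $[x_\beta,Z_2(S)]\le Z(S)$ and $[x_\beta,Z(S)]=1$, so $x_\beta$ centralizes every lower central quotient and the same contradiction applies. Hence both $Q_1<E<S$ and $Q_2<E<S$ are impossible, and $E\in\{Q_1,Q_2,S\}$.

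The main obstacle is the verification that the chain is characteristic in $E$, not merely in $S$: a priori $\Aut(E)$ is larger than the automorphisms induced from $\Aut(S)$ and could move $Z_i(S)$ or $S'$. The clean way around this is to recognize $S'$, $Z_3(S)$, $Z_2(S)$ and $Z(S)$ as the lower central series terms of $E$, which are automatically characteristic; establishing $\gamma_i(E)=\gamma_i(S)$ is exactly where the Chevalley commutator formulas and the hypothesis $Q_i<E$ enter, the latter guaranteeing an element of $E$ outside $Q_i$ whose commutators with $Q_i$ already generate all of $S'$. The remaining centralizer computations for $x_\alpha(t)$ and $x_\beta(t)$ are routine applications of the commutator formulas.
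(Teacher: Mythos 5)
Your proof is correct and follows essentially the same route as the paper: both arguments use the Chevalley commutator formulas to show that the relevant central-series terms of $E$ coincide with those of $S$ (hence are characteristic in $E$), and then apply \cref{Chain} together with $S$-radicality to the resulting chain. The only cosmetic differences are that you phrase the characteristic subgroups via the lower central series $\gamma_i(E)$ where the paper uses $Z_i(E)$ and $E'$, and that you apply \cref{Chain} to a single root element $x_\alpha(t)\notin E$ (resp.\ $x_\beta(t)\notin E$) to get an immediate contradiction, whereas the paper applies it to all of $Q_2$ (resp.\ $Q_1$) to force $Q_2\le E$ and hence $E=S$.
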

\begin{proof}
Suppose that $Q_1<E$. Then there is $e=x_{\alpha}(t_1)\in E$ with $t_1\ne 0$, applying the commutator formulas, it follows that $Z(E)=Z(S)$, $Z_2(E)=Z_2(S)$, $Z_3(E)=Z_3(S)$ and $E'=S'$. But then $Q_2$ centralizes the chain $\{1\}\normaleq Z_2(E) \normaleq Z_3(E)\normaleq E'\normaleq E$, and since $E$ is $S$-radical, $E=S$. In a similar manner, if $Q_2<E$ then there is $e=x_{\beta}(t_1)\in E$ with $t_1\ne 0$. Again, from the commutator formulas, $Z(E)=Z(S)$ and $E'=S'$. Now, $Q_1$ centralizes the chain $\{1\}\normaleq Z(E)\normaleq E'\normaleq E$ and since $E$ is $S$-radical, $E=S$.
\end{proof}

\begin{lemma}\label{Z3}
Suppose that $E\le S$ is an $S$-centric, $S$-radical subgroup of $S$ with $Z_3(S)=S''\le E$. Then $E=Z_3(S)$ or $Z(E)\le Z_2(S)$. Moreover, if $E$ is essential in some saturated fusion system $\fs$ supported on $S$, then $E\ne Z_3(S)$. 
\begin{proof}
Since $Z_3(S)\le E$ is self-centralizing, we have that $Z(E)\le Z_3(S)$. By \cref{5conj}, if $Z(E)\not\le Z_2(S)$ then there is $e\in Z(E)\setminus Z_2(S)$ with $e$ conjugate in $S$ to some $x_{2\alpha+\beta}(u)$ for $u\in\mathbb{K}^\times$. Thus, $Z_3(S)\le E\le C_S(e)=Z_3(S)(X_\beta)^s$ for some $s\in S$. Suppose that $E>Z_3(S)$. Since $E$ is self centralizing $Z(C_S(e))=Z(S)(X_{2\alpha+\beta})^s\le Z(E)$ and so $Z(E)=Z(C_S(e))$. Therefore, $C_S(e)$ centralizes the series $\{1\}\normaleq Z(E)\normaleq E$ so that $E=C_S(e)\le Q_1$. But now, $Q_1$ centralizes the series $\{1\}\normaleq E'=Z(S)=Q_1'\normaleq E$, a contradiction.

Suppose that $E=Z_3(S)$ is an essential subgroup of some saturated fusion system $\fs$ on $S$. Then $Q_2/E$ is elementary abelian of order $q^2$ and centralizes $Z_2(S)$ which has index $q$ in $E$. Hence, $E$ is an FF-module for $\Out_{\fs}(E)$ so that \cref{SEFF} provides a contradiction.
\end{proof}
\end{lemma}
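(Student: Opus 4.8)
The plan is to prove the two assertions in turn, leaning on the self-centralizing property of $Z_3(S)$, the conjugacy statement \cref{5conj}, and the $S$-radical hypothesis via \cref{Chain}. First I would verify directly from the commutator formulas that $Z_3(S)$ is self-centralizing: centralizing $X_{2\alpha+\beta}$ forces the $X_\alpha$- and $X_{\alpha+\beta}$-coordinates of an element to vanish, while centralizing $X_{3\alpha+\beta}$ forces the $X_\beta$-coordinate to vanish, so $C_S(Z_3(S))=Z_3(S)$. Since $Z_3(S)\le E$, this gives $Z(E)=C_E(E)\le C_S(Z_3(S))=Z_3(S)$, and the first claim reduces to showing that $Z(E)\not\le Z_2(S)$ forces $E=Z_3(S)$.

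So suppose $Z(E)\not\le Z_2(S)$ and choose $e\in Z(E)\setminus Z_2(S)\subseteq Z_3(S)\setminus Z_2(S)$. By \cref{5conj}, $e$ is $S$-conjugate to some $x_{2\alpha+\beta}(u)$, and a short commutator computation shows that $C_S(e)=(Z_3(S)X_\beta)^s$ for a suitable $s\in S$: a group of order $q^4$ contained in $Q_1$, with $C_S(e)'=Z(S)$ and $Z(C_S(e))=Z(S)(X_{2\alpha+\beta})^s$ of order $q^2$. Assume for contradiction that $E>Z_3(S)$. Then $E$ contains an element with nontrivial $X_\beta^s$-part, which fails to centralize $X_{3\alpha+\beta}^s\le Z_3(S)\le E$; this collapses the center of $E$ down to $Z(C_S(e))$, so $Z(E)=Z(C_S(e))$. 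Now $C_S(e)$ centralizes $Z(E)$ (it is exactly its own center) and also $E/Z(E)$, since $[C_S(e),E]\le C_S(e)'=Z(S)\le Z(E)$. Thus $C_S(e)$ centralizes the normal chain $\{1\}\normaleq Z(E)\normaleq E$, and as $E$ is $S$-centric and $S$-radical, \cref{Chain} forces $C_S(e)\le E$, i.e.\ $E=C_S(e)\le Q_1$. But then $E'=Z(S)=Q_1'$ and $Q_1$ centralizes $\{1\}\normaleq E'\normaleq E$, so the $S$-radical hypothesis would give $Q_1\le E$, contradicting $|Q_1|=q^5>q^4=|E|$. Hence $E=Z_3(S)$, proving the first assertion.

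For the ``moreover'' clause, suppose $E=Z_3(S)$ is essential in a saturated $\fs$ on $S$. Here $E$ is elementary abelian of order $q^3$ and self-centralizing, with $N_S(E)=S$, so $\Out_S(E)=S/Z_3(S)$ has order $q^3$ and lies in $\syl_p(\Out_{\fs}(E))$ by full automization. Taking $V=E=\Omega(Z(E))$, I would exhibit the image of $Q_2$ as an offender: $Q_2/Z_3(S)$ is elementary abelian of order $q^2$ and centralizes $Z_2(S)$, which has index $q$ in $E$, while $[E,Q_2]\ne\{1\}$ (for instance $[X_\alpha,X_{2\alpha+\beta}]=X_{3\alpha+\beta}\le E$). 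Hence $|V/C_V(Q_2)|\le q<q^2$ and $V$ is an FF-module for $\Out_{\fs}(E)$. Then \cref{SEFF} yields $L:=O^{p'}(\Out_{\fs}(E))$ with $L/C_L(V)\cong\SL_2(q)$ and $C_L(V)$ a $p'$-group, so $|\Out_{\fs}(E)|_p=|L|_p=q$. This contradicts $|\Out_S(E)|=q^3$, giving $E\ne Z_3(S)$.

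The step I expect to be the main obstacle is the structural computation underpinning the first assertion, namely pinning down $C_S(e)$ and proving that $Z(E)$ collapses exactly to $Z(C_S(e))$ once $E>Z_3(S)$; this is where the non-degenerate commutator formulas for $p\ge5$ are genuinely used. Once that is in hand, the two chain arguments and the FF-module/Sylow-order contradiction follow routinely from \cref{Chain} and \cref{SEFF}.
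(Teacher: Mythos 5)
Your proposal is correct and takes essentially the same route as the paper's proof: the self-centralizing property of $Z_3(S)$ together with \cref{5conj} to force $E\le C_S(e)=Z_3(S)X_\beta^s$, the two chain arguments via \cref{Chain} (first collapsing $Z(E)$ to $Z(C_S(e))$ and concluding $E=C_S(e)$, then using $Q_1$ against $E'=Z(S)=Q_1'$), and finally the $Q_2/Z_3(S)$-offender giving an FF-module contradiction through \cref{SEFF}. One small imprecision: \cref{SEFF} yields $L/C_L(V)\cong\SL_2(p^m)$ for an a priori undetermined $m$ rather than $\SL_2(q)$, but your Sylow-order contradiction is robust to this, since full automization would force $p^m=|\Out_S(E)|=q^3$ while the natural module would then have order $q^6>|E|=q^3$.
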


\begin{lemma}\label{Q1Q2-5}
Suppose that $E$ is an $S$-centric, $S$-radical subgroup of $S$ with $Z_3(S)=S''\le E$ and $Z(E)=Z(S)$. Then $E\in\{Q_1, S\}$.
\end{lemma}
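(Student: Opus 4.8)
The plan is to reduce, via \cref{Q1Q2}, to the two cases $E\leq Q_1$ and $E\not\leq Q_1$, showing $E=Q_1$ in the first and $E=S$ in the second. Throughout I would use that $Z_3(S)$ is self-centralizing, i.e. $C_S(Z_3(S))=Z_3(S)$, which drops straight out of the commutator formulas. This gives $Z(E)\leq C_E(Z_3(S))=Z_3(S)$ and makes $E/Z_3(S)$ act faithfully on $Z_3(S)$. A first consequence is $E\not\leq Q_2$: since $Q_2=C_S(X_{3\alpha+\beta})$, if $E\leq Q_2$ then $X_{3\alpha+\beta}\in Z_3(S)\leq E$ would be centralized by all of $E$, forcing $Z_2(S)\leq Z(E)$ against $Z(E)=Z(S)$.

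The case $E\leq Q_1$ I would settle cleanly using that, by \cref{Q15Iden}, $Q_1$ is ultraspecial with $Q_1'=\Phi(Q_1)=Z(Q_1)=Z(S)=Z(E)$. Then $[Q_1,E]\leq Q_1'=Z(S)\leq E$, so $E\normaleq Q_1$ and $Q_1$ normalizes the characteristic chain $\{1\}\normaleq Z(E)\normaleq E$, centralizing both factors since $[Q_1,Z(E)]=[Q_1,Z(S)]=\{1\}$ and $[Q_1,E]\leq Z(S)=Z(E)$. Hence \cref{Chain} places the image of $Q_1$ inside $O_p(\Out(E))$, and $S$-radicality gives $Q_1\leq EC_S(E)=E$, so $E=Q_1$.

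The substance is the case $E\not\leq Q_1$, where I claim $E=S$. First I would show $Z_4(S)=S'\leq E$. Note $S'$ always normalizes $E$, because $[S',E]\leq[S',S]=Z_3(S)\leq E$, and moreover $[S',Z_3(S)]=Z(S)$ while $[S',Z(S)]=\{1\}$. Thus, granting that $Z_3(S)$ is characteristic in $E$, if $S'\not\leq E$ then any $g\in S'\setminus E$ would centralize the characteristic chain $\{1\}\normaleq Z(E)\normaleq Z_3(S)\normaleq E$ and, by \cref{Chain}, induce a nontrivial element of $O_p(\Out(E))\cap\Out_S(E)$, contradicting $S$-radicality; hence $S'\leq E$, whence $[S,E]\leq S'\leq E$ and $E\normaleq S$. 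If now $E\neq S$ then $\Out_S(E)=S/E\neq\{1\}$, and $S$ centralizes every factor of $\{1\}\normaleq Z(S)\normaleq Z_2(S)\normaleq Z_3(S)\normaleq S'\normaleq E$, since $[S,Z_2(S)]=Z(S)$, $[S,Z_3(S)]=Z_2(S)$, $[S,S']=Z_3(S)$ and $[S,E]\leq S'$; so \cref{Chain} again contradicts $S$-radicality, forcing $E=S$.

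The main obstacle is precisely what I assumed above: to run these chain arguments I must know that $Z_2(S)$, $Z_3(S)$ and $S'$ are \emph{characteristic} in $E$, i.e. $\Aut(E)$-invariant and not merely $N_S(E)$-invariant. The plan here is to read these subgroups off intrinsically from the faithful action of $E/Z_3(S)$ on $Z_3(S)$ together with the self-centralizing property: $Z(E)=Z(S)=C_{Z_3(S)}(E)$ is characteristic, $Z_3(S)$ appears either as a term of the lower central series of $E$ or as $C_E(\gamma_2(E))$ when $E$ has class $3$, and $Z_2(S)/Z(S)$ and $S'$ are recovered as $E$-fixed subspaces, with $E\not\leq Q_1$ and $E\not\leq Q_2$ being exactly what force these fixed spaces to be as small as claimed. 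Carrying this out requires a short case split on the nilpotency class of $E$; in particular the degenerate ``diagonal'' subgroups — those $E$ whose image in $S/S'$ lies on a single $\mathbb{K}$-line off the two coordinate axes — must be shown to have class $3$ and hence enough characteristic subgroups to be non-$S$-radical. This bookkeeping is the principal technical burden of the proof.
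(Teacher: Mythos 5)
Your reduction and the two easy steps match the paper: $E\not\le Q_2$ because $E$ is $S$-centric and contains $Z_2(S)$, and when $E\le Q_1$ the chain $\{1\}\normaleq Z(E)\normaleq E$ together with $[Q_1,E]\le Q_1'=Z(S)=Z(E)$ forces $E=Q_1$. The genuine gap is in the main case $E\not\le Q_1\cup Q_2$, and it is exactly the one you flag and then defer: since $S$-radicality is a statement about $O_p(\Out(E))\cap\Out_S(E)$, every application of \cref{Chain} requires the chain to consist of $\Aut(E)$-invariant subgroups, and all of your chains are built from $Z_2(S)$, $Z_3(S)$ and $S'$, whose characteristicity in $E$ you never establish. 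Worse, your proposed intrinsic identification of $Z_3(S)$ concretely fails. When $E\not\le Q_1\cup Q_2$ one can only deduce $Z_2(S)\le E'\le S'$ (pick $e\in E\setminus (Q_1\cup Q_2)$; then $[e,Z_2(S)]=Z(S)$ and $[e,X_{2\alpha+\beta}]Z(S)=Z_2(S)$ lie in $E'$), and the sub-case $E'=Z_2(S)$ is a priori possible: there $\gamma_2(E)=Z_2(S)$ and $\gamma_3(E)\le Z(S)$, so $E$ has class exactly $3$, no lower central term equals $Z_3(S)$, and $C_E(\gamma_2(E))=C_E(Z_2(S))=E\cap Q_2$, which in the surviving configuration properly contains $Z_3(S)$. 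So neither of your two recipes recovers $Z_3(S)$, your very first chain $\{1\}\normaleq Z(E)\normaleq Z_3(S)\normaleq E$ cannot be certified characteristic, and the argument stalls before $S'\le E$ is even obtained. This ``bookkeeping'' is not peripheral; it is the actual content of the lemma in this case.

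The paper avoids the problem rather than solving it: it never shows $Z_2(S)$, $Z_3(S)$ or $S'$ to be characteristic in $E$, but instead manufactures chains from subgroups that are characteristic by construction. Having noted $Z_2(S)\le E'$ and $C_E(E')\le E\cap Q_2$, it splits on $[Z_3(S),E']$. If $[Z_3(S),E']\ne\{1\}$, then $C_E(E'/Z(E))$ is characteristic, contains $E\cap S'$ and lies in $E\cap Q_1$, and the single chain $\{1\}\normaleq Z(E)\normaleq C_E(E'/Z(E))\normaleq E$ is centralized first by $S'$ (giving $S'\le E$, hence $E\normaleq S$) and then by $Q_1$ (giving $Q_1\le E$), after which \cref{Q1Q2} together with $[Z_3(S),Q_1']=\{1\}$ yields $E=S$; note that your route never reaches $Q_1\le E$ and never invokes \cref{Q1Q2}, which — contrary to your opening sentence — is needed at the end, not for the case split. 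If $[Z_3(S),E']=\{1\}$, the configuration is eliminated outright by explicit commutator computations (ending with a choice of $e,\wt e\in E$ satisfying $[e,\wt e]\not\le Z_2(S)=E'$): the correct resolution of your ``diagonal'' subgroups is a contradiction, not a proof that they carry enough characteristic subgroups. To repair your proposal you would either have to import this device or replace your unproven characteristicity claims by case-by-case commutator analysis of at least the same length as the paper's.
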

\begin{proof}
Since $Z(E)=Z(S)$, we infer that $E\not\le Q_2$. Moreover, if $E\le Q_1$, then $[E, Q_1]\le Q_1'=Z(S)=Z(E)$ and $Q_1$ centralizes the chain $\{1\}\normaleq Z(E)\normaleq E$. Since $E$ is $S$-radical, it follows that $E=Q_1$ in this case. Hence, we may assume throughout that $E\not\le Q_1,Q_2$ and so there is $e:=x_{\alpha}(t_1)x_{\beta}(t_2)x_{\alpha+\beta}(t_3)\in E$ with $t_1\ne 0\ne t_2$. Then, $[e, Z_2(S)]=Z(S)\le E'$ and $[e, X_{2\alpha+\beta}]Z(S)=Z_2(S)\le E'$. Therefore, $C_E(E')\le E\cap Q_2$.

Suppose first that $[Z_3(S), E']=\{1\}$. Since $Z_3(S)$ is self-centralizing, we have that $Z_2(S)\le E'\le Z_3(S)$. If $E'\ne Z_2(S)$, then $Z_3(S)=C_E(E')$ is a characteristic subgroup of $E$. Then $E\cap Q_1=C_E(Z_3(S)/Z(S))=C_E(Z_3(S)/Z(E))$ is also characteristic in $E$. Then, since $S'$ normalizes $E$, $S'$ centralizes the chain $\{1\}\normaleq Z(E)\normaleq E\cap Q_1\normaleq E$ and since $E$ is radical, $S'\le E$ by \cref{Chain}. But then $E\normaleq S$ and $Q_1$ centralizes the chain $\{1\}\normaleq Z(E)\normaleq E\cap Q_1\normaleq E$ and so $Q_1\le E$. Then by \cref{Q1Q2}, $E=Q_1$ or $E=S$ and since $Z_2(S)\le E'$ and $[E', Z_3(S)]=\{1\}$, we have a contradiction in either case. Therefore, $E'=Z_2(S)$ and $E\cap Q_2=C_E(E')$ is characteristic in $E$.

If $E\cap S'>Z_3(S)$, as $E'=Z_2(S)$, it follows from the commutator formulas that $E\cap Q_2=E\cap S'$. But then $S'$ centralizes the chain $\{1\}\normaleq Z(E)\normaleq E\cap S'\normaleq E$ and since $E$ is $S$-radical, $S'\le E$, $E\normaleq S$ and $S'$ is characteristic in $E$. Now,
$Q_1$ centralizes the chain $\{1\}\normaleq Z(E)\normaleq S'\normaleq E$ so that $Q_1\le E$ and, by \cref{Q1Q2}, $E=S$ or $E=Q_1$. Since $E'=Z_2(S)$, we have a contradiction in either case. Thus, $E\cap S'=Z_3(S)$. If $E\cap Q_2=Z_3(S)$, then $S'$ centralizes the chain $\{1\}\normaleq Z(E)\normaleq Z_3(S)\normaleq E$ and since $E$ is $S$-radical, $S'\le E$. Since $E\cap S'=Z_3(S)$, this is an obvious contradiction. Thus, $Z_3(S)=E\cap S'<E\cap Q_2$. Since $E\not\le Q_2$, there is $e:=x_{\alpha}(t_1)x_{\beta}(t_2)x_{\alpha+\beta}(t_3)\in E$ with $t_2\ne 0$ and $\wt e:=x_{\alpha}(\wt{t_1})x_{\alpha+\beta}(\wt{t_2})\in E\cap Q_2$ with $\wt{t_1}\ne 0$. But then, $[e, \wt e]\not\le Z_2(S)=E'$, a contradiction.

Suppose now that $[Z_3(S), E']\ne \{1\}$. Since $Z_2(S)\le E'$, it follows that there is $x:=x_{\alpha+\beta}(t_1)x_{2\alpha+\beta}(t_2)\in E'$ with $t_1\ne 0$. In particular, $S'\cap E\le C_E(E'/Z(E))\le Q_1\cap E$ and so $S'$ centralizes the chain $\{1\}\normaleq Z(E)\normaleq C_E(E'/Z(E))\normaleq E$ and since $E$ is $S$-radical, $S'\le E$. Therefore, $S'\le C_E(E'/Z(E))$, $E\normaleq S$ and $Q_1$ centralizes the chain $\{1\}\normaleq Z(E)\normaleq C_E(E'/Z(E))\normaleq E$. Since $E$ is $S$-radical, $Q_1\le E$ and since $[Z_3(S), E']\ne\{1\}$, it follows from \cref{Q1Q2} that $E=S$.
\end{proof}

\begin{lemma}\label{AZ}
Suppose that $E$ is an $S$-centric, $S$-radical subgroup of $S$ with $Z_3(S)=S''< E$ and $Z(E)\ne Z(S)$. Then $E=Q_2$; or $E\le Q_2$ has order $q^4$, $\Phi(E)< Z_2(S)=Z(E)$, $|\Phi(E)|=q$ and $N_S(E)=Q_2$. Moreover, if $E$ is essential in some saturated fusion system $\fs$ supported on $S$ then $E=Q_2$.
\end{lemma}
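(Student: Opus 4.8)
The plan is to locate $E$ inside $Q_2$, then analyse $\bar E:=E/Z_3(S)$ inside $Q_2/Z_3(S)$ by means of \cref{Chain}, and finally dispatch the essential case on its own.

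\emph{Locating $E$.} Since $Z_3(S)<E$ we have $E\neq Z_3(S)$, so \cref{Z3} yields $Z(E)\le Z_2(S)$; as $Z(S)\le Z(E)$ always holds and $Z(E)\neq Z(S)$ by hypothesis, I get $Z(S)<Z(E)\le Z_2(S)$. Choosing $e\in Z(E)\setminus Z(S)$, so $e=x_{3\alpha+\beta}(t_1)x_{3\alpha+2\beta}(t_2)$ with $t_1\neq0$, a direct computation with the commutator formulas shows that only $X_\beta$ fails to centralise $e$, with $[x_\beta(b),e]=x_{3\alpha+2\beta}(\pm bt_1)\in Z(S)$; this commutator is unchanged under conjugation by $Q_2$, so $C_S(e)=Q_2$ and hence $E\le Q_2$. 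Because $E$ is $S$-centric and $Z(Q_2)=Z_2(S)$ centralises $E$, I conclude $Z_2(S)=Z(Q_2)\le C_S(E)=Z(E)\le Z_2(S)$, i.e. $Z(E)=Z_2(S)$.

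\emph{The two outcomes.} As $Z_3(S)=\Phi(Q_2)=Q_2'\le E\le Q_2$, the subgroup $E$ is normal in $Q_2$, so $Q_2\le N_S(E)$; write $\bar E\le Q_2/Z_3(S)=\langle\bar x_\alpha,\bar x_{\alpha+\beta}\rangle$. If $\bar E$ is the whole space then $E=Q_2$. Otherwise, $S$-centricity first forces $\bar E$ to have nontrivial $\bar x_\alpha$-projection, for if $\bar E\le\langle\bar x_{\alpha+\beta}\rangle$ the commutator formulas give $X_\beta Z_2(S)\le C_S(E)$, contradicting $C_S(E)=Z_2(S)$. Then I would feed \cref{Chain} with chains passing through the genuine characteristic subgroups $Z(E)=Z_2(S)$ and $\Phi(E)=E'$, using the outer $p$-elements $x_\alpha(a),x_{\alpha+\beta}(c)\in Q_2$ (each satisfying $[\,\cdot\,,E]\le Z_3(S)$): radicality forbids any such element from centralising a characteristic chain, and this forces $E'\le Z_2(S)$ of order $q$. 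Since $E'$ modulo $Z_2(S)$ is precisely the image of the commutator (symplectic) form on $\bar E$ with values in $Z_3(S)/Z_2(S)\cong\mathbb{K}$, the condition $E'\le Z_2(S)$ is equivalent to $\bar E$ being a single $\mathbb{K}$-line; this gives $|E|=q^4$ and $\Phi(E)=E'<Z_2(S)=Z(E)$ with $|\Phi(E)|=q$. Finally the transvection action $(a,c)\mapsto(a,c-ua)$ of $x_\beta(u)$ on $Q_2/Z_3(S)$ moves this line for every $u\neq0$, so no element of $S\setminus Q_2$ normalises $E$ and $N_S(E)=Q_2$.

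\emph{The essential addendum.} Suppose the order-$q^4$ subgroup $E$ is essential; then it is fully $\fs$-normalised, with $\Out_S(E)\cong Q_2/E$ of order $q$ centralising $\Omega(Z(E))=Z_2(S)$, so the FF-input cannot come from $\Out_S(E)$ acting on $\Omega(Z(E))$. Mirroring the $p=3$ treatment, I would instead force $Q_2$ to be essential and apply \cref{SEFF} to $Z_2(S)=\Omega(Z(Q_2))$, which is an FF-module for $\Out_{\fs}(Q_2)$ with offender $\Out_S(Q_2)$ since $|Z_2(S)/Z(S)|=q=|S/Q_2|$; this gives $O^{p'}(\Out_{\fs}(Q_2))\cong\SL_2(q)$ acting naturally on $Q_2/Z_3(S)$, hence transitively on its lines. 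Consequently some $\alpha\in\Aut_{\fs}(Q_2)$ carries $\bar E$ to $(Q_1\cap Q_2)/Z_3(S)=\langle\bar x_{\alpha+\beta}\rangle$, so $E\alpha=Q_1\cap Q_2$; but $N_S(Q_1\cap Q_2)=S>Q_2=N_S(E)$, contradicting that $E$ is fully normalised. (If $Q_2$ is not essential, one argues as in the corresponding step of the $p=3$ case, lifting the central element of $O^{p'}(\Out_{\fs}(E))$ to $\Aut_{\fs}(S)$.) Thus $E=Q_2$ whenever $E$ is essential.

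\emph{Main obstacle.} The delicate point is the radical step pinning down $\bar E$ as a $\mathbb{K}$-line. The natural chain $\{1\}\normaleq Z_2(S)\normaleq Z_3(S)\normaleq E$ is tempting, but $Z_3(S)$ need not be characteristic in $E$: for $E=X_\alpha Z_3(S)$ the roles of $X_\alpha$ and $X_{2\alpha+\beta}$ are interchangeable by an automorphism of $E$, so the isotropy of $\bar E$ must be extracted purely from the honestly characteristic subgroups $Z(E)$ and $\Phi(E)=E'$. One must also confirm, even for $p=5$ (where $S$ has exponent $25$), that $E^p=\{1\}$, so that $\Phi(E)=E'$: every $g\in E\le Q_2$ lies in a class-$2$ subgroup and $\binom{p}{2}\equiv0\pmod p$ for odd $p$, which forces $g^p=1$.
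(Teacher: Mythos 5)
Your architecture coincides with the paper's: the localisation step (\cref{Z3} gives $Z(S)<Z(E)\le Z_2(S)$, then $C_S(e)=Q_2$ forces $E\le Q_2$ and $Z(E)=Z_2(S)$) and the branch of the addendum where $Q_2$ is essential (natural module, transitivity on $\mathbb{K}$-lines, $E$ conjugate to $S'$ against full normalisation) are exactly the paper's arguments, and your symplectic-form reading of $Q_2/Z_3(S)$ is a reasonable intrinsic substitute for the paper's identification of $Q_2/\Phi(Q_2)$ as the natural $\SL_2(q)$-module inside $\mathrm{G}_2(q)$. But there are three concrete gaps.

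First, the pivotal radical step is mis-stated: radicality does not ``forbid any outer $p$-element from centralising a characteristic chain''; \cref{Chain} forces a subgroup centralising an invariant chain \emph{into} $E$, and the elements $x_\alpha(a),x_{\alpha+\beta}(c)$ you name may themselves lie in $E$. The working version pivots on the characteristic subgroup $C_E(E')$: if $E'\not\le Z_2(S)$ then $E'$ contains an element with nontrivial $X_{2\alpha+\beta}$-component, whose centraliser in $Q_2$ is $Z_3(S)$, so $C_E(E')=Z_3(S)$ is characteristic in $E$; since $[Q_2,E]\le Q_2'=Z_3(S)$, $[Q_2,Z_3(S)]\le Z_2(S)=Z(E)$ and $[Q_2,Z_2(S)]=\{1\}$, the whole of $Q_2$ centralises $\{1\}\normaleq Z(E)\normaleq Z_3(S)\normaleq E$, and \cref{Chain} yields $E=Q_2$, contrary to your case assumption (the paper runs this chain with $S'$ when proving $S'\cap E=Z_3(S)$). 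Second, your claimed equivalence is false as stated: $E'\le Z_2(S)$ is equivalent to $\bar E$ being \emph{contained in} a $\mathbb{K}$-line, which gives only $|E|\le q^4$, not $|E|=q^4$. Promoting containment to equality needs one further radicality chain: letting $X$ be the full preimage of the line, $[X,E]\le X'\le Z_2(S)=Z(E)$ and $[X,Z(E)]=\{1\}$, so $X$ centralises $\{1\}\normaleq Z(E)\normaleq E$ and $E=X$; this is precisely how the paper concludes, realising $X$ as an $\Out_{\mathrm{G}_2(q)}(Q_2)$-conjugate of $S'$ to read off $|\Phi(X)|=q$.

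Third, the parenthetical treatment of the case where $Q_2$ is not essential is a pointer, not a proof, and in the paper it is the bulk of the ``moreover'' clause. One must first show, by a SEFF-type argument using $Q_2=N_S(E)$, $|Q_2/E|=q$ and $[Q_2,Z_3(S)]=Z_2(S)$, that $E/Z(E)$ is a natural $\SL_2(q)$-module for $O^{p'}(\Out_{\fs}(E))\cong\SL_2(q)$, and that $E$ is maximally essential, so that $t_E\in Z(O^{p'}(\Out_{\fs}(E)))$ lifts via receptivity and Alperin--Goldschmidt to some $t_E^*\in\Aut_{\fs}(S)$. The contradiction is then a specific three-subgroup computation: $t_E^*$ centralises $Z(E)=Z_2(S)$ and $Q_2/E\cong S'/Z_3(S)$ while inverting $Z_3(S)/Z_2(S)$, and from $[t_E^*,S',Z_3(S)]=\{1\}=[S',Z_3(S),t_E^*]$ one gets $[t_E^*,Z_3(S)]\le Z(S')=Z_2(S)$, contradicting the inversion. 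None of this transfers verbatim from the $p=3$ lemma you cite, since the subgroup lattice there is different, so it must actually be carried out. Two smaller points: in your $Q_2$-essential branch, \cref{SEFF} applied to $Z_2(S)$ identifies the automiser but not the action on $Q_2/Z_3(S)$ --- the paper instead runs the FF-argument on $Q_2/\Phi(Q_2)$ directly, using $[S,S']\le Z_3(S)$; and the paper quotes \cite[Lemma 4.4]{parkersem} to settle $q=p$ before this analysis, a case your sketch does not separate.
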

\begin{proof}
By \cref{Z3}, $Z(S)<Z(E)\le Z_2(S)$. Then $E\le Q_2$ and $Z(E)=Z_2(S)$ is characteristic in $E$. If $S'=E$ then $Q_1$ centralizes the chain $\{1\}\normaleq E'\normaleq E$, a contradiction since $E$ is assumed to be $S$-radical; and if $S'<E$, by the commutator formulas, it follows that $Z_2(E)=Z_3(S)=Q_2'$ is characteristic in $E$ and so $Q_2$ centralizes the chain $\{1\}\normaleq Z(E)\normaleq Z_2(E) \normaleq E$ and as $E$ is $S$-radical, $E=Q_2$ in this case. Hence, $S'\not\le E$. Moreover, if $E\le S'$ then $S'$ centralizes the series $\{1\}\normaleq Z(E)\normaleq E$ so $E\not\le S'$. Suppose there exists $x\in (S'\cap E) \setminus Z_3(S)$ and let $e\in E\setminus S'$. Since $Z_3(S)\le S'\cap E$, we may take $x=x_{\alpha+\beta}(t_1)$. Then $Z(S)=[x, Z_3(S)]\le E'$ and $Z_2(S)=Z(S)[e,Z_3(S)]\le E'$. Thus, $Z_2(S)<Z_2(S)[e,x]\le E'\le Z_3(S)$, $C_E(E')=Z_3(S)$ is characteristic in $E$ and $S'$ centralizes the chain $\{1\}\normaleq Z(E)\normaleq C_E(E')\normaleq E$, a contradiction since $E$ is $S$-radical. Hence, $S'\cap E=Z_3(S)$ and since $S'E\le Q_2$, $|E|\leq q^4$. Moreover, comparing with commutator formulas, it follows that $N_S(E)=Q_2$.

Now, analyzing $Q_2$ within $\mathrm{G}_2(q)$, we see that $Q_2/Z_3(S)$ is a natural $\SL_2(q)$-module for $O^{p'}(\Out_{\mathrm{G}_2(q)}(Q_2))\cong\SL_2(q)$. In particular, $E$ is contained in some subgroup $X$ of order $q^4$ such that $X$ is conjugate in $O^{p'}(\Out_{\mathrm{G}_2(q)}(Q_2))$ to $S'$. Since $S^{(2)}=Z(S)$, and $Z_2(S)$ is also a natural $\SL_2(q)$-module for $O^{p'}(\Out_{\mathrm{G}_2(q)}(Q_2))\cong\SL_2(q)$, it follows that $\Phi(X)$ is a group of order $q$ contained in $Z_2(S)=Z(E)$. In particular, if $E<X$, then $X$ centralizes the chain $\{1\}\normaleq Z(E)\normaleq E$, a contradiction since $E$ is $S$-radical. Therefore, $E=X$ is of order $q^4$ and satisfies the required properties. 

Assume now that $E$ is essential in a saturated fusion system $\fs$ on $S$. By the results in \cite[Lemma 4.4]{parkersem}, we may assume that $q>p$ else the result holds. Note that $Q_2$ centralizes $Z_2(S)$ and since $Q_2=N_S(E)$, $O^{p'}(\Out_{\fs}(E))$ centralizes $Z_2(S)=Z(E)$. Moreover, since $\Phi(E)\le Z_2(S)$, $|Q_2/E|= q$,$|E/Z_3(S)|=q$ and $[Q_2, Z_3(S)]=Z_2(S)$,  it follows by a similar argument to \cref{SEFF} that $E/Z(E)$ is a natural $\SL_2(q)$-module for $O^{p'}(\Out_{\fs}(E))\cong \SL_2(q)$. 

Suppose first that $Q_2$ is essential in $\fs$. By \cref{Q1Q2}, $Q_2$ is maximally essential. Since $\Phi(Q_2)=Z_3(S)$ and $[S, S']\le Z_3(S)$, by \cref{SEFF} we deduce that $Q_2/\Phi(Q_2)$ is a natural $\SL_2(q)$-module for $O^{p'}(\Out_{\fs}(Q_2))\cong\SL_2(q)$. But then, $O^{p'}(\Out_{\fs}(Q_2))$ acts transitively on subgroups of $Q_2$ of order $q^4$ containing $\Phi(Q_2)=Z_3(S)$ so that $E$ is conjugate in $\fs$ to $S'$. Since $E$ was assumed to be fully $\fs$-normalized, this is a contradiction.

Hence, we may assume that $Q_2$ is not essential. Note that as any essential containing $E$ contains $Z_3(S)$, we may as well assume that $E$ is not properly contained in any essential subgroup and so $E$ is maximally essential. Let $t_E$ be a non-trivial element in $Z(O^{p'}(\Out_{\fs}(E)))$. Using that $t_E$ normalizes $\Out_S(E)$, $E$ is receptive and applying the Alperin--Goldschmidt theorem, $t_E$ lifts to some morphism in $\Aut_{\fs}(S)$ and so normalizes $Z_3(S)$ and $S'$. Moreover, since $E/Z(E)$ is natural $\SL_2(q)$-module, $t_E$ inverts $Z_3(S)/Z(E)$, centralizes $Z(E)$ and centralizes $Q_2E/E\cong S'/Z_3(S)$. But now, $[t_E, S', Z_3(S)]=\{1\}$ since $Z_3(S)$ is abelian, and $[S', Z_3(S), t_E]=\{1\}$. By the three subgroup lemma, $[t_E, Z_3(S), S']=\{1\}$ and so $[t_E, Z_3(S)]\le Z(S')=Z_2(S)=Z(E)$, a contradiction. Therefore, $E$ is not essential in $\fs$.
\end{proof}

\begin{lemma}\label{EAEssen}
Suppose that $E$ is an $S$-centric, $S$-radical subgroup of $S$ with $Z_3(S)\not\le E$ but $Z_2(S)\le E$. Then $E\cap Z_3(S)=Z_2(S)$.
\end{lemma}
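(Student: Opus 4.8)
The inclusion $Z_2(S)\le E\cap Z_3(S)$ is immediate, so I would assume for a contradiction that $A:=E\cap Z_3(S)$ strictly contains $Z_2(S)$ and aim to show that $E$ fails to be $S$-radical. Since $Z_2(S)\le A$ and, by \cref{5conj}, every element of $Z_3(S)\setminus Z_2(S)$ is $S$-conjugate to some $x_{2\alpha+\beta}(u)$, I can adjust a representative of $A\setminus Z_2(S)$ by an element of $Z_2(S)\le E$ to arrange that $x_{2\alpha+\beta}(u)\in E$ for some $u\in\mathbb{K}^\times$; note also that $X_{2\alpha+\beta}\not\le E$, since otherwise $Z_3(S)=X_{2\alpha+\beta}Z_2(S)\le E$. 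The plan is then to use the single root subgroup $X_{2\alpha+\beta}$ as the offending $p$-subgroup in \cref{Chain}.

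First I would record two structural facts. Since $[Z_3(S),E]\le[Z_3(S),S]=Z_2(S)\le E$, the abelian group $Z_3(S)$, and in particular $X_{2\alpha+\beta}$, normalizes $E$, so $X_{2\alpha+\beta}\le N_S(E)\setminus E$. Secondly, using the commutator formulas one computes $C_S(x_{2\alpha+\beta}(u))=\langle X_\beta, X_{2\alpha+\beta}, X_{3\alpha+\beta}, X_{3\alpha+2\beta}\rangle$ and $C_S(Z_2(S))=Q_2$, whose intersection is $Z_3(S)$; as $\langle x_{2\alpha+\beta}(u), Z_2(S)\rangle\le E$ and $E$ is $S$-centric, this forces $Z(E)=C_S(E)\le Z_3(S)$, while of course $Z(S)\le Z(E)$. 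Because $Z(E)\le Z_3(S)$ is abelian and $X_{2\alpha+\beta}\le Z_3(S)$, we obtain $[X_{2\alpha+\beta}, Z(E)]=\{1\}$.

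The heart of the argument is to exhibit the characteristic chain $\{1\}\normaleq Z(E)\normaleq Z_2(E)\normaleq E$ and to show that $X_{2\alpha+\beta}$ centralizes each factor. Centralization of $Z(E)$ is recorded above. For the top factor, $[Z_2(S),E]\le[Z_2(S),S]=Z(S)\le Z(E)$ gives $Z_2(S)\le Z_2(E)$, whence $[X_{2\alpha+\beta},E]\le Z_2(S)\le Z_2(E)$ and $X_{2\alpha+\beta}$ centralizes $E/Z_2(E)$. The only delicate point, which I expect to be the main obstacle, is the middle factor, namely $[X_{2\alpha+\beta},Z_2(E)]\le Z(E)$; here I would split according to whether $E\le Q_1$, $E\le Q_2$, or neither. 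If $E\le Q_1$ then $Z_2(E)\le Q_1$ and $[X_{2\alpha+\beta},Q_1]=Z(S)\le Z(E)$. If $E\le Q_2$ then $Z_2(S)\le Z(E)$ by the centric hypothesis, so $[X_{2\alpha+\beta},Z_2(E)]\le[X_{2\alpha+\beta},E]\le Z_2(S)\le Z(E)$. In the remaining case $E$ contains elements with nonzero $X_\alpha$- and nonzero $X_\beta$-coordinate; then the centralizer computation sharpens to $Z(E)=Z(S)$, and any $x\in Z_2(E)$ with nonzero $X_\alpha$-coordinate would, commuted against an element of $E$ with nonzero $X_\beta$-coordinate, produce a commutator with nonzero $X_{\alpha+\beta}$-coordinate, hence one not lying in $Z(E)=Z(S)$, contradicting $x\in Z_2(E)$. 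Thus $Z_2(E)\le Q_1$ and again $[X_{2\alpha+\beta},Z_2(E)]\le[X_{2\alpha+\beta},Q_1]=Z(S)\le Z(E)$.

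With all three factors centralized, \cref{Chain} applied to $X_{2\alpha+\beta}\le N_S(E)$ acting on the characteristic chain $\{1\}\normaleq Z(E)\normaleq Z_2(E)\normaleq E$ shows that $X_{2\alpha+\beta}$ induces an element of $O_p(\Out(E))$; since $X_{2\alpha+\beta}\not\le E=E\,C_S(E)$, its image in $\Out_S(E)$ is nontrivial, so $O_p(\Out(E))\cap\Out_S(E)\ne\{1\}$, contradicting that $E$ is $S$-radical. Hence $A=Z_2(S)$, as required. In summary, the only genuine work is the case analysis controlling the second centre $Z_2(E)$ (in particular forcing $Z_2(E)\le Q_1$ when $E$ lies in neither $Q_1$ nor $Q_2$); the remaining verifications are immediate from the Chevalley commutator relations together with the $S$-centric hypothesis.
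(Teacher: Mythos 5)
Your overall plan is sound and is essentially the paper's argument in streamlined form: after adjusting by $Z_2(S)\le E$ to get $x_{2\alpha+\beta}(u)\in E$, you run \cref{Chain} on the characteristic chain $\{1\}\normaleq Z(E)\normaleq Z_2(E)\normaleq E$ with offender $X_{2\alpha+\beta}$ (the paper uses $Z_3(S)=X_{2\alpha+\beta}Z_2(S)$ as the offender, which amounts to the same thing since $Z_2(S)\le E$), and your preliminary reductions ($Z(E)\le Z_3(S)$; $Z(E)=Z(S)$ when $E\not\le Q_1,Q_2$) are correct and in fact shortcut the paper's first two paragraphs. However, there is a genuine gap at exactly the step you flagged as delicate. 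In the case $E\not\le Q_1$, $E\not\le Q_2$, you claim that if $x\in Z_2(E)$ has nonzero $X_\alpha$-coordinate and $e\in E$ has nonzero $X_\beta$-coordinate, then $[x,e]$ has nonzero $X_{\alpha+\beta}$-coordinate. This is false in general: modulo $Z_3(S)$ the commutator induces the symplectic form on $S/S'\cong\mathbb{K}^2$, so writing $x\equiv(t,v)$ and $e\equiv(s,u)$ modulo $S'$, the $X_{\alpha+\beta}$-coordinate of $[x,e]$ is $\pm(tu-sv)$, which vanishes whenever $x$ and $e$ are $\mathbb{K}$-proportional modulo $S'$. Nothing in your case hypotheses excludes $ES'/S'$ from lying in a single line $\mathbb{K}(t,v)$ with $t\ne 0\ne v$ --- such a configuration is perfectly compatible with $E\not\le Q_1$ and $E\not\le Q_2$ --- and in that configuration every commutator you form has trivial $X_{\alpha+\beta}$-part, so the argument does not close.

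The repair is immediate, and it is exactly what the paper does (tellingly, using the very element you constructed at the outset and then never used again): commute $x$ against $x_{2\alpha+\beta}(u)\in E$ rather than against an element with nonzero $X_\beta$-coordinate. Since $x_{2\alpha+\beta}(u)\in Z_3(S)$, only the $\alpha$- and $(\alpha+\beta)$-parts of $x$ contribute, and $[x_\alpha(t),x_{2\alpha+\beta}(u)]=x_{3\alpha+\beta}(3tu)$ with $3tu\ne 0$ because $p\geq 5$; hence $[x,x_{2\alpha+\beta}(u)]\in Z_2(S)\setminus Z(S)$, which does not lie in $Z(E)=Z(S)$, contradicting $x\in Z_2(E)$. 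This forces $Z_2(E)\le E\cap Q_1$, after which your verification $[X_{2\alpha+\beta},Z_2(E)]\le[X_{2\alpha+\beta},Q_1]\le Z(S)\le Z(E)$ goes through unchanged. This is precisely the paper's step ``since $E\cap Z_3(S)>Z_2(S)$, it follows from the commutator formulas that $Z_2(E)\le E\cap Q_1$''; with that one substitution your proof is a correct, and somewhat more economical, version of the paper's.
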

\begin{proof}
Since $Z_2(S)\le E$, we deduce that $Z(E)\le Q_2$. Suppose that $E\cap Z_3(S)>Z_2(S)$. Since $Z(E)$ centralizes $E\cap Z_3(S)$ and $Z_3(S)$ is self-centralizing in $S$, it follows that $Z(E)\le Z_3(S)$. If $Z(E)\cap Z_2(S)>Z(S)$, then $E\le Q_2$ and $Z_2(S)\le Z(E)\le Z_3(S)$. Moreover, if $Z_2(S)<Z(E)$ then, again using that $Z_3(S)$ is self-centralizing, it follows that $E\le Z_3(S)$ and since $E$ is $S$-centric, $E=Z_3(S)$, a contradiction. Hence, if $Z(E)\cap Z_2(S)>Z(S)$ then $Z(E)=Z_2(S)$. But now, $Z_3(S)$ centralizes the chain $\{1\}\normaleq Z(E)\normaleq E$, a contradiction since $E$ is $S$-radical and $Z_3(S)\not\le E$. Therefore, if $E\cap Z_3(S)>Z_2(S)$, then $Z(E)\cap Z_2(S)=Z(S)$.

Suppose that $Z(E)\cap Z_3(S)>Z(S)$ and let $e\in (Z_3(S)\cap Z(E))\setminus Z(S)$. By \cref{5conj}, $e$ is conjugate in $S$ to some element $x_{2\alpha+\beta}(t)$ with $t\ne 0$. Moreover, it follows from the commutator formulas that the centralizer of such an element is contained in $Q_1$ and intersects $S'$ only in $Z_3(S)$. Since $Q_1$, $S'$ and $Z_3(S)$ are normal in $S$, the centralizer of $e$ is contained in $Q_1$ and intersects $S'$ only in $Z_3(S)$. But $E$ centralizes $e\le Z(E)$ and so if $E\le S'$, then $E\le Z_3(S)$ and since $E$ is $S$-centric, we have a contradiction. Therefore, $E\le Q_1$ and there is $x\in E\setminus S'$. Since $Z_2(S)\le E$, $Z(S)=[x, Z_2(S)]\le E' \le Q_1'=Z(S)$ and so $Z(S)=E'$. But then $Q_1$ normalizers the chain $\{1\}\normaleq E'\normaleq E$, and since $E$ is $S$-radical, we conclude that $Z_3(S)\le Q_1\le E$, a contradiction.

Hence, we have shown that if $E\cap Z_3(S)>Z_2(S)$, then $Z(E)=Z(S)$. In particular, $E\not\le Q_2$ since $Z_2(S)\not\le Z(E)$ and $E\not\le Q_1$, for otherwise $Q_1$ centralizes the chain $\{1\}\normaleq Z(E)\normaleq E$, a contradiction for then $Z_3(S)\le Q_1\le E$ since $E$ is $S$-radical. Now, $Z_2(S)\le Z_2(E)$ and since $E\cap Z_3(S)>Z_2(S)$, it follows from the commutator formulas that $Z_2(E)\le E \cap Q_1$. But then $[Z_3(S), Z_2(E)]\le Z(S)=Z(E)$, $[Z_3(S), E]\le Z_2(S)\le Z_2(E)$ and $Z_3(S)$ centralizes the chain $\{1\}\normaleq Z(E)\normaleq Z_2(E)\normaleq E$, a contradiction since $E$ is $S$-radical.
\end{proof}

\begin{lemma}
Suppose that $E$ is an $S$-centric, $S$-radical subgroup of $S$ with $Z_3(S)\not\le E$ but $Z_2(S)\le E$. Then either $E\le S'$ is elementary abelian of order $q^3$, $N_S(E)=Q_1$ and $E$ is not an essential subgroup of any saturated fusion system $\fs$ on $S$; or $E\cap S'=Z_2(S)$. 
\end{lemma}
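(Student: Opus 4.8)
The plan is to prove the stated dichotomy by assuming the second alternative fails, i.e. $E\cap S'>Z_2(S)$ (note $Z_2(S)\le E\cap S'$ always), and deducing the first. I would use throughout that $E\cap Z_3(S)=Z_2(S)$, which is \cref{EAEssen}, together with the commutator formulas at the head of the section. The first task is to confine $E$ to $Q_1$. Since $E\cap S'>Z_2(S)=E\cap Z_3(S)$, there is $w\in (E\cap S')\setminus Z_3(S)$, so $w$ has nontrivial $X_{\alpha+\beta}$-coordinate $c_1\ne 0$. For every $e\in E$ we have $[e,w]\in E\cap[S,S']\le E\cap Z_3(S)=Z_2(S)$; computing the image in $Z_3(S)/Z_2(S)\cong X_{2\alpha+\beta}$ using $[x_\alpha(t),x_{\alpha+\beta}(u)]=x_{2\alpha+\beta}(-2tu)\cdots$ shows the $X_{2\alpha+\beta}$-coordinate of $[e,w]$ is $-2ac_1$, where $a$ is the $X_\alpha$-coordinate of $e$. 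As $p\ge 5$ this forces $a=0$ for all $e\in E$, whence $E\le Q_1$.

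The engine for the remaining reductions is the root subgroup $X_{2\alpha+\beta}$. Because $[X_{2\alpha+\beta},Q_1]=Z(S)\le E$, it normalizes $E$, and since $X_{2\alpha+\beta}\cap E=X_{2\alpha+\beta}\cap Z_3(S)\cap E=X_{2\alpha+\beta}\cap Z_2(S)=\{1\}$, it is not contained in $E$. If $E\not\le S'$ then $E$ has nontrivial $X_\beta$-coordinate, and as $X_{3\alpha+\beta}\le Z_2(S)\le E$ and $[x_\beta(b),x_{3\alpha+\beta}(c)]=x_{3\alpha+2\beta}(bc)$, together with $E'\le Q_1'=Z(S)$, we get $E'=Z(S)$; then $X_{2\alpha+\beta}$ centralizes the characteristic chain $\{1\}\normaleq E'\normaleq E$, contradicting that $E$ is $S$-radical by \cref{Chain}. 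Hence $E\le S'$. If $E\le S'$ were nonabelian, a direct centralizer calculation gives $Z(E)=Z_2(S)$, and then, using $[X_{2\alpha+\beta},Z_2(S)]=\{1\}$ and $[X_{2\alpha+\beta},E]\le Z(S)\le Z(E)$, the element $X_{2\alpha+\beta}$ centralizes the characteristic chain $\{1\}\normaleq E'\normaleq Z(E)\normaleq E$, again contradicting \cref{Chain}. Therefore $E\le S'$ is abelian, and since $S'$ has exponent $p$ (it is of class $2$ with root subgroups of exponent $p$), $E$ is elementary abelian.

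Next I would identify $E$ precisely and rule out essentiality. Abelianness forces the $X_{2\alpha+\beta}$-coordinate of $E/Z_2(S)$ to be a $\GF(q)$-linear function of its $X_{\alpha+\beta}$-coordinate, so $E/Z_2(S)=\{x_{\alpha+\beta}(d)x_{2\alpha+\beta}(\lambda d):d\in D\}$ for some $\GF(p)$-subspace $D\le\GF(q)$ and some $\lambda\in\GF(q)$. Computing $C_S(E)$ from the commutator formulas (first $C_S(E)\le C_S(Z_2(S))=Q_2$, then killing the $X_\alpha$- and $X_{\alpha+\beta}$-coordinates) shows $C_S(E)$ is the full graph over all of $\GF(q)$, of order $q^3$; the centric hypothesis $C_S(E)=Z(E)=E$ then forces $D=\GF(q)$ and $|E|=q^3$. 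Since $X_\alpha$ moves $w$ out of $E$ while $Q_1$ normalizes $E$, we obtain $N_S(E)=Q_1$, so $\Out_S(E)=Q_1/E$ has order $q^2$. As $E$ is elementary abelian, $|E/\Phi(E)|=q^3<q^4=|\Out_S(E)|^2$, so \cref{E-Bound} shows $E$ is not essential in any saturated fusion system on $S$, completing the first alternative.

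The step I expect to be the main obstacle is forcing $E$ to be abelian. It is tempting to conclude abelianness immediately from $E\le S'$, but the off-diagonal coordinate of a subgroup of $S'/Z_2(S)$ is only constrained to be $\GF(p)$-linear, not $\GF(q)$-linear, in the $X_{\alpha+\beta}$-coordinate; consequently genuinely nonabelian centric subgroups of $S'$ satisfying the hypotheses do exist, and they must be excluded using the $S$-radical hypothesis. This is exactly where the $X_{2\alpha+\beta}$ chain argument, supported by the computation $Z(E)=Z_2(S)$, does the essential work.
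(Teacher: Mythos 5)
Your argument reaches the correct dichotomy but by a genuinely different route from the paper. The paper's proof splits into cases according to the position of $Z(E)$ relative to $Z(S)$ and $Z_2(S)$: the first two cases are killed by chains centralized by $Z_3(S)$ or $Q_1$, and in the third case $E$ is conjugated by an explicit element $x_\alpha(\cdot)$ into the normal form $X_{\alpha+\beta}Z_2(S)$, with non-essentiality then ruled out by \cref{SEFF}. You instead confine $E$ to $Q_1$ at the outset via the coordinate computation $[e,w]\equiv x_{2\alpha+\beta}(-2ac_1)\pmod{Z_2(S)}$ (valid, since $[S,Z_3(S)]\le Z_2(S)$ makes the computation well defined in $S/Z_2(S)$, and $p\geq 5$ kills the coefficient $-2c_1$ only when $a=0$), and then exploit the symplectic structure of $S'/Z_2(S)\cong\mathbb{K}^2$, where $v^\perp=\mathbb{K}v$, with the single root subgroup $X_{2\alpha+\beta}$ as the chain-centralizing witness in both the $E\not\le S'$ and the nonabelian cases. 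This buys a cleaner structural picture: totally isotropic equals abelian equals contained in a $\mathbb{K}$-line, whence the graph description $\{x_{\alpha+\beta}(d)x_{2\alpha+\beta}(\lambda d)\}Z_2(S)$ and $D=\GF(q)$ by centricity, and $N_S(E)=Q_1$ by direct computation rather than by conjugating to $X_{\alpha+\beta}Z_2(S)$ first. Your closing observation is also on target: nonabelian $S$-centric subgroups of $S'$ satisfying the hypotheses genuinely exist (e.g.\ Frobenius-twist graphs $\{(t,t^p)\}$ for $n\geq 2$ have $C_S(E)=Z(E)=Z_2(S)$), so abelianness really must come from the $S$-radical hypothesis, which is exactly what your $X_{2\alpha+\beta}$-chain, supported by the computation $Z(E)=Z_2(S)$, delivers.

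One step falls short of the lemma as stated: the final appeal to \cref{E-Bound}. That lemma carries a $\mathcal{K}$-group hypothesis on $\Aut_{\fs}(E)$ which the present statement does not grant; the paper's Bender-based workaround for this hypothesis is only available at $p=2$, and throughout the $p\geq 5$ section the paper deliberately avoids it. The repair is immediate from what you have already proved: $Z_3(S)$ normalizes $E$, $Z_3(S)E/E\cong Z_3(S)/Z_2(S)$ has order $q$, and $Z_3(S)$ centralizes $Z_2(S)$, which has index $q$ in $E$, so $E$ is an FF-module for $\Out_{\fs}(E)$ with offender $\Out_{Z_3(S)}(E)$. Then \cref{SEFF}, which needs no $\mathcal{K}$-group assumption, forces the Sylow $p$-subgroups of $\Out_{\fs}(E)$ to have order at most $q$, while $\Out_S(E)=Q_1/E$ has order $q^2$ and essential subgroups are fully automized --- the same contradiction as in the paper's own closing argument. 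With that substitution your proof is complete and hypothesis-free in the same sense as the paper's.
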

\begin{proof}
By \cref{EAEssen}, we have that $E\cap Z_3(S)=Z_2(S)$. We assume throughout that $E\cap S'>Z_2(S)$. It then follows from the commutator formulas that $Z(E)\le S'$. We suppose first that $Z(E)=Z(S)$. In particular, $E\not\le Q_2$ and since $Z_2(S)\le E$, for $x\in E\setminus Q_2$, $Z(S)=[x, Z_2(S)]\le E'$. Now, for $e\in (E\cap S')\setminus Z_3(S)$, $[e, Z_2(E)]=Z(E)$ and it follows from the commutator formulas that $Z_2(S)\le Z_2(E)\le Q_1$. In particular, $Z_3(S)$ centralizes the chain $\{1\}\normaleq Z(E)\normaleq Z_2(E)\normaleq E$, a contradiction since $E$ is $S$-radical and $Z_3(S)\not\le E$.

Suppose now that $Z(E)>Z(S)$ but $Z(E)\cap Z_2(S)=Z(S)$. Then there is $e\in (Z(E)\cap S')\setminus Z(S)$ and it follows from the commutator formulas that the centralizer of such an element is contained in $Q_1$. Therefore, $E\le Q_1$ and $E'\le Q_1'=Z(S)$. Moreover, if there is $x\in E\setminus S'$, then $Z(S)=[x, Z_2(S)]\le E'=Z(S)$ and so, $Q_1$ centralizes the chain $\{1\}\normaleq E' \normaleq E$, a contradiction since $Q_1\not\le E$ and $E$ is $S$-radical. Therefore, $E\le S'$, which yields another contradiction for then $Z_2(S)\le Z(E)$. 

Finally, suppose that $Z(E)\cap Z_2(S)>Z(S)$ so that $E\le Q_2$. Then $Z_2(S)\le Z(E)$ and since $Z_3(S)\not\le E$ and $E$ is $S$-radical, we conclude that $Z_2(S)<Z(E)$ for otherwise, $Z_3(S)$ centralizes the chain $\{1\}\normaleq Z(E)\normaleq E$. But then, there is $e\in (Z(E)\cap S')\setminus Z_3(S)$ and by the commutator formulas, $E\le S'$. Since $E\cap Z_3(S)=Z_2(S)$, we deduce that $|E|\leq q^3$. Set $e:=x_{\alpha+\beta}(t_1)x_{2\alpha+\beta}(t_2)x$, where $x\in Z_2(S)$ and $t_1\in\mathbb{K}^\times$. Then for $y:=x_{\alpha}(-t_22^{-1}t^{-1})$, $e^y=x_{\alpha+\beta}(t_1)x'$ for some $x'\in Z_2(S)$. Then $C_S(e^yZ_2(S))=X_{\alpha+\beta}Z_2(S)$ and it follows that $E\le C_S(e)$ is conjugate to a subgroup of $X_{\alpha+\beta}Z_2(S)$. Moreover, since $E$ is $S$-centric and $X_{\alpha+\beta}Z_2(S)$ is elementary abelian, $E$ is conjugate to $X_{\alpha+\beta}Z_2(S)$ and a calculation using the commutator formulas gives that $N_S(E)=Q_1$.

In this scenario, assume that $E$ is essential in some saturated fusion system $\fs$ on $S$. Since $Z_3(S)E/E$ is elementary abelian of order $q$ and $Z_3(S)$ centralizes $Z_2(S)$ which has index $q$ in $E$, by \cref{SEFF} we deduce that $E/C_E(O^{p'}(\Out_{\fs}(E)))$ is a natural $\SL_2(q)$-module for $O^{p'}(\Out_{\fs}(E))\cong \SL_2(q)$ and $\Out_{Z_3(S)}(E)\in\syl_p(\Out_{\fs}(E))$. But $Q_1\le N_S(E)$ and we have a contradiction.
\end{proof}

\begin{lemma}\label{EmeetZ2}
Suppose that $E$ is an $S$-centric, $S$-radical subgroup of $S$ with $E\cap S'=Z_2(S)$. Then either
\begin{enumerate}
\item $E\le Q_2$ is elementary abelian of order $q^3$, $E\not\le S'$ and $N_S(E)=EZ_3(S)$ has order $q^4$; or
\item $E\cong q^{1+2}$, $Z_2(S)=E\cap Q_1=E\cap Q_2$, $Z(S)=Z(E)=\Phi(E)$ and $N_S(E)=EZ_3(S)$ has order $q^4$.
\end{enumerate}
Moreover, in both cases $E$ is not essential in any saturated fusion system $\fs$ on $S$.
\end{lemma}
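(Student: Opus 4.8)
The plan is to first pin down the isomorphism type and embedding of $E$ using only the hypothesis $E\cap S'=Z_2(S)$, and then rule out essentiality by producing an automorphism of $S$ whose action on the root filtration is internally inconsistent.

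Since $Z_2(S)=E\cap S'\le E$ while $Z_3(S)\le S'$ is \emph{not} contained in $E$, I first record that $E/Z_2(S)=E/(E\cap S')\cong ES'/S'\le S/S'$ is elementary abelian of order dividing $q^2$; hence $\Phi(E)\le Z_2(S)$ and $|E|\le q^4$. The key first reduction is to exclude $|E|=q^4$: commutation induces a nondegenerate alternating $\mathbb{K}$-pairing $S/S'\times S/S'\to S'/Z_3(S)$, whose nondegeneracy is visible from the leading term $x_{\alpha+\beta}(-tu)$ of $[x_\alpha(t),x_\beta(u)]$. If $|E|=q^4$ then $ES'=S$, so this pairing restricted to $\overline{E}=S/S'$ is still nondegenerate and forces $E'$ to have nontrivial image in $S'/Z_3(S)$; but $E'\le E\cap S'=Z_2(S)\le Z_3(S)$, a contradiction. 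Thus $|E|=q^3$ and $\overline{E}:=ES'/S'$ is a single $\mathbb{K}$-line in $S/S'$.

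Next I analyse the three possibilities for $\overline{E}$. If $\overline{E}=\overline{X_\alpha}$ then $E\le Q_2=C_S(Z_2(S))$, so $Z_2(S)\le Z(E)$ and $E=\langle e\rangle Z_2(S)$ is abelian with $\overline{e}=\overline{x_\alpha(t)}$; since $C_S(X_\alpha)=X_\alpha Z_2(S)$ is elementary abelian, $E$ is conjugate to $X_\alpha Z_2(S)$, giving case (i), and a direct commutator computation yields $N_S(E)=EZ_3(S)$ of order $q^4$. If $\overline{E}$ is diagonal (distinct from $\overline{X_\alpha}$ and $\overline{X_\beta}$) then $E\cap Q_1=E\cap Q_2=Z_2(S)$ and $[e,Z_2(S)]=Z(S)$, so $E$ is nonabelian with $E'=\Phi(E)=Z(E)=Z(S)$, i.e. $E\cong q^{1+2}$, which is case (ii); again $N_S(E)=EZ_3(S)$ of order $q^4$. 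Finally, $\overline{E}=\overline{X_\beta}$ is excluded: here $E\le Q_1$, which is ultraspecial by \cref{Q15Iden}, and because the image of $E$ in $Q_1/Z(Q_1)$ is a nondegenerate $2$-space (the line $\overline{X_\beta}$ pairs nontrivially with $Z_2(S)/Z(S)$ via $[x_\beta,x_{3\alpha+\beta}]=x_{3\alpha+2\beta}$), one gets $Z(E)=Z(S)$ while $|C_{Q_1}(E)|=q^3$, contradicting that $E$ is $S$-centric.

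It remains to show that the surviving $E$ are never essential; this is the crux. After dealing with $q=p$ by quoting \cite{parkersem}, I assume $q>p$ and that $E$ is essential. In case (i), $E=\Omega(Z(E))$ is an FF-module for $\Out_{\fs}(E)$ with offender $\Out_S(E)$ of order $q$, so \cref{SEFF} gives $O^{p'}(\Out_{\fs}(E))\cong\SL_2(q)$ with $E/Z(S)$ natural; in case (ii) the same conclusion holds for $E/Z(E)$ by the argument used in \cref{AZ}. Let $t_E$ be the central involution of this $\SL_2(q)$. Using that $E$ is maximally essential (automatic in case (ii), where $E\not\le Q_1,Q_2$), receptivity together with the Alperin--Goldschmidt theorem lifts $t_E$ to $t_E^*\in\Aut_{\fs}(S)$. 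Since $Q_1,Q_2,Z_2(S),Z_3(S)$ are characteristic in $S$, $t_E^*$ preserves them and, from inverting $E/Z(S)$ and centralizing $Z(S)$, one checks $t_E^*$ acts as $-1$ on $S/S'$; comparing its induced action on $Z_3(S)/Z_2(S)\cong\overline{X_{2\alpha+\beta}}$ computed via $[x_\alpha(t),x_{\alpha+\beta}(u)]\equiv x_{2\alpha+\beta}(-2tu)$ against the action forced by inverting $\overline{X_{3\alpha+\beta}}\le E/Z(S)$ and $[x_\alpha(t),x_{2\alpha+\beta}(u)]=x_{3\alpha+\beta}(3tu)$ yields $+1=-1$, impossible for $p$ odd. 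The main obstacle is exactly this lifting step: establishing the $\SL_2(q)$ automizer without a $\mathcal{K}$-group hypothesis (handled via \cref{SEFF} and \cite{parkersem} as above) and, in case (i), ruling out that $E$ sits properly inside an essential $Q_2$, so that maximal essentiality — and hence the lift of $t_E$ all the way to $\Aut_{\fs}(S)$ — is genuinely available.
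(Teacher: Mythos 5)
Your structural trichotomy is a reasonable repackaging of the paper's analysis, but the ``crux'' you identify — non-essentiality in case (i) — is exactly where your proof does not close, and you say so yourself. Your only mechanism for a contradiction is a lift $t_E^*\in\Aut_{\fs}(E)\to\Aut_{\fs}(S)$ acting as $-1$ on $S/S'$, and in case (i) this fails twice over. First, the sub-case where $Q_2$ is essential and $E<Q_2$ is left open, and it genuinely cannot be handled by lifting: the paper disposes of it by a different mechanism entirely. When $Q_2$ is essential, \cref{SEFF} makes $Q_2/\Phi(Q_2)=Q_2/Z_3(S)$ a natural $\SL_2(q)$-module, so $O^{p'}(\Out_{\fs}(Q_2))$ is transitive on the $q+1$ subgroups between $Z_3(S)$ and $Q_2$, whence some $\phi$ conjugates $E$ into $S'$; then $[E\phi,Q_1]\le Z(S)\le E\phi$ gives $Q_1\le N_S(E\phi)$ of order at least $q^5$, contradicting that the essential (hence fully normalized) $E$ has $|N_S(E)|=q^4$. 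Second, even when $Q_2$ is not essential and the lift exists, your step ``$t_E^*$ acts as $-1$ on $S/S'$'' is a non sequitur in case (i): there $ES'/S'=\overline{X_\alpha}$ is one coordinate line, so inverting $E/Z_2(S)$ constrains only the $Q_2/S'$-coordinate, and the action on $Q_1/S'\cong\overline{X_\beta}$ is an arbitrary $p'$-action. (Your argument that an inverted \emph{diagonal} line forces $-1$ on both invariant coordinates is correct, but that is precisely case (ii).) Relatedly, your identification $C_E(O^{p'}(\Out_{\fs}(E)))=Z(S)$ is unjustified in case (i), since $Z(S)$ need not be $\Aut_{\fs}(E)$-invariant; the paper instead splits on whether $Z(S)=C_E(t_E^*)$ or $Z(S)\cap C_E(t_E^*)=\{1\}$ and derives the contradiction by a three-subgroup-lemma cascade inside $Q_2$, $S'$ and $Z_3(S)$, never needing the $X_\beta$-direction at all.

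There are also smaller gaps in the structural part. Excluding $ES'=S$ does not yield ``$|E|=q^3$ and $\overline{E}$ is a $\mathbb{K}$-line'': a priori $\overline{E}$ is only a $\GF(p)$-subspace. Total isotropy ($E'\le Z_2(S)\le Z_3(S)$) together with $v^\perp=\mathbb{K}v$ does place $\overline{E}$ \emph{inside} a $\mathbb{K}$-line, but that $\overline{E}$ fills the line needs an argument: centricity supplies it in case (i) (via $E\le C_S(e)$ conjugate into the elementary abelian $X_\alpha Z_2(S)$), not in the diagonal case. Your exclusion of $\overline{E}=\overline{X_\beta}$ computes $C_{Q_1}(E)$ of order $q^3$ as if $E=X_\beta Z_2(S)$ on the nose, ignoring that elements of $E$ carry arbitrary $X_{\alpha+\beta}$- and $X_{2\alpha+\beta}$-corrections which can shrink this centralizer; the paper's exclusion via radicality ($E\le Q_1$ forces $E'=Z(S)=Q_1'$, so $Q_1$ centralizes $\{1\}\normaleq E'\normaleq E$) sidesteps this. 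Finally, the paper needs no reduction to $q>p$ in this lemma, and a blanket appeal to \cite{parkersem} for $q=p$ would itself require care since that work assumes $O_p(\fs)=\{1\}$. On the positive side, your case (ii) computation — $-1$ forced on both coordinates of $S/S'$, then $+1=-1$ on $Z_2(S)/Z(S)$ through the filtration — is correct and is a genuinely slicker alternative to the paper's torus argument, which extends an element $\lambda$ of order $q-1$ and tracks scalars $\mu,\mu^{-1},\dots,\mu^5$ down to $Z(S)$ to conclude $\mu^5=1$, forcing $q=6$.
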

\begin{proof}
Suppose first that $E\le Q_2$. Then $Z_2(S)\le Z(E)$ and since $E\cap S'=Z_"(S)$, $|E|\leq q^3$. If $Z(E)=Z_2(S)$, then $Z_3(S)$ centralizes the chain $\{1\}\normaleq Z(E)\normaleq E$, a contradiction since $E$ is $S$-radical. Therefore, there is $e\in Z(E)\setminus S'$ and we may write $e=x_{\alpha}(t_1)x_{\alpha+\beta}(t_2)x_{3\alpha+\beta}(t_3)x$ for some $x\in Z_2(S)$, $t_1\in \mathbb{K}^\times$ and $t_2, t_3\in\mathbb{K}$. Then for $y:=x_{\beta}(t_1^{-1}t_2)x_{\alpha+\beta}(2^{-1}t_1^{_1}(t_3-t_1t_2))$, we have that $e^y=x_{\alpha}(t_1)x'$ for some $x'\in Z_2(S)$. Then $C_S(e^yZ_2(S))=X_\alpha Z_2(S)$ and by conjugation, $E\le C_S(e)$ is conjugate to a subgroup of $X_\alpha Z_2(S)$. Moreover, since $E$ is $S$-centric and $X_{\alpha}Z_2(S)$ is elementary abelian, we conclude that $E$ is conjugate to $X_{\alpha}Z_2(S)$ and a calculation using the commutator formulas gives that $N_S(E)=EZ_3(S)$, as required.

Suppose now that $E$ is essential in a saturated fusion system $\fs$ on $S$. Then $Z_3(S)E/E$ is elementary abelian of order $q$ and $Z_3(S)$ centralizes $Z_2(S)$ which has index $q$ in $E$. By \cref{SEFF}, $E/C_E(O^{p'}(\Out_{\fs}(E)))$ is a natural $\SL_2(q)$-module for $O^{p'}(\Out_{\fs}(E))\cong \SL_2(q)$ and $\Out_{Z_3(S)}(E)\in\syl_p(\Out_{\fs}(E))$. Since $E\not\le Q_1$, we may assume by \cref{Q1Q2-5} and \cref{AZ} that the only possible essential $E$ is properly contained in is $Q_2$. 

If $Q_2$ is essential then using that $S$ centralizes $S'/Z_3(S)=S'/\Phi(Q_2)$ and $S'/Z_3(S)$ has index $q$ in $Q_2/Z_3(S)$, it follows by \cref{SEFF} that $Q_2/Z_3(S)$ is a natural $\SL_2(q)$-module for $O^{p'}(\Out_{\fs}(Q_2))\cong\SL_2(q)$. But then, $O^{p'}(\Out_{\fs}(Q_2))$ is transitive on subgroup of order $q$ in $Q_2/\Phi(Q_2)$ and so $E\phi \le S'$ for some $\phi\in O^{p'}(\Out_{\fs}(Q_2))$. Therefore, $[E\phi, Q_1]\le Z(S)\le Z_2(S)\le E\phi$ and $Q_1\le N_S(E\phi)$. Since $|N_S(E)|=q^4$, $E$ is not fully normalized, a contradiction. 

Hence, we may assume that $Q_2$ is not essential in $\fs$ and for a non-trivial element $t_E\in Z(O^{p'}(\Out_{\fs}(E)))$, using that $E$ is receptive, $t_E$ lifts to some $t_E^*\in\Aut_{\fs}(S)$. Moreover, by coprime action, $E=[E, t_E^*]\times C_E(t_E^*)$ and either $Z(S)=C_E(t_E^*)$ or $Z(S)\cap C_E(t_E^*)=\{1\}$. Since $Z_2(S)=C_E(Z_3(S))$, it follows in the latter case that $t_E^*$ centralizes $Z_2(S)/Z(S)$ and since $Z_3(S)E/E\cong Z_3(S)/Z_2(S)$, coprime action yields $[Z_3(S), t_E^*]=Z(S)$. Then, $[Z_3(S), S, t_E^*]=Z(S)$, $[t_E^*, Z_3(S), S]=\{1\}$ and the three subgroup lemma yields, $[S, t_E^*, Z_3(S)]\le Z(S)$ and $t_E^*$ centralizes $S/Q_1\cong Q_2/S'=ES'/S'\cong E/Z_2(S)$, a contradiction. Thus, $t_E^*$ centralizes $Z(S)$ and inverts $Z_2(S)/Z(S)$. Moreover, $t_E^*$ centralizes $Z_3(S)/Z_2(S)$ and inverts $E/Z_2(S)=E/E\cap S'\cong Q_2/S'$. Now, since $[S', Z_3(S)]\le Z(S)$ is centralized by $t_E^*$ and $[Z_3(S), t_E^*]\le Z_2(S)$ is centralized by $S'$, it follows from the three subgroup lemma that $[t_E^*, S', Z_3(S)]=\{1\}$ and since $Z_3(S)$ is self-centralizing, $[t_E^*, S']\le Z_3(S)$. Indeed, coprime action implies that $[t_E^*, S']\le Z_2(S)$. But then $[t_E^*, S', Q_2]=\{1\}$, $[S', Q_2, t_E^*]\le Z_2(S)$ and another application of the three subgroup lemma gives $[t_E^*, Q_2, S']\le Z_2(S)$. But $t_E^*$ inverts $Q_2/S'$ and a contradiction follows from the commutator formulas.

Assume now that $E\not\le Q_2$ and since $Z_2(S)\le E$, for $x\in E\setminus Q_2$, we have that $Z(S)=[x, Z_2(S)]\le E'\le E\cap S'=Z_2(S)$. If $Z(S)<E'$, then $C_E(E')=E\cap Q_2$ is characteristic in $E$. Moreover, $Z_2(S)<C_E(E')$ for otherwise $Z_3(S)$ centralizes the chain $\{1\}\normaleq Z_2(S)\normaleq E$, a contradiction since $Z_3(S)\not\le E$ and $E$ is $S$-radical. Furthermore, $Z(E)\cap Q_2\le S'\cap E=Z_2(S)$, otherwise $E\le Q_2$. But then $Z(E)=Z(S)$ and since there is $e\in (E\cap Q_2)\setminus Z_2(S)$, $Z_2(S)\le Z_2(E)\le E\cap Q_1$ and so $Z_2(S)=Z_2(E)\cap (E\cap Q_2)$ is characteristic in $E$ and $Z_3(S)$ centralizes the chain $\{1\}\normaleq Z_2(S)\normaleq E$, a contradiction.

Finally, we suppose that $E\cap S'=Z_2(S)$, $E\not\le Q_2$ and $Z(S)=E'$. If $E\cap Q_2>Z_2(S)$ then, as $E\not\le Q_2$, there is $e\in E\setminus Q_2$, with $[e, E\cap Q_2]\not \le Z(S)=E'$. Hence, $E\cap Q_2=Z_2(S)$ and $|E|\leq q^3$. Notice that if $E\le Q_1$, then $[E, Q_1]\le Q_1'=Z(S)=E'$ and $Q_1$ centralizes the chain $\{1\}\normaleq E'\normaleq E$, a contradiction since $Z_3(S)\not\le E$ and $E $ is $S$-radical. Hence, there is $e\in E\setminus (Q_1\cup Q_2)$ and since $[e, E\cap Q_1]\le E'=Z(S)$, it follows from the commutator formulas that $E\cap Q_1=Z_2(S)$. Note that $EQ_1/Q_1\cong E/Z_2(S)$ is elementary abelian and so, $\Phi(E)\le Z_2(S)$. If $Z(S)<\Phi(E)$, then $Z_2(S)=C_E(\Phi(E))$ is characteristic in $E$, a contradiction for then $Z_3(S)$ centralizes then $\{1\}\normaleq Z_2(S)\normaleq E$. Therefore, $\Phi(E)=Z(E)=Z(S)$, $|E|=q^3$ and the commutator formulas imply that $N_S(E)=Z_3(S)E$, as required.

Suppose that $E$ is essential in some saturated fusion system $\fs$ supported on $S$. Since $E\not\le Q_1, Q_2$, it follows by \cref{Q1Q2-5} and \cref{AZ} that $E$ is maximally essential. Moreover, $Z_3(S)E/E$ is elementary abelian of order $q$ and $Z_3(S)$ centralizes $Z_2(S)$ which has index $q$ in $E$. Then by \cref{SEFF}, $E/Z(E)$ is a natural $\SL_2(q)$-module, $O^{p'}(\Out_{\fs}(E))\cong \SL_2(q)$ and $\Out_{Z_3(S)}(E)\in\syl_p(\Out_{\fs}(E))$.

Let $\lambda\in N_{O^{p'}(\Out_{\fs}(E)}(\Out_S(E))$ be an element of order $q-1$, isomorphic to a generator of a torus in $\SL_2(q)$. We can choose $\lambda$ to act as the scalar $\mu^{-1}$ on $E/Z_2(S)$ and as $\mu$ on $Z_2(S)/Z(S)$, for $\mu\in\mathbb{K}^\times$. Since $E$ is essential, it is receptive, so we may extend $\lambda$ to some $\hat{\lambda}$, and by the Alperin -- Goldschmidt Theorem and since $E$ is maximally essential, we may take $\hat{\lambda}\in\Aut_{\fs}(S)$ so that $\hat{\lambda}$ acts on $S', Q_1$ and $Q_2$. Since $E/Z_2(S)\cong ES'/S'$, it follows that $\hat{\lambda}$ acts as $\mu^{-1}$ on $ES'/S'$. Let $x_{\alpha}(t_1), x_{\beta}(t_2)$ be transversals in $S/S'$ such that $x_{\alpha}(t_1)x_{\beta}(t_2)S'\in ES'/S'$. We have that \[x_{\alpha}(t)\hat{\lambda}=(x_{\alpha}(t)x_{\beta}(u)\hat{\lambda})(x_{\beta}(-u)\hat{\lambda})=(x_{\alpha}(\mu^{-1}t) x_{\beta}(\mu^{-1}u)(x_{\beta}(-u)\hat{\lambda})\] and comparing coefficients, we have that $\hat{\lambda}$ acts as $\mu^{-1}$ on both $Q_1/S'$ and $Q_2/S'$. Then, by the commutator formula \[[x_{\alpha}(t), x_{\alpha+\beta}(u)]=x_{2\alpha+\beta}(-2tu)x_{3\alpha+\beta}(3t^2u)x_{3\alpha+2\beta}(3tu^2)\] and using that $\hat{\lambda}$ acts as $\mu^2$ on $N_S(E)/E\cong Z_3(S)/Z_2(S)$, we deduce that $\hat{\lambda}$ acts as $\mu^3$ on $S'/Z_3(S)$. Using the commutator relation $[x_{\alpha+\beta}(t), x_{2\alpha+\beta}(u)]=x_{3\alpha+2\beta}(3tu)$ we get that $\hat{\lambda}$ acts as $\mu^5$ on $Z(S)$. But since $Z(S)=C_{E}(O^{p'}(\Out_{\fs}(E)))$ and since $\lambda$ was of order $q-1$, it follows that $q=6$, a contradiction.
\end{proof}

Given \cref{Q1Q2-5}, \cref{AZ} and \cref{EmeetZ2}, we finally assume that $Z_2(S)\not\le E$. This is a particularly interesting case as there is some exceptional behaviour when $q=p=7$ related to the $7$-fusion system of the Monster sporadic simple group. Indeed, this exceptional behaviour produces a distinct class of essentials and with it, a large number of exotic fusion systems. This phenomena was already known about by the work in \cite{parkersem}. 

\begin{lemma}\label{5Essential}
Suppose that $E$ is an $S$-centric, $S$-radical subgroup of $S$ with $Z_2(S)\not\le E$. Then either
\begin{enumerate}
\item $E\le Q_1$ is elementary abelian of order $q^3$, $E\not\le S'$ and $N_S(E)=Q_1$; or
\item$p\geq 7$, $E$ is elementary abelian of order $q^2$, $E\cap Q_1=E\cap Q_2=Z(S)$ and $N_S(E)=Z_2(S)E$.
\end{enumerate}
\end{lemma}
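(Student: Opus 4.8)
The plan is to use throughout that $E$ being $S$-centric gives $Z(S)\le Z(E)=C_S(E)\le E$, combined with the radical criterion \cref{Chain}: if a $p$-subgroup $P\le N_S(E)$ centralises a chain of characteristic subgroups of $E$, then $P\le E$. I first record two reductions. As $Z_2(S)=Z(Q_2)$, if $E\le Q_2$ then $Z_2(S)\le C_S(E)\le E$, against hypothesis, so $E\not\le Q_2$; since $Z_2(S)=Z(S')$ and $S'=Q_1\cap Q_2$, the same reasoning gives $E\not\le S'$ (so the assertion $E\not\le S'$ in (i) is automatic). The argument then divides according to whether $E\le Q_1$ (giving (i)) or $E\not\le Q_1$ (giving (ii)); in the latter case $E$ lies in neither $Q_1$ nor $Q_2$, hence contains a \emph{regular} element, i.e.\ one outside $Q_1\cup Q_2$.

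For the case $E\le Q_1$, \cref{Q15Iden} shows $Q_1$ is ultraspecial with $Z(Q_1)=Z(S)$, so $\bar Q:=Q_1/Z(S)$ carries a nondegenerate alternating $\GF(q)$-form given by commutation. Writing $\bar E:=EZ(S)/Z(S)$, centricity is equivalent to $\bar E$ being coisotropic ($\bar E^{\perp}\le\bar E$), forcing $\dim_{\GF(q)}\bar E\in\{2,3,4\}$. If $\dim\bar E=4$ then $E=Q_1\ge Z_2(S)$, excluded; if $\dim\bar E=3$ then $E$ is nonabelian of order $q^4$ with $E'=Z(S)$ and $|Z(E)|=q^2$, and then $[Q_1,E]\le Q_1'=Z(S)=E'$ shows $Q_1$ centralises the characteristic chain $\{1\}\normaleq E'\normaleq Z(E)\normaleq E$, so \cref{Chain} forces the absurdity $Q_1\le E$. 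Hence $\dim\bar E=2$, $\bar E$ is Lagrangian and $E$ is elementary abelian of order $q^3$. Finally $[Q_1,E]\le Z(S)\le E$ gives $Q_1\le N_S(E)$, whence $N_S(E)\in\{Q_1,S\}$; and $N_S(E)=S$ is impossible because a direct calculation with the commutator formulas shows the only $S$-invariant elementary abelian subgroup of order $q^3$ of $Q_1$ is $Z_3(S)$, which contains $Z_2(S)$. Thus $N_S(E)=Q_1$ and (i) holds.

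For the case $E\not\le Q_1$, fix a regular $e\in E$. A computation with the commutator formulas shows $C_S(e)$ is abelian of order $q^2$, with $C_S(e)\cap S'=Z(S)$ and $C_S(e)/Z(S)$ mapping isomorphically onto the line $\GF(q)\bar e$ of $S/S'$; in particular $C_S(e)\cap Q_1=C_S(e)\cap Q_2=Z(S)$. The crux is to prove $E=C_S(e)$. Since $Z(E)\le C_S(e)$ and every element of $C_S(e)\setminus Z(S)$ is regular, either $Z(E)>Z(S)$ contains a regular $z$, in which case $E\le C_S(z)=C_S(e)$ is abelian; as $C_S(e)$ is abelian it centralises $E$, so $C_S(e)\le C_S(E)=Z(E)=E$ and $E=C_S(e)$. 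Otherwise $Z(E)=Z(S)$, which I would eliminate by a radical argument along the characteristic upper central series of $E$, returning $Z_2(S)\le E$ under the standing hypotheses and hence a contradiction. Thus $E=C_S(e)$ is abelian of order $q^2$ with $E\cap Q_1=E\cap Q_2=Z(S)$.

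It remains to pin down the exponent. By \cref{G2Exponent}, if $p\ge 7$ then $S$ has exponent $p$, so $E$ is elementary abelian and (ii) holds once one computes $N_S(E)=Z_2(S)E$. If instead $p=5$, then the regular elements are exactly those of order $p^2$, and the $p$-power map shows $E=C_S(e)$ is \emph{homocyclic} of type $C_{p^2}^{\,n}$ with $\Omega(E)=\mho(E)=Z(S)$; now $Z_2(S)$ normalises $E$ with $[Z_2(S),E]\le Z(S)$, so $\{1\}\ne\Aut_{Z_2(S)}(E)$ lies in the kernel of the reduction $\Aut(E)\to\GL_n(p)$, a normal $p$-subgroup of $\Aut(E)$; thus $\Out_S(E)\le O_p(\Out(E))$ is nontrivial, contradicting that $E$ is $S$-radical. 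This forces $p\ge 7$, completing (ii). I expect the case $E\not\le Q_1$ to be the main obstacle — specifically establishing $E=C_S(e)$ (eliminating $Z(E)=Z(S)$) and the radical/exponent interplay — the decisive point being that $S$-radicality fails exactly when the regular unipotent elements have order $p^2$, i.e.\ when $p=5$.
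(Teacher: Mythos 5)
Your overall framework---centricity forces $Z(S)\le Z(E)\le E$, and \cref{Chain} turns any $E$-characteristic chain centralised by a subgroup of $S$ into a contradiction with $S$-radicality---is exactly the paper's, but your branch $E\le Q_1$ has a genuine hole. You treat $\bar E=EZ(S)/Z(S)$ as a $\GF(q)$-subspace of the $4$-dimensional symplectic $\GF(q)$-space $Q_1/Z(S)$; it is only an $\GF(p)$-subspace, so for $n\geq 2$ the trichotomy $\dim_{\GF(q)}\bar E\in\{2,3,4\}$ does not exist. (Also, centricity gives only the implication $C_{Q_1}(E)\le E$, i.e.\ $\bar E^{\perp}\le\bar E$, not an equivalence---though that direction is all you use.) What survives is that $\bar E^{\perp}=(\GF(q)\bar E)^{\perp}$ is automatically $\GF(q)$-rational: if the $\GF(q)$-span of $\bar E$ is $2$-dimensional this does force $\bar E$ Lagrangian, but if the span has dimension $3$ or $4$ then $\bar E$ may be a proper $\GF(p)$-subspace of it (of order $q^2p^k$, say), $E$ need not be nonabelian of order $q^4$, and $E'$ may be a \emph{proper} subgroup of $Z(S)$. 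In that situation your chain $\{1\}\normaleq E'\normaleq Z(E)\normaleq E$ is not centralised by $Q_1$: coisotropy puts the preimage of $\bar E^{\perp}$ inside $Z(E)$, and since the form is nondegenerate and $\bar E^{\perp}\ne 0$, one gets $[Q_1,Z(E)]=Z(S)\not\le E'$, so the middle quotient breaks and \cref{Chain} does not apply. These intermediate configurations are precisely what the paper's proof is built to kill: it first shows $Z(E)\not\le Q_2$, picks $e\in Z(E)\setminus Q_2$, and uses $E\le C_S(e)$ together with $|C_S(e)|\leq q^4$, $|Z(C_S(e))|=q^2$, $C_S(e)'=Z(S)$ and $m_p(Q_1)=3n$ to force $E=Z(C_S(e))Z(C_S(e'))$ elementary abelian of order $q^3$ with $N_S(E)=Q_1$---which also replaces your unproved assertion that $Z_3(S)$ is the unique $S$-invariant elementary abelian subgroup of $Q_1$ of order $q^3$.

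Ironically, the step you flag as the main obstacle in the other branch---eliminating $Z(E)=Z(S)$ when $E\not\le Q_1\cup Q_2$---is a one-liner, and is the paper's opening move in the stronger form $Z(E)\not\le Q_2$: if $Z(E)\le Q_2$ then $Z_2(S)=Z(Q_2)$ centralises the characteristic chain $\{1\}\normaleq Z(E)\normaleq E$, since $[Z_2(S),E]\le[Z_2(S),S]\le Z(S)\le Z(E)$, whence radicality plus centricity give $Z_2(S)\le E$, contrary to hypothesis. With that inserted, your identification $E=C_S(z)$ for a regular $z$ is sound, granted the load-bearing but merely asserted computation that $C_S(e)$ is abelian of order $q^2$ with $C_S(e)\cap S'=Z(S)$ and image the line $\GF(q)\bar e$ in $S/S'$ (true for $p\geq 5$, but it carries the whole branch, including $E\cap Q_1=E\cap Q_2=Z(S)$, and must be proved); note the paper avoids needing $|E|=q^2$ exactly, getting $|E|\le q^2$ for free from $E\cap Q_1=Z(S)$ and $|S/Q_1|=q$. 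Your $p=5$ elimination via the Burnside kernel acting on the homocyclic group $C_{p^2}^{\,n}$ is correct and is equivalent to the paper's chain $\{1\}\normaleq\mho(E)\normaleq E$ centralised by $Z_2(S)$; both rest on the (true, but again unproved) fact that for $p=5$ every element of order $p$ lies in $Q_1\cup Q_2$, so regular elements have order $25$. In sum: the $E\not\le Q_1$ branch is right up to routine verification, but the $E\le Q_1$ branch fails as written whenever $n\geq 2$ and needs the paper's centraliser analysis, or some genuinely $\GF(p)$-linear replacement for it.
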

\begin{proof}
We may suppose $Z(E)\not\le Q_2$ for otherwise $Z_2(S)$ centralizes the chain $\{1\}\normaleq Z(E)\normaleq E$, a contradiction since $Z_2(S)\not\le E$ and $E$ is $S$-radical. In particular, it follows by the commutator formulas that $E\cap Q_2 \le S'$ and $E\cap Z_2(S)=Z(S)$. 

Suppose that $E\cap Q_1\ne Z(S)$. Then a calculation using the commutator formulas reveals that $Z(E)\le Q_1$. Then, $Z(E)\not\le S'$ for otherwise $Z_2(S)$ centralizes the chain $\{1\}\normaleq Z(E)\normaleq E$, and another calculation yields $E\le Q_1$. Recall from \cref{Q15Iden} that $Q_1\cong q^{1+2}*q^{1+2}$. Then, $m_p(Q_1)=3n$ and for any element of order $x\in Q_1\setminus Z(S)$ of order $p$, we have that $|C_{Q_1}(x)|=q^4$, $|Z(C_S(e))|=q^2$ and $C_S(e)'=Z(S)$. Since $Z(E)\not\le Q_2$, there is $e\in Z(E)$ such that $E\le C_S(e)$ where $C_S(e)$ has order at most $q^4$. Then, as $E$ is $S$-centric, $Z(C_S(e))\le Z(E)$. Now, if $Z(E)=Z(C_S(e))$, then $C_S(e)$ centralizes the chain $\{1\}\normaleq Z(E)\normaleq E$, and since $E$ is $S$-radical, $E=C_S(e)$. But then $Q_1$ centralizes the chain $\{1\}\normaleq  E'\normaleq E$, a contradiction since $Z_2(S)\not\le E$. 

So assume that $Z(C_S(e))<Z(E)$. It follows that there is $e'\in (Z(E)\cap S')\setminus Z(S)$ so that $E\le C_S(e')$ and again $Z(C_S(e'))\le Z(E)$. Thus, $Z(C_S(e'))Z(C_S(e))$ is elementary abelian of order $q^3$ and contained in $Z(E)$. But $m_p(Q_1)=3n$ and so $E=Z(E)=Z(C_S(e'))Z(C_S(e))$ is elementary abelian of order $q^3$. It follows directly from the commutator formulas that $N_S(E)=Q_1$.

Thus, we have shown that $Z(S)=E\cap Q_1=E\cap Q_2$ and $|E|\leq q^2$. If $p\geq 7$, then as $S$ has exponent $p$ and $E$ is centric, we can explicitly construct elementary abelian subgroups of order $q$ complementing $Z(S)$ in $E$ so that $E=\Omega(Z(E))$ is of order $q^2$. If $p=5$, then $S$ has exponent $25$ and it follows that $\mho(E)=E\cap S'=Z(S)$ and $Z_2(S)$ centralizes the chain $\{1\}\normaleq \mho(E)\normaleq E$, a contradiction since $E$ is $S$-radical.
\end{proof}

\begin{lemma}\label{ExoticEssential}
Suppose that $E\le S$ is an essential subgroup of $\fs$ and $Z_2(S)\not\le E$. Then $q=p=7$ and $E=\langle Z(S), x\rangle$ for some $x\in S\setminus (Q_1\cup Q_2)$.
\end{lemma}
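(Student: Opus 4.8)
The plan is to start from \cref{5Essential}, which tells us that any $S$-centric, $S$-radical subgroup $E$ with $Z_2(S)\not\le E$ — and in particular our essential $E$ — lies in one of two families: either (i) $E\le Q_1$ is elementary abelian of order $q^3$ with $N_S(E)=Q_1$, or (ii) $p\geq 7$, $E$ is elementary abelian of order $q^2$ with $E\cap Q_1=E\cap Q_2=Z(S)$ and $N_S(E)=Z_2(S)E$. The first task is to eliminate (i), the second to pin down (ii) to the stated shape with $q=p=7$.

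To rule out (i), I would recognize $E$ as a failure to factorize module for $\Out_{\fs}(E)$ with offender $\Out_S(E)=Q_1/E$ and invoke \cref{SEFF}. Since $E$ is centric and elementary abelian, $C_{Q_1}(E)=E$, so $\Out_S(E)$ has order $q^2$ and acts faithfully on $E$; using \cref{Q15Iden} that $Q_1\cong q^{1+2}\ast q^{1+2}$ is ultraspecial with $\Phi(Q_1)=Z(S)$, the commutator pairing on $Q_1/Z(S)$ is nondegenerate, whence $C_E(Q_1)=Z(S)$ and $|E/C_E(\Out_S(E))|=q^2=|\Out_S(E)|$. Thus $V:=E=\Omega(Z(E))$ is an $\Aut_{\fs}(E)$-invariant FF-module, and \cref{SEFF} forces $L:=O^{p'}(\Out_{\fs}(E))\cong\SL_2(q^2)$ (the action is faithful, so $C_L(V)=\{1\}$, and $\Out_S(E)$ of order $q^2$ is Sylow in $L$) with $V/C_V(O^p(L))$ a natural $\SL_2(q^2)$-module of order $q^4$. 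As this is a section of $E$ of order $q^3<q^4$, we reach a contradiction, and no $\mathcal{K}$-group hypothesis is needed.

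So we are in case (ii). First I would check that $E$ is maximally essential: by the classification of $S$-centric, $S$-radical subgroups assembled in \cref{Q1Q2-5,AZ,EmeetZ2,5Essential} together with (i) just excluded, every essential subgroup is $Q_1$, $Q_2$, or of type (ii); since $E\not\le Q_1,Q_2$ and all type-(ii) subgroups have order $q^2=|E|$, none properly contains $E$. Exactly as in (i), $E$ is an FF-module: here $\Out_S(E)=Z_2(S)E/E\cong Z_2(S)/Z(S)$ has order $q$, and the commutator formulas give $[E,X_{3\alpha+\beta}]=Z(S)$ and $C_E(\Out_S(E))=E\cap Q_2=Z(S)$ (as $E\not\le Q_2$), so $|E/C_E(\Out_S(E))|=q=|\Out_S(E)|$. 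Applying \cref{SEFF} yields $O^{p'}(\Out_{\fs}(E))\cong\SL_2(q)$ with $E$ a natural $\SL_2(q)$-module (the action being faithful, and $C_E(\Out_S(E))=[E,\Out_S(E)]=Z(S)$ as in \cref{sl2p-mod}).

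The decisive step, and the one I expect to be the main obstacle, is a torus eigenvalue computation forcing $q=7$, modelled on the closing argument of \cref{EmeetZ2}. Choose $\lambda\in N_{O^{p'}(\Out_{\fs}(E))}(\Out_S(E))$ of order $q-1$ generating a torus of $\SL_2(q)$; on the natural module it acts as a $\GF(q)$-scalar $\nu$ of order $q-1$ on $C_E(\Out_S(E))=Z(S)$, as $\nu^{-1}$ on $E/Z(S)$, and as $\nu^{2}$ on $\Out_S(E)\cong Z_2(S)/Z(S)$. Since $E$ is receptive and maximally essential, the Alperin--Goldschmidt theorem lets me extend (the $p'$-part of) $\lambda$ to some $\hat\lambda\in\Aut_{\fs}(S)$, which therefore acts on $Z(S)=X_{3\alpha+2\beta}$, $Z_2(S)$, $S'$, $Q_1$ and $Q_2$. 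Writing $A,B$ for the scalars by which $\hat\lambda$ acts on $X_\alpha$ and $X_\beta$, the root characters give $A^3B^2=\nu$ on $Z(S)$ and $A^3B=\nu^2$ on $Z_2(S)/Z(S)$, while invariance of the line $ES'/S'$ (which has nonzero image in both $Q_2/S'\cong X_\alpha$ and $Q_1/S'\cong X_\beta$, as $E\not\le Q_1$ and $E\not\le Q_2$) with eigenvalue $\nu^{-1}$ forces $A=B=\nu^{-1}$. The subtle point buried here is to exclude a field-automorphism component of $\hat\lambda$: one checks via $[x_\alpha(t),x_\beta(u)]=x_{\alpha+\beta}(-tu)\cdots x_{3\alpha+2\beta}(-2t^3u^2)$ that a common Frobenius twist $\sigma$ would act $\sigma$-semilinearly on $Z(S)$, contradicting the $\GF(q)$-linearity of $\nu$, so $\sigma=1$ and the scalar bookkeeping is legitimate. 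Substituting then gives $\nu^{-5}=\nu$, i.e. $\nu^6=1$; as $\nu$ has order $q-1$ we get $q-1\mid 6$, and since $p\geq 7$ the only possibility is $q=p=7$. Finally, with $q=p$ the quotient $E/Z(S)$ has order $p$, so $E=\langle Z(S),x\rangle$ for any $x\in E\setminus Z(S)$, and $E\cap Q_1=E\cap Q_2=Z(S)$ gives $x\in S\setminus(Q_1\cup Q_2)$, as required.
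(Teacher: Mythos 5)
Your proposal is correct and matches the paper's proof in every essential respect: the reduction via \cref{5Essential}, the identification of $O^{p'}(\Out_{\fs}(E))\cong\SL_2(q)$ acting naturally on $E$ via \cref{SEFF}, maximal essentiality deduced from \cref{Q1Q2-5}, \cref{AZ} and \cref{EmeetZ2}, and the same torus-extension argument (receptivity plus Alperin--Goldschmidt, invariance of the diagonal line $ES'/S'$ forcing the scalar $\nu^{-1}$ on both $Q_1/S'$ and $Q_2/S'$, then the commutator-formula bookkeeping giving $\nu^{-5}=\nu$, so $q-1\mid 6$ and $q=p=7$). Your only deviations are cosmetic: in eliminating the order-$q^3$ case you use the full offender $Q_1/E$ (forcing $\SL_2(q^2)$, whose natural module of order $q^4$ cannot be a section of $E$) where the paper uses the $Z_2(S)$-offender and contradicts $|\Out_S(E)|=q^2$ --- the same Sylow-order mismatch --- and your explicit exclusion of a field-automorphism component of $\hat{\lambda}$ makes rigorous a point the paper passes over silently.
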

\begin{proof}
By \cref{5Essential}, we may assume that $E$ is elementary abelian of order $q^3$ and contained in $Q_1$; or $E$ is elementary abelian of order $q^2$ and intersects $Q_1$ only in $Z(S)$. In the former case, $Z_2(S)E/E$ is elementary abelian of order $q$ and $Z_2(S)$ centralizes $E\cap S'$ which has index $q$ in $E$. Then by \cref{SEFF}, it follows that $E/C_E(O^{p'}(\Out_{\fs}(E)))$ is a natural $\SL_2(q)$-module for $O^{p'}(\Out_{\fs}(E))\cong \SL_2(q)$. But $N_S(E)=Q_1$ and $|Q_1/E|=q^2$, a contradiction. 

Thus, $E$ is elementary abelian of order $q^2$ and $E\cap Q_1=E\cap Q_2=Z(S)$. Since $Z_2(S)$ centralizes $Z(S)$ which has index $q$ in $E$, by \cref{SEFF}, $E$ is a natural $\SL_2(q)$-module for $O^{p'}(\Out_{\fs}(E))\cong\SL_2(q)$ and $\Out_{Z_2(S)}(E)=\Out_S(E)$. By \cref{Q1Q2-5}, \cref{AZ} and \cref{EmeetZ2} and since $E\not\le Q_1,Q_2$, we assume that $E$ is maximally essential.

Let $\lambda\in N_{O^{p'}(\Out_{\fs}(E))}(\Out_S(E))$ be an element of order $q-1$, isomorphic to a generator of a torus in $\SL_2(q)$. Since $E$ is a natural $\SL_2(q)$-module, for some $\mu\in K^\times$ of order $q-1$, we can choose $\lambda$ to acts as $\mu$ on $Z(S)$ and $\mu^{-1}$ on $E/Z(S)$. Since $E$ is receptive, and by the Alperin--Goldschmidt Theorem, $\lambda$ extends to $\hat{\lambda}\in \Aut_{\fs}(S)$. Since $Q_1, Q_2, S'$ are characteristic in $S$, $\lambda$ acts on $Q_1/S'$, $Q_2/S'$ and $ES'/S'\cong E/Z(S)$. Let $x_{\alpha}(t)$ be a transversal of $Q_2/S'$. Then  $x_{\alpha}(t)\hat{\lambda}=(x_{\alpha}(t)x_{\beta}(u)x_{\beta}(-u))\hat{\lambda}$ for all $u\in K^\times$. But, for some $u$, $x_{\alpha}(t)x_{\beta}(u)$ is a transversal of $ES'/S'$ and $x_{\beta}(-u)$ is a transversal of $Q_1/S'$ and $\hat{\lambda}$ acts on $ES'/S'$ as $\mu^{-1}$.

Thus, \[x_{\alpha}(t)\hat{\lambda}=(x_{\alpha}(t)x_{\beta}(u)\hat{\lambda})(x_{\beta}(-u)\hat{\lambda})=(x_{\alpha}(\mu^{-1}t) x_{\beta}(\mu^{-1}u)(x_{\beta}(-u)\hat{\lambda})\] and by comparing coefficients, $\hat{\lambda}$ acts as $\mu^{-1}$ on both $Q_1/S'$ and $Q_2/S'$. Using the commutator formulas on various elements on $S$, one has that $\hat{\lambda}$ acts as $\mu^{-2}$, $\mu^{-3}$, $\mu^{-4}$ and $\mu^{-5}$ on $S'/Z_3(S)$, $Z_3(S)/Z_2(S)$, $Z_2(S)$ and $Z(S)$ respectively. But since $\hat{\lambda}$ acts on $Z(S)$ as $\lambda$ does, $\mu^{-5}=\mu$ and $\mu^6=1$. Since $\mu$ was of order $q-1$, we conclude that $q=p=7$. In this case, $S$ has exponent $7$ and there is $x\in E\setminus (Q_1\cup Q_2)$ of order $7$ such that $E=\langle Z(S), x\rangle$, as required.
\end{proof}

Before determining all possible saturated fusion systems on $S$, we sum up the results concerning $S$-centric, $S$-radical subgroups of $S$.

\begin{proposition}
Suppose that $E$ is an $S$-centric, $S$-radical subgroup of $S$. Then one of the following holds:
\begin{enumerate}
\item $E\in\{Q_1,Q_2,S\}$;
\item $E\le Q_2$ has order $q^4$, $\Phi(E)< Z_2(S)=Z(E)$, $|\Phi(E)|=q$ and $N_S(E)=Q_2$;
\item $E\le S'$ is elementary abelian of order $q^3$ with $E\normaleq S$ if $E=Z_3(S)$; and $N_S(E)=Q_1$ otherwise;
\item $E\le Q_2$ is elementary abelian of order $q^3$, $E\not\le S'$ and $N_S(E)=EZ_3(S)$ has order $q^4$;
\item $E\cong q^{1+2}$, $Z_2(S)=E\cap Q_1=E\cap Q_2$, $Z(S)=Z(E)=\Phi(E)$;
\item $E\le Q_1$ is elementary abelian of order $q^3$, $E\cap Z_2(S)=Z(S)$ and $N_S(E)=Q_1$; or
\item $p\geq 7$, $E$ is elementary abelian of order $q^2$, $Z(S)=E\cap Q_1=E\cap Q_2=Z(S)$ and $N_S(E)=EZ_2(S)$ has order $q^3$.
\end{enumerate}
\end{proposition}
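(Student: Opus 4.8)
The plan is to prove this by a single case division according to the position of the characteristic subgroups $Z_2(S)$ and $Z_3(S)=S''$ relative to $E$, assembling the conclusions of the lemmas proved earlier in this section. Throughout, I use that $E$ being $S$-centric forces $Z(S)\le Z(E)\le E$, and I recall that $Z_3(S)=\langle X_{2\alpha+\beta}, X_{3\alpha+\beta}, X_{3\alpha+2\beta}\rangle$ is elementary abelian of order $q^3$ and contained in $S'$, since each of its three root subgroups has exponent $p$ and they pairwise commute by the commutator formulas. Since either $Z_2(S)\le E$ or not, and in the former case either $Z_3(S)\le E$ or not, the three cases (A) $Z_3(S)\le E$, (B) $Z_2(S)\le E$ but $Z_3(S)\not\le E$, and (C) $Z_2(S)\not\le E$ are exhaustive.

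First I would treat case (A), where $Z_3(S)\le E$. By \cref{Z3}, either $E=Z_3(S)$, which is normal in $S$ and yields the first alternative of outcome (iii); or $Z(E)\le Z_2(S)$ with $Z_3(S)<E$. In the latter situation I split on $Z(E)$: if $Z(E)=Z(S)$, then \cref{Q1Q2-5} gives $E\in\{Q_1,S\}$, which is outcome (i); while if $Z(E)\ne Z(S)$, then \cref{AZ} produces either $E=Q_2$ (again outcome (i)) or the subgroup of order $q^4$ with $\Phi(E)<Z_2(S)=Z(E)$, $|\Phi(E)|=q$ and $N_S(E)=Q_2$, which is precisely outcome (ii).

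Next I would treat case (B), where $Z_3(S)\not\le E$ but $Z_2(S)\le E$. Here \cref{EAEssen} forces $E\cap Z_3(S)=Z_2(S)$, and the lemma immediately preceding \cref{EmeetZ2} then splits into two possibilities: either $E\le S'$ is elementary abelian of order $q^3$ with $N_S(E)=Q_1$, giving the second alternative of outcome (iii); or $E\cap S'=Z_2(S)$. In the latter case \cref{EmeetZ2} applies and returns either an elementary abelian $E\le Q_2$ of order $q^3$ with $E\not\le S'$ and $N_S(E)=EZ_3(S)$ of order $q^4$ (outcome (iv)), or $E\cong q^{1+2}$ with $Z_2(S)=E\cap Q_1=E\cap Q_2$ and $Z(S)=Z(E)=\Phi(E)$ (outcome (v)). Finally, case (C), where $Z_2(S)\not\le E$, is handled directly by \cref{5Essential}, whose two conclusions are exactly outcomes (vi) and (vii).

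Since the work is essentially bookkeeping, the only point requiring real care---and the step I would verify most carefully---is that the cited conclusions match the stated outcomes exactly. Specifically, one must confirm that outcome (iii) genuinely absorbs both the normal subgroup $Z_3(S)$ arising from \cref{Z3} and the conjugates of $X_{\alpha+\beta}Z_2(S)$ arising from the lemma preceding \cref{EmeetZ2} (both being elementary abelian of order $q^3$ inside $S'$), and that the hypothesis $E\cap S'=Z_2(S)$ of \cref{EmeetZ2} is met precisely on the residual branch of case (B). No computation beyond these matchings is needed, and collecting the cases completes the proof.
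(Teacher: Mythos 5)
Your proposal is correct and coincides with the paper's own (implicit) argument: the proposition is stated there without a separate proof, explicitly as a summary of the preceding lemmas, and your trichotomy on the position of $Z_2(S)$ and $Z_3(S)$ relative to $E$ assembles exactly those results --- \cref{Z3}, \cref{Q1Q2-5} and \cref{AZ} when $Z_3(S)\le E$; \cref{EAEssen}, the unnamed lemma following it, and \cref{EmeetZ2} when $Z_2(S)\le E$ but $Z_3(S)\not\le E$; and \cref{5Essential} when $Z_2(S)\not\le E$. Your flagged matching checks are also right (in particular, outcome (iii) absorbs both $Z_3(S)$ itself and the conjugates of $X_{\alpha+\beta}Z_2(S)$, and the fact $E\cap Z_2(S)=Z(S)$ in outcome (vi) is established in the first paragraph of the proof of \cref{5Essential}), so nothing further is needed.
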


We now analyze the automizers of the potential essential subgroups of a saturated fusion system $\fs$ over $S$. That is, $Q_1, Q_2$ and if $q=p=7$, some conjugacy class of elementary abelian subgroups of order $7^2$. For the latter class of essentials, we refer to \cite{parkersem} to determine the fusion system, where a large number of exotic fusion systems are uncovered. We analyze the automizer of $Q_2$ via \cref{SEFF}, noting that this result is independent of a $\mathcal{K}$-group hypothesis. Analyzing the automizer of $Q_1$ is more complicated and, with the help of some supporting results, we conclude that $O^{p'}(\Out_{\fs}(Q_1))$ is isomorphic to a subgroup of $\Sp_4(q)$. Since the maximal subgroups of $\Sp_4(q)$ are known by \cite{Mitchell}, we compute the candidates for $O^{p'}(\Out_{\fs}(Q_1))$ independent of any $\mathcal{K}$-group hypothesis. We omit the details here, and instead appeal to \cref{MaxEssen} and a result in \cite{parkersem}.

Finally, we wish to apply \cref{G2Cor} to determine $\fs$. Except in the case where $q=p\in\{5,7\}$, we have that $Q_1, Q_2$ are the only possible essentials and $O^{p'}(\Out_{\fs}(Q_i))\cong\SL_2(q)$ for $i\in\{1,2\}$. In particular, the application of \cref{G2Cor} via \cref{MainThm} relies only on the classification of weak BN-pairs of rank $2$ provided in \cite{Greenbook} and again is independent of any $\mathcal{K}$-group hypothesis. We remark that there is currently no known way of determining whether a fusion system is exotic without appealing to the classification of finite simple groups, and instead appeal to \cite[Theorem 6.2]{parkersem} for a proof of the exoticity of the fusion systems listed in (vii).

\begin{theorem}
Let $\fs$ be a saturated fusion system over a Sylow $p$-subgroup of $\mathrm{G}_2(p^n)$ with $p\geq 5$. Then one of the following holds
\begin{enumerate}
\item $\fs=\fs_S(S: \Out_{\fs}(S))$;
\item $\fs=\fs_S(Q_1: \Out_{\fs}(Q_1))$ where $O^{p'}(\Out_{\fs}(Q_1))\cong\SL_2(q)$ or $q=p\in\{5,7\}$ and the possibilities for $O^{p'}(\Out_{\fs}(Q_1))$ are given in \cite[Lemma 5.2]{parkersem};
\item $\fs=\fs_S(Q_2: \Out_{\fs}(Q_2))$ where $O^{p'}(\Out_{\fs}(Q_2))\cong\SL_2(q)$;
\item $\fs=\fs_S(M)$ where $M\cong 5^3.\SL_3(5)$, $p=5$ and $n=1$;
\item $\fs=\fs_S(G)$ where $G\cong\mathrm{Ly}$, $\mathrm{HN}$, $\Aut(\mathrm{HN})$ or $\mathrm{B}$, $p=5$ and $n=1$;
\item $\fs=\fs_S(G)$ where $G\cong \mathrm{M}$, $p=7$ and $n=1$;
\item $p^n=7$ and, assuming CFSG, $\fs$ is one of the exotic fusion systems recorded in \cite[Table 5.1]{parkersem}; or
\item $\fs=\fs_S(G)$ where $F^*(G)=O^{p'}(G)\cong \mathrm{G}_2(p^n)$.
\end{enumerate}
\end{theorem}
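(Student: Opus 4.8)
The plan is to run the Alperin--Goldschmidt fusion theorem, which reduces the determination of $\fs$ to its essential subgroups, their $\fs$-automizers, and $\Aut_{\fs}(S)$. Since every essential subgroup is $S$-centric, $S$-radical and fully $\fs$-normalized, it must appear in the preceding classification of $S$-centric, $S$-radical subgroups. The exclusion clauses of \cref{Z3}, \cref{AZ}, \cref{EmeetZ2}, \cref{5Essential} and \cref{ExoticEssential} (together with the accompanying lemma ruling out the order-$q^3$ subgroups of $S'$) then eliminate every candidate in that list except $Q_1$ and $Q_2$, leaving, by \cref{ExoticEssential}, the single further possibility that $q=p=7$ and $\mathcal{E}(\fs)$ contains the class of elementary abelian subgroups $\langle Z(S),x\rangle$ of order $49$ with $x\in S\setminus(Q_1\cup Q_2)$. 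Thus, away from $q=p=7$, we have $\mathcal{E}(\fs)\subseteq\{Q_1,Q_2\}$.

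Next I would compute the automizers. For $Q_2$, note $\Phi(Q_2)=Z_3(S)$ and that $S'/Z_3(S)$ has index $q$ in $Q_2/\Phi(Q_2)$ and is centralized by $S$ (as $[S,S']\le Z_3(S)$), so $Q_2/\Phi(Q_2)$ is an FF-module and \cref{SEFF} yields $O^{p'}(\Out_{\fs}(Q_2))\cong\SL_2(q)$ acting naturally; the same input handles $Q_2$ as a proper essential. The automizer of $Q_1$ is the main obstacle. Since $Q_1$ is ultraspecial of shape $q^{1+2}\ast q^{1+2}$ by \cref{Q15Iden}, with $Q_1'=Z(S)\cong\GF(q)$, the commutator map equips the single noncentral chief factor $Q_1/Z(S)$ (of order $q^4$) with a nondegenerate alternating form preserved by $\Out_{\fs}(Q_1)$, so $O^{p'}(\Out_{\fs}(Q_1))$ embeds into $\Sp_4(q)$; crucially this factor need not be an FF-module, so \cref{SEFF} does not apply. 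Here I would bound $O^{p'}(\Out_{\fs}(Q_1))$ using Mitchell's description of the maximal subgroups of $\Sp_4(q)$ \cite{Mitchell} together with \cref{MaxEssen} (valid when $m_p(\Out_S(Q_1))=n\ge 2$) and, for the residual case $q=p$, \cite[Lemma 5.2]{parkersem}, concluding $O^{p'}(\Out_{\fs}(Q_1))\cong\SL_2(q)$ unless $q=p\in\{5,7\}$, where larger automizers persist.

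With the automizers in place I would split on $\mathcal{E}(\fs)$, using throughout that $Q_1,Q_2$ are characteristic in $S$ and hence $\Aut_{\fs}(S)$-invariant, and that by \cref{Q1Q2} each is maximally essential. If neither is essential (and there is no exotic class) then $\fs=N_{\fs}(S)$ is constrained and the Model Theorem \cref{model} gives outcome (i); if exactly one $Q_i$ is essential, \cref{MaxEssen} and \cref{model} give outcomes (ii) or (iii). If both are essential I would first verify $O_p(\fs)=\{1\}$: by \cref{normalinF} any normal $p$-subgroup lies in $Q_1\cap Q_2$, and the irreducibility of the natural modules $Z(Q_i)/\Phi(Q_i)$ and $Q_i/Z(Q_i)$ under $O^{p'}(\Out_{\fs}(Q_i))$ forces it to be trivial, exactly as in the $p=3$ analysis. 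Then $\fs=\langle N_{\fs}(Q_1),N_{\fs}(Q_2)\rangle$ with two $\Aut_{\fs}(S)$-invariant maximally essential subgroups, so \cref{G2Cor} (via \cref{MainThm}) applies and returns outcome (viii), or outcome (vii) when $p=7$. As noted in the text, for $O^{p'}(\Out_{\fs}(Q_i))\cong\SL_2(q)$ this application rests only on the weak BN-pair classification of \cite{Greenbook}, so no $\mathcal{K}$-group hypothesis is needed.

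Finally, the exceptional outcomes (iv), (v), (vi) and (vii) occur only for $q=p\in\{5,7\}$, precisely when $Q_1$ carries one of the larger automizers above or when the exotic class of \cref{ExoticEssential} is essential. For these $n=1$ configurations I would appeal directly to the classification of \cite{parkersem}, which identifies the corresponding fusion systems as those of $5^3.\SL_3(5)$, $\mathrm{Ly}$, $\mathrm{HN}$, $\Aut(\mathrm{HN})$ and $\mathrm{B}$ (for $p=5$) and of $\mathrm{M}$ (for $p=7$), together with the exotic systems, whose exoticity is \cite[Theorem 6.2]{parkersem}. I expect essentially all the genuine difficulty to sit in the $Q_1$-automizer computation; once $O^{p'}(\Out_{\fs}(Q_1))\cong\SL_2(q)$ is established in the generic range, the remaining case split is routine bookkeeping feeding into \cref{G2Cor}.
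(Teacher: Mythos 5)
Your outline reproduces the paper's architecture faithfully up to the final case split: the exclusion lemmas, the FF-module treatment of $Q_2$, the $\Sp_4(q)$ embedding via \cite[Lemma 2.73]{parkerSymp} plus \cref{MaxEssen} and \cite[Lemma 5.2]{parkersem} for $Q_1$, and the routing of the extra $7^2$-class through \cite[Theorem 5.1]{parkersem}. But there is a genuine gap at the step where both $Q_1$ and $Q_2$ are essential: your claim that $O_p(\fs)=\{1\}$ follows ``exactly as in the $p=3$ analysis'' is false for $p\geq 5$. The $p=3$ argument works because there $Q_1\cap Q_2=Z(Q_1)Z(Q_2)$ is covered by irreducible natural chief factors of the two automizers. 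For $p\geq 5$ the chief series of $Q_2$ under $\Aut_{\fs}(Q_2)$ contains a \emph{central} factor: $O^{p'}(\Out_{\fs}(Q_2))$ centralizes $Z_3(S)/Z_2(S)=\Phi(Q_2)/Z(Q_2)$ (and note $Z(Q_1)=\Phi(Q_1)=Z(S)$, so the factor ``$Z(Q_1)/\Phi(Q_1)$'' you invoke is trivial — you are importing the $p=3$ module structure wholesale, and it does not match). Consequently $Z_3(S)$ can be normal in $\fs$ with both $Q_i$ essential, and this actually happens: it is realized by $\fs_S(M)$ for $M\cong 5^3.\SL_3(5)$, a maximal $5$-local of $\mathrm{Ly}$. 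The paper's irreducibility argument only pins $O_p(\fs)$ down to $\{1\}$ or the range $Z_2(S)\le O_p(\fs)\le Z_3(S)$; it then needs a separate coprime-action argument (via an involution in $Z(O^{p'}(\Out_{\fs}(Q_1)))$) to exclude $O_p(\fs)=Z_2(S)$, after which $O_p(\fs)=Z_3(S)$.

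Your proof therefore never generates outcome (iv), and your attempted repair — attributing $5^3.\SL_3(5)$ to the $q=p=5$ exceptional automizers and citing \cite{parkersem} — cannot work, because \cite{parkersem} classifies only fusion systems with $O_p(\fs)=\{1\}$, whereas $\fs_S(5^3.\SL_3(5))$ has $O_5(\fs)=Z_3(S)\ne\{1\}$ (this trivial-core restriction is exactly what the present paper removes). The missing content is the whole constrained branch: with $O_p(\fs)=Z_3(S)$ one invokes \cref{model} to get a model $G$ with $F^*(G)=Z_3(S)$, shows $\bar{G}=G/Z_3(S)$ carries a weak BN-pair of rank $2$ in the sense of \cite{Greenbook}, identifies $Z_3(S)$ as an FF-module via \cref{BasicFF} so that \cite[Theorem A]{ChermakJ} yields $O^{p'}(\bar{G})\cong\SL_3(q)$ with $Z_3(S)$ a natural module, and then rules out a split extension by comparing $p$-ranks ($4n$ for a Sylow $p$-subgroup of $\SL_4(q)$ versus $3n$ for $S$) before applying \cite[Table I]{bell} to force $q=p=5$ in the non-split case. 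The remainder of your proposal (outcomes (i)--(iii), (v)--(viii), the avoidance of a $\mathcal{K}$-group hypothesis through \cite{Greenbook}, and the $n=1$ appeals to \cite{parkersem} for the $O_p(\fs)=\{1\}$ configurations) is sound and matches the paper.
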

\begin{proof}
Suppose first that there is an essential $E\not\in\{Q_1, Q_2\}$. By \cref{ExoticEssential}, $p=q=7$ and the action of $O^{7'}(\Out_{\fs}(E))$ is irreducible on $E$. In particular, since $O_7(\fs)$ is normal in $S$ and contained in each essential subgroup by \cref{normalinF}, $O_7(\fs)=\{1\}$. Then the hypothesis of \cite[Theorem 5.1]{parkersem} are satisfied and $\fs$ is one of the fusion systems described in \cite[Table 5.1]{parkersem}.

Hence, we may assume that $\mathcal{E}(\fs)\subseteq \{Q_1, Q_2\}$. If $\mathcal{E}(\fs)=\emptyset$, the (i) is satisfied. Suppose that $Q_2$ is essential and notice that $Z_3(S)=\Phi(Q_2)$. Since $[S, S']\le Z_3(S)$ and $S'$ has index $q$ in $Q_2$, it follows in a similar manner to \cref{SEFF} that $Q_2/\Phi(Q_2)$ is a natural $\SL_2(q)$-module for $O^{p'}(\Out_{\fs}(Q_2))\cong\SL_2(q)$. Moreover, since $S$ does not centralize $Z_2(S)=Z(Q_2)$ but acts quadratically on $Z(Q_2)$, it follows $Z(Q_2)$ is also a natural $\SL_2(q)$-module for $O^{p'}(\Out_{\fs}(Q_2))$ and since $S$ centralizes $Z_3(S)/Z_2(S)$, $O^{p'}(\Out_{\fs}(Q_2))$ centralizes $Z_3(S)/Z_2(S)$. In particular, if $Q_1$ is not essential then (iii) is satisfied.

Suppose that $Q_1$ is essential. Notice that $O^{p'}(\Out_{\mathrm{G}_2(q)}(Q_1))\cong\SL_2(q)$ acts irreducibly on $Q_1/\Phi(Q_1)$ and it follows that $\langle \Out_S(Q_1)^{\Out(Q_1)}\rangle$ acts irreducibly on $Q_1/\Phi(Q_1)$ and centralizes $\Phi(Q_1)$. Then by \cite[Lemma 2.73]{parkerSymp}, $\langle \Out_S(Q_1)^{\Out(Q_1)}\rangle$ is isomorphic to an irreducible subgroup of $\mathrm{Sp}_4(q)$ and so $O^{p'}(\Out_{\fs}(Q_1))$ is isomorphic to a subgroup of $\mathrm{Sp}_4(q)$ with a strongly $p$-embedded subgroup. Applying \cref{MaxEssen}, it follows that $O^{p'}(\Out_{\fs}(Q_1))$ is isomorphic to a central extension of $\PSL_2(q)$; or $q=p\in\{5,7\}$ and the possibilities are determined in \cite[Lemma 5.2]{parkersem}. If $Q_2$ is not essential then (ii) is satisfied.

If both $Q_1$ and $Q_2$ are essential, then since $O_p(\fs)\le Q_1\cap Q_2$ by \cref{normalinF} and $O^{p'}(\Out_{\fs}(Q_2))$ is irreducible on $Z_2(S)$ and $Q_2/Z_3(S)$, we have that $Z_2(S)\le O_p(\fs)\le Z_3(S)$ or $O_p(\fs)=\{1\}$. If $O_p(\fs)=\{1\}$, then $\fs$ is determined by \cref{G2Cor}, and the result holds. So suppose that $Z_2(S)\le O_p(\fs)\le Z_3(S)$. If $Z_2(S)=O_p(\fs)$, then $C_{Q_1}(Z_2(S))=S'$ is $\Aut_{\fs}(Q_1)$-invariant and since $Q_2$ centralizes $Z_2(S)$, $Q_1/S'$ and $Z_3(S)/Z_2(S)$, it follows from \cref{SEFF} that $S'/Z_2(S)$ is a natural module for $O^{p'}(\Out_{\fs}(Q_1))\cong \SL_2(q)$, and both $Z_2(S)$ and $Q_1/S'$ are centralized by $O^{p'}(\Out_{\fs}(Q_1))$. Letting $t\in Z(O^{p'}(\Out_{\fs}(Q_1)))$, by coprime action we have that for $V:=Q_1/Z(S)$, $V=[V, t]\times C_V(t)$ and $[V,t]$ is normalized by $S$. Since $Z_2(S)$ is centralized by $t$, we deduce that $[V, t]\cap Z(S/Z(S))=\{1\}$ so that $[V,t]=\{1\}$ and $t$ centralizes $V$, a contradiction. Therefore, $Z_2(S)<O_p(\fs)\le Z_3(S)$ so that $Z_3(S)=C_S(O_3(\fs))\le Q_1\cap Q_2$. Then by \cref{normalinF}, $C_S(O_3(\fs))\normaleq \fs$ and since $Z_3(S)$ is elementary abelian, $O_3(\fs)=Z_3(S)$.

Setting $L_1:=O^{p'}(\Out_{\fs}(Q_1))$, we have that $L_1/C_{L_1}(Q_1/Z_3(S)\cong \SL_2(q)$ and $L_1/C_{L_1}(Z_3(S)/Z(S))\cong \SL_2(q)$, and either $C_{L_1}(Q_1/Z_3(S)=C_{L_1}(Z_3(S)/Z(S))$ and $L_1\cong \SL_2(q)$; or $L_1$ is isomorphic to a central extension of $\PSL_2(q)$ by an elementary group of order $4$. Since $p\geq 5$, $\PSL_2(q)$ is perfect and has Schur multiplier of order $2$, and as $L_1=O^{p'}(L_1)$, we have a contradiction in the latter case. Therefore, $L_1\cong\SL_2(q)\cong O^{p'}(\Out_{\fs}(Q_2))$.

Now, $Z_3(S)$ is a normal, $S$-centric subgroup of $\fs$. By \cref{model}, there is a finite group $G$ such that $F^*(G)=Z_3(S)$ and $\fs=\fs_S(G)$. Moreover, $O^{p'}(\Out_G(Q_i))\cong \SL_2(q)$ and $\Out_{\fs}(Q_i)$ acts faithfully on $Q_i/Z_3(S)$ for $i\in\{1,2\}$. Set $\bar{G}:=G/Z_3(S)$ and notice that $\bar{Q_1}$ and $\bar{Q_2}$ are self-centralizing in $\bar{G}$. Moreover, $\bar{G}=\langle N_{\bar{G}}(\bar{Q_1}), N_{\bar{G}}(\bar{Q_2})\rangle$, and $\bar{Q_i}$ is $\Aut_{\bar{G}}(\bar{S})$-invariant for $i\in\{1,2\}$. It follows that $\bar{G}$ has a weak BN-pair of rank $2$ in the sense of \cite{Greenbook}. Moreover, since $Q_2$ centralizes $Z_2(S)$ which has index $q$ in $Z_3(S)$ and $Q_2/Z_3(S)$ is elementary abelian of order $q^2$, we deduce that $Z_3(S)$ is an FF-module for $\bar{G}$ by \cref{BasicFF}. Then, comparing with the completions in \cite{Greenbook} and applying \cite[Theorem A]{ChermakJ}, we conclude that $O^{p'}(\bar{G})\cong\SL_3(q)$ and $Z_3(S)$ is a natural module for $O^{p'}(\bar{G})$. As in the case when $p=2$, we observe that if $S$ splits over $Z_3(S)$, then $S$ is isomorphic to a Sylow $p$-subgroup of $\SL_4(q)$, which has $p$-rank $4n$ by \cite[Theorem 3.3.3]{GLS3}, whereas $S$ has $p$-rank $3n$. Therefore, $S$ is non-split and by \cite[Table I]{bell}, it follows that $q=p=5$. One can check that there is a unique fusion system up to isomorphism on $S$ with $O_5(\fs)=Z_3(S)$.
\end{proof}

\begin{remark}
In case (iv) of the above theorem, one can take $M$ to be a maximal subgroup of $\mathrm{Ly}$.
\end{remark}

\section[Fusion Systems on a Sylow \texorpdfstring{$p$}{p}-subgroup of \texorpdfstring{$\PSU_4(p^n)$}{PSU4(pn)}]{Fusion Systems on a Sylow $p$-subgroup of $\PSU_4(p^n)$}

We set $S$ to be a Sylow $p$-subgroup of $\PSU_4(q)$ where $q=p^n$ and $\fs$ to be a saturated fusion system supported on $S$. Again, let $\mathbb{K}$ be the finite field of order $q$ and recall the commutator formulas from \cref{G2Sylow}.

\begin{lemma}\label{Q1Unique}
There exists a unique subgroup $X:=X_\alpha X_{\alpha+\beta}X_{2\alpha+\beta}\le S$ of order $q^5$ such that $X'=Z(S)$, $|X|>q^4$, $S'=X\cap J(S)$ and $X$ is maximal by inclusion with respect to these properties. In particular, $X$ is characteristic in $S$.
\end{lemma}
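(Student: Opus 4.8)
The plan is to first confirm that $X = X_\alpha X_{\alpha+\beta} X_{2\alpha+\beta}$ has all the listed properties, and then to show that \emph{every} subgroup $Y\le S$ satisfying $Y'=Z(S)$, $|Y|>q^4$ and $Y\cap J(S)=S'$ is contained in $X$; uniqueness of the maximal such subgroup and characteristicity then follow formally. For the first part, since $X_{2\alpha+\beta}=Z(S)$ is central and $[x_\alpha(t), x_{\alpha+\beta}(u)]=x_{2\alpha+\beta}(\epsilon''\mathrm{Tr}(tu))$ with $\mathrm{Tr}\colon\hat{\mathbb{K}}\to\mathbb{K}$ surjective, one reads off $X'=[X_\alpha, X_{\alpha+\beta}]=X_{2\alpha+\beta}=Z(S)$. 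The product of the three root subgroups is closed (all commutators land inside it) and has order $q^2\cdot q^2\cdot q=q^5>q^4$, and intersecting with $J(S)=X_\beta X_{\alpha+\beta} X_{2\alpha+\beta}$ leaves exactly $X_{\alpha+\beta} X_{2\alpha+\beta}=S'$. These checks are routine.

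For uniqueness I would work in the quotient $\bar S:=S/S'\cong\hat{\mathbb{K}}\times\mathbb{K}$, recording each $y\in S$ by its $X_\alpha$-coordinate $s(y)\in\hat{\mathbb{K}}$ and its $X_\beta$-coordinate $t(y)\in\mathbb{K}$; here $J(S)/S'$ is the $X_\beta$-line $s=0$ and $X/S'$ is the $X_\alpha$-plane $t=0$. Let $Y$ satisfy the three properties. The condition $Y\cap J(S)=S'$ gives $S'\le Y$ and forces the image of $Y$ in $\bar S$ to meet the $X_\beta$-line trivially. Working modulo $Z(S)$, the only commutator contributing to the $X_{\alpha+\beta}$-layer is $[x_\alpha(s_1), x_\beta(t_2)]\equiv x_{\alpha+\beta}(\epsilon s_1 t_2)$, so $[y_1,y_2]\equiv x_{\alpha+\beta}(\epsilon(s(y_1)t(y_2)-s(y_2)t(y_1)))$; the requirement $Y'\le Z(S)$ (part of $Y'=Z(S)$) therefore forces the alignment condition $s(y_1)t(y_2)=s(y_2)t(y_1)$ in $\hat{\mathbb{K}}$ for all $y_1,y_2\in Y$.

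The crux, and the main obstacle, is exploiting this alignment condition together with the size bound. If every $y\in Y$ has $t(y)=0$, then $Y\le\ker t=X$ and we are done. Otherwise fix $y_0$ with $t(y_0)\ne 0$; alignment gives $s(y)=c\,t(y)$ for a fixed $c=s(y_0)t(y_0)^{-1}\in\hat{\mathbb{K}}$, and $c\ne 0$ since otherwise $y_0\in J(S)\setminus S'$, contradicting $Y\cap J(S)=S'$. But then the image of $Y$ in $\bar S$ is the graph $\{(c\,t, t)\}$, of order at most $|\mathbb{K}|=q$, whence $|Y|=|S'|\cdot|Y/S'|\le q^3\cdot q=q^4$, contradicting $|Y|>q^4$. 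Hence only the first alternative occurs, and every admissible $Y$ lies in $X$.

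Since $X$ is itself admissible and contains every admissible subgroup, it is the unique maximal such subgroup. Finally, $Z(S)$, $J(S)$ and $S'$ are all characteristic in $S$, so the three defining conditions are preserved by every automorphism of $S$; an automorphism therefore carries the unique maximal admissible subgroup to itself, giving that $X$ is characteristic. The only genuinely delicate point is the alignment computation and the ensuing dichotomy; everything else is bookkeeping with the commutator formulas.
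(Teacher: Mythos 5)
Your proof is correct and takes essentially the same route as the paper: both verify the stated properties of $X$ from the commutator formulas and then use the vanishing of the $X_{\alpha+\beta}$-component of commutators forced by $Y'=Z(S)$ (your ``alignment condition'' $s(y_1)t(y_2)=s(y_2)t(y_1)$ is exactly what the paper extracts from $[y,x_\alpha(t)]\not\le Z(S)$), combined with $|Y|>q^4$, to exclude any admissible $Y\not\le X$. The only cosmetic difference is the final count: the paper shows $Y\cap X=S'$ and concludes via $|XY|=|X||Y|/|X\cap Y|>q^6=|S|$, whereas you bound the image of $Y$ in $S/S'$ by the graph $\{(ct,t)\}$ of order $q$ to get $|Y|\le q^4$ directly.
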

\begin{proof}
By the definition of $X$, $|X|=q^5>q^4$ and $X\cap J(S)=S'$. Moreover, it follows from the commutator relations that $X'=Z(S)$. Thus, $X$ satisfies the required properties. Suppose there is $Y\not\le X$ such that $Y$ also satisfies the required properties. Since $Y\not\le X$ and $Y\cap J(S)=S'$, there is $y:=x_{\alpha}(t_1)x_{\beta}(t_2)\in Y$ with $t_1\ne 0\ne t_2$. By the requirements, $[Y, y]\le Y'=Z(S)$ and since $[y, x_{\alpha}(t)]\not\le Z(S)$ it follows that $Y\cap X=S'$. However, $|Y|>q^4$ so that $|XY|=|X||Y|/|X\cap Y|>q^6=|S|$, a clear contradiction.
\end{proof}

\begin{remark}
We may uniquely define $X$ as the preimage in $S$ of $J(S/Z(S))$. Moreover, $X$ is an ultraspecial special group with $Z(X)=X'=Z(S)$ of order $q$.
\end{remark}

We set $Q_1:=X$ and $Q_2:=J(S)$ with the intention of proving $\mathcal{E}(\fs)\subseteq\{Q_1, Q_2\}$. As it turns out, this is true except when $q=p=2$ where $S$ is coincidentally isomorphic to a Sylow $2$-subgroup of $\PSL_4(2)$. In this case, since $|S|=2^6$, we can directly compute that $S$-radical, $S$-centric subgroups of $S$ and classify all saturated fusion systems on $S$ with the aid of MAGMA.

\begin{proposition}
Let $S$ be isomorphic to a Sylow $2$-subgroup of $\mathrm{PSU}_4(2)$. If $X$ is an $S$-centric, $S$-radical subgroups of $S$ then either:
\begin{enumerate}
    \item $X\in\{S, Q_1, Q_2\}$;
    \item $X=C_S(x)$ for some $x\in S'\setminus Z(S)$ so that $|C_S(x)|=2^5$; or
    \item $X\in\mathcal{A}(Q_1)$ with $X\not\le Q_2$ so that $|X|=2^3$.
\end{enumerate}
\end{proposition}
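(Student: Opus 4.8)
The plan is to drive the classification with the same mechanism used throughout the section, namely the radicality test coming from \cref{Chain}: if $E\le S$ is $S$-centric and one can produce an $\Aut(E)$-invariant chain $\{1\}=E_0\normaleq\dots\normaleq E_m=E$ together with an element $g\in N_S(E)\setminus E$ centralizing every factor $E_i/E_{i-1}$, then the image of $c_g$ is a nontrivial element of $\Out_S(E)\cap O_2(\Out(E))$ and $E$ fails to be $S$-radical. I would work with the characteristic subgroups computed at $q=2$: $Z(S)=X_{2\alpha+\beta}$ of order $2$, $S'=X_{\alpha+\beta}X_{2\alpha+\beta}$ elementary abelian of order $2^3$, $Q_2=J(S)=X_\beta X_{\alpha+\beta}X_{2\alpha+\beta}$ elementary abelian of order $2^4$, and $Q_1=X$ the ultraspecial group of order $2^5$ from \cref{Q1Unique} with $Z(Q_1)=Q_1'=Z(S)$. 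Both $Q_1$ and $Q_2$ are characteristic in $S$, $Q_1\cap Q_2=S'$, and $S/S'$ is elementary abelian of order $2^3$.

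First I would split on whether $S'=Q_1\cap Q_2\le E$. If $S'\le E$, then $E$ is one of the finitely many overgroups of $S'$, corresponding to the subgroups of $S/S'\cong C_2^3$: besides $S'$ and $S$ themselves there are seven of order $2^4$ and seven of order $2^5$. The subgroup $S'$ is discarded at once since $Q_2=X_\beta S'$ centralizes it, so $S'$ is not $S$-centric. For each remaining overgroup I would try to build a short $\Aut(E)$-invariant chain from $Z(E)$, $\Omega(Z(E))$, $E'$ and the invariant intersections with $S'$ and $Z(S)$, centralized by an element of $N_S(E)\setminus E$; this eliminates every candidate except $S$, $Q_1$, $Q_2$ and the index-$2$ subgroups $C_S(x)$ with $x\in S'\setminus Z(S)$, giving outcomes (i) and (ii). The point where $q=2$ intervenes is precisely that these centralizers $C_S(x)$ survive here: since $S'\setminus Z(S)$ consists of involutions and $|S/C_S(x)|=2$, the candidate chains cannot be $N_S(E)$-centralized, so \cref{Chain} yields no contradiction, whereas for $q>2$ the analogous subgroups are killed.

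Next I would treat $S'\not\le E$, mirroring the forthcoming general lemmas for $\PSU_4(q)$: one forces $Z(E)$ into a single root factor, then shows that an $S$-centric $E$ centralizing its own centre lands inside $Q_1$ with $E\cap Q_2$ small, so that $E$ becomes an elementary abelian subgroup of the ultraspecial group $Q_1$. Identifying $Q_1/Z(S)$ as a $4$-dimensional symplectic space over $\GF(2)$, with form induced by the commutator map into $Z(S)\cong\GF(2)$, the elementary abelian subgroups of $Q_1$ of maximal rank $3$ all arise as preimages of maximal totally isotropic $2$-subspaces, i.e.\ as the members of $\mathcal{A}(Q_1)$ of order $2^3$. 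The unique such subgroup contained in $Q_2$ is $Q_1\cap Q_2=S'$, already handled, so the survivors are exactly the members of $\mathcal{A}(Q_1)$ with $X\not\le Q_2$, which is outcome (iii). Again the degeneracy is essential: for larger $q$ these subgroups are ruled out by exponent and failure-to-factorize arguments (as in \cref{PSUExponent} and \cref{SEFF}), while at $q=2$ they are legitimately $S$-centric and $S$-radical.

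The main obstacle I anticipate is not the reduction but the final radicality bookkeeping on the two degenerate families, the order-$2^5$ centralizers in (ii) and the order-$2^3$ maximal elementary abelian subgroups in (iii). For these one must actually compute $O_2(\Out(E))$ and verify $\Out_S(E)\cap O_2(\Out(E))=\{1\}$, and the chain arguments that decide matters for $q>2$ no longer resolve the question either way. Since $|S|=2^6$, the cleanest and most reliable way to close these cases, and to confirm that no $S$-conjugacy class of candidates has been missed, is the direct enumeration of subgroups in MAGMA referenced in the text; the structural reduction above then serves to organise that output into the three stated families and to explain conceptually why the extra families appear only when $q=2$.
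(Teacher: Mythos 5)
Your proposal reaches the right list, but note that the paper does not prove this proposition by hand at all: the sentence immediately preceding it explains that, since $|S|=2^6$, the $S$-centric, $S$-radical subgroups (and then the fusion systems in \cref{q=2Class2}) are found by direct enumeration in MAGMA, and the proposition simply records the output. Your plan --- rerunning the general-$q$ analysis of \cref{U4Centric1} and \cref{U4Centric2} at $q=2$ and isolating where it degenerates --- is therefore a genuinely different presentation, and it is essentially viable: the proofs of both of those propositions go through at $q=2$ except at a single point, namely the branch of \cref{U4Centric2} with $S'\le E$, $Z(S)<Z(E)$ and $p=2$, where the chain arguments force $E=C_S(x)$ and the final contradiction invokes \cite[Lemma 3.13]{ParkU4}, which is valid only for $q>2$; that branch is the sole source of your family (ii). Since the proposition asserts only the forward implication (``if $S$-centric and $S$-radical, then\dots''), the \cref{Chain}-style eliminations you sketch suffice in principle, and your closing appeal to MAGMA coincides with the paper's actual justification. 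What your approach buys is a conceptual explanation of the $q=2$ exceptions that the bare computation leaves opaque; what the computation buys is certainty about the case-by-case bookkeeping you defer.

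Two of your side claims need repair, though neither affects the final list. First, in identifying $\mathcal{A}(Q_1)$: at $q=2$ the group $Q_1$ is extraspecial of type $2^{1+4}_+$, and its elementary abelian subgroups of order $2^3$ are the preimages of the totally \emph{singular} $2$-spaces for the quadratic form induced by squaring, not merely the totally isotropic ones for the commutator form --- an isotropic $2$-space containing a nonsingular vector has abelian preimage of exponent $4$. Second, your assertion that family (iii) is a $q=2$-only degeneracy ``ruled out by exponent and failure-to-factorize arguments'' for larger $q$ is wrong: \cref{U4Centric1} keeps exactly these elementary abelian subgroups of order $q^3$ in the $S$-centric, $S$-radical list for \emph{every} $q$, and \cref{SEFF} excludes them only from being essential, a strictly stronger condition; the genuine $q=2$-only family is (ii). Finally, the ``main obstacle'' you anticipate for family (iii) evaporates: an elementary abelian $E$ of order $2^3$ has $\Out(E)=\Aut(E)\cong\GL_3(2)$, whose $O_2$ is trivial, so such subgroups are automatically $S$-radical and only centricity requires verification.
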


\begin{proposition}\label{q=2Class2}
Let $\fs$ be a saturated fusion system over a Sylow $2$-subgroup of $\PSU_4(2)$. Then one of the following holds:
\begin{enumerate}
\item $\fs=\fs_S(S:\Out_{\fs}(S))$;
\item $\fs=\fs_S(Q_2:\Out_{\fs}(Q_2))$ where $\Out_{\fs}(Q_2)\cong \PSL_2(4)$;
\item $\fs=\fs_S(Q_1:\Out_{\fs}(Q_1))$ where $\Out_{\fs}(Q_1)$ is isomorphic to a subgroup of $\Sym(3)\times 3$;
\item $\fs=\fs_S(Q_x:\Out_{\fs}(Q_x))$ where $Q_x=C_S(x)$ for any $x\in S'\setminus Z(S)$, and $\Out_{\fs}(Q_x)\cong\Sym(3)$;
\item $\fs=\fs_S(M)$ where $M\cong 2^4: (\Sym(3)\times \Sym(3))$;
\item $\fs=\fs_S(M)$ where $M\cong 2^3: \PSL_3(2)$;
\item $\fs=\fs_S(G)$ where $G\cong \PSU_4(2)$; or
\item $\fs=\fs_S(G)$ where $G\cong\PSL_4(2)$.
\end{enumerate}
\end{proposition}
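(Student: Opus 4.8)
The plan is to exploit the small order $|S| = 2^6$ together with the preceding proposition, which has already reduced the $S$-centric, $S$-radical subgroups of $S$ to the list $\{S, Q_1, Q_2, Q_x, A\}$ up to $S$-conjugacy, where $Q_x = C_S(x)$ for $x \in S' \setminus Z(S)$ has order $2^5$ and $A \in \mathcal{A}(Q_1)$ with $A \not\le Q_2$ has order $2^3$. Since every essential subgroup of $\fs$ is $S$-centric and $S$-radical, we have $\mathcal{E}(\fs) \subseteq \{Q_1, Q_2, Q_x, A\}$ up to conjugacy. By the Alperin--Goldschmidt fusion theorem, the classification then reduces to determining, for each essential $E$, the automizer $\Out_{\fs}(E)$ together with the induced action of $\Aut_{\fs}(S)$, and assembling these into a saturated system.

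The first step is to constrain the automizers. For each $E$ in the list I would compute $\Out(E) = \Aut(E)/\Inn(E)$ and the image $\Out_S(E)$. Essentiality forces $\Out_{\fs}(E)$ to contain a strongly $2$-embedded subgroup with $\Out_S(E) \in \syl_2(\Out_{\fs}(E))$; by Bender's theorem \cite{Bender} (which, as remarked in the surrounding discussion, removes any $\mathcal{K}$-group hypothesis at $p=2$), this means $O^{2'}(\Out_{\fs}(E))$ is a rank-$1$ group of Lie type in characteristic $2$. Combined with the inequality $|E/\Phi(E)| \ge |\Out_S(E)|^2$ of \cref{E-Bound}, this sharply restricts the possibilities: it forces $O^{2'}(\Out_{\fs}(Q_2)) \cong \SL_2(4) = \PSL_2(4)$, confines $\Out_{\fs}(Q_1)$ to a subgroup of $\Sym(3) \times 3$, yields $\Out_{\fs}(Q_x) \cong \Sym(3) = \SL_2(2)$, and leaves only finitely many admissible configurations for the low-rank subgroup $A$.

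Next I would assemble the compatible configurations, distinguishing by the number of essentials. When at most one of $Q_1, Q_2, Q_x$ is essential, the system is constrained with $O_2(\fs) \ne \{1\}$; by the Model Theorem (\cref{model}) it equals $\fs_S(E : \Out_{\fs}(E))$, which produces outcomes (i)--(iv). When several essentials occur together, their automizers must agree on common overlaps and generate a saturated subsystem, and the combinatorial interplay forces $S$ to embed into one of the larger completions. Here I would use the exceptional coincidences that $S$ is also isomorphic to a Sylow $2$-subgroup of $\PSL_4(2) \cong \Alt(8)$ and that $\PSU_4(2) \cong \PSp_4(3)$: these let me recognize the realizing groups $2^4 : (\Sym(3) \times \Sym(3))$, $2^3 : \PSL_3(2)$, $\PSU_4(2)$, and $\PSL_4(2)$ of outcomes (v)--(viii). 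It is precisely the $\PSL_4(2)$ coincidence that explains why this degenerate case escapes the general machinery of \cref{PSUCor} and must be treated on its own.

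The hard part will be certifying that the list is exhaustive — that no further combination of automizer choices and $\Aut_{\fs}(S)$-actions yields an unlisted (possibly exotic) saturated system — and disentangling the genuinely close cases, such as the constrained $2^4:(\Sym(3)\times\Sym(3))$-type system versus the simple completions. Since $|S| = 2^6$ is small, I would discharge this final verification with MAGMA \cite{Comp1}: directly enumerate all saturated fusion systems on $S$, check that each coincides with one of the eight systems above, and exhibit a realizing finite group in every case. This computation simultaneously confirms saturation, fixes the precise automizers, and shows that every system on the list is realizable, so that in particular none is exotic.
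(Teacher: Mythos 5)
Your proposal is correct and takes essentially the same route as the paper: the paper offers no hand proof of this proposition, but states just beforehand that since $|S|=2^6$ one can directly compute the $S$-centric, $S$-radical subgroups and then classify all saturated fusion systems on $S$ with the aid of MAGMA \cite{Comp1}, which is exactly the decisive enumeration you fall back on. Your preliminary narrowing of the automizers via Bender's theorem and \cref{E-Bound} mirrors the paper's general-$q$ toolkit and is a harmless supplement, since the exhaustiveness, saturation, and realizability checks are in both cases discharged by the same computer calculation.
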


Henceforth, we suppose that $q>2$. Consider $Q_1, Q_2$ and their normalizers as subgroups of $\PSU_4(q)$. Then, as $\GF(p)$-modules, $Q_2$ is a natural $\Omega_4^-(q)$-module for $O^{p'}(\Aut_{\PSU_4(q)}(Q_2))\cong\PSL_2(q^2)$ while $Q_1/Z(Q_1)$ is the direct sum of two natural $\SL_2(q)$-modules for $O^{p'}(\Out_{\PSU_4(q)}(Q_1))\cong\SL_2(q)$. With this information, we can properly analyze the centralizers of elements in $S$.

\begin{lemma}\label{Q_2Omega}
Let $F\le S$ be such that $F\not\le Q_2$. Then one of the following occurs:
\begin{enumerate}
\item $[Q_2, F]=[Q_2, S]=S'$ and $C_{Q_2}(F)=C_{Q_2}(S)=Z(S)$;
\item $p=2$, $[Q_2, F]=C_{Q_2}(F)$ has order $q^2$ and $|FQ_2/Q_2|\leq q$; or
\item $p$ is odd, $|[Q_2, F]|=|C_{Q_2}(F)|=q^2$, $S'=[Q_2, F]C_{Q_2}(F)$, $Z(S)=C_{[Q_2, F]}(F)$ and $|FQ_2/Q_2|\leq q$.
\end{enumerate}
\end{lemma}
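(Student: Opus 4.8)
The plan is to recognize this statement as nothing more than the concrete incarnation of the final trichotomy of \cref{Omega4}, transported through the isomorphism type of $Q_2=J(S)$ recorded just above: $Q_2$ is elementary abelian of order $q^4$ and is a natural $\Omega_4^-(q)$-module for $G:=O^{p'}(\Aut_{\PSU_4(q)}(Q_2))\cong\PSL_2(q^2)$. First I would note that, $Q_2$ being abelian, $\Aut_{\PSU_4(q)}(Q_2)=\Out_{\PSU_4(q)}(Q_2)$, and that since $J(S)$ is characteristic in $S$ and $C_S(J(S))\le J(S)$ we have $C_S(Q_2)=Q_2$. Hence $S$ acts on $Q_2$ with kernel $Q_2$, so $\Aut_S(Q_2)\cong S/Q_2$ has order $q^2$; as $S\in\syl_p(\PSU_4(q))$ normalizes $Q_2$, its image $\Aut_S(Q_2)$ is a Sylow $p$-subgroup of $\Aut_{\PSU_4(q)}(Q_2)$, and being a $p$-group it lies in $G$, so $\Aut_S(Q_2)\in\syl_p(G)$, matching the Sylow order $q^2$ of $\PSL_2(q^2)$.

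Next I would install the two identifications that rename the abstract module data in our notation. Since $Z(S)=X_{2\alpha+\beta}\le J(S)=Q_2$ centralizes $S$, and conversely any element of $Q_2$ fixed by $S$ lies in $Z(S)$, we get $C_{Q_2}(S)=Z(S)$, of order $q$; and since $[Q_2,S]\le S'$ with both subgroups of order $q^3$ by the order count in \cref{Omega4}, we get $[Q_2,S]=S'$. With these in hand, for $F\le S$ with $F\not\le Q_2$ I would set $\bar F:=\Aut_F(Q_2)=FQ_2/Q_2$, a nontrivial subgroup of $\Aut_S(Q_2)$; because the conjugation action of $F$ on $Q_2$ factors through $\bar F$, one has $[Q_2,F]=[Q_2,\bar F]$ and $C_{Q_2}(F)=C_{Q_2}(\bar F)$. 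The trichotomy of \cref{Omega4}, applied to $\bar F$ acting on $V:=Q_2$ with Sylow subgroup $\Aut_S(Q_2)$, then delivers the three cases: option (a) becomes outcome (i) via $C_{Q_2}(S)=Z(S)$ and $[Q_2,S]=S'$; option (b) (with $p=2$) becomes outcome (ii); and option (c) (with $p$ odd) becomes outcome (iii), the bound $|\bar F|\le q$ reading as $|FQ_2/Q_2|\le q$ in each of the last two.

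I do not expect a genuine obstacle here: the entire content is the correct matching of parameters together with the two renamings $C_{Q_2}(S)=Z(S)$ and $[Q_2,S]=S'$. The one point that requires care is the field bookkeeping — the acting group is $\PSL_2(q^2)$, whose Sylow $p$-subgroup has order $q^2=|S/Q_2|$, while the module parameter of \cref{Omega4} is $q=p^n$, so that $|Q_2|=q^4$, $|C_{Q_2}(S)|=q$ and $|[Q_2,F]|=|C_{Q_2}(F)|=q^2$ all come out with the right exponents. Once $\Aut_S(Q_2)$ is confirmed to be a full Sylow $p$-subgroup of $G$, so that \cref{Omega4} genuinely applies, the remainder is a direct transcription.
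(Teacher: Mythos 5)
Your proposal is correct and follows exactly the paper's route: the paper's entire proof is the one-line observation that the lemma ``is a restatement of \cref{Omega4},'' relying on the identification (stated just before the lemma) of $Q_2$ as a natural $\Omega_4^-(q)$-module for $O^{p'}(\Aut_{\PSU_4(q)}(Q_2))\cong\PSL_2(q^2)$. Your write-up simply makes explicit the bookkeeping the paper leaves implicit --- $C_S(Q_2)=Q_2$, $\Aut_S(Q_2)\in\syl_p$ of the automizer, the renamings $C_{Q_2}(S)=Z(S)$ and $[Q_2,S]=S'$, and the parameter matching $|S/Q_2|=q^2$ versus the module parameter $q$ --- all of which is accurate.
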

\begin{proof}
This is a restatement of \cref{Omega4}.
\end{proof}

\begin{lemma}\label{q^5cent}
Let $x\in S'\setminus Z(S)$. Then $Q_2\le C_S(x)$, $|C_S(x)|=q^5$, $Z(C_S(x))=C_{Q_2}(C_S(x))$ has order $q^2$ and $C_S(x)'=[Q_2, C_S(x)]$ has order $q^2$.
\end{lemma}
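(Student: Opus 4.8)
The plan is to argue entirely inside the root group parametrization of $S$. I would write a typical element of $S'\setminus Z(S)$ as $x = x_{\alpha+\beta}(s)x_{2\alpha+\beta}(r)$ with $s\in\hat{\mathbb{K}}^\times$ and $r\in\mathbb{K}$, using that $S' = X_{\alpha+\beta}X_{2\alpha+\beta}$ and $Z(S)=X_{2\alpha+\beta}$. Since $Q_2 = J(S)$ is a natural $\Omega_4^-(q)$-module it is elementary abelian, and as $x\in S'\leq Q_2$ we get $Q_2\leq C_S(x)$ at once.

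First I would pin down $|C_S(x)|$. Writing $S = X_\alpha Q_2$ with $X_\alpha\cap Q_2=\{1\}$, every $g\in S$ has the form $g = x_\alpha(a)y$ with $y\in Q_2\leq C_S(x)$; since $[x_\alpha(a),x] = x_{2\alpha+\beta}(\epsilon''Tr(as))$ lies in $Z(S)$ and is therefore unchanged under conjugation by $y$, one gets $g\in C_S(x)$ iff $Tr(as)=0$. As $a\mapsto Tr(as)$ is a nonzero $\mathbb{K}$-linear functional on $\hat{\mathbb{K}}$, the set $A:=\{x_\alpha(a): Tr(as)=0\}$ is a subgroup of $X_\alpha$ of order $q$, so that $C_S(x)=AQ_2$ has order $q^5$.

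Next I would compute the center and derived subgroup. An element of $C_S(x)$ with nontrivial $X_\alpha$-component $x_\alpha(a)$, $a\ne 0$, cannot centralize $X_\beta\leq Q_2$, because $[x_\alpha(a),x_\beta(b)]$ has $X_{\alpha+\beta}$-component $x_{\alpha+\beta}(\epsilon ab)\ne 1$; hence $Z(C_S(x))\leq Q_2$, and since $Q_2$ is abelian $Z(C_S(x)) = C_{Q_2}(A) = C_{Q_2}(C_S(x))$. A direct commutator calculation identifies $C_{Q_2}(A)$ as the set of $x_{\alpha+\beta}(u)x_{2\alpha+\beta}(v)$ with $Tr(au)=0$ for all $a\in A$; because $A$ is a $1$-dimensional $\mathbb{K}$-subspace of $\hat{\mathbb{K}}$ and the trace form $(a,u)\mapsto Tr(au)$ is nondegenerate, the admissible $u$ form the $1$-dimensional orthogonal complement of $A$, which together with the free parameter $v\in\mathbb{K}$ gives $|C_{Q_2}(A)|=q^2$. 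For the derived subgroup, $A$ and $Q_2$ are abelian with $Q_2$ normal, so $C_S(x)' = [A,Q_2] = [Q_2,C_S(x)]$; feeding $A$ and $Q_2$ into the two commutator formulas shows this subgroup has $X_{\alpha+\beta}$-part running over the $1$-dimensional space $\{ab : a\in A,\, b\in\mathbb{K}\}=A$ and $X_{2\alpha+\beta}$-part filling $Z(S)$ (via the surjectivity of $u\mapsto Tr(au)$), so that $|[Q_2,C_S(x)]|=q^2$ as well.

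The commutator bookkeeping is routine: $S$ has class $3$ with $[S,S']\leq Z(S)$, so all commutators that appear land in $S'$ and the central correction terms vanish. The one point needing genuine care is the $\hat{\mathbb{K}}/\mathbb{K}$ linear algebra — checking that $a\mapsto Tr(as)$ and the trace form are nondegenerate and $\mathbb{K}$-linear, so that each of $A$, its orthogonal complement, and $\{ab\}$ has order exactly $q$ — and ensuring the argument is uniform in $p$ (in particular when $p=2$, where the signs $\epsilon,\epsilon',\epsilon''$ collapse). Alternatively, once $|C_S(x)|=q^5$ is in hand, the two orders $|C_{Q_2}(C_S(x))|=|[Q_2,C_S(x)]|=q^2$ follow immediately from \cref{Q_2Omega} applied to $F=C_S(x)\not\leq Q_2$, after ruling out case (i) of that lemma by exhibiting a fixed point of $C_S(x)$ on $Q_2$ outside $Z(S)$.
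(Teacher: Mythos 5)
Your proof is correct, but it follows a genuinely different route from the paper's. For $|C_S(x)|=q^5$ the paper never touches the trace form: it uses $x\in Q_1\setminus Z(Q_1)$ together with the ultraspecial structure of $Q_1$ (so that $y\mapsto [x,y]$ maps $Q_1$ onto $Z(S)$, giving $|C_{Q_1}(x)|=q^4$), whence $C_{Q_1}(x)Q_2\le C_S(x)$ forces $|C_S(x)|\geq q^5$, while $|C_S(x)Q_1|=|C_S(x)||Q_1|/|C_{Q_1}(x)|\leq |S|=q^6$ forces equality. For the remaining claims the paper does exactly what you relegate to your final sentence: since $Q_2$ is self-centralizing and $Q_2\le C_S(x)$, one has $Z(C_S(x))=C_{Q_2}(C_S(x))$, and \cref{Q_2Omega} applied to $F=C_S(x)\not\le Q_2$ --- with case (i) excluded precisely because $x\in Z(C_S(x))\setminus Z(S)$ --- yields both orders $q^2$; the identity $C_S(x)'=[Q_2,C_S(x)]$ is then read off from the commutator formulas. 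So your closing ``alternative'' \emph{is} the paper's actual argument, and your main computation is a self-contained replacement for it. What your route buys is explicitness: you exhibit $C_S(x)=AQ_2$, its center and its derived subgroup as concrete root subspaces, with no appeal to the module-theoretic input behind \cref{Omega4}. What the paper's route buys is insulation from the fine print of the twisted commutator relations: the paper expressly warns that only the qualitative shape of the $\epsilon,\epsilon',\epsilon''$ formulas is guaranteed, and in ${}^2\mathrm{A}_3$ the $X_{2\alpha+\beta}$-coefficient in $[x_\alpha(t),x_{\alpha+\beta}(u)]$ is in the standard references a sesquilinear expression of the shape $Tr(tu^q)$ rather than the bilinear $Tr(tu)$ as printed (one can see something must be adjusted at $p=2$: with the bilinear form your $[Q_2,C_S(x)]=X_{\alpha+\beta}(A)Z(S)$ and $C_{Q_2}(C_S(x))=X_{\alpha+\beta}(s\mathbb{K})Z(S)$ would generally differ, contradicting case (ii) of \cref{Q_2Omega}). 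Your argument survives this because you only use $\mathbb{K}$-linearity and nondegeneracy of the pairing --- kernels, orthogonal complements and images all still have order exactly $q$, which is all the lemma's conclusions require --- so you were right to flag the $\hat{\mathbb{K}}/\mathbb{K}$ linear algebra as the one point needing care, though the caveat should extend to the precise form of the pairing and not just its nondegeneracy.
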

\begin{proof}
Let $x\in S'\setminus Z(S)$. Then since $x\in Q_2$, and $Q_2$ is elementary abelian, $Q_2\le C_S(x)$ so that $Q_2=J(S)=J(C_S(x))$ is characteristic in $C_S(x)$. Moreover, since $x\in Q_1\setminus Z(Q_1)$, we have that $|C_{Q_1}(x)|=q^4$. Then $C_{Q_1}(x)Q_2\le C_S(x)$ and so $|C_S(x)|\geq q^5$. Suppose $|C_S(x)|>q^5$. Then $q^6<|C_S(x)||Q_1|/|C_{Q_1}(x)|=|C_S(x)Q_1|\leq |S|=q^6$, a contradiction.

Since $Q_2$ is self-centralizing and $Q_2\le C_S(x)$, we have that $Z(C_S(x))=C_{Q_2}(C_S(x))$ may be determined from the information provided in \cref{Q_2Omega}. Indeed, since $x \in Z(C_S(x))\setminus Z(S)$, we have that $|[Q_2, C_S(x)]|=|Z(C_S(x))|=q^2$. Finally, it is clear from the commutator formulas that $C_S(x)'=[Q_2, C_S(x)]$, as required.
\end{proof}

\begin{lemma}
Let $x\in Q_2\setminus S'$. Then $C_S(x)=Q_2$.
\end{lemma}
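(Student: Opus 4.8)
The statement to prove is that for $x \in Q_2 \setminus S'$, we have $C_S(x) = Q_2$. Since $Q_2 = J(S)$ is elementary abelian (being the natural $\Omega_4^-(q)$-module in the relevant action), it is immediate that $Q_2 \le C_S(x)$. So the real content is the reverse inclusion $C_S(x) \le Q_2$, i.e. that no element outside $Q_2$ centralizes $x$.

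**Main approach.** The plan is to invoke the module structure of $Q_2$ via \cref{Q_2Omega} (equivalently \cref{Omega4}). First I would argue that any element $g \in C_S(x) \setminus Q_2$ would give a subgroup $F := \langle g \rangle$ with $F \not\le Q_2$ that centralizes $x$; in particular $x \in C_{Q_2}(F)$. The key point is that $x \in Q_2 \setminus S'$, so $x$ lies outside $S' = [Q_2, S]$. I would then read off from \cref{Q_2Omega} what $C_{Q_2}(F)$ can be for each of the three cases. In case (i), $C_{Q_2}(F) = Z(S) \le S'$, so $x \in S'$, contradicting $x \notin S'$. In cases (ii) and (iii) (for $p = 2$ and $p$ odd respectively), one has $[Q_2, F] = C_{Q_2}(F)$ of order $q^2$ (case (ii)) or $C_{Q_2}(F)$ of order $q^2$ with $Z(S) = C_{[Q_2,F]}(F)$ (case (iii)); in both situations $C_{Q_2}(F) \le S'$ since $[Q_2, F] \le [Q_2, S] = S'$ and, in the odd case, $C_{Q_2}(F) \le S'$ as well because it is a proper submodule sitting inside $S'$. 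Hence again $x \in C_{Q_2}(F) \le S'$, contradicting $x \notin S'$.

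**Key steps in order.** First, establish $Q_2 \le C_S(x)$ from the fact that $Q_2$ is abelian and $x \in Q_2$. Second, suppose for contradiction there is $g \in C_S(x) \setminus Q_2$, and set $F = \langle g\rangle \not\le Q_2$. Third, apply \cref{Q_2Omega} to $F$ and observe that in every one of the three cases, $C_{Q_2}(F)$ is contained in $S'$: this is because $C_{Q_2}(F)$ is either $Z(S)$ (case (i)) or has order $q^2$ and lies inside $S' = [Q_2, S]$ (cases (ii), (iii)), using that $[Q_2, F] \le S'$ and the explicit identifications in \cref{Q_2Omega}. Fourth, since $g$ centralizes $x$, we have $x \in C_{Q_2}(F) \le S'$, contradicting the hypothesis $x \in Q_2 \setminus S'$. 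Therefore no such $g$ exists and $C_S(x) = Q_2$.

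**Expected main obstacle.** The delicate part is verifying cleanly that $C_{Q_2}(F) \le S'$ in the cases $p$ odd and $p = 2$ where $|C_{Q_2}(F)| = q^2$, since a priori $C_{Q_2}(F)$ could contain elements outside $S'$. I would pin this down by noting that in the $\Omega_4^-(q)$-module structure, $S' = [Q_2, S]$ has index $q$ in $Q_2$ and is the unique $S$-invariant submodule of that index containing $Z(S)$; the centralizer $C_{Q_2}(F)$ for $F \not\le Q_2$ is, in cases (ii) and (iii), precisely the $q^2$-order submodule determined by the module geometry, which is contained in $[Q_2, S] = S'$. The cleanest route is to observe directly from \cref{Q_2Omega} that $C_{Q_2}(F) \le [Q_2, S] = S'$ holds in all three cases — in case (i) because $C_{Q_2}(F) = Z(S) \le S'$, and in cases (ii), (iii) because the displayed equalities force $C_{Q_2}(F)$ into $S'$ — so that any element centralizing $x \notin S'$ from outside $Q_2$ is impossible.
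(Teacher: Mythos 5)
Your proof is correct, but it takes a genuinely different route from the paper's. You work entirely inside $Q_2$: any $g\in C_S(x)\setminus Q_2$ generates $F=\langle g\rangle\not\le Q_2$, and in each of the three cases of \cref{Q_2Omega} one has $C_{Q_2}(F)\le S'$ --- in case (i) because $C_{Q_2}(F)=Z(S)\le S'$, in case (ii) because $C_{Q_2}(F)=[Q_2,F]\le[Q_2,S]=S'$, and in case (iii) because the stated equality $S'=[Q_2,F]C_{Q_2}(F)$ forces the containment --- so $x\in C_{Q_2}(F)\le S'$ contradicts $x\notin S'$. (A uniform one-line justification, bypassing the case division entirely, is \cref{Omega4}(iv): $[V,S]=C_V(s)\times[V,s]$ for every $1\ne s$, applied to the image of $g$ in $S/Q_2$, which is nontrivial and acts on $Q_2$ exactly as $g$ does since $Q_2$ is abelian.) The paper instead works inside $Q_1$: since $x\in Q_2\setminus S'$ implies $x\notin Q_1$ (as $Q_1\cap Q_2=S'$), and $Q_1/Z(S)$ is a direct sum of two natural $\SL_2(q)$-modules for $O^{p'}(\Out_{\PSU_4(q)}(Q_1))$, one gets $|C_{Q_1/Z(S)}(x)|=q^2$ and hence $C_{Q_1}(x)=S'$; the order computation $q^6=|S|\geq |C_S(x)Q_1|=|C_S(x)|\,|Q_1|/|S'|\geq q^6$ then pins down $|C_S(x)|=q^4$, whence $C_S(x)=Q_2$. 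Your approach avoids both the $Q_1$-module structure and the counting argument, relying only on the $\Omega_4^-(q)$-module geometry of $Q_2$; the paper's approach yields the extra facts $|C_S(x)|=q^4$ and $S=C_S(x)Q_1$ along the way. Both arguments are complete, and I see no gap in yours.
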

\begin{proof}
Let $x\in Q_2\setminus S'$. Since $Q_2$ is abelian, $Q_2\le C_S(x)$ and $|C_S(x)|\geq q^4$. We have that $S'\le C_{Q_1}(x)$ so that $C_{Q_1/Z(S)}(x)$ is of order at least $q^2$. But $Q_1/Z(S)$, as a $\mathrm{GF}(p)\Out_{\PSU_4(q)}(Q_1)$-module, is a direct sum of natural $\SL_2(q)$-modules so that $|C_{Q_1/Z(S)}(x)|=q^2$ from which it follows that $S'=C_{Q_1}(x)$. Then $q^6=|S|\geq|C_S(x)Q_1|=|C_S(x)||Q_1|/|S'|\geq q^6$ so that $S=C_S(x)Q_1$, $|C_S(x)|=q^4$ and $C_S(x)=Q_2$.
\end{proof}

\begin{lemma}\label{q^4cent}
Let $x\in S\setminus Q_2$ be of order $p$. Then $C_S(x)\le Q_1$, $|C_S(x)|=q^4$, $|C_S(x)\cap Q_2|=q^2$, $m_p(C_S(x))\leq 3n$, $C_S(x)'=Z(S)$ and $|Z(C_S(x))|=q^2$.
\end{lemma}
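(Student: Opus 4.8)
Here is how I would approach \cref{q^4cent}.

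The plan is to read off $C_S(x)$ directly from the Chevalley commutator formulas recalled in \cref{G2Sylow}. Write $x$ in the normal form $x = x_\alpha(a)x_\beta(b)x_{\alpha+\beta}(c)x_{2\alpha+\beta}(d)$ afforded by $S = X_\alpha X_\beta X_{\alpha+\beta} X_{2\alpha+\beta}$. Since $Q_2 = J(S) = X_\beta X_{\alpha+\beta} X_{2\alpha+\beta}$, the hypothesis $x \notin Q_2$ says exactly that the $X_\alpha$-coordinate $a$ is nonzero, whereas $Q_1 = X_\alpha X_{\alpha+\beta} X_{2\alpha+\beta}$ consists of the elements with trivial $X_\beta$-coordinate. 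The first, and I expect the most delicate, step is to use the order-$p$ hypothesis to locate $x$ inside $Q_1$, i.e. to show $b = 0$. When $p = 2$, collecting $x^2$ with the commutator formulas (and using $2c = 0$) shows that the $X_{\alpha+\beta}$-coordinate of $x^2$ is $-\epsilon\, ab$, so the relation $x^2 = 1$ together with $a \ne 0$ forces $b = 0$; this reduction, and the way it interacts with the degeneracies of the commutator formulas and the exponent estimates of \cref{PSUExponent}, is the crux of the argument.

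Granting that $x \in Q_1$ (so $b = 0$), the computation is mechanical. For an arbitrary $y = x_\alpha(s)x_\beta(v)x_{\alpha+\beta}(w)x_{2\alpha+\beta}(e)$, using that $X_{2\alpha+\beta} = Z(S)$ is central, that $[X_\beta, X_{\alpha+\beta}] = 1$, and that $[S', Q_2] = 1$ (as $S' \le Q_2$ is abelian), one finds
\[
[x,y] = x_{\alpha+\beta}\!\big(\epsilon(av - sb)\big)\,x_{2\alpha+\beta}\!\big(\epsilon''\,\mathrm{Tr}(aw - cs) + \epsilon'(N(a)v - N(s)b)\big).
\]
With $b = 0$ the first factor forces $v = 0$ because $a \ne 0$, whence $y \in X_\alpha X_{\alpha+\beta} X_{2\alpha+\beta} = Q_1$; this is precisely the assertion $C_S(x) \le Q_1$. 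The surviving condition is the single $\mathbb{K}$-linear equation $\mathrm{Tr}(aw - cs) = 0$, which is surjective in $w$ since $a \ne 0$. Counting its solutions — $s \in \hat{\mathbb{K}}$ free, $w$ ranging over a coset of $\{w : \mathrm{Tr}(aw) = 0\}$ of order $q$, and $e \in \mathbb{K}$ free — yields $|C_S(x)| = q^4$; setting $s = 0$ gives $C_S(x) \cap Q_2 = C_{Q_2}(x)$ of order $q^2$, consistent with case (ii) or (iii) of \cref{Q_2Omega}.

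The remaining assertions are immediate consequences of the explicit description $C_S(x) = \{x_\alpha(s)x_{\alpha+\beta}(w)x_{2\alpha+\beta}(e) : \mathrm{Tr}(aw - cs) = 0\} \le Q_1$. The containment $C_S(x) \le Q_1$ gives both $m_p(C_S(x)) \le m_p(Q_1) = 3n$ and $C_S(x)' \le Q_1' = Z(S)$; evaluating $[x_\alpha(s), x_{\alpha+\beta}(w)] = x_{2\alpha+\beta}(\epsilon''\,\mathrm{Tr}(sw))$ over the admissible pairs shows $\mathrm{Tr}(sw)$ sweeps out all of $\mathbb{K}$, so in fact $C_S(x)' = Z(S)$. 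For the centre, the commutator of two elements of $C_S(x)$ is $x_{2\alpha+\beta}(\epsilon''\,\mathrm{Tr}(sw' - s'w))$, so $Z(C_S(x))$ is governed by the radical of the $\GF(p)$-alternating pairing $((s,w),(s',w')) \mapsto \mathrm{Tr}_{\mathbb{K}/\GF(p)}\,\mathrm{Tr}(sw' - s'w)$ restricted to the codimension-$n$ subspace $\{\mathrm{Tr}(aw - cs) = 0\}$; a nondegeneracy and dimension count shows this radical has order $q$, and together with the free central coordinate $e$ this gives $|Z(C_S(x))| = q^2$. This last symplectic computation is the only point beyond the reduction of the first paragraph that requires genuine, if routine, work.
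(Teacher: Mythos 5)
Your second half is essentially sound and is a coordinate version of the paper's argument: granting $x\in Q_1$, your description $C_S(x)=\{x_\alpha(s)x_{\alpha+\beta}(w)x_{2\alpha+\beta}(e) : \mathrm{Tr}(aw-cs)=0\}$ yields all six conclusions, including the radical computation for $|Z(C_S(x))|=q^2$ (the radical is $\{(\mu a,\mu c):\mu\in\mathbb{K}\}$). The paper instead works structurally: it uses \cref{Q_2Omega} to get $C_S(x)\cap Q_2\le S'$ of order $q^2$, shows $C_S(x)'\le Z(S)$ using that $S$ splits over $Q_2$, and then invokes the uniqueness characterization of \cref{Q1Unique} to force $C_S(x)S'=Q_1$, after which everything is read off from the ultraspecial structure of $Q_1$ with no symplectic bookkeeping. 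So far the two routes are parallel, and your $p=2$ reduction (the $X_{\alpha+\beta}$-coordinate of $x^2$ is $\pm\epsilon ab$, so $x^2=1$ and $a\ne 0$ force $b=0$) is correct.

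The genuine gap is exactly at the step you yourself flag as the crux, and for odd $p$ it cannot be filled: the implication ``$x$ of order $p$, $x\notin Q_2$ implies $b=0$'' is false when $p$ is odd. By \cref{PSUExponent}, for $p\geq 5$ the group $S$ has exponent $p$, so $x=x_\alpha(1)x_\beta(1)$ has order $p$ and lies in $S\setminus(Q_1\cup Q_2)$; your own first-coordinate equation $\epsilon(av-sb)=0$ then ties $s$ to $v$ and gives $|C_S(x)|=q^3$ for such mixed elements, and trivially $C_S(x)\not\le Q_1$ since $x\in C_S(x)$. (A similar computation produces order-$3$ elements outside $Q_1\cup Q_2$ when $p=3$, e.g. with $\mathrm{Tr}(a^2)=0$.) So for odd $p$ no manipulation of the order-$p$ hypothesis, the commutator degeneracies, or the exponent bounds can place $x$ in $Q_1$: the statement is only correct under the reading $x\in Q_1\setminus Q_2$, i.e. your ``$b=0$'' has to be a hypothesis rather than a deduction. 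It is worth noting that the paper's own proof never derives $x\in Q_1$ from the order-$p$ hypothesis either: its unproved assertion that $C_S(x)S'$ has order $q^5$ (equivalently, that $C_S(x)Q_2=S$) already presupposes $x\in Q_1$, where it follows from $[Q_1,x]\le Z(S)$ and hence $|C_{Q_1}(x)|\geq q^4$. Keep your $p=2$ argument, which makes explicit something the paper leaves silent, but the odd-$p$ branch of your plan collapses and must be replaced by the stronger hypothesis on $x$.
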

\begin{proof}
Upon demonstrating that $C_S(x)\le Q_1$, the results follow from the structure of $Q_1$. Since $C_S(x)$ is centralized by $x\not\in Q_2$, it follows that $C_S(x)\cap Q_2\le S'$ and $C_S(x)S'$ has order $q^5$ and intersects $Q_2$ in $S'$. Hence, if $(C_S(x)S')'=Z(S)$, then $C_S(x)S'=Q_1$ by \cref{Q1Unique}. It is clear from \cref{Q_2Omega} that $[S', C_S(x)]=Z(S)$ and so it remains to show that $C_S(x)'\le Z(S)$. Indeed, since $S$ splits over $Q_2$, $C_S(x)$ splits over $S'$ and since $C_S(x)S'/S'$ is elementary abelian, we need only show that $[C_S(x)\cap S', C_S(x)]=Z(S)$. But this follows from \cref{Q_2Omega}, and the result is proved.
\end{proof}

With this information, we can determine the $S$-centric, $S$-radical subgroups of $S$, which we do over the following two propositions.

\begin{proposition}
\label{U4Centric1}
Suppose that $E$ is an $S$-centric, $S$-radical subgroup of $S$ and $S'\not\le E$. Then $E$ is elementary abelian of order $q^3$, $E\le Q_1$ and either
\begin{enumerate}
\item $p=2$, $E\normaleq S$ and $|E\cap S'|=q^2$;
\item $p$ is odd, $N_S(E)=Q_1$ and $|E\cap S'|=q^2$; or
\item $p$ is arbitrary, $N_S(E)=Q_1$ and $E\cap S'=Z(S)$.
\end{enumerate}
Moreover, in all cases, $E$ is not essential in any saturated fusion system $\fs$ over $S$.
\end{proposition}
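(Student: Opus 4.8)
The plan is to locate $Z(E)$ using the centraliser computations in \cref{q^4cent}, \cref{Q_2Omega} and the preceding lemmas, force $E$ into $Q_1$, and then use the characteristic-chain criterion \cref{Chain} repeatedly to pin down $|E|$, $E\cap S'$ and $N_S(E)$; non-essentiality will then follow from the failure-to-factorise theorem \cref{SEFF}, which carries no $\mathcal K$-group hypothesis.

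First I would show $Z(E)\not\le S'$. Since $E$ is $S$-centric we have $Z(S)\le Z(E)$, and $[S',E]\le[S',S]=Z(S)$ by the commutator formulas. If $Z(E)\le S'$ then, as $S'$ is abelian, $S'$ centralises both $Z(E)$ and $E/Z(E)$, so $S'$ centralises the characteristic chain $\{1\}\normaleq Z(E)\normaleq E$; \cref{Chain} then forces $S'\le E$, contrary to hypothesis. Next, if some $x\in Z(E)\cap Q_2$ lay outside $S'$ then $C_S(x)=Q_2$ (the centraliser lemma for elements of $Q_2\setminus S'$), whence $E\le Q_2$; but $Q_2$ is abelian and $E$ is centric, giving $Q_2\le C_S(E)=Z(E)\le E$ and $S'\le Q_2\le E$, a contradiction. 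Thus $Z(E)\cap Q_2\le S'$. Replaying the chain argument with $\Omega(Z(E))$ in place of $Z(E)$ (using $Z(S)\le\Omega(Z(E))$, as $Z(S)$ is elementary abelian) shows $\Omega(Z(E))\not\le S'$, so there is $z\in\Omega(Z(E))\setminus Q_2$ of order $p$. By \cref{q^4cent}, $C_S(z)\le Q_1$ has order $q^4$ with $C_S(z)'=Z(S)$, and since $z\in Z(E)$ we obtain $E\le C_S(z)\le Q_1$.

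Now I would determine the isomorphism type of $E$. As $E\le Q_1$ we have $E'\le Q_1'=Z(S)$ and $[Q_1,E]\le Z(S)\le E$, so $Q_1$ normalises $E$ and $Q_1\le N_S(E)$. If $E$ were nonabelian with $E'=Z(S)$, then $E'$ is characteristic and $Q_1$ centralises the chain $\{1\}\normaleq E'\normaleq E$ (since $[Q_1,E]\le Z(S)=E'$), so \cref{Chain} gives $Q_1\le E$, i.e. $E=Q_1\ge S'$, a contradiction; the same chain excludes $|E|=q^4$, so $E\ne C_S(z)$. Using that $Q_1$ is special with $Q_1/Z(S)$ a nondegenerate symplectic $\GF(q)$-space (the sum of two natural $\SL_2(q)$-modules), the $S$-centric condition $C_{Q_1}(E)=Z(E)$ says that $\overline E:=E/Z(S)$ equals its perp, forcing $\overline E$ to be a $\GF(q)$-Lagrangian; hence $E$ is elementary abelian of order $q^3$. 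The \textbf{main obstacle} is exactly the exclusion of a nonabelian $E$ with \emph{proper} derived subgroup $E'<Z(S)$, whose image $\overline E$ need not be $\GF(q)$-rational and so escapes the bare Lagrangian count; I expect to settle this by the explicit commutator formulas together with the $\GF(q)$-rationality forced by the module structure of $Q_1/Z(S)$, thereby reducing to the case $E'=Z(S)$ already handled. Since both $\overline E$ and $\overline{S'}$ are Lagrangians and $E\ne S'$, their intersection is $0$- or $1$-dimensional, giving $E\cap S'=Z(S)$ or $|E\cap S'|=q^2$.

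Finally I would compute $N_S(E)$ and rule out essentiality. As $Q_1\le N_S(E)$ and $Q_1$ is maximal in $S$, either $N_S(E)=Q_1$ or $E\normaleq S$, the latter holding iff $Q_2$ normalises $E$, i.e. iff $[Q_2,E]\le E\cap S'$. Here \cref{Q_2Omega} is decisive: since $E$ is abelian, $E\cap S'=E\cap Q_2\le C_{Q_2}(E)$, and that lemma shows these have equal order, so $E\cap S'=C_{Q_2}(E)$. When $E\cap S'=Z(S)$ we have $|EQ_2/Q_2|=q^2$, so case (i) applies and $[Q_2,E]=S'\not\le E\cap S'$, giving $N_S(E)=Q_1$ (statement (iii)); when $|E\cap S'|=q^2$ we are in case (ii) or (iii) of \cref{Q_2Omega}, and $[Q_2,E]=C_{Q_2}(E)=E\cap S'$ holds precisely when $p=2$, yielding $E\normaleq S$ (statement (i)), whereas for $p$ odd $[Q_2,E]\ne C_{Q_2}(E)$ forces $N_S(E)=Q_1$ (statement (ii)). For non-essentiality, suppose $E$ is essential, so $\Out_S(E)=N_S(E)/E\in\syl_p(\Out_{\fs}(E))$ has order $q^2$ or $q^3$. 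The subgroup $A:=\Out_{Q_1}(E)=Q_1/E$ is elementary abelian of order $q^2$, acts faithfully with $[E,A]=Z(S)$ and $C_E(A)=Z(S)$, so $|E/C_E(A)|=q^2\le|A|$ and $E$ is an FF-module for $\Out_{\fs}(E)$. By \cref{SEFF}, $O^{p'}(\Out_{\fs}(E))\cong\SL_2(p^m)$ with $\Out_S(E)\in\syl_p$, whence $p^m=|\Out_S(E)|\ge q^2$ and $E$ must contain a natural $\SL_2(p^m)$-module of order $p^{2m}\ge q^4>q^3=|E|$, a contradiction; as \cref{SEFF} is $\mathcal K$-free this handles all primes $p$ uniformly.
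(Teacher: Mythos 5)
Your opening reduction (forcing $\Omega(Z(E))\not\le Q_2$ by chain arguments and then $E\le C_S(z)\le Q_1$ via \cref{q^4cent}) is exactly the paper's first step, and your non-essentiality argument at the end is correct and in fact more uniform than the paper's: you use the order-$q^2$ offender $\Out_{Q_1}(E)$ so that \cref{SEFF} forces a Sylow of order $p^m=|\Out_S(E)|\geq q^2$ and a natural module of order $p^{2m}\geq q^4>|E|$, where the paper instead runs \cref{SEFF} with order-$q$ offenders ($\Out_{S'}(E)$, resp.\ $\Out_{C_S(e)}(E)$) and contradicts $|N_S(E)/E|\geq q^2$. The middle of your proof, however, has genuine gaps.

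The obstacle you flag yourself --- nonabelian $E$ with $1\ne E'<Z(S)$, which is nonvacuous whenever $n>1$ --- is never closed: ``I expect to settle this'' is not an argument, and nothing in your Lagrangian formalism can address it, because perps only compute centralizers and say nothing about commutation inside $E$; the only $\GF(q)$-rationality available is that of $\bar{E}^{\perp}$, not of $\bar{E}$. Worse, there is a second, unflagged failure: even granting $E$ abelian, the inference ``Lagrangian image, hence elementary abelian'' is false at $p=2$. Consider $E_0:=\{x_{\alpha}(t)x_{\alpha+\beta}(t)\mid t\in\hat{\mathbb{K}}\}Z(S)$. The commutator form vanishes on this diagonal, so $E_0$ is abelian of order $q^3$ with $\overline{E_0}$ a Lagrangian, hence $C_S(E_0)=E_0$ and $E_0$ is $S$-centric by your own computation; yet $(x_{\alpha}(t)x_{\alpha+\beta}(t))^2=x_{2\alpha+\beta}(\pm Tr(t^2))$ and $Tr(t^2)=(t+t^q)^2\ne 0$ for $t\notin\mathbb{K}$, so $E_0$ has exponent $4$. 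It escapes the proposition only because it is not $S$-radical: here $\mho(E_0)=Z(S)$ is characteristic and $Q_1$ centralizes $\{1\}\normaleq\mho(E_0)\normaleq E_0$. So radicality must be re-invoked after the Lagrangian step, and when $\mho(E)$ is a \emph{proper} subgroup of $Z(S)$ that chain fails too, landing you back in the same difficulty as the nonabelian case. The paper's proof avoids both problems by splitting on $E\cap S'$ and deploying chain arguments at every stage, so that $Z(S)\le E'$ or $|E|=q^3$ with explicit elementary abelian generation ($E=Z(E)(E\cap S')$, resp.\ $E=Z(C_S(e))Z(C_S(e'))$) is forced before any isomorphism type is claimed. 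Finally, a smaller but real error: $Q_1$ is not maximal in $S$ for $n>1$ (as $|S/Q_1|=q$), so ``either $N_S(E)=Q_1$ or $E\normaleq S$'' does not follow from maximality; you must show elementwise that $N_{Q_2}(E)\in\{S',Q_2\}$, e.g.\ for $p$ odd $[E,y]\le E\cap S'=C_{Q_2}(E)$ and $[E,y]\le[Q_2,E]$ force $[E,y]\le Z(S)$ by \cref{Q_2Omega}, which the commutator formulas then exclude for $y\in Q_2\setminus S'$ --- true, but it needs saying.
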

\begin{proof}
Suppose that $S'\not\le E$. Since $[E,S']\le [S, S']\le Z(S)\le \Omega(Z(E))$, we must have that $[S', \Omega(Z(E))]\ne\{1\}$ for otherwise $S'$ centralizes the chain $\{1\}\normaleq \Omega(Z(E))\normaleq E$, a contradiction by \cref{Chain} since $S$-radical. Since $S'$ centralizes $Q_2$, there is $x\in \Omega(Z(E))$ with $x\in S\setminus Q_2$ and $E\le C_S(x)$. In particular, $Z(C_S(x))\le Z(E)$, $|Z(E)Q_2/Q_2|\geq q$ and $E\le Q_1$ by \cref{q^4cent}.

Suppose first that $E\cap S'>Z(S)$. Then for $e\in (E\cap S')\setminus Z(S)$, $Z(E)\le C_S(e)$. In particular, $|Z(E)Q_2/Q_2|=q$. Moreover, $C_{S'}(\Omega(Z(E)))=Z(C_S(e))$ has order $q^2$ and centralizes the chain $\{1\}\normaleq \Omega(Z(E))\normaleq E$ so that $C_{S'}(\Omega(Z(E)))= E\cap S'$ has order $q^2$. Suppose that $|EQ_2/Q_2|>q$. Then by \cref{Q_2Omega}, we have $Z(S)=[E, E\cap S']\le E'$ and either $E'=Z(S)$ and $Q_1$ centralizes the chain $\{1\}\normaleq E'\normaleq E$, a contradiction since $E$ is $S$-radical and $S'\not\le E$; or $Z(S)<E'\le E\cap S'$, $C_E(E')=E\cap C_S(e)=Z(E)(E\cap S')$ has order $q^3$ and $[E, C_E(E')]=[E, S\cap E']=Z(S)$ is characteristic in $E$ and again, $Q_1$ centralizes a characteristic chain. Thus, $|EQ_2/Q_2|=q$ and $E=Z(E)(E\cap S')$ is elementary abelian of order $q^3$. Since $E\le Q_1$ and $Q_1'=Z(S)\le E$, we deduce that $E\normaleq Q_1$. Moreover, when $p=2$, it follows from \cref{Q_2Omega} that $[C_S(e), E]\le C_S(e)'=(S'\cap E)$ and so $E\normaleq S=Q_1C_S(e)$.

Suppose now that $E\cap S'=Z(S)$. Since $E\le Q_1$, it follows that $E\cap Q_2=Z(S)$ and $|E|\leq q^3$. If $\Omega(Z(E))\le Q_2$, then $\Omega(Z(E))=Z(S)$ and so $Q_1$ centralizes the chain $\{1\}\normaleq \Omega(Z(E))\normaleq E$, a contradiction since $E$ is $S$-radical. Hence, there is $e\in\Omega(Z(E))\setminus Q_2$ and so, $E\le C_S(e)$. Since $E$ is $S$-centric, we must have that $Z(C_S(e))\le \Omega(Z(E))$. If $\Omega(Z(E))=Z(C_S(e))$, then as $C_S(e)'=Z(S)$, $C_S(e)$ centralizes the chain $\{1\}\normaleq \Omega(Z(E))\normaleq E$, and since $E$ is $S$-radical, $E=C_S(e)$. But then $Q_1$ centralizes the chain $\{1\}\normaleq E'\normaleq E$, a contradiction. So there is $e'\in \Omega(Z(E))\setminus(Q_2C_S(e))$ with $Z(C_S(e'))\cap Z(C_S(e))=Z(S)$ and $Z(C_S(e'))\le \Omega(Z(E))$. In particular, $Z(C_S(e'))Z(C_S(e))$ is an  elementary abelian subgroup of $E$ of order $q^3$, and since $E$ itself has order at most $q^3$, we conclude that $E=Z(C_S(e))Z(C_S(e'))$. Then for any $y\in Q_2\setminus S'$, $[E, y]\not\le Z(S)$ and so $N_{Q_2}(E)=S'$. Since $E\le Q_1$ and $Q_1'=Z(S)\le E$, we have that $N_S(E)=Q_1$.

Suppose that for any of the $E$ considered, $E$ is essential is some saturated fusion system $\fs$ supported on $S$. Suppose first that we are in case (i) or (ii). Then $S'$ centralizes $E\cap S'$ and since $|S'/E\cap S'|=|E/E\cap S'|=q$, it follows from \cref{SEFF} that $O^{p'}(\Out_{\fs}(E))\cong\SL_2(q)$ and $\Out_{S'}(E)\in\syl_p(E)$. But $|N_S(E)/E|\geq q^2$ in either case, a contradiction. Hence, we may assume that we are in case (iii) and $E\cap S'=Z(S)$. Let $e\in E\setminus Q_2$ so that $E\le C_S(e)$, where $|C_S(e)|=q^4$. Then $Z(C_S(e))$ is a subgroup of $E$ of index $q$ centralized by $C_S(e)$ where $|C_S(e)E/E|=q$ and $C_S(e)\le N_S(E)=Q_1$. By \cref{SEFF}, $O^{p'}(\Out_{\fs}(E))\cong\SL_2(q)$ and $\Out_{C_S(e)}(E)\in\syl_p(E)$, and since $|N_S(E)/E|=q^2$, we have another contradiction.
\end{proof}

\begin{proposition}\label{U4Centric2}
Suppose that $E$ is an $S$-centric, $S$-radical subgroup of $S$, $S'\le E$ and $q>2$. Then $E\in\{Q_1, Q_2, S\}$.
\end{proposition}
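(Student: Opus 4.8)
The plan is to use \cref{Chain} throughout, with the backbone observation that $S$ stabilises its own lower central structure. Since $S'\le E$ we have $[S,E]\le[S,S]=S'\le E$, so $E\normaleq S$ (hence $N_S(E)=S$) and $Z(S)\le Z(E)$. The commutator formulas give $[S,Z(S)]=1$, $[S,S']\le Z(S)$ and $[S,E]\le S'$, so $S$ centralises every factor of the chain $\{1\}\normaleq Z(S)\normaleq S'\normaleq E$. Consequently, as soon as $Z(S)$ and $S'$ are \emph{characteristic} in $E$, \cref{Chain} forces $\Out_S(E)\le O_p(\Out(E))$, and since $C_S(E)=Z(E)\le E$ this group is nontrivial whenever $E<S$, contradicting $S$-radicality. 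Thus the whole task reduces to locating the (few) centric $E$ for which this chain fails, and these will turn out to be exactly $Q_1$ and $Q_2$. I also record that $S'\le E$ gives $C_S(E)\le C_S(S')=Q_2$, so $Z(E)=C_S(E)=C_{Q_2}(E)$, which is what lets me apply \cref{Q_2Omega} later.

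First I would dispose of the two ``inside'' cases. If $E\le Q_2=J(S)$ then $E$ is abelian and $Q_2\le C_S(E)$, so centricity ($C_S(E)=Z(E)=E$) forces $E=Q_2$. If $E\le Q_1$, then for $S'<E$ the commutator formulas give $E'=[E,E]\le Q_1'=Z(S)$ and $E'\ge[\,x_\alpha\text{-element},X_{\alpha+\beta}]=Z(S)$, so $E'=Z(S)$ is characteristic; as $[Q_1,E]\le Q_1'=Z(S)=E'$, the group $Q_1$ centralises the chain $\{1\}\normaleq E'\normaleq E$, and $S$-radicality forces $E=Q_1$. The degenerate possibility $E=S'$ is excluded because $C_S(S')=Q_2\supsetneq S'$, so $S'$ is not $S$-centric.

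It remains to treat the general case $E\not\le Q_1$ and $E\not\le Q_2$, where I aim to show $E=S$. Choosing $a,b\in E$ with nontrivial images in $S/Q_2\cong X_\alpha$ and $S/Q_1\cong X_\beta$ respectively, the commutator formula $[x_\alpha(t),x_{\alpha+\beta}(u)]=x_{2\alpha+\beta}(\epsilon'' \,Tr(tu))$ gives $Z(S)=[a,S']\le E'$. When $E$ surjects onto $S/Q_2$ (equivalently $EQ_2=S$), varying $a$ over a full set of $X_\alpha$-representatives in the formula $[a,b]\equiv x_{\alpha+\beta}(\epsilon tu)\pmod{Z(S)}$ sweeps out all of $X_{\alpha+\beta}$, whence $E'=S'$; moreover $Z(E)=C_{Q_2}(E)=C_{Q_2}(S)=Z(S)$. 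Now both $Z(S)=Z(E)$ and $S'=E'$ are characteristic in $E$, so the backbone chain of the first paragraph applies and yields $E=S$.

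The main obstacle is the residual sub-case $EQ_2\ne S$, i.e. $|EQ_2/Q_2|\le q$. Here $Q_2$ (as the natural $\Omega_4^-$-module) is acted on by $E$ as an offender, so by \cref{Q_2Omega} one has $Z(E)=C_{Q_2}(E)\le S'$ of order $q^2$; in particular $|E|\le q^5$, so $E\ne S$ and I must derive a contradiction by showing $E$ is not $S$-radical. The plan is to work with a characteristic central chain of $E$ (its upper central series, which collapses to $\{1\}\normaleq E'\normaleq Z(E)\normaleq E$ when $E$ has class $2$) and to exhibit an element $g=x_\alpha(t_1)\in S\setminus E$ that centralises each of its factors: using $[g,S']\le Z(S)\le E'$ for free, the only genuine requirement $[g,E]\le Z(E)$ reduces via the commutator formulas to a linear condition of the form $Tr(\epsilon u_0 t_0\,t_1)=0$, which has nonzero solutions precisely because $Z(E)$ has order $q^2$ and hence leaves a full $\mathbb{K}$-hyperplane of admissible $t_1$. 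The stability group of the chain then properly contains $E$, and \cref{Chain} gives $O_p(\Out(E))\cap\Out_S(E)\ne\{1\}$, contradicting $S$-radicality. The delicate points, which I expect to occupy most of the argument, are making this element construction uniform in $q=p^n$, treating the $p=2$ and odd-$p$ offender alternatives of \cref{Q_2Omega} together, and correctly identifying the characteristic chain in the case where $E$ has nilpotency class $3$ rather than $2$.
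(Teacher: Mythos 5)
Your reductions outside the residual case are sound and run parallel to the paper's: $E\le Q_2$ forces $E=Q_2$ by centricity; $E\le Q_1$ with $S'<E$ forces $E'=Z(S)$ and then $E=Q_1$ via \cref{Chain}; and when $EQ_2=S$ your bilinear sweep correctly yields $E'=S'$ and $Z(E)=C_{Q_2}(E)=C_{Q_2}(S)=Z(S)$, after which your backbone chain legitimately gives $E=S$. But the residual case $E\not\le Q_1,Q_2$ with $EQ_2\ne S$ is where essentially all of the paper's proof lives, and your plan for it has a genuine gap, not a deferred delicacy. Already the first step is unjustified: with $|EQ_2/Q_2|\le q$, alternative (i) of \cref{Q_2Omega} is \emph{not} excluded (the bound $|F|\le q$ occurs in (ii) and (iii), but the converse fails), so you cannot conclude $|Z(E)|=q^2$; in particular the sub-cases with $Z(E)=Z(S)$ --- which the paper handles separately, one of them (with $E\cap Q_2>S'$) terminating in $E=S$ rather than in a contradiction, the other ($Z(S)=Z(E)<E'$, $E\cap Q_2=S'$) requiring a Thompson subgroup analysis --- fall into your residual bucket and are untouched by the trace-condition construction.

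More decisively, the element you propose provably cannot exist in the key configuration. Take $p$ odd and $E=C_S(x)$ for $x\in S'\setminus Z(S)$: by \cref{q^5cent} this satisfies all your residual hypotheses ($Q_2\le E$, $|E|=q^5$, $|EQ_2/Q_2|=q$). Since $x\in Z(E)$, case (i) of \cref{Q_2Omega} fails, so case (iii) gives $Z(S)=C_{[Q_2,E]}(E)<[Q_2,E]=E'$; hence $E'\not\le Z(E)$ and $E$ has class $3$, so your collapsed chain $\{1\}\normaleq E'\normaleq Z(E)\normaleq E$ is not even a chain of subgroups. For the upper central series the bottom term is $Z(E)\ni x$, and any $g$ centralizing it lies in $C_S(Z(E))\le C_S(x)=E$, so no outside element can centralize its factors. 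For the lower central series you would need some $g\in S\setminus E$ with $[Q_2,g]\le[Q_2,E]=E'$; but $E'\normaleq S$, so $F:=\{h\in S\mid [Q_2,h]\le E'\}$ is a subgroup containing $\langle g,E\rangle$, whose image modulo $Q_2$ has order exceeding $q$, whence case (i) of \cref{Q_2Omega} forces $[Q_2,F]=S'$ of order $q^3$, contradicting $[Q_2,F]\le E'$ of order $q^2$. So \emph{no} characteristic chain of $E$ admits an outside centralizing element, and indeed the paper never looks for one: it uses radicality itself to force $Q_2=J(E)$, extracts the identities $S'=[Q_2,E]Z(E)$ and $Z(S)=[Q_2,E]\cap Z(E)$ to make $Z(S)$ and $S'$ $\Aut(E)$-invariant (for $p=2$ this needs the external input \cite[Lemma 3.13]{ParkU4}), and only then runs your backbone chain with $Q=\Aut_S(E)$. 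That Thompson-subgroup step is the missing idea; no choice of $t_1$ in a trace condition substitutes for it.
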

\begin{proof}
Since $S'\le E$, we have that $Z(E)\le Q_2$. Moreover, if $E\le Q_2$, then using that $E$ is $S$-centric, we conclude that $E=Q_2$. So we may suppose throughout the remainder of this proof that there is $e\in E\setminus Q_2$.

Suppose first that $Z(E)=Z(S)$ so that $S'\le Z_2(E)$. Indeed, if $E\cap Q_2>S'$, then it follows from the commutator formulas that $Z_2(E)=S'$ and $S$ centralizes the chain $\{1\}\normaleq Z(E)\normaleq Z_2(S)\normaleq E$, and since $E$ is $S$-radical, we deduce that $E=S$. So if $Z(E)=Z(S)$, then we may assume that $E\cap Q_2=S'$. 

In addition, suppose that $E'=Z(S)$. Consider $A\in\mathcal{A}(E)$. Since $S'\le E$ and $S'$ is elementary abelian, we infer that $|A|\geq 3n$. Moreover, there is $a \in A$ with $a\not\le Q_2$, else $S'=J(E)$ and $Q_2$ centralizes the chain $\{1\}\normaleq J(E)\normaleq E$, a contradiction since $E$ is $S$-radical. It follows that $A\le C_S(a)\le Q_1$, $|A|=q^3$ and $|A\cap S'|=q^2$. Then either $E=AS'\le Q_1$; or $|E|>q^4$. In either case, it follows from \cref{Q1Unique} that $E\le Q_1$ and then $Q_1$ centralizes the chain $\{1\}\normaleq Z(E)\normaleq E$. Since $E$ is $S$-radical, $Q_1\le E$. Since $E\cap Q_2=S'$, it follows from a consideration of orders that $E=Q_1$.

Suppose that $Z(S)=Z(E)<E'$. By \cref{q^5cent}, $C_E(E')\le C_S(x)$ for some $x\in E'\setminus Z(E)$ and it follows that either $C_E(E')=S'$; or $C_E(E')\not\le Q_2$ and $Z(C_E(E'))\le S'$ has order $q^2$. In the former case, $S$ centralizes the chain $\{1\}\normaleq Z(E)\normaleq C_E(E')\normaleq E$, and since $E$ is $S$-radical, $E=S$, a contradiction since $E\cap Q_2=S'$. Therefore, $C_E(E')\not\le Q_2$ and since $C_E(E')\cap Q_2\le E\cap Q_2=S'$, we conclude that $|C_E(E')|\leq q^4$.

Let $A\in\mathcal{A}(C_E(E'))$ and suppose that $A\cap S'>Z(C_E(E'))$. Comparing with the commutator formulas, it follows that $A\le C_S(A\cap S')=S'$ and so $A=S'$. Notice that if $S'=J(C_E(E'))$, then $Q_2$ centralizes the chain $\{1\}\normaleq S'\normaleq E$, a contradiction since $E$ is $S$-radical. Thus, we may assume that there is $A\in\mathcal{A}(C_E(E))$ with $A\cap S'=Z(C_E(E'))$ and $|A|\geq q^3$. In particular, $C_E(E')=AS'$ and $|A|=q^3$. Then for $a\in A\setminus A\cap S'$, we infer that $A\le C_S(a)\le Q_1$ and so $C_E(E')\le Q_1$. But now, since $S'\le C_E(E')$, $Q_1$ centralizes the chain $\{1\}\normaleq Z(E)\normaleq C_E(E')\normaleq E$, a contradiction since $|E|\leq q^5$, $E$ is $S$-radical and $E'>Z(S)$.

Suppose now that $Z(S)<Z(E)$. Since $E\not\le Q_2$, $Z(E)\le S'$ and $E\le C_S(x)$ for some $e\in Z(E)\setminus Z(S)$. Since $E$ is $S$-centric,  $Z(C_S(x))\le Z(E)$ and since $E\not\le Q_2$, it follows from \cref{q^5cent}, that $Z(C_S(x))=Z(E)$. Indeed, if $p=2$, then $Z(E)=C_{Q_2}(E)=[Q_2, E]$ and $Q_2$ centralizes the chain $\{1\}\normaleq Z(E)\normaleq E$. Since $E$ is $S$-radical, $Q_2=J(E)$ is characteristic in $E$. Then, $[C_S(x), E]\le J(E)$ and $Z(E)=[J(S), C_S(x)]$ and $C_S(x)$ centralizes the chain $\{1\}\normaleq Z(E)\normaleq J(E)\normaleq E$, and since $E$ is $S$-radical, $E=C_S(x)$. Now, assuming $q>2$, both $Z(S)$ and $S'$ are characteristic subgroups of $E$ by \cite[Lemma 3.13]{ParkU4}. Then $S$ centralizes the chain $\{1\}\normaleq Z(S)\normaleq S'\normaleq E$, a contradiction since $E$ was assumed to be $S$-radical.

Suppose now that $p$ is odd and $Z(C_S(x))=Z(E)$. Let $A\in\mathcal{A}(E)$ such that $A\not\le Q_2$. Then, there is $a\in A$ such that $|C_S(a)|=q^4$, $A\le C_S(a)\cap C_S(x)$, $C_S(a)\le Q_1$ and $Z(E)=C_S(a)\cap S'$. Now, $|C_S(x)\cap C_S(a)|=q^3$ and it follows that any elementary abelian subgroup of $E$ not contained in $Q_2$ has order at most $q^3$. Since $E\cap Q_2$ is elementary abelian, it follows that either $J(E)=E\cap Q_2\ge S'$, or $E\cap Q_2=S'$ and there is $A\in\mathcal{A}(E)$ with $|A|=q^3$ and $A\cap S'=Z(E)$. In the latter case, it follows that $E=AS'$ has order $q^4$ and since $A\le C_S(a)\le Q_1$, we have that $E\le Q_1$. Moreover, $E'=[A, S']=Z(S)$ and $Q_1$ centralizes the chain $\{1\}\normaleq E'\normaleq E$, a contradiction since $E$ is $S$-radical. Thus, $J(E)=E\cap Q_2$ and so $Q_2$ centralizes the chain $\{1\}\normaleq J(E)\normaleq E$, and since $E$ is $S$-radical, $Q_2=J(E)$. But then, since $p$ is odd, $S'=[Q_2, E]Z(E)$, $Z(S)=[Q_2, E]\cap Z(E)$ and $S$ centralizes the chain $\{1\}\normaleq Z(S)\normaleq S'\normaleq E$, a contradiction since $Z(E)>Z(S)$ and $E$ is $S$-radical.
\end{proof}

We now complete the classification of saturated fusion systems supported on a Sylow $p$-subgroup of $\PSU_4(p^n)$. When $q=p$ we get some exceptional behaviour, particularly when $p=3$, and refer to \cite{Baccanelli} and \cite{Raul} where these cases have already been treated. Additionally, by \cref{U4Centric2}, we have that $\mathcal{E}(\fs)\subseteq \{Q_1, Q_2\}$. 

As in earlier sections in this chapter, we endeavor to classify saturated fusion systems on $S$ without the need for a $\mathcal{K}$-group hypothesis. When $p=2$, since $m_2(S/Q_i)>1$, \cite{Bender} provides a list of groups with a strongly embedded subgroups, and so we focus more on the case where $p$ is odd. Here, $Q_1/\Phi(Q_1)$ witnesses quadratic action by $S$, and we rely on results of Ho (although we believe it should be possible to find a more elementary proof) to show that $O^{p'}(\Out_{\fs}(Q_1))\cong \SL_2(q)$. With regards to $Q_2$, we come up short and rely on $\mathcal{K}$-group hypothesis to identify $O^{p'}(\Out_{\fs}(Q_2))$ with $\PSL_2(q^2)$. We believe this can be achieved without using a $\mathcal{K}$-group hypothesis as follows: 

By the conditions on $G:=O^{p'}(\Out_{\fs}(Q_2)$, we see quickly that $\syl_p(G)$ is a TI-set for $G$. Then, using some appropriately chosen minimality condition, we should be able to prove that $G=\langle S, T\rangle$ and $C_{Q_2}(S)\cap C_{Q_2}(T)=\{1\}$ for any $S, T\in\syl_p(G)$. Even better, $C_{Q_2}(S)\cap [Q_2, T]=\{1\}$ for all such $S$ and $T$. Noticing that $|Q_2/C_{Q_2}(S)|=q^3$, we strive to show that $Q_2/C_{Q_2}(S)=[Q_2/C_{Q_2}(S), S]\cup\bigcup_{s\in S} C_{Q_2}(T^s)C_{Q_2}(S)/C_{Q_2}(S)$, where the intersection of any of the two subgroups in the union is $C_{Q_2}(S)$. 

Then, we aim to show that $C_{Q_2}(S)$ and $C_{Q_2}(T)$ are the only centralizers of a Sylow $p$-subgroup of $G$ contained in $C_{Q_2}(T)C_{Q_2}(S)$, for then we have a correspondence between Sylow $p$-subgroups of $G$ and certain subgroups of $Q_2$ of order $q$ and, more importantly, we are able to deduce that there are only $q^2+1$ Sylow $p$-subgroups of $G$. We are then in a position to recognize $\PSL_2(q^2)$ via a result of Hering, Kantor and Seitz \cite{SplitBN} which recognizes a split BN-pair of rank $1$ in $G$ .

In the classification of fusion systems supported on $S$, we apply \cref{PSUCor} using \cref{MainThm} when $Q_1$ and $Q_2$ are both essential and, as in earlier cases, we remark that this reduces to applying the main result from \cite{Greenbook}, which is independent of any $\mathcal{K}$-group hypothesis.

\begin{theorem}
Let $\fs$ be a saturated fusion system over a Sylow $p$-subgroup of $\PSU_4(p^n)$ for $p^n>2$. Moreover, if $p$ is odd then assume that $O^{p'}(\Out_{\fs}(Q_2))$ is a $\mathcal{K}$-group. Then one of the following occurs:
\begin{enumerate}
\item $\fs=\fs_S(S: \Out_{\fs}(S))$;
\item $\fs=\fs_S(Q_1: \Out_{\fs}(Q_1))$ where $O^{p'}(\Out_{\fs}(Q_1))\cong \SL_2(p^n)$, or $p^n=p=3$ and $\Out_{\fs}(Q_1)$ is determined in \cite{Baccanelli};
\item $\fs=\fs_S(Q_2: \Out_{\fs}(Q_2))$ where $O^{p'}(\Out_{\fs}(Q_2))\cong\PSL_2(p^{2n})$;
\item $\fs=\fs_S(G)$ where $G=\mathrm{Co}_2$, $\mathrm{McL}$, $\Aut(\mathrm{McL})$, $\PSU_6(2)$ or $\PSU_6(2).2$ and $p^n=3$; or
\item $\fs=\fs_S(G)$ where $F^*(G)=O^{p'}(G)\cong\PSU_4(p^n)$.
\end{enumerate}
\end{theorem}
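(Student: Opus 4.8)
The plan is to follow the template established in the $\mathrm{G}_2$ sections: first pin down the essential subgroups, then compute their automizers, and finally reconstruct $\fs$ through the Alperin--Goldschmidt fusion theorem together with \cref{PSUCor}. First I would record that, by \cref{U4Centric1} and \cref{U4Centric2}, the only possible essential subgroups of $\fs$ are $Q_1$ and $Q_2$. Both are characteristic in $S$ --- $Q_1$ by \cref{Q1Unique} and $Q_2=J(S)$ because the Thompson subgroup is characteristic --- so both are $\Aut_\fs(S)$-invariant, and since neither contains the other and the only radical centric subgroup strictly above either is $S$ (which is never essential), each $Q_i$ is maximally essential whenever it is essential. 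If $\mathcal{E}(\fs)=\emptyset$, then the fusion theorem immediately gives $\fs=\fs_S(S:\Out_\fs(S))$, which is outcome (i).

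Next I would determine the automizers. For $Q_2$, note $\Out_S(Q_2)\cong S/J(S)$ is elementary abelian with $m_p(\Out_S(Q_2))=2n\geq 2$, so \cref{MaxEssen} applies (replacing it by Bender's theorem \cite{Bender} when $p=2$, and invoking the $\mathcal{K}$-group hypothesis when $p$ is odd). Feeding in the module data from \cref{Q_2Omega} --- namely that $Q_2$ is a faithful irreducible $\GF(p)\Out_\fs(Q_2)$-module of order $q^4$ with $C_{Q_2}(S)=Z(S)$ of order $q$ and with $S$ acting non-quadratically --- I would eliminate every entry of the \cref{MaxEssen} list except $\PSL_2(q^2)$, and then apply \cref{SL2ModRecog} to confirm $O^{p'}(\Out_\fs(Q_2))\cong\PSL_2(q^2)$ acting on the natural $\Omega_4^-(q)$-module. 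For $Q_1$, the quotient $Q_1/\Phi(Q_1)=Q_1/Z(S)$ is a sum of two natural $\SL_2(q)$-modules on which $S$ acts quadratically, so it is not an FF-module and \cref{SEFF} is unavailable; instead I would invoke the cited results of Ho for odd $p$ (and Bender for $p=2$) to obtain $O^{p'}(\Out_\fs(Q_1))\cong\SL_2(q)$, deferring the single degenerate case $q=p=3$ to \cite{Baccanelli} and \cite{Raul}.

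With the automizers in hand I would split on $\mathcal{E}(\fs)$. If exactly one $Q_i$ is essential, then $Q_i$ is contained in every essential subgroup and is $\Aut_\fs(S)$-invariant, so $Q_i\normaleq\fs$ by \cref{normalinF} and $\fs$ is constrained; the Model Theorem \cref{model} then yields $\fs=\fs_S(Q_i:\Out_\fs(Q_i))$, giving (ii) or (iii). If both $Q_1$ and $Q_2$ are essential, I would first argue $O_p(\fs)=\{1\}$: by \cref{normalinF} one has $O_p(\fs)\leq Q_1\cap Q_2=S'$, while $O_p(\fs)$ is an $O^{p'}(\Out_\fs(Q_2))$-invariant subgroup of the irreducible $\PSL_2(q^2)$-module $Q_2$, hence $O_p(\fs)\in\{1,Q_2\}$; as $O_p(\fs)\leq S'<Q_2$ this forces $O_p(\fs)=\{1\}$. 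Then $Q_1,Q_2$ are $\Aut_\fs(S)$-invariant maximally essential subgroups with $\mathcal{K}$-group automizers, $O_p(\fs)=\{1\}$, and $\fs=\langle N_\fs(Q_1),N_\fs(Q_2)\rangle$, so \cref{PSUCor} (via \cref{MainThm}, which for these completions reduces to the classification of weak BN-pairs of rank $2$) applies and returns outcomes (iv) and (v).

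I expect the genuine obstacle to be the identification of $O^{p'}(\Out_\fs(Q_2))$ with $\PSL_2(q^2)$: this is exactly where the $\mathcal{K}$-group hypothesis enters for odd $p$, since $Q_2$ carries no FF-module structure exploitable through \cref{SEFF}, and the classification-free substitute sketched in the surrounding discussion (a TI-set argument feeding the Hering--Kantor--Seitz recognition of a split BN-pair of rank $1$) is precisely what is not carried through. By contrast, the reduction $O_p(\fs)=\{1\}$ is immediate from irreducibility of the $\Omega_4^-(q)$-module; this is why, unlike the $\mathrm{G}_2$ cases, no additional constrained fusion systems with non-trivial $p$-core survive once both $Q_1$ and $Q_2$ are essential.
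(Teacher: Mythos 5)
Your proposal is correct and follows essentially the same route as the paper's proof: reduce to $\mathcal{E}(\fs)\subseteq\{Q_1,Q_2\}$ via \cref{U4Centric1} and \cref{U4Centric2}, identify $O^{p'}(\Out_{\fs}(Q_2))\cong\PSL_2(q^2)$ through \cref{MaxEssen} (Bender at $p=2$, the $\mathcal{K}$-group hypothesis at odd $p$) combined with the non-quadratic action and \cref{SL2ModRecog}, identify $O^{p'}(\Out_{\fs}(Q_1))\cong\SL_2(q)$ via quadratic action and Ho's theorem for odd $p$, kill $O_p(\fs)$ using irreducibility of $Q_2$ as an $\Omega_4^-(q)$-module, and conclude with \cref{PSUCor}. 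The only cosmetic divergence is at $q=p$: the paper runs its TI-set/Ho argument only for $q>p$ and defers the whole case $q=p$ to \cite{Baccanelli} ($p=3$) and \cite[Lemma 4.4]{Raul} ($p\geq 5$), whereas you defer only $q=p=3$, but the conclusions agree.
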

\begin{proof}
Set $q=p^n$ throughout. If neither $Q_1$ nor $Q_2$ are essential then $\fs=\fs_S(S: \Out_{\fs}(S))$ and (i) holds. Suppose that $Q_1$ is essential and assume first that $q=p$. If $p=3$, then the action of $\Out_{\fs}(Q_1)$ on $Q_1$ is determined completely in \cite{Baccanelli} while if $p\geq 5$, then the action of $O^{p'}(\Out_{\fs}(Q_1))$ is determined by \cite[Lemma 4.4]{Raul}.

Suppose now that $Q_1$ is essential and $q>p$. If $p=2$, then as $m_p(S/Q_1)>1$, it follows from \cref{MaxEssen} that $O^{p'}(\Out_{\fs}(Q_1))\cong\SL_2(q)$. So suppose that $p$ is odd. Let $T,P\in\syl_p(O^{p'}(\Out_{\fs}(Q_1)))$ and suppose that $1\ne x \in T\cap P$. Notice that $Z(Q_1)=Z(S)$ so that $O^{p'}(\Out_{\fs}(Q_1))$ acts trivially on $Z(Q_1)$. Then $[Q_1, T]Z(Q_1)=[Q_1, x]Z(Q_1)=[Q_1, P]Z(Q_1)$ and $[Q_1, T, T]\le Z(Q_1)\ge [Q_1, P,P]$. It follows that $\langle P, T\rangle$ centralizes a series $\{1\}\normaleq Z(Q_1)\normaleq [Q_1, T]Z(Q_1)\normaleq Q_1$ and by \cref{GrpChain}, $\langle P, T\rangle$ is a $p$-group. Since $T,P\in\syl_p(O^{p'}(\Out_{\fs}(Q_1)))$, we must have that $T=P$. Moreover, $T$ acts quadratically on $Q_1/Z(Q_1)=Q_1/\Phi(Q_1)$ and so, by \cite[Theorem 1]{HoTI}, $O^{p'}(\Out_{\fs}(Q_1))$ is isomorphic to a central extension of $\PSL_2(q)$. Then eliminating $\PSL_2(q)$ since $T$ acts quadratically (see \cite[(I.3.8.4)]{gor}), we deduce that $O^{p'}(\Out_{\fs}(Q_1))\cong\SL_2(q)$. By \cref{DirectSum} and as $T\in\syl_p(O^{p'}(\Out_{\fs}(Q_1)))$, we conclude that $Q_1/Z(Q_1)$ is a direct sum of two natural $\SL_2(q)$-modules.

Suppose that $Q_2$ is essential. Since $S/Q_2$ is elementary abelian of order $q^2$ and $q>p$, it follows from \cref{MaxEssen} that $O^{p'}(\Out_{\fs}(Q_2))$ is isomorphic to a central extension of $\PSL_2(q^2)$. Then, since $S$ does not act quadratically on $Q_2$ and $Q_2$ contains a non-central chief factor, by \cref{SL2ModRecog}, we conclude that $Q_2$ is a natural $\Omega_4^-(q)$-module for $O^{p'}(\Out_{\fs}(Q_2))\cong \PSL_2(q)$, as required.

If both $Q_1$ and $Q_2$ are essential, then by \cref{normalinF}, $O_p(\fs)\le Q_1\cap Q_2$ and $O_p(\fs)$ is normalized by $O^{p'}(\Out_{\fs}(Q_2))$. Thus, $O_p(\fs)=\{1\}$ and since $Q_1$ and $Q_2$ are characteristic in $S$ and we satisfy the hypotheses of \cref{PSUCor}.
\end{proof}

\printbibliography

\end{document}